\documentclass[reqno,11pt,a4paper]{amsart}
\usepackage{amsmath}
\usepackage[foot]{amsaddr} 
\usepackage{enumerate}
\usepackage{amsfonts}
\usepackage{amssymb}
\usepackage{mathrsfs}
\usepackage{euscript}
\usepackage{empheq}
\usepackage{mathtools}
\usepackage[usenames,dvipsnames]{pstricks}
\usepackage{epsfig}
\usepackage{hyperref}
\usepackage{textcomp}
\usepackage{subcaption}
\captionsetup[subfigure]{labelfont=rm}
\usepackage{float}
\usepackage[left= 1in, right= 1in, top= 1.2in, bottom= 1.0in]{geometry}
\usepackage[percent]{overpic}
\usepackage[numbers,sort&compress]{natbib}
\usepackage[all]{xy}

\title[On the spectral problem for the time-periodic NLS]{On the spectral problem associated with the time-periodic nonlinear Schr\"odinger equation}

\author{Jonatan Lenells and Ronald Quirchmayr}

\address{Department of Mathematics, KTH Royal Institute of Technology,  
Stockholm, Sweden} 

\email{jlenells@kth.se, ronaldq@kth.se}

\newtheorem{theorem}{Theorem}[section]

\newtheorem*{theorem*}{Theorem}
\newtheorem*{lemma*}{Lemma}
\newtheorem{proposition}[theorem]{Proposition}
\newtheorem{lemma}[theorem]{Lemma}
\newtheorem{corollary}[theorem]{Corollary}
\theoremstyle{definition}
\newtheorem{definition}[theorem]{Definition}
\newtheorem{remark}[theorem]{Remark}
\newtheorem*{remark*}{Remark}

\numberwithin{equation}{section}

\newcommand{\be}{\begin{equation}}
\newcommand{\ee}{\end{equation}}
\newcommand{\bew}{\begin{equation*}} 
\newcommand{\eew}{\end{equation*}}

\newcommand{\D}{\partial}
\newcommand{\Dt}{\partial_t}

\newcommand{\I}{\mathrm{i}}
\newcommand{\C}{\mathbb{C}}
\newcommand{\R}{\mathbb{R}}
\newcommand{\Z}{\mathbb{Z}}
\newcommand{\N}{\mathbb{N}}
\newcommand{\K}{\mathbb{K}}
\newcommand{\T}{\mathbb{T}}
\newcommand{\LL}{\mathrm{X}}
\newcommand{\e}{\mathrm{e}}
\newcommand{\m}{\grave{m}}
\newcommand{\M}{\grave{M}}
\newcommand{\eps}{\varepsilon}
\newcommand{\id}{\mathrm{I}}
\DeclareMathOperator{\pr}{pr}

\DeclareMathOperator{\sgn}{sgn}

\newcommand{\overbar}[1]{\mkern 1.5mu\overline{\mkern-1.5mu#1\mkern-1.5mu}\mkern 1.5mu}

\makeatletter
\def\@setemails{%
\mbox{{\itshape E-mail addresses}:\space}{\ttfamily\emails}.
}
\makeatother

\begin{document}
\begin{abstract} 
\noindent
According to its Lax pair formulation, the nonlinear Schr\"odinger (NLS) equation can be expressed as the compatibility condition of two linear ordinary differential equations with an analytic dependence on a complex parameter. The first of these equations---often referred to as the \emph{$x$-part} of the Lax pair---can be rewritten as an eigenvalue problem for a Zakharov-Shabat operator. The spectral analysis of this operator is crucial for the solution of the initial value problem for the NLS equation via inverse scattering techniques. For space-periodic solutions, this leads to the existence of a Birkhoff normal form, which beautifully exhibits the structure of NLS as an infinite-dimensional completely integrable system.
In this paper, we take the crucial steps towards developing an analogous picture for time-periodic solutions by performing a spectral analysis of the \emph{$t$-part} of the Lax pair with a periodic potential.
\end{abstract}

\maketitle

\noindent {\small{\sc AMS Subject Classification (2010)}: 34L20, 35Q55, 37K15, 47A75.} 

\noindent {\small{\sc Keywords}: Nonlinear Schr\"odinger equation, Spectral analysis, Eigenvalue asymptotics, Lax pair operator, Bi-Hamiltonian structure.}

\thispagestyle{empty}

\section{Introduction}
\vspace{0em}
\noindent
The nonlinear Schr\"odinger (NLS) equation 
\begin{align}\label{nls}
  \I u_t + u_{xx} - 2\sigma |u|^2 u = 0, \qquad \sigma = \pm 1,
\end{align}
is one of the most well-studied nonlinear partial differential equations. As a universal model equation for the evolution of weakly dispersive wave packets, it arises in a vast number of applications, ranging from nonlinear fiber optics and water waves to Bose-Einstein condensates. 
Many aspects of the mathematical theory for \eqref{nls} are well-understood. 
For example, for spatially periodic solutions (i.e., $u(x,t) = u(x+1,t)$), there exists a normal form theory for \eqref{nls} which beautifully exhibits its structure as an infinite-dimensional completely integrable system (see \cite{GrebertKappeler14} and references therein).  
This theory takes a particularly simple form in the case of the defocusing (i.e., $\sigma = 1$) version of \eqref{nls}. Indeed, for $\sigma = 1$, the normal form theory ascertains the existence of a single global system of Birkhoff coordinates (the Cartesian version of action-angle coordinates) for \eqref{nls}. For the focusing (i.e., $\sigma = -1$) NLS, such coordinates also exist, but only locally \cite{Kappeler_etal_09}. The existence of Birkhoff coordinates has many implications. Among other things, it provides an explicit decomposition of phase space into invariant tori, thereby making it evident that an $x$-periodic solution of the defocusing NLS is either periodic, quasi-periodic, or almost periodic in time. The construction of Birkhoff coordinates for \eqref{nls} is a major achievement which builds on ideas going back all the way to classic work of Gardner, Greene, Kruskal and Miura on the Korteweg--de~Vries (KdV) equation \cite{GGKM1967, GGKM1974}, and of Zakharov and Shabat on the NLS equation \cite{ZS1972}. Early works on the (formal) introduction of action-angle variables include \cite{ZS1971, ZM1974}. More recently, Kappeler and collaborators have developed powerful methods which have led to a rigorous construction of Birkhoff coordinates for both KdV \cite{KM1999, KM2001, KappelerPoeschel03} and NLS \cite{Kappeler_etal_09, GrebertKappeler14} in the spatially periodic case.

The key element in the construction of Birkhoff coordinates is the spectral analysis of the Zakharov-Shabat operator $L(u)$ defined by
\bew
L(u) = \I \sigma_3\bigg(\frac{\mathrm d}{\mathrm d x} - U\bigg), \quad \text{where} \quad
U = \begin{pmatrix} 0 & u \\
\sigma \bar{u} & 0 \end{pmatrix} \quad \text{and}\quad 
\sigma_3 = \begin{pmatrix} 1 & 0 \\ 0 & -1 \end{pmatrix}.
\eew
In particular, the periodic eigenvalues of this operator are independent of time if $u$ evolves according to \eqref{nls} and thus encode the infinite number of conservation laws for \eqref{nls}.
The time-independence is a consequence of the fact that equation \eqref{nls} can be viewed as the compatibility condition $\phi_{xt} = \phi_{tx}$ of the Lax pair equations \cite{L1968, ZS1972}
\begin{align}\label{xlax}
 & \phi_x + \I \lambda \sigma_3 \phi = U\phi,
  	\\ \label{tlax}
 & \phi_t + 2 \I \lambda^2 \sigma_3 \phi = V \phi,	
\end{align}
where $\lambda \in \C$ is the spectral parameter, $\phi(x,t,\lambda)$ is an eigenfunction,
\be \label{V(u)def} 
V = \begin{pmatrix} - \I \sigma |u|^2 & 2 \lambda u + \I u_x \\
2 \sigma \lambda \bar{u} - \I \sigma \bar{u}_x & \I \sigma |u|^2 \end{pmatrix},
\ee
and we note that \eqref{xlax} is equivalent to the eigenvalue problem $L(u)\phi = \lambda \phi$.

Strangely enough, although the spectral theory of equation \eqref{xlax} (or, equivalently, of the Zakharov-Shabat operator) has been so thoroughly studied, it appears that no systematic study of the spectral theory of the $t$-part \eqref{tlax} with a periodic potential has yet been carried out (there only exist a few studies of the NLS equation on the half-line with asymptotically time-periodic boundary conditions which touch tangentially on the issue \cite{BIK2007, BIK2009, Ldefocusing, LeFo15, LFunifiedII}).  The general scope of this paper is to lay the foundation for a larger project with the goal of showing that \eqref{nls}, viewed as an evolution equation in the $x$-variable, is an integrable PDE and in particular admits a normal form in a neighborhood of the trivial solution $u\equiv0$. This means that one can construct Birkhoff coordinates---often referred to as nonlinear Fourier coefficients---on appropriate function spaces so that, when expressed in these coordinates, the PDE can be solved by quadrature. Our approach is inspired by the methods and ideas of~\cite{Kappeler_etal_09, GrebertKappeler14}, where such coordinates for \eqref{nls} as a $t$-evolution equation were constructed on the phase space of $x$-periodic functions. This work at hand provides the key ingredients needed to adapt the scheme of construction developed in~\cite{Kappeler_etal_09, GrebertKappeler14} to the $x$-evolution of time-periodic solutions of NLS, and ultimately to establish local Birkhoff coordinates, hence integrability. In particular, we provide asymptotic estimates for the fundamental matrix solution of the $t$-part \eqref{tlax}, which we exploit to study the periodic spectrum of the corresponding generalized eigenvalue problem.

For the spectral analysis, it is appropriate (at least initially) to treat the four functions $u$, $\sigma\bar{u}$, $u_x$, $\sigma \bar{u}_x$ in the definition of $V$ as independent. We will therefore consider the spectral problem \eqref{tlax} with potential $V$ given by
\begin{align}\label{Vdef}
V = V(\lambda,\psi) = \begin{pmatrix} - \I  \psi^1 \psi^2 & 2\lambda \psi^1 + \I \psi^3 \\
2\lambda \psi^2 - \I   \psi^4 & \I  \psi^1 \psi^2 \end{pmatrix},
\end{align}
where $\psi=\{\psi^j(t)\}_1^4$ are periodic functions of $t \in \R$ with period one. 

Apart from the purely spectral theoretic interest of studying \eqref{tlax}, there are at least three other reasons motivating the present study:

\begin{enumerate}[$-$]
\item
First, in the context of fiber optics, the roles of the variables $x$ and $t$ in equation \eqref{nls} are interchanged, see e.g. \cite{A2013}. In other words, in applications to fiber optics, $x$ is the temporal and $t$ is the spatial variable. Since the analysis of \eqref{tlax} plays the same role for the $x$-evolution of $u(x,t)$ as the analysis of the Zakharov-Shabat operator plays for the $t$-evolution, this motivates the study of \eqref{tlax}.  
\item
Second, one of the most important problems for nonlinear integrable PDEs is to determine the solution of initial-boundary value problems with asymptotically time-periodic boundary data \cite{BF2008, BKSZ2010, LFunifiedII}. For example, consider the problem of determining the solution $u(x,t)$ of \eqref{nls} in the quarter-plane $\{x>0,t>0\}$, assuming that the initial data $u(x,0)$, $x \geq 0$, and the boundary data $u(0,t)$, $t \geq 0$ are known, and that $u(0,t)$ approaches a periodic function as $t \to \infty$. The analysis of this problem via Riemann-Hilbert techniques relies on the spectral analysis of \eqref{tlax} with a periodic potential determined by the asymptotic behavior of $u(0,t)$ \cite{BIK2007, LeFo15}. 
\item
Third, at first sight, the differential equations \eqref{xlax} and \eqref{tlax} may appear unrelated. However, the fact that they are connected via equation \eqref{nls} implies that they can be viewed as different manifestations of the same underlying mathematical structure. Indeed, for the analysis of elliptic equations and boundary value problems, a coordinate-free intrinsic approach in which the two parts of the Lax pair are combined into a single differential form has proved the most fruitful \cite{F1997, HE1989b}. In such a formulation, eigenfunctions which solve both the $x$-part \eqref{xlax} and the $t$-part \eqref{tlax} simultaneously play a central role. It is therefore natural to investigate how the spectral properties of \eqref{xlax} are related to those of \eqref{tlax}. Since the NLS equation is just one example of a large number of integrable equations with a Lax pair formulation, the present work can in this regard be viewed as a case study with potentially broader applications.
\end{enumerate}

\subsection{Comparison with the analysis of the $x$-part}
Compared with the analysis of the $x$-part \eqref{xlax}, the spectral analysis of the $t$-part \eqref{tlax} presents a number of novelties. Some of the differences are:
\begin{enumerate}[$-$]
\item
Whereas equation \eqref{xlax} can be rewritten as the eigenvalue equation $L(u)\phi = \lambda \phi$ for an operator $L(u)$, no (natural) such formulation is available for \eqref{tlax} due to the more complicated $\lambda$-dependence. Nevertheless, it is possible to define spectral quantities associated with \eqref{tlax} in a natural way. 
\item
Asymptotically for large $|\lambda|$, the periodic and antiperiodic eigenvalues of \eqref{xlax} come in pairs which lie in discs centered at the points $n \pi$, $n \in \Z$, along the real axis \cite{GrebertKappeler14}. 
In the case of \eqref{tlax}, a similar result holds, but in addition to discs centered at points on the real axis, there are also discs centered at points on the imaginary axis (see Lemma \ref{counting_lemma}). Moreover, the spacing between these discs shrinks to zero as $|\lambda|$ becomes large.  
\item
For so-called real type potentials (the defocusing case), the Zakharov-Shabat operator is self-adjoint, implying that the spectrum associated with \eqref{xlax} is real. No such statement is true for the $t$-part \eqref{tlax}. This is clear already from the previous statement that there exist pairs of eigenvalues tending to infinity contained in discs centered on the imaginary axis. However, it is also true that the eigenvalues of \eqref{tlax} near the real axis need not be purely real and the eigenvalues near the imaginary axis need not be purely imaginary. This can be seen from the simple case of a single-exponential potential. Indeed, consider the potential
\be \label{intro_singexp}
\big(\psi^1(t), \psi^2(t),\psi^3(t),\psi^4(t)\big)  = 
\big(\alpha e^{\I\omega t}, \sigma \bar{\alpha} e^{-\I\omega t}, c e^{\I\omega t}, \sigma \bar{c} e^{-\I\omega t}\big),
\ee
where $\alpha, c \in \C$, $\omega \in 2\pi \Z$, and $\sigma = \pm 1$. For potentials of this form, equation \eqref{tlax} can be solved explicitly (see Section \ref{sec_singexp}) and Fig.~\ref{fig:intro} shows the periodic and antiperiodic eigenvalues of \eqref{tlax} for two choices of the parameters. 
\item
Whereas the matrix $U$ in \eqref{xlax} is off-diagonal and contains only the function $u$ and its complex conjugate $\bar{u}$, the matrix $V$ in \eqref{tlax} is neither diagonal nor off-diagonal and involves also $u_x$ and $\bar{u}_x$. This has implications for the spectral analysis---an obvious one being that \eqref{Vdef} involves four instead of two scalar potentials $\psi^j(t)$.
\item
The occurrence of the factor $\lambda^2$ in \eqref{tlax} implies that the derivation of the fundamental solution's asymptotics for $|\lambda|\to \infty$ requires new techniques (see the proof of Theorem \ref{thm_asymptotics_M_1}). For the $x$-part, the analogous result can be established via an application of Gronwall's lemma \cite{GrebertKappeler14}. This approach does not seem to generalize to the $t$-part, but instead we are able to perform an asymptotic analysis inspired by \cite[Chapter 6]{CL1955} (see also \cite{LNonlinearFourier}).
\item
In Theorems \ref{thm_conn_per_ev} and \ref{cor_conn_per_ev}, we will, for sufficiently small potentials, establish the existence of analytic arcs which connect periodic eigenvalues close to the real line in a pairwise manner and along which the discriminant is real. A similar result for \eqref{xlax} can be found in \cite[Proposition 2.6]{Kappeler_etal_09}. In both cases, the proof relies on the implicit function theorem in infinite dimensional Banach spaces. However, the proof of \eqref{tlax} is quite a bit more involved and requires, for example, the introduction of more complicated function spaces, see \eqref{lpsdef}.
\end{enumerate}

\begin{figure}[h!] \centering
\begin{subfigure}{.43\textwidth}  \centering
\begin{overpic}[width=.95\textwidth]{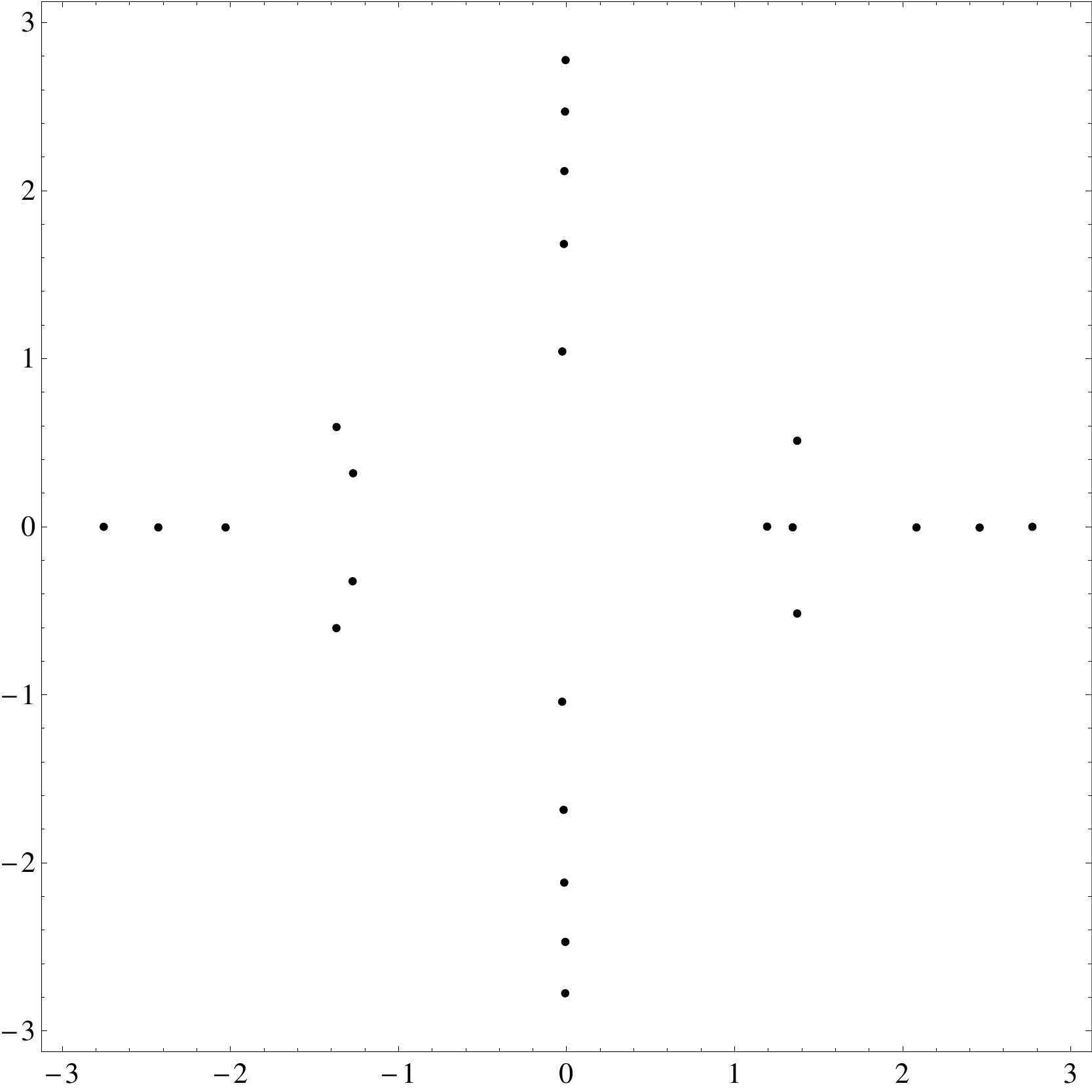}
\end{overpic}
\caption{}
  \label{fig:intro1a}
\end{subfigure}%
\quad
\begin{subfigure}{.43\textwidth}  \centering
\begin{overpic}[width=.95\textwidth]{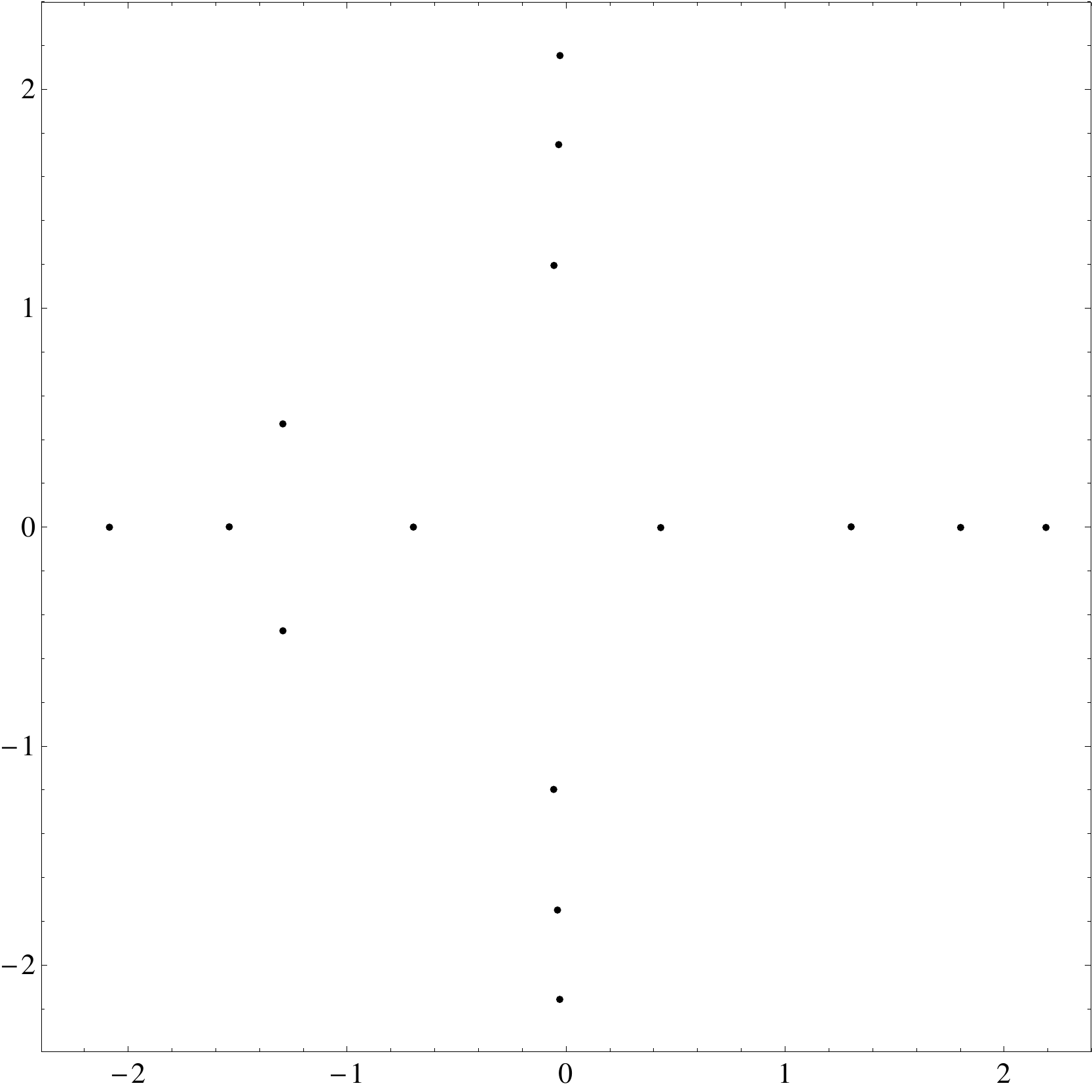}
\end{overpic}
\caption{}
  \label{fig:intro1b}
\end{subfigure}%
\caption{Plots of the periodic and antiperiodic eigenvalues for two single exponential potentials with different sets of parameters $\sigma$, $\omega$, $\alpha$ and $c$; cf.~\eqref{intro_singexp}.
Fig.~\ref{fig:intro1a} shows the periodic and antiperiodic eigenvalues for the real type potential given by $\sigma=1$, $\omega=-2\pi$, $\alpha=\frac{6}{15}+\frac{11}{4}\I$, $c=\frac{1}{10}$; Fig.~\ref{fig:intro1b} shows the spectrum of the imaginary type potential with $\sigma=-1$, $\omega=-2\pi$, $\alpha=\frac{1}{2}$, $c=\I \alpha \sqrt{ 2\alpha^2-\omega}$, which arises from an exact plane wave solution of the focusing NLS.}
\label{fig:intro}
\end{figure} 

\subsection{Outline of the paper}
In order to facilitate comparison with the existing literature on the $x$-part \eqref{xlax}, our original intention was to closely   follow the scheme and methods developed in \cite{GrebertKappeler14}, adapting them to equation \eqref{tlax}. 
As pointed out in the previous paragraph we have that equation \eqref{tlax} is quadratic in the spectral parameter $\lambda$ and hence it is a \emph{generalized eigenvalue problem} making its treatment more challenging.
Nevertheless, some resemblance to the first two chapters of \cite{GrebertKappeler14} remains.
The main novelty of the paper is the proof of the leading order asymptotics for large $|\lambda|$ of the fundamental matrix solution associated with  \eqref{tlax}, cf.~Theorem \ref{thm_asymptotics_M_1}.
These asymptotics are a key ingredient for the subsequent two sections. 
The discussion of the asymptotic localization of the Dirichlet eigenvalues, Neumann eigenvalues and periodic eigenvalues
in Section \ref{sec_spectra}, as well as the study of the zero set of the imaginary part of the discriminant for potentials of real and imaginary type (corresponding to the defocusing and focusing NLS, respectively) in Section \ref{sec_potRI} then follow closely~\cite{GrebertKappeler14} and, respectively,~\cite{Kappeler_etal_09}.
In Section \ref{sec_singexp}, we consider the special (but important) case of single-exponential potentials for which the fundamental matrix solution permits an exact formula. This enables us to illustrate the theoretical results from the previous sections.
We provide useful formulas for the gradients of the fundamental solution and the discriminant in Section \ref{sec_formgrad}.
The last section reviews the standard bi-Hamiltonian structure of NLS as a time-evolution equation and establishes a Hamiltonian structure for NLS viewed as an $x$-evolution equation. More precisely, we show that the NLS system 
\bew
\begin{cases}
   q_{xx}  = -\I q_t +2 q^2 r
  	\\
   r_{xx}  = \I r_t + 2 r^2 q,
\end{cases}
\eew
which is associated with \eqref{nls}, can be written as
\bew
(q,r,p,s)^\intercal_x= \tilde{\mathcal{D}} \, \partial \tilde H_1,
\eew
where the $4$-vector on the left hand side is understood as a column vector (indicated by the transpose operation ${}^\intercal$), and
the Hamiltonian $\tilde H_1$, its gradient $\partial \tilde H_1$, and the Hamiltonian operator $\tilde{\mathcal D}$ are given by 
\bew
\tilde{H}_1 = \int \big(ps + \I q_tr - q^2 r^2\big) \, \mathrm d t, \quad
\partial \tilde{H}_1 = 
\begin{pmatrix}
-\I r_t - 2 r^2 q\\
\I q_t - 2 q^2 r\\
s\\
p
\end{pmatrix}, \quad 
\tilde{\mathcal{D}} = 
\begin{pmatrix}  0 & 0 & 0 & 1  \\
 0 & 0 &1&0  \\
 0 &-1&0& 0 \\
  -1 &0& 0 &0
 \end{pmatrix}.
\eew
The associated Poisson bracket for two functionals $F$ and $G$ is given by 
\bew
\{F, G\}_{\tilde{\mathcal{D}}} = \int  (\partial F)^\intercal \, \tilde{\mathcal{D}} \, \partial G \, \mathrm d t.
\eew

\vspace{0em}

\section{Fundamental solution}\label{sec_fund_sol} 
\vspace{0em}
\noindent
In Section \ref{subsec_baprop}, we introduce the framework for the study of \eqref{tlax} and establish basic properties of the fundamental solution. In Section \ref{subsec_asympM} we derive estimates for the fundamental matrix solution and its $\lambda$-derivative for large $|\lambda|$. These estimates will be used  in Section \ref{sec_spectra}  to asymptotically localize the Dirichlet, Neumann and periodic eigenvalues as well as the critical points of the discriminant of \eqref{tlax}.

\subsection{Framework and basic properties} \label{subsec_baprop}
The potential matrix $V$ in \eqref{tlax} depends on the \emph{spectral parameter} $\lambda \in \C$ and the potential
$\psi=(\psi^1,\psi^2,\psi^3,\psi^4)$ taken from the space 
\bew
\LL \coloneqq H^1(\T,\C) \times H^1(\T,\C) \times H^1(\T,\C) \times H^1(\T,\C),
\eew
where $H^1(\T,\C)$ denotes the Sobolev space of complex absolutely continuous functions on the one-dimensional torus $\T=\R/\Z$ with square-integrable weak derivative, which is equipped with the usual norm induced by the $H^1$-inner product 
\bew
(\cdot,\cdot)\colon H^1(\T,\C)\times H^1(\T,\C) \to \C, \quad (u,v) \mapsto \int^1_0 ( u \bar v + u_t \bar v_t )  \, \mathrm d t.
\eew
We endow the space $\LL$ with the inner product
\bew 
\langle \psi_1,\psi_2 \rangle \coloneqq
 (\psi^1_1,  \psi^1_2) + (\psi^2_1 , \psi^2_2) + (\psi^3_1 , \psi^3_2) + (\psi^4_1 , \psi^4_2),
\eew
which induces the norm $\|\psi\| = \sqrt{\langle \psi,\psi \rangle}$ 
on $\LL$.
Likewise we consider the space
\bew
\LL_\tau \coloneqq  H^1([0,\tau],\C) \times H^1([0,\tau],\C) \times H^1([0,\tau],\C) \times H^1([0,\tau],\C)
\eew
on the interval $[0,\tau]$ for fixed $\tau>0$, where the Sobolev space $H^1([0,\tau],\C)$ is equipped with the inner product
\bew
(\cdot,\cdot) _\tau\colon H^1([0,\tau],\C)\times H^1([0,\tau],\C) \to \C, \quad 
(u,v) \mapsto \int^\tau_0 ( u \bar v + u_t \bar v_t )  \, \mathrm d t.
\eew
We set
\bew 
\langle \psi_1,\psi_2 \rangle_\tau \coloneqq
 (\psi^1_1,  \psi^1_2)_\tau + (\psi^2_1 , \psi^2_2)_\tau + (\psi^3_1 , \psi^3_2)_\tau + (\psi^4_1 , \psi^4_2)_\tau,
\eew
which makes $\LL_\tau$ an inner product space and induces the norm $\|\psi\|_\tau \coloneqq \sqrt{\langle \psi,\psi \rangle_\tau}$. 
For the components $\psi^j$ of $\psi\in \LL$ or $\psi\in \LL_\tau$ respectively, we write
\bew
\|\psi^j\| = \sqrt{(\psi^j,\psi^j)}, \quad
\|\psi^j\|_\tau = \sqrt{(\psi^j,\psi^j)_\tau}, \qquad j=1,2,3,4.
\eew
Since not every $\psi\in\LL_1$ is periodic, $\LL$ is a proper closed subspace of $\LL_1$. The spaces $\LL$ and $\LL_\tau$ inherit completeness from $H^1(\T,\C)$ and $H^1([0,\tau],\C)$ respectively, hence they are Hilbert spaces.

On the space $M_{2\times2}(\C)$ of complex valued $2\times2$-matrices we consider the norm $| \cdot |$,
which is induced by the standard norm in $\C^2$, also denoted by $|\cdot|$, i.e.
\bew
|A| 
 \coloneqq \max_{z\in \C^2,|z|=1} |A z|.
\eew
The norm $|\cdot|$ is submultiplicative, i.e.~$|A B| \leq |A| \, |B|$ for $A,B \in M_{2\times2}(\C)$. 

\smallskip

For given $\lambda\in\C$ and $\psi\in\LL$, let us write the initial value problem corresponding to \eqref{tlax} as
\begin{align}
\mathrm D \phi &= R \phi + V \phi, \label{main_eq} \\
 \phi(0) &= \phi_0, \label{main_eq_id}
\end{align}
where $V$ is given by \eqref{Vdef},
\begin{align*}
\mathrm D&\coloneqq
  \begin{pmatrix}
   \Dt &  \\
   &\Dt \\
  \end{pmatrix}  ,
\quad
R \equiv R(\lambda) \coloneqq -2\I\lambda^2 \sigma_3,
\end{align*}
and
\bew
\phi=
\begin{pmatrix}
\phi^1 \\ \phi^2
\end{pmatrix} \colon \T\to\C^2.
\eew
Equation \eqref{main_eq} reduces to \eqref{tlax} if we identify $(\psi^1, \psi^2, \psi^3, \psi^4)  = (u,\sigma \bar{u}, u_x, \sigma \bar{u}_x)$.

In analogy to the conventions for the eigenvalue problem \eqref{xlax} for the $x$-part of the NLS Lax pair, we say that the spectral problem \eqref{main_eq} is of Zakharov-Shabat (ZS) type. The corresponding equation written in AKNS \cite{AKNS1974} coordinates $(q_0,p_0,q_1,p_1)$ reads 
\be \label{AKNS_system}
\mathrm D \phi =
-2\lambda^2 
  \begin{pmatrix}
    &-1  \\
   1& \\
  \end{pmatrix} \phi  +
  \begin{pmatrix}
   2\lambda q_0 - p_1 & 2\lambda p_0 + p_0^2 +q_0^2  + q_1  \\
  2\lambda p_0 -( p_0^2 +q_0^2)  + q_1 & -2\lambda q_0 + p_1 \\
  \end{pmatrix} \phi. 
\ee
It is obtained by multiplying the operator equation $\mathrm D = R + V$ from the right with $T$ and from the left with $T^{-1}$, where
\be \label{def_T}
T = 
  \begin{pmatrix}
   1 &\I  \\
   1&-\I \\
  \end{pmatrix} , \quad 
  T^{-1} = \frac{1}{2} 
  \begin{pmatrix}
   1 &1  \\
   -\I&\I \\
  \end{pmatrix},
\ee
and by writing 
\bew
\psi^1=q_0+\I p_0, \;
\psi^2=q_0-\I p_0, \;
\psi^3=q_1+\I p_1, \;
\psi^4=q_1-\I p_1, \;
\eew
that is,
\bew
q_0=\frac{1}{2}(\psi^1+\psi^2), \; p_0=-\frac{\I}{2}(\psi^1-\psi^2), \; q_1=\frac{1}{2}(\psi^3+\psi^4), \; 
p_1=-\frac{\I}{2}(\psi^3-\psi^4).
\eew

In what follows we show the existence of a unique matrix-valued fundamental solution $M$ of \eqref{main_eq}, that is, a solution of
\be \label{fund_solution}
\mathrm D M = R M + V M, \quad M(0)=\id,
\ee
where $\id \in M_{2\times2}(\C)$ denotes the identity matrix.
The proof relies on a standard iteration technique.
We first observe that the fundamental matrix solution for the zero potential $\psi=0$ is given by
\bew
E_\lambda(t) \coloneqq   \e^{-2 \lambda^2 \I \sigma_3 t} =
  \begin{pmatrix}
    \e^{-2\lambda^2\I t}&  \\
   & \e^{2\lambda^2\I t} \\
  \end{pmatrix},
  \quad t\geq 0.
\eew
Indeed, $E_\lambda$ solves the initial value problem
\bew
\mathrm D E_\lambda = R E_\lambda, \quad E_\lambda(0) = \id.
\eew
For $\lambda\in \C$, $\psi \in \LL$ and $0\leq t <\infty$ we inductively define 
\be \label{M_n}
M_0\coloneqq E_\lambda(t), \quad
M_{n+1}(t) \coloneqq \int^t_0 E_\lambda(t-s) V(s) M_n(s) \, \mathrm d s, \qquad n \geq 0,
\ee
where  $V\equiv V(s,\lambda,\psi)$ is defined for all $s\geq0$ by periodicity. 
For each $n\geq1$, $M_n$ is continuous on $[0,\infty)\times\C\times\LL$ and satisfies
\bew
M_n(t) =\int_{0\leq s_n \leq \cdots \leq s_1\leq t} E_\lambda(t) \prod^n_{i=1} E_\lambda(-s_i) V(s_i) E_\lambda(s_i) \, \mathrm d s_n \cdots \mathrm d s_1.
\eew
Using that $| E_\lambda(t)|=\e^{2 | \Im (\lambda^2)| t}$ for $t\geq0$, we estimate
\begin{align*}
|M_n(t)| &\leq \e^{2(2n+1) | \Im (\lambda^2)| t}
\int_{0\leq s_n \leq \cdots \leq s_1\leq t} 
\prod^n_{i=1} | V(s_i) | \, \mathrm d s_n \cdots \mathrm d s_1 \\
&\leq \frac{\e^{2(2n+1) | \Im (\lambda^2)| t}}{n!}
\int_{[0,t]^n} 
\prod^n_{i=1} | V(s_i) | \, \mathrm d s_n \cdots \mathrm d s_1 \\
&\leq \frac{\e^{2(2n+1) | \Im (\lambda^2)| t}}{n!}
\bigg(\int^t_0    
 | V(s) | \, \mathrm d s \bigg)^n\\
&\leq \frac{\e^{2(2n+1) | \Im (\lambda^2)| t}}{n!} \, t^{n/2} \,  \big(2 \max(1,|\lambda|)\big)^n \, [C(\psi,t)]^n,
\end{align*}
where one can choose
\bew
C(\psi,t) \coloneqq     
\big\| \max \big(|\psi^1 \psi^2|, |\psi^1| + |\psi^3|, |\psi^2| + |\psi^4|\big) \big\|_t
\eew
as a uniform bound for bounded sets of $[0,\infty)\times \LL$. Therefore the matrix
\be \label{M}
M(t) \coloneqq \sum^\infty_{n=0} M_n(t)
\ee
exists and converges uniformly on bounded subsets of $[0,\infty)\times\C\times\LL$. 
By construction, $M$ solves the integral equation
\be \label{M_int_eq}
M(t,\lambda,\psi)=E_\lambda(t) + \int^t_0 E_\lambda(t-s) V(s,\lambda,\psi) M(s,\lambda,\psi) \, \mathrm d s,
\ee
hence $M$ is the unique matrix solution of the initial value problem \eqref{fund_solution}.
Since each $M_n$, $n\geq0$ is continuous on $[0,\infty)\times\C\times\LL$ and moreover analytic in $\lambda$ and $\psi$ for fixed $t\in[0,\infty)$, $M$ inherits the same regularity due to uniform convergence. Thus we have proved the following result.
\begin{theorem}[Existence of the fundamental solution $M$]\label{thm_fund_sol}
The power series \eqref{M} with coefficients given by \eqref{M_n} converges uniformly on bounded subsets of 
$[0,\infty)\times \C\times \LL$ to a continuous function denoted by $M$, which is analytic in $\lambda$ and $\psi$ for each fixed $t\geq 0$ and satisfies 
the integral equation \eqref{M_int_eq}.
\end{theorem}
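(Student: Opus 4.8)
The plan is to construct $M$ by the classical method of successive approximations applied to the Volterra integral equation equivalent to \eqref{fund_solution}. First I would dispose of the unperturbed problem: since $R=-2\I\lambda^2\sigma_3$ is constant and diagonal, the equation $\mathrm D E_\lambda = R E_\lambda$, $E_\lambda(0)=\id$, is solved explicitly by $E_\lambda(t)=\e^{-2\I\lambda^2\sigma_3 t}$, and the one quantitative fact on which the whole argument rests is the operator-norm identity $|E_\lambda(t)|=\e^{2|\Im(\lambda^2)|t}$ for $t\ge 0$. Then, by variation of parameters (treating $V\phi$ as an inhomogeneity in $\mathrm D\phi-R\phi=V\phi$), a continuous matrix function $M$ solves \eqref{fund_solution} if and only if it solves the integral equation \eqref{M_int_eq}; so it suffices to produce a continuous solution of the latter.

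Next I would build the Neumann series: set $M_0=E_\lambda$ and $M_{n+1}(t)=\int_0^t E_\lambda(t-s)V(s)M_n(s)\,\mathrm d s$, as in \eqref{M_n}. An induction on $n$, unwinding the recursion and using $E_\lambda(a)E_\lambda(b)=E_\lambda(a+b)$, yields the simplex representation
\bew
M_n(t)=\int_{0\le s_n\le\cdots\le s_1\le t}E_\lambda(t)\prod_{i=1}^n E_\lambda(-s_i)V(s_i)E_\lambda(s_i)\,\mathrm d s_n\cdots\mathrm d s_1,
\eew
from which, by submultiplicativity of $|\cdot|$, the bound $|E_\lambda(\cdot)|=\e^{2|\Im(\lambda^2)|(\cdot)}$, the standard symmetrization that replaces the simplex by $\tfrac{1}{n!}[0,t]^n$, the pointwise estimate $|V(s)|\le 2\max(1,|\lambda|)\max(|\psi^1\psi^2|,|\psi^1|+|\psi^3|,|\psi^2|+|\psi^4|)(s)$, and Cauchy--Schwarz to pass from $\int_0^t|V|$ to $\sqrt t\,C(\psi,t)$, one arrives at
\bew
|M_n(t)|\le\frac{\e^{2(2n+1)|\Im(\lambda^2)|t}}{n!}\,t^{n/2}\,\big(2\max(1,|\lambda|)\big)^n[C(\psi,t)]^n.
\eew

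The step that needs the most care, and which I would single out as the main obstacle, is to read off from this estimate that $\sum_n M_n$ converges uniformly on bounded subsets of $[0,\infty)\times\C\times\LL$. The subtlety is that, because each of the $2n+1$ exponential factors contributes, the prefactor $\e^{2(2n+1)|\Im(\lambda^2)|t}$ itself grows with $n$; one must observe that on a fixed bounded set the quantities $t$, $|\lambda|$, $|\Im(\lambda^2)|$ and $C(\psi,t)$ are uniformly bounded by some $R_0$, so that the whole bound is dominated by $A^n/n!$ with $A=A(R_0)$, and $\sum_n A^n/n!<\infty$ beats the $n$-dependent exponential. Hence $M\coloneqq\sum_n M_n$ exists, the convergence is absolute and locally uniform, and termwise summation of \eqref{M_n} shows $M$ satisfies \eqref{M_int_eq}.

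It remains to collect the consequences. Differentiating \eqref{M_int_eq} (legitimate since its integrand is continuous) shows $M$ solves \eqref{fund_solution}; uniqueness follows by applying Gronwall's lemma to the difference of two solutions of \eqref{M_int_eq}. For the regularity: each $M_n(t)$ is jointly continuous on $[0,\infty)\times\C\times\LL$, is entire in $\lambda$, and, for fixed $t$ and $\lambda$, is a polynomial (hence analytic) map of $\psi\in\LL$; since a locally uniform limit of maps that are continuous, holomorphic in $\lambda$, and analytic in $\psi$ is again of that type (Weierstrass' theorem for the $\lambda$-dependence, and the analogue for locally uniform limits of analytic maps between Banach spaces for the $\psi$-dependence), the limit $M$ inherits exactly these properties. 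This gives all the assertions of the theorem.
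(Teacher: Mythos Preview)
Your proposal is correct and follows essentially the same route as the paper: the same Neumann/Picard iterates \eqref{M_n}, the same simplex representation, the same bound $|M_n(t)|\le \tfrac{1}{n!}\e^{2(2n+1)|\Im(\lambda^2)|t}\,t^{n/2}(2\max(1,|\lambda|))^n C(\psi,t)^n$, and the same passage to uniform convergence on bounded sets with regularity inherited termwise. Your additional remarks (Gronwall for uniqueness, Weierstrass and the Banach-space analogue for analytic limits) only make explicit what the paper leaves implicit.
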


The fundamental solution $M$ is in fact compact:

\begin{proposition}[Compactness of $M$] \label{Prop_M_comp}
For any sequence $(\psi_k)_k$ in $\LL$ which converges weakly to an element $\psi \in \LL$ as $k\to\infty$, i.e.~$\psi_k \rightharpoonup\psi$, one has 
\bew
|  M(t,\lambda,\psi_k) - M(t,\lambda,\psi)  | \to 0
\eew
uniformly on bounded sets of $[0,\infty)\times\C$.
\end{proposition}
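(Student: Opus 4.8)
The plan is to exploit the Volterra series representation $M=\sum_{n\ge0}M_n$ from \eqref{M} together with the compactness of the Sobolev embedding $H^1([0,\tau],\C)\hookrightarrow C([0,\tau],\C)$, valid for every $\tau>0$. The key observation is that $M$ depends on $\psi$ only through the potential matrix $V(\cdot,\lambda,\psi)$, and by \eqref{Vdef} this dependence is through pointwise (bi)linear combinations of the components $\psi^j$. Consequently, weak convergence in $\LL$ translates, via the compact embedding, into \emph{uniform} convergence of the potentials on bounded $t$-intervals, and this then propagates through the series.

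First I would record that a weakly convergent sequence is bounded, say $\|\psi_k\|\le B$, so that (extending periodically) the restrictions of the $\psi_k$ to any $[0,\tau]$ are bounded in $\LL_\tau$. Fixing $\tau>0$, the compact embedding shows that every subsequence of $(\psi_k^j)$ has a further subsequence converging uniformly on $[0,\tau]$; since the uniform limit is simultaneously the weak $H^1$-limit, it must equal $\psi^j$, and the subsequence principle yields $\psi_k^j\to\psi^j$ uniformly on $[0,\tau]$ for $j=1,\dots,4$. Using \eqref{Vdef}, the identity $\psi_k^1\psi_k^2-\psi^1\psi^2=\psi_k^1(\psi_k^2-\psi^2)+(\psi_k^1-\psi^1)\psi^2$, and the uniform (in $k$) sup-norm bounds furnished by the same compact embedding, one obtains, for every $\Lambda>0$,
\[
\delta_k:=\sup_{0\le s\le\tau,\;|\lambda|\le\Lambda}\bigl|V(s,\lambda,\psi_k)-V(s,\lambda,\psi)\bigr|\longrightarrow0 \quad\text{as }k\to\infty,
\]
together with a uniform bound $\sup_{0\le s\le\tau,\;|\lambda|\le\Lambda}\bigl(|V(s,\lambda,\psi_k)|+|V(s,\lambda,\psi)|\bigr)\le c_1=c_1(\tau,\Lambda,B)$.

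Next I would pass this to the series. The $n=0$ term equals $E_\lambda(t)$ regardless of $\psi$, hence cancels in $M(t,\lambda,\psi_k)-M(t,\lambda,\psi)$. For $n\ge1$ I would use the ordered-integral formula for $M_n$ stated after \eqref{M_n} and telescope the difference of the two ordered products of the $V(s_i,\lambda,\psi_k)$ and the $V(s_i,\lambda,\psi)$ into $n$ summands, each carrying exactly one factor $V(s_i,\lambda,\psi_k)-V(s_i,\lambda,\psi)$ (bounded by $\delta_k$) and $n-1$ factors bounded by $c_1$; combined with $|E_\lambda(t)|=\e^{2|\Im(\lambda^2)|t}$ and the simplex volume $t^n/n!$ this gives, for $0\le t\le\tau$ and $|\lambda|\le\Lambda$,
\[
\bigl|M_n(t,\lambda,\psi_k)-M_n(t,\lambda,\psi)\bigr|\le \e^{2(2n+1)\Lambda^2\tau}\,\frac{\tau^{n}}{(n-1)!}\,c_1^{\,n-1}\,\delta_k .
\]
Summing over $n\ge1$ — the right-hand side is dominated by a constant times $\sum_{m\ge0}(\e^{4\Lambda^2\tau}\tau c_1)^m/m!<\infty$ — yields $\sup_{0\le t\le\tau,\;|\lambda|\le\Lambda}\bigl|M(t,\lambda,\psi_k)-M(t,\lambda,\psi)\bigr|\le c_2(\tau,\Lambda,B)\,\delta_k\to0$, which is the assertion, since every bounded subset of $[0,\infty)\times\C$ lies in some $[0,\tau]\times\{|\lambda|\le\Lambda\}$.

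The conceptual heart of the argument is the first step: upgrading the weak $H^1$-convergence of $\psi_k$ to uniform convergence of the potentials through the compact embedding $H^1([0,\tau])\hookrightarrow C([0,\tau])$ — this is precisely the mechanism responsible for the compactness of $M$. The remainder — the uniform-in-$(\lambda,t)$ telescoping estimate for the Volterra series — is routine bookkeeping that parallels the convergence estimates already carried out for \eqref{M}, the only mild subtlety being to keep every constant uniform over the chosen bounded set of $(\lambda,t)$.
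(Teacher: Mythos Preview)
Your proof is correct and follows essentially the same strategy as the paper: both exploit the series representation $M=\sum_{n\ge0}M_n$ together with the compact embedding $H^1\hookrightarrow C$ (which you make explicit, while the paper leaves it implicit in the inductive step). The only cosmetic difference is that you control $M_n(\psi_k)-M_n(\psi)$ by a direct telescoping estimate and then sum, whereas the paper argues by induction on $n$ and uses the uniform convergence of the series on bounded subsets of $[0,\infty)\times\C\times\LL$ to handle the tail.
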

\begin{proof}
It suffices to prove the statement for each $M_n$, since the series \eqref{M} converges uniformly on bounded subsets of $[0,\infty)\times \C\times \LL$. 
The assertion is true for $M_0=E_\lambda$, which is independent of $\psi$. To achieve the inductive step, we assume that the statement holds for $M_n$, $n\geq1$, and consider an arbitrary sequence $\psi_k\rightharpoonup\psi$ in $\LL$. Then 
\bew
M_n(t,\lambda,\psi_k)\to M_n(t,\lambda,\psi)
\eew
uniformly on bounded subsets of $[0,\infty)\times\C$. Thus
\begin{align*}
M_{n+1}(t,\lambda,\psi_k) &= \int^t_0 E_\lambda(t-s) V(s,\lambda,\psi_k) M_n(s,\lambda,\psi_k) \, \mathrm d s \\
&\to \int^t_0 E_\lambda(t-s) V(s,\lambda,\psi) M_n(s,\lambda,\psi) \, \mathrm d s
\end{align*}
uniformly on bounded subsets of $[0,\infty)\times\C$.
\end{proof}

Furthermore, $M$ satisfies the \emph{Wronskian identity}:

\begin{proposition}[Wronskian identity] \label{Wronskian_identity}
Everywhere on $[0,\infty)\times \C\times \LL$ it holds that
\bew 
\mathrm{det} \, M(t, \lambda, \psi) = 1.
\eew
In particular, the inverse $M^{-1}$ is given by
\bew
M^{-1} = \begin{pmatrix}m_4 & -m_2 \\ -m_3& m_1\end{pmatrix} \quad \text{if} \quad M = 
  \begin{pmatrix}
   m_1 & m_2  \\
   m_3&m_4 \\
  \end{pmatrix}.
\eew
\end{proposition}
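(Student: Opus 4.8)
The plan is to show that the determinant of $M(t,\lambda,\psi)$ is constant in $t$ and then evaluate it at $t=0$. First I would observe that $M$ is absolutely continuous in $t$ (indeed the integral equation \eqref{M_int_eq} together with the regularity of $V$ and the uniform convergence of the series \eqref{M} gives $M(\cdot,\lambda,\psi)\in H^1_{\mathrm{loc}}([0,\infty),M_{2\times2}(\C))$), so that the scalar function $t\mapsto \det M(t,\lambda,\psi)$ is absolutely continuous and we may differentiate it. Using Jacobi's formula for the derivative of a determinant, together with $\mathrm D M = (R+V)M$ from \eqref{fund_solution}, one gets
\be
\frac{\mathrm d}{\mathrm d t}\det M = \operatorname{tr}\!\big((R+V)M\big)\,M^{-1}\cdot\det M \cdot\ldots
\ee
— more precisely, for a solution of $M' = AM$ one has $(\det M)' = \operatorname{tr}(A)\,\det M$ (Abel/Liouville). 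So the key computation is simply that $A \coloneqq R(\lambda) + V(\lambda,\psi)$ is traceless. This is immediate from the definitions: $R=-2\I\lambda^2\sigma_3$ has trace $-2\I\lambda^2 + 2\I\lambda^2 = 0$, and from \eqref{Vdef} the diagonal of $V$ is $(-\I\psi^1\psi^2,\ \I\psi^1\psi^2)$, which also sums to zero. Hence $\operatorname{tr} A = 0$ pointwise in $t$.

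Consequently $\tfrac{\mathrm d}{\mathrm d t}\det M(t,\lambda,\psi) = 0$ for a.e. $t$, and since $\det M$ is absolutely continuous it is constant in $t$; evaluating at $t=0$ and using $M(0)=\id$ from \eqref{fund_solution} gives $\det M(t,\lambda,\psi) = \det \id = 1$ for all $t\geq 0$, and for all $(\lambda,\psi)\in\C\times\LL$. The formula for $M^{-1}$ then follows from the standard adjugate expression for the inverse of a $2\times2$ matrix, $M^{-1} = \frac{1}{\det M}\begin{pmatrix} m_4 & -m_2 \\ -m_3 & m_1\end{pmatrix}$, specialized to $\det M = 1$.

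The argument is essentially routine; the only point requiring a little care is the justification that $M(\cdot,\lambda,\psi)$ is differentiable (a.e.) in $t$ so that Abel's identity applies — this is where one invokes the integral equation \eqref{M_int_eq} and the fact that $s\mapsto V(s,\lambda,\psi)$ is locally integrable (in fact continuous, as $\psi^j\in H^1$), rather than the series representation directly. An alternative that sidesteps even this is to verify the identity termwise or to note that $N \coloneqq M_4 m_1 - \cdots$ — but the cleanest route is Abel's formula as above. I do not anticipate a genuine obstacle here.
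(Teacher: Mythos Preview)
Your proof is correct and follows essentially the same route as the paper: Abel/Liouville's formula $(\det M)' = \operatorname{tr}(R+V)\det M$, the observation that $R+V$ is traceless, and evaluation at $t=0$. The paper is slightly terser about the regularity in $t$, but the argument is the same.
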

\begin{proof}
The fundamental solution $M$ is regular for all $t\geq0$. Therefore a direct computation yields
\bew
\Dt\, \mathrm{det}\,M = \mathrm{tr} (\Dt  M \cdot M^{-1}) \, \mathrm{det}\, M.
\eew
Since 
\bew
\mathrm{tr} (\Dt  M \cdot M^{-1})= \mathrm{tr} (R+V) = 0
\eew
it follows that $\mathrm{det} \, M(t)=\mathrm{det} \, M(0)=1$ for all $t\geq0$.
\end{proof}

The solution of the inhomogeneous problem corresponding to the initial value problem \eqref{main_eq}-\eqref{main_eq_id} has the usual ``variation of constants representation'':

\begin{proposition} \label{Prop_inhom_eq}
The unique solution of the inhomogeneous equation
\bew
\mathrm D f = (R+V) f +g, \quad f(0)=v_0
\eew 
with $g\in L^2([0,1],\C)\times L^2([0,1],\C)$ is given by
\be \label{prop_inhom}
f(t) = M(t) \bigg( v_0 + \int^t_0 M^{-1}(s) g(s) \, \mathrm d s \bigg).
\ee
\end{proposition}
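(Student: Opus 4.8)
The plan is to verify that the right-hand side of \eqref{prop_inhom} solves the inhomogeneous initial value problem and then to prove uniqueness; both are standard Duhamel-type arguments, the only mild subtlety being that $g$ is merely $L^2$, so $f$ will be absolutely continuous rather than $C^1$ and all manipulations must be carried out in that regularity class.

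First I would set $F(t) \coloneqq v_0 + \int_0^t M^{-1}(s) g(s)\,\mathrm d s$ and $f \coloneqq M F$. Since $M(\cdot,\lambda,\psi)$ is continuous on $[0,\infty)$ with $\det M \equiv 1$ by Proposition \ref{Wronskian_identity}, the entries of $M^{-1}$ are continuous, hence bounded on each compact interval; as $g\in L^2([0,1],\C)\times L^2([0,1],\C)\subset L^1$, the integrand $M^{-1}g$ is integrable and $F$ is absolutely continuous on $[0,1]$ with $\mathrm D F = M^{-1}g$ a.e. Moreover $M$ solves \eqref{fund_solution}, i.e.\ $\mathrm D M = (R+V)M$ with $M(0)=\id$, and $M$ is locally absolutely continuous in $t$. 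Hence $f=MF$ is absolutely continuous, $f(0)=M(0)F(0)=v_0$, and the Leibniz rule for absolutely continuous functions gives, for a.e.\ $t$,
\bew
\mathrm D f = (\mathrm D M) F + M\,\mathrm D F = (R+V) M F + M M^{-1} g = (R+V) f + g,
\eew
which is the claimed identity, so \eqref{prop_inhom} indeed solves the problem.

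For uniqueness, suppose $f_1,f_2$ both solve it and put $w\coloneqq f_1-f_2$, so that $\mathrm D w=(R+V)w$ and $w(0)=0$. Differentiating $M M^{-1}=\id$ yields $\mathrm D(M^{-1}) = -M^{-1}(\mathrm D M) M^{-1} = -M^{-1}(R+V)$ a.e., whence
\bew
\mathrm D(M^{-1} w) = -M^{-1}(R+V) w + M^{-1}(R+V) w = 0
\eew
a.e.\ on $[0,1]$; being absolutely continuous with a.e.-vanishing derivative, $M^{-1}w$ is constant and equals $M^{-1}(0)w(0)=0$, so $w\equiv 0$. (Alternatively one may close the argument with Gronwall's inequality applied to $|w(t)|\le \int_0^t |R+V|\,|w|\,\mathrm d s$.)

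I do not expect a serious obstacle here. The only point requiring a little care is the bookkeeping of regularity: establishing that $M^{-1}$ is continuous and locally bounded so that $M^{-1}g\in L^1_{\mathrm{loc}}$, and justifying the product and quotient rules in the absolutely continuous category rather than in $C^1$. Everything else is a direct computation built on $\mathrm D M=(R+V)M$ and $\det M\equiv 1$ from Proposition \ref{Wronskian_identity}.
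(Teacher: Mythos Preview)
Your proof is correct and follows essentially the same variation-of-constants verification as the paper: differentiate the proposed formula using the product rule and $\mathrm D M=(R+V)M$. You are in fact more thorough than the paper, which verifies only that \eqref{prop_inhom} solves the problem and checks the initial condition, whereas you also justify the regularity bookkeeping (absolute continuity, $M^{-1}g\in L^1$) and supply an explicit uniqueness argument.
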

\begin{proof}
Differentiating \eqref{prop_inhom} with respect to $t$ and using that $M$ is the fundamental solution of \eqref{fund_solution}, we find that
\begin{align*}
f'(t)=\mathrm D f(t)&=M'(t) v_0 + M'(t) \int^t_0 M^{-1}(s) g(s) \, \mathrm d s + M(t) M^{-1}(t) g(t) \\
&= (R + V) M(t) \bigg( v_0 + \int^t_0 M^{-1}(s) g(s) \, \mathrm d s \bigg) + g(t)\\
&=(R+V) f(t) +g(t)
\end{align*}
and $f(0)=v_0$.
\end{proof}
As a corollary we obtain a formula for the $\lambda$-derivative $\dot M$ of $M$.
\begin{corollary} \label{Cor_lambda_deriv_M}
The $\lambda$-derivative $\dot M$ of $M$ is given by
\be \label{Cor_form_dotM}
\dot M(t) = M(t)    \int^t_0 M^{-1}(s) N(s) M(s) \, \mathrm d s,
\ee
where
\bew 
N=2 
  \begin{pmatrix}
    -2\lambda \I& \psi^1 \\
  \psi^2 & 2\lambda \I \\
  \end{pmatrix}.
\eew
In particular, $\dot M$ is analytic on $\C\times \LL$ and compact on $[0,\infty)\times \C\times \LL$ uniformly on bounded subsets of $[0,\infty)\times \C$.
\end{corollary}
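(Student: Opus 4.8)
The plan is to differentiate the integral equation \eqref{M_int_eq} with respect to $\lambda$ and then recognize the result as an inhomogeneous linear ODE to which Proposition \ref{Prop_inhom_eq} applies. First I would justify that $M$ is differentiable in $\lambda$: by Theorem \ref{thm_fund_sol}, $M$ is analytic in $\lambda$ for each fixed $t$, and the power series \eqref{M} converges uniformly on bounded subsets, so term-by-term differentiation is legitimate and $\dot M$ is again analytic in $\lambda$ and $\psi$ and continuous in $t$. Next, observe that the only $\lambda$-dependence in the differential equation $\mathrm D M = R M + V M$ sits in $R = -2\I\lambda^2\sigma_3$ and in the off-diagonal entries $2\lambda\psi^1$, $2\lambda\psi^2$ of $V$; a direct computation gives $\dot R + \dot V = N$ with $N$ as in the statement (here $\dot R = -4\I\lambda\sigma_3$ contributes the diagonal $\mp 4\I\lambda$ and $\dot V$ contributes the off-diagonal $2\psi^1$, $2\psi^2$). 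Differentiating $\mathrm D M = (R+V)M$ in $\lambda$ and using that $\mathrm D$ commutes with $\partial_\lambda$ then yields
\be \label{dotM_ode}
\mathrm D \dot M = (R+V)\dot M + N M, \qquad \dot M(0) = 0,
\ee
since $M(0) = \id$ is independent of $\lambda$.

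Now equation \eqref{dotM_ode} is exactly the inhomogeneous problem of Proposition \ref{Prop_inhom_eq} with unknown $\dot M$ (read columnwise), forcing term $g = NM$, and zero initial data $v_0 = 0$. One should check the mild hypothesis of that proposition, namely that $NM(\cdot) \in L^2$ on $[0,1]$ (columnwise): this is immediate because $\psi^1,\psi^2 \in H^1(\T,\C) \subset L^\infty$, so $N$ is bounded on $[0,1]$, and $M$ is continuous hence bounded there. Applying the variation of constants formula \eqref{prop_inhom} gives
\bew
\dot M(t) = M(t)\bigg(0 + \int_0^t M^{-1}(s) N(s) M(s)\,\mathrm d s\bigg),
\eew
which is precisely \eqref{Cor_form_dotM}. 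Strictly speaking Proposition \ref{Prop_inhom_eq} is stated for a vector-valued $f$, so one applies it to each column of $\dot M$ separately, or simply remarks that the same variation of constants computation carried out in its proof goes through verbatim for matrix-valued solutions.

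For the final sentence of the corollary, analyticity of $\dot M$ on $\C \times \LL$ for fixed $t$ was already noted above from term-by-term differentiation of \eqref{M}. The compactness claim — that $\psi_k \rightharpoonup \psi$ in $\LL$ implies $\dot M(t,\lambda,\psi_k) \to \dot M(t,\lambda,\psi)$ uniformly on bounded subsets of $[0,\infty) \times \C$ — follows by inspecting the explicit formula \eqref{Cor_form_dotM}: $M$ and $M^{-1}$ are compact by Proposition \ref{Prop_M_comp} (and the Wronskian identity, Proposition \ref{Wronskian_identity}), the map $\psi \mapsto N$ is a bounded linear operator from $\LL$ into $L^2([0,t],M_{2\times2}(\C))$ so it is weak-to-weak continuous, and the integral over $[0,t]$ of the product $M^{-1} N M$ turns this weak convergence into norm convergence of the integrals — the product of a norm-convergent sequence with a weakly-convergent one converges, and integration against the bounded kernel completes the argument, all uniformly for $(t,\lambda)$ in bounded sets. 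The main point requiring care — though it is routine rather than deep — is bookkeeping the passage from the vector-valued statement of Proposition \ref{Prop_inhom_eq} to the matrix-valued setting and verifying the integrability hypothesis on $g = NM$; the compactness part is then an immediate consequence of the already-established compactness of $M$.
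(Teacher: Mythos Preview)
Your proof is correct and follows essentially the same approach as the paper: differentiate the ODE $\mathrm D M = (R+V)M$ in $\lambda$ to obtain the inhomogeneous equation \eqref{dotM_ode}, then apply the variation of constants formula of Proposition~\ref{Prop_inhom_eq}. The paper's proof is considerably terser---it simply states the differentiated equation and invokes Propositions~\ref{Prop_inhom_eq} and~\ref{Prop_M_comp}---whereas you have filled in the justification for differentiating under $\mathrm D$, the integrability of $NM$, the passage from vector- to matrix-valued solutions, and the details of the compactness argument; but the underlying route is identical.
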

\begin{proof}
Differentiation of $\mathrm D M = (R+V)M$
with respect to $\lambda$ gives
\begin{align*}
\mathrm D \dot M &= (R + V) \dot M + \frac{\mathrm d}{\mathrm d \lambda}\Big( R(\lambda) + V(\lambda)  \Big) M 
=  (R + V) \dot M  + N  M,
\end{align*}
and Proposition \ref{Prop_inhom_eq} yields \eqref{Cor_form_dotM}. The second claim is a consequence of Proposition \ref{Prop_M_comp}.
\end{proof}

The fundamental solution $M$ of the ZS-system is related to the fundamental solution $K$ of the AKNS-system by 
\be \label{fund_K}
K=T^{-1} M T, 
\ee
cf.~\eqref{def_T}. That is, if
\bew
M = 
  \begin{pmatrix}
   m_1 & m_2  \\
   m_3&m_4 \\
  \end{pmatrix}, \quad
  K = 
  \begin{pmatrix}
   k_1 & k_2  \\
   k_3&k_4 \\
  \end{pmatrix},
\eew
then
\begin{align*}
k_1= \frac{m_1+m_2+m_3+m_4}{2}, \quad &k_2=\frac{m_1-m_2+m_3-m_4}{-2\I}, \\
k_3= \frac{m_1+m_2-m_3-m_4}{2\I}, \quad &k_4=\frac{m_1-m_2-m_3+m_4}{2}.
\end{align*}
The fundamental solution for the zero potential in AKNS coordinates is therefore given by
\bew
\e^{2 \I \lambda^2 \sigma_2 t} =  
  \begin{pmatrix}
   \cos 2\lambda^2 t & \sin 2\lambda^2 t  \\
   -\sin 2\lambda^2 t&\cos 2\lambda^2 t 
  \end{pmatrix}, \quad
    \sigma_2 = 
  \begin{pmatrix}
   0 & -\I  \\
   \I &0 
  \end{pmatrix}.
\eew

\begin{remark}
It is obvious that all results in this section possess an analogous version in which the space $\LL$ of $1$-periodic potentials is replaced by the space $\LL_\tau$ of potentials defined on the interval $[0,\tau]$, $\tau>0$.
\end{remark}


\subsection{Leading order asymptotics} \label{subsec_asympM}

The results in this section hold for $0\leq t\leq1$ and hence apply to the time-periodic problem we are primarily interested in.

It was pointed out in~\cite{LeFo15} that the fundamental matrix solution $M$ of \eqref{fund_solution} for a potential with sufficient smoothness and decay admits an asymptotic expansion (as $|\lambda| \to \infty$) of the form
\be \label{M_formal}
M(\lambda,t) = 
\bigg(\id + \frac{Z_1(t)}{\lambda} + \frac{Z_2(t)}{\lambda^2} + \cdots \bigg) \e^{-2\I \lambda^2 t \sigma_3}
+ \bigg(\frac{W_1(t)}{\lambda} + \frac{W_2(t)}{\lambda^2} +  \cdots \bigg) \e^{2\I \lambda^2 t \sigma_3},
\ee
where the matrices $Z_k$, $W_k$, $k=1,2, \dots$, can be explicitly expressed in terms of the potential and therefore only depend on the time variable $t\geq0$, and satisfy $Z_k(0) + W_k(0)=0$ for all integers $k\geq1$.
This suggests that $M$ satisfies 
\bew
M(\lambda,t) = \e^{-2\I \lambda^2 t \sigma_3} + \mathcal O\big(|\lambda|^{-1} \, \e^{2 |\Im (\lambda^2)| t}\big) \qquad
\text{as} \quad |\lambda|\to \infty
\eew 
for $t$ within a given bounded interval. These considerations suggest the following result. 

\begin{theorem}[Asymptotics of $M$ and $\dot{M}$ as $|\lambda| \to \infty$] \label{thm_asymptotics_M_1}
Uniformly on $[0,1]\times\C$ and on bounded subsets of $\LL_1$,
\bew
M(t,\lambda,\psi) = E_\lambda(t) +  \mathcal O\big(|\lambda|^{-1} \,\e^{2|\Im (\lambda^2)|t}\big)
\eew
in the sense that there exist constants $C>0$ and $K>0$ such that 
\be \label{thm_asymptotics_M_2}
 |\lambda|\, \e^{-2|\Im (\lambda^2)|t} \, | M(t,\lambda,\psi) - E_\lambda(t)| \leq C
\ee
uniformly for all $0\leq t \leq 1$, all $\lambda \in\C$ with $|\lambda|>K$ and all $\psi$ contained in a given bounded subset of $\LL_1$. 
Moreover, the $\lambda$-derivative of $M$ satisfies
\be\label{thm_asymptotics_Mdot_2}
\dot M(t,\lambda,\psi) = \dot E_\lambda(t) +  \mathcal O\big(\e^{2|\Im (\lambda^2)| t}\big)
\ee
uniformly on $[0,1]\times\C$ and on bounded subsets of $\LL_1$.
\end{theorem}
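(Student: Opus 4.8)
The plan is to start from the integral equation \eqref{M_int_eq} and implement the asymptotic scheme of \cite[Chapter 6]{CL1955}: split $M$ into its ``diagonal-exponential'' and ``anti-diagonal-exponential'' parts, use the explicit oscillation of $E_\lambda$ to gain a factor of $\lambda^{-1}$ via integration by parts, and close the estimate with a Gronwall-type argument. Concretely, write $M = E_\lambda + R_\lambda$ where $R_\lambda(t) = \int_0^t E_\lambda(t-s) V(s) M(s)\,\mathrm d s$; the naive bound $|E_\lambda(t-s)|\,|E_\lambda(s)| = \e^{2|\Im(\lambda^2)|t}$ only gives $R_\lambda = \mathcal O(\e^{2|\Im(\lambda^2)|t})$, so the point is to extract the extra decay. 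For this I would conjugate out the exponential by setting $\widetilde M(t) = E_\lambda(-t) M(t)$, so that $\widetilde M$ solves $\mathrm D\widetilde M = E_\lambda(-t) V(t) E_\lambda(t)\,\widetilde M$ with $\widetilde M(0)=\id$. The kernel $E_\lambda(-t)V(t)E_\lambda(t)$ is bounded by $\e^{2|\Im(\lambda^2)|t}\cdot 2\max(1,|\lambda|)\,C(\psi,t)$, which is still too large; the gain comes only after one notices that the \emph{off-diagonal} entries of $E_\lambda(-t)V(t)E_\lambda(t)$ carry a factor $\e^{\mp 4\I\lambda^2 t}$ and hence, after an integration by parts in $s$, contribute $\mathcal O(\lambda^{-2})$ times the leading $2\lambda$ from $V$, i.e.\ net $\mathcal O(|\lambda|^{-1})$, while the diagonal entries of $V$ are $\pm\I\psi^1\psi^2$, already $\mathcal O(1)$ in $\lambda$.

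The key steps, in order, would be: (i) introduce the decomposition of $V = 2\lambda V_{od} + V_{rest}$ where $V_{od}$ is the off-diagonal part with entries $\psi^1,\psi^2$ and $V_{rest}$ collects the $\mathcal O(1)$ terms $\mp\I\psi^1\psi^2$, $\pm\I\psi^3$, $\mp\I\psi^4$; (ii) in the integral equation for $\widetilde M$, handle the $2\lambda V_{od}$ term by integration by parts, using $\frac{\mathrm d}{\mathrm d s}\e^{\pm 4\I\lambda^2 s} = \pm 4\I\lambda^2\,\e^{\pm 4\I\lambda^2 s}$ to trade the factor $2\lambda$ for $\lambda^{-1}$ plus boundary terms plus a term with $\frac{\mathrm d}{\mathrm d s}(\psi^{1,2}\widetilde M)$; the derivative $\Dt\widetilde M$ is itself controlled by the equation, and $\Dt\psi^{j}\in L^2$ since $\psi\in\LL_1 = (H^1)^4$ — this is exactly where the $H^1$ (rather than $L^2$) hypothesis on the potential is used; (iii) assemble these into an estimate of the form $|\widetilde M(t) - \id| \le \frac{c_1}{|\lambda|} + c_2\int_0^t |\widetilde M(s) - \id|\,\mathrm d s$ with $c_1,c_2$ depending only on $\|\psi^j\|$, $\|\Dt\psi^j\|$ (so uniform on bounded subsets of $\LL_1$) and on $\max(1,|\lambda|^{-1})$ — here the cancellation of the exponential via the conjugation is what removes the $\e^{2|\Im(\lambda^2)|t}$ from the kernel; (iv) apply Gronwall to get $|\widetilde M(t)-\id| \le \frac{c_1}{|\lambda|}\e^{c_2 t} \le C/|\lambda|$ on $[0,1]$, which upon multiplying back by $E_\lambda(t)$ gives \eqref{thm_asymptotics_M_2}. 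For \eqref{thm_asymptotics_Mdot_2} I would feed the just-proved estimate on $M$ and $M^{-1}$ (the latter via Proposition \ref{Wronskian_identity}, $M^{-1} = \sigma_1 M^\intercal \sigma_1$-type formula, so $M^{-1} = E_\lambda(-t) + \mathcal O(|\lambda|^{-1}\e^{2|\Im(\lambda^2)|t})$ as well) into the formula \eqref{Cor_form_dotM} for $\dot M$; since $N = \mathcal O(|\lambda|)$, the integrand $M^{-1} N M$ is a priori $\mathcal O(|\lambda|\,\e^{2|\Im(\lambda^2)|t})$, and one again needs an integration-by-parts cancellation on the off-diagonal oscillatory piece to bring the $\lambda$ down to $\lambda^0$, yielding $\dot M = M\cdot\mathcal O(\e^{\cdots}) = \mathcal O(\e^{2|\Im(\lambda^2)|t})$ after absorbing $M = \mathcal O(\e^{\cdots})$.

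The main obstacle, I expect, is step (ii)–(iii): making the integration-by-parts bookkeeping uniform in $\lambda$ \emph{over all of $\C$}, including the ``bad'' directions where $|\Im(\lambda^2)|$ is large (so the exponentials genuinely grow) and the directions where $\lambda^2$ is nearly real (so $\e^{\pm 4\I\lambda^2 s}$ does not decay but merely oscillates). The conjugation trick is designed precisely so that the relevant quantity is $E_\lambda(-s)V(s)E_\lambda(s)$, whose norm is $\le\e^{2|\Im(\lambda^2)|s}|V(s)|$ — but then the ``$\e^{-2|\Im(\lambda^2)|t}$'' prefactor in \eqref{thm_asymptotics_M_2} must be carried through every term, and in the integration by parts one produces boundary terms evaluated at $s=t$ which come with the full $\e^{2|\Im(\lambda^2)|t}$ and must be shown to still be $\mathcal O(|\lambda|^{-1})$ after the prefactor is restored. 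Keeping track of which exponential accompanies which matrix entry — i.e.\ respecting the matrix structure throughout rather than just taking norms — is the delicate part; a norm-only estimate loses the oscillatory cancellation and fails. A secondary technical point is that $V_{od}\widetilde M$ is only $H^1$ in $t$ (not smoother), so only \emph{one} integration by parts is available, which is just enough to reach $\mathcal O(|\lambda|^{-1})$ but not more — consistent with the statement, which claims exactly one power of $\lambda$.
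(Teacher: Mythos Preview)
Your Gronwall target in step (iii)--(iv), namely $|\widetilde M(t)-\id|\le C/|\lambda|$ uniformly on $[0,1]\times\C$, is too strong to be true, and this is where the plan breaks. With $\widetilde M=E_\lambda(-t)M$, the equation is $\widetilde M'=E_\lambda(-t)V(t)E_\lambda(t)\,\widetilde M$, whose off-diagonal entries carry the factors $\e^{\pm4\I\lambda^2 s}$; one of these has modulus $\e^{4|\Im(\lambda^2)|s}$, so the conjugation does \emph{not} make the kernel bounded. After one integration by parts, the boundary term at $s=t$ is of the form $\tfrac{1}{4\I\lambda^2}\e^{\pm4\I\lambda^2 t}(2\lambda\psi^{1,2}+\cdots)\widetilde M(t)$, i.e.\ size $|\lambda|^{-1}\e^{4|\Im(\lambda^2)|t}$ multiplied by an entry of $\widetilde M$ that you only know is $\mathcal O(1)$. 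That cannot be absorbed into a uniform $C/|\lambda|$ bound on $|\widetilde M-\id|$. Put differently: the estimate \eqref{thm_asymptotics_M_2} only says that $|M-E_\lambda|\le C|\lambda|^{-1}\e^{2|\Im(\lambda^2)|t}$, which in terms of $\widetilde M$ asserts very different things about the two rows (one row gets the growing exponential, the other the decaying one). Your scalar Gronwall tries to prove the stronger, symmetric statement, and that fails.

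What the paper does instead is precisely to avoid a uniform scalar estimate. It multiplies on the \emph{right}, setting $M^\pm=M\,\e^{\pm\I\theta t\sigma_3}$, so that the two \emph{columns} decouple, and then estimates each column only in the sector ($\overline{D_+}$ or $\overline{D_-}$) where the accompanying exponential is bounded by $1$; four column-times-sector combinations cover the whole plane. Furthermore, the ``one integration by parts'' is not performed inside the iteration but is pre-packaged into an explicit WKB approximation $M_p=Z_p\e^{-\I\theta t\sigma_3}+W_p\e^{\I\theta t\sigma_3}$ (with $Z_p=\id+Z_1/\lambda+Z_2^{\mathrm{od}}/\lambda^2$ chosen so that the residual $\Delta_Z$ is already $\mathcal O(|\lambda|^{-1})$ in $L^1$). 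The Volterra iteration is then run for $Z_p^{-1}M^\pm$ against this small kernel, giving clean column bounds without any exponentially large boundary terms. Your comment that ``a norm-only estimate loses the oscillatory cancellation'' is exactly right; the missing ingredient is the column/sector splitting that realizes this.

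For \eqref{thm_asymptotics_Mdot_2} there is no need to redo any of this: once \eqref{thm_asymptotics_M_2} is known, Cauchy's inequality applied to the analytic function $\lambda\mapsto M(t,\lambda,\psi)-E_\lambda(t)$ on a disc of fixed radius immediately gives $\dot M-\dot E_\lambda=\mathcal O(\e^{2|\Im(\lambda^2)|t})$, which is how the paper proceeds.
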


Theorem \ref{thm_asymptotics_M_1} will be established via a series of lemmas. We first introduce some notation and briefly discuss the idea of the proof.

For $ \lambda \in \C$ and $\psi \in \LL_1$, let $M$ be the fundamental solution of \eqref{fund_solution}, which will be considered on the unit interval $[0,1]$. We set
\bew
\theta \coloneqq 2 \lambda ^2 
\eew
and define $M^+$ and $M^-$ by
\bew
M^+(t,\lambda,\psi) \coloneqq M(t,\lambda,\psi) \, \e^{\I \theta t \sigma_3}, 
\quad M^-(t,\lambda,\psi) \coloneqq M(t,\lambda,\psi) \, \e^{-\I \theta t \sigma_3},
\eew
For a given complex $2\times2$-matrix
\bew
A = 
\begin{pmatrix}
a & b \\
c & d
\end{pmatrix}
\eew 
we denote by $A^{\mathrm d}$ its diagonal part and by $A^{\mathrm{od}}$ its off-diagonal part, i.e.
\bew
A^{\mathrm d} = 
\begin{pmatrix}
a &  \\
 & d
\end{pmatrix}, \quad 
A^{\mathrm{od}} = 
\begin{pmatrix}
 & c \\
b & 
\end{pmatrix}.
\eew
We will always identify a potential $\psi \in \LL_1$, with its absolutely continuous version. This allows us to evaluate $\psi$ at each point; we set $\psi_0\coloneqq \psi(0)$ and $\psi^j_0\coloneqq \psi^j(0)$ for $j=1,2,3,4$. 
For a given potential $\psi \in \LL_1$, $t\in[0,1]$ and $\lambda \in \C$ we define
\bew
Z_p(t,\lambda,\psi) \coloneqq \id + \frac{Z_1(t,\psi)}{\lambda} + \frac{Z^{\mathrm{od}}_2(t,\psi)}{\lambda^2} ,
\eew
where
\begin{align*}
Z_1(t,\psi) \coloneqq -\frac{\I}{2} 
\begin{pmatrix}
 & \psi^1 \\
-\psi^2 & 
\end{pmatrix} 
+ \frac{1}{2} \Gamma \sigma_3, \quad
Z^{\mathrm{od}}_2(t,\psi) \coloneqq \frac{1}{4}
\begin{pmatrix}
 & \psi^3 + \I \psi^1 \Gamma   \\
\psi^4 + \I \psi^2 \Gamma & 
\end{pmatrix},
\end{align*}
with
\bew
\Gamma \equiv \Gamma(t,\psi) \coloneqq \int^t_0 ( \psi^1 \psi^4 -  \psi^2 \psi^3) \, \mathrm d \tau.
\eew
Furthermore we set
\bew
W_p(t,\lambda,\psi) \coloneqq \frac{W_1(t,\psi)}{\lambda} + \frac{W_2(t,\psi)}{\lambda^2} + \frac{W^{\mathrm d}_3(t,\psi)}{\lambda^3} ,
\eew
where
\begin{align*}
&W_1 (t,\psi) = W_1 (\psi) \coloneqq \frac{\I}{2}
\begin{pmatrix}
 & \psi^1_0 \\
-\psi^2_0 & 
\end{pmatrix}, \\
&W_2(t,\psi) \coloneqq - \frac{1}{4} 
\begin{pmatrix}
\psi^2_0 \psi^1   & -\I \psi^1_0 \Gamma +\psi^3_0 \\
- \I \psi^2_0 \Gamma +\psi^4_0 &  \psi^1_0 \psi^2
\end{pmatrix}, \\
&W^{\mathrm d}_3(t,\psi) \coloneqq  \frac{\I}{8} 
\begin{pmatrix}
-\psi^2_0( \psi^3 + \I \psi^1 \Gamma) +\psi^4_0 \psi^1&  \\
 & \psi^1_0 (\psi^4 + \I\psi^2 \Gamma) -\psi^3_0 \psi^2
\end{pmatrix}.
\end{align*}
We finally define $M_p$, which will serve as an approximation of $M$, by
\bew
M_p(t,\lambda,\psi)\coloneqq Z_p(t,\lambda,\psi) \, \e^{-\I \theta t \sigma_3} + W_p(t,\lambda,\psi) \, \e^{\I \theta t \sigma_3},
\eew
and set
$M^+_p \coloneqq M_p \,\e^{\I \theta t \sigma_3}$, $M^-_p \coloneqq M_p \,\e^{-\I \theta t \sigma_3}$, i.e.
\begin{align*}
M^+_p(t,\lambda,\psi) &= Z_p(t,\lambda,\psi) +  W_p(t,\lambda,\psi)\, \e^{2\I \theta t \sigma_3},
	 \\
M^-_p(t,\lambda,\psi) &= Z_p(t,\lambda,\psi) \,\e^{-2\I \theta t \sigma_3} +  W_p(t,\lambda,\psi). 
\end{align*}
Letting $Q_j$, $j=1,2,3,4$, denote the four open quadrants of the complex $\lambda$-plane, we set
\bew
D_+ \coloneqq Q_1 \cup Q_3 \quad \text{and} \quad D_-\coloneqq Q_2 \cup Q_4.
\eew
For an arbitrary complex number $\lambda = x + \I y$ with $x,y\in \R$ and $t\geq0$, it holds that 
\bew
|\e^{2\I \lambda^2 t}| = \e^{-4 x y} \leq 1 \iff \lambda \in \overbar{D_+},
\qquad
|\e^{-2\I \lambda^2 t}| = \e^{4 x y}  \leq 1 \iff \lambda \in \overbar{D_-}.
\eew

We will prove Theorem \ref{thm_asymptotics_M_1} by establishing asymptotic estimates for the distance between the fundamental solution $M$ and the explicit expression $M_p$ that approximates $M$.
For this purpose we will consider the columns of $M^+$ and $M^-$ separately and compare them with the columns of $M^+_p$ and $M^-_p$, respectively, after restricting attention to either $\overline{D_+}$ or $\overline{D_-}$. By combining all possible combinations, we are able to infer asymptotic estimates for the full matrix $M$ valid on the whole complex plane.

\begin{remark} \label{rem_derivation_of_Z_k_W_k}
For a given smooth potential $\psi$, the matrices $Z_k$ and $W_k$ can be determined recursively up to any order $k\geq0$ by integration by parts. Indeed, note that $V = V_0 + \lambda V_1$ where 
\bew
V_0 \coloneqq
\begin{pmatrix}
-\I \psi^1 \psi^2 & \I \psi^3 \\
- \I \psi^4 & \I \psi^1 \psi^2
\end{pmatrix}, \quad 
V_1 \coloneqq
\begin{pmatrix}
 & 2 \psi^1 \\
2\psi^2 &
\end{pmatrix}. 
\eew 
Assuming that the formal expression 
\bew
\bigg( \sum^\infty_{k=-1} \frac{Z_k(t,\psi)}{\lambda^k} \bigg)  \e^{-\I \theta t \sigma_3} +
\bigg( \sum^\infty_{k=-1} \frac{W_k(t,\psi)}{\lambda^k} \bigg)  \e^{\I \theta t \sigma_3}
\eew
with 
\bew
Z_0(t,\psi) \equiv \id, \quad Z_{-1}(t,\psi)=W_{-1}(t,\psi)=W_0(t,\psi) \equiv 0
\eew
solves \eqref{fund_solution}, one infers the following recursive equations for the coefficients $Z_k$ and $W_k$: 
\begin{align*}
(Z_k)_t + 4 \I \sigma_3 Z^{\mathrm{od}}_{k+2} &= V_0 Z_k + V_1 Z_{k+1},  \\
(W_k)_t + 4 \I \sigma_3 W^{\mathrm{d}}_{k+2} &= V_0 W_k + V_1 W_{k+1} 
\end{align*}
for all integers $k\geq -1$ and $Z_k(0,\psi) + W_k(0,\psi) = 0$ for all integers $k\geq1$.

For $\psi \in \LL_1$, the matrices $Z_p$ and $W_p$ satisfy
\bew
Z_p(0,\lambda,\psi) + W_p(0,\lambda,\psi) = \id + \mathcal O(|\lambda|^{-2}),
\eew
since the values of $Z^{\mathrm d}_2$ are not determined,
which turns out to be sufficient to prove the asymptotic estimates of $M$ asserted in Theorem \ref{thm_asymptotics_M_1}.
\end{remark}

\begin{lemma} \label{lem_M^+_fund_sol_char}
Let $\psi \in \LL_1$ be an arbitrary potential.
Then $M$ is the fundamental matrix solution of the Cauchy problem \eqref{fund_solution} if and only if $M^+$ satisfies
\be \label{fund_sol_M^+}
M^+_t +  2 \I \theta \sigma_3 (M^+)^{\mathrm{od}} = V M^+, \quad M^+(0,\lambda) = \id.
\ee
\end{lemma}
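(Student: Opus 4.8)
The plan is to verify the equivalence by a direct computation, differentiating the defining relation $M^+ = M\,\e^{\I\theta t\sigma_3}$ and using the product rule, then isolating the diagonal and off-diagonal parts. First I would assume $M$ solves \eqref{fund_solution}, i.e. $M_t = (R+V)M$ with $R = -2\I\lambda^2\sigma_3 = -\I\theta\sigma_3$, and compute
\be
M^+_t = M_t\,\e^{\I\theta t\sigma_3} + M\,(\I\theta\sigma_3)\,\e^{\I\theta t\sigma_3}
= (R+V)M^+ + \I\theta\, M\sigma_3\,\e^{\I\theta t\sigma_3}.
\ee
The key algebraic observation is that $\sigma_3$ commutes with $\e^{\I\theta t\sigma_3}$, so $M\sigma_3\,\e^{\I\theta t\sigma_3} = M\,\e^{\I\theta t\sigma_3}\sigma_3 = M^+\sigma_3$; hence the last term is $\I\theta\, M^+\sigma_3$. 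Combining this with $R = -\I\theta\sigma_3$ gives
\be
M^+_t = V M^+ - \I\theta\sigma_3 M^+ + \I\theta M^+\sigma_3 = V M^+ - \I\theta\,[\sigma_3, M^+].
\ee
The remaining point is to identify $-\I\theta\,[\sigma_3,M^+]$ with $-2\I\theta\sigma_3(M^+)^{\mathrm{od}}$. This is the small computational heart of the lemma: for any $2\times2$ matrix $A$ with diagonal part $A^{\mathrm d}$ and off-diagonal part $A^{\mathrm{od}}$, one has $[\sigma_3, A^{\mathrm d}]=0$ and $[\sigma_3, A^{\mathrm{od}}] = 2\sigma_3 A^{\mathrm{od}}$ (equivalently $= -2A^{\mathrm{od}}\sigma_3$), which follows because $\sigma_3$ anticommutes with any off-diagonal matrix; I would either state this as a one-line matrix identity or spell out the $2\times2$ entries. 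Substituting yields $M^+_t = VM^+ - 2\I\theta\sigma_3(M^+)^{\mathrm{od}}$, i.e. \eqref{fund_sol_M^+}, and the initial condition transfers directly since $\e^{\I\theta\cdot 0\cdot\sigma_3} = \id$, so $M^+(0,\lambda) = M(0) = \id$.

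For the converse, I would run the same computation backwards: starting from $M := M^+\,\e^{-\I\theta t\sigma_3}$ and assuming $M^+$ satisfies \eqref{fund_sol_M^+}, differentiate to get $M_t = M^+_t\,\e^{-\I\theta t\sigma_3} - \I\theta M^+\sigma_3\,\e^{-\I\theta t\sigma_3} = (VM^+ - 2\I\theta\sigma_3(M^+)^{\mathrm{od}})\,\e^{-\I\theta t\sigma_3} - \I\theta M\sigma_3$. Using the same commutator identity in reverse, $-2\I\theta\sigma_3(M^+)^{\mathrm{od}}\,\e^{-\I\theta t\sigma_3} - \I\theta M\sigma_3 = -\I\theta\sigma_3 M^+\e^{-\I\theta t\sigma_3} = -\I\theta\sigma_3 M = RM$, and $VM^+\,\e^{-\I\theta t\sigma_3} = VM$, so $M_t = (R+V)M$ with $M(0) = \id$. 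Since the fundamental solution of \eqref{fund_solution} is unique (Theorem \ref{thm_fund_sol}), this $M$ is \emph{the} fundamental solution, completing the equivalence.

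I do not anticipate a genuine obstacle here — this is a change-of-variables lemma whose only content is the commutator identity $[\sigma_3, A^{\mathrm{od}}] = 2\sigma_3 A^{\mathrm{od}}$ together with the commutativity of $\sigma_3$ and $\e^{\I\theta t\sigma_3}$. The one thing to be careful about is the bookkeeping of signs and the placement of $\sigma_3$ (left versus right multiplication), since $\sigma_3(M^+)^{\mathrm{od}} = -(M^+)^{\mathrm{od}}\sigma_3$; writing out the four scalar entries of both \eqref{fund_solution} and \eqref{fund_sol_M^+} once, conjugated by $\e^{\pm\I\theta t\sigma_3}$, is the cleanest way to avoid errors, and I would likely present the argument at the level of the matrix identities above, leaving the entrywise check to the reader.
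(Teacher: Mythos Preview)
Your proposal is correct and follows essentially the same approach as the paper: both proofs differentiate $M^+ = M\,\e^{\I\theta t\sigma_3}$ via the product rule, use that $\sigma_3$ commutes with the exponential, and reduce the resulting commutator $[\sigma_3,M^+]$ to $2\sigma_3(M^+)^{\mathrm{od}}$, with the converse run analogously and the initial condition handled by evaluation at $t=0$.
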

\begin{proof}
By applying the product rule, assuming that \eqref{fund_solution} holds and noting that $\sigma_3$ commutes with diagonal matrices, we obtain 
\begin{align*}
M^+_t = (M \e^{\I \theta t \sigma_3})_t &= M_t \, \e^{\I \theta t \sigma_3} + M \, \e^{\I \theta t \sigma_3} \, \I \theta \sigma_3 \\
&= (V M - \I \theta \sigma_3 \, M) \, \e^{\I \theta t \sigma_3} + \I \theta M^+   \sigma_3 \\
&= V M^+ - \I \theta [\sigma_3, M^+]  \\
&= V M^+ - 2 \I \theta \sigma_3 (M^+)^{\mathrm{od}}.
\end{align*}
Conversely, if \eqref{fund_sol_M^+} holds, we similarly obtain
\bew
M_t \, \e^{\I \theta t \sigma_3} = (V M - \I \theta \sigma_3 \, M) \, \e^{\I \theta t \sigma_3},
\eew
and a multiplication with $\e^{-\I \theta t \sigma_3}$ from the right yields that $M$ satisfies the differential equation in \eqref{fund_solution}. 
The statement concerning the initial conditions holds because $M(0,\lambda)=M^+(0,\lambda)$.
\end{proof}

The following lemma is concerned with the invertibility of $Z_p$. 
We set $\C^K \coloneqq \{\lambda \in \C \colon |\lambda|>K \}$ for $K>0$, and denote by $B_r(0,\LL_1)$ the ball of radius $r>0$ in $\LL_1$ centered at $0$. 
\begin{lemma} \label{lem_W_p_Z_p_invertible}
Let $r>0$. 
There exists a constant $K_r>0$ such that $Z_p$ is invertible on 
$[0,1]\times \C^{K_r} \times B_r(0,\LL_1)$ with
\be \label{Z_p_inverse}
Z_p^{-1}(t,\lambda,\psi) =
\sum^\infty_{n=0} \bigg(- \frac{Z_1(t,\psi)}{\lambda} - \frac{Z^{\mathrm{od}}_2(t,\psi)}{\lambda^2}  \bigg)^n.
\ee
\label{lem_W_p_Z_p_invertible_1}
\end{lemma}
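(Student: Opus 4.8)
The plan is to prove invertibility of $Z_p$ by exhibiting the explicit Neumann-series inverse \eqref{Z_p_inverse} and showing it converges, using the fact that the ``correction'' part of $Z_p$ is small in operator norm when $|\lambda|$ is large. Write $Z_p = \id + A(t,\lambda,\psi)$ where $A \coloneqq \frac{Z_1(t,\psi)}{\lambda} + \frac{Z_2^{\mathrm{od}}(t,\psi)}{\lambda^2}$. The key quantitative input is a uniform bound on $|A|$ over $[0,1] \times \C^K \times B_r(0,\LL_1)$. Since $H^1(\T,\C) \hookrightarrow C(\T,\C)$ continuously (indeed $\|f\|_{C^0} \leq c\|f\|_{H^1}$ for a dimension-independent constant $c$ on the unit torus), every component $\psi^j$ satisfies $\|\psi^j\|_{C^0} \leq c\|\psi\| \leq cr$ for $\psi \in B_r(0,\LL_1)$, and consequently $|\Gamma(t,\psi)| = \big|\int_0^t (\psi^1\psi^4 - \psi^2\psi^3)\,\mathrm d\tau\big| \leq 2c^2 r^2$ for all $t \in [0,1]$. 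Plugging these into the explicit formulas for $Z_1$ and $Z_2^{\mathrm{od}}$ gives $|Z_1(t,\psi)| \leq C_1(r)$ and $|Z_2^{\mathrm{od}}(t,\psi)| \leq C_2(r)$ for constants depending only on $r$ (and the fixed constant $c$); here I use submultiplicativity and the triangle inequality for the matrix norm $|\cdot|$ introduced earlier, together with the elementary estimate $|\sigma_3| = 1$. Hence for $|\lambda| > 1$,
\be
|A(t,\lambda,\psi)| \leq \frac{C_1(r)}{|\lambda|} + \frac{C_2(r)}{|\lambda|^2} \leq \frac{C_1(r) + C_2(r)}{|\lambda|}.
\ee

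Now choose $K_r \coloneqq \max\big(1,\, 2(C_1(r) + C_2(r))\big)$, so that $|A(t,\lambda,\psi)| \leq \tfrac12$ for all $(t,\lambda,\psi) \in [0,1] \times \C^{K_r} \times B_r(0,\LL_1)$. By submultiplicativity, $|A^n| \leq |A|^n \leq 2^{-n}$, so the Neumann series $\sum_{n=0}^\infty (-A)^n$ converges absolutely in $M_{2\times 2}(\C)$, uniformly on the stated set. A standard telescoping argument shows its sum is a two-sided inverse of $\id + A = Z_p$: indeed $(\id + A)\sum_{n=0}^N (-A)^n = \id + (-A)^{N+1} \to \id$ as $N \to \infty$, and similarly on the other side. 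This is precisely the asserted formula \eqref{Z_p_inverse}, since $-A = -\frac{Z_1}{\lambda} - \frac{Z_2^{\mathrm{od}}}{\lambda^2}$. As a byproduct one also records the bound $|Z_p^{-1}| \leq \sum_{n=0}^\infty 2^{-n} = 2$, which will presumably be useful in the lemmas that follow.

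There is no serious obstacle here; the only point requiring a little care is making the constants genuinely uniform in all three variables simultaneously. Uniformity in $\psi$ over $B_r(0,\LL_1)$ comes from the Sobolev embedding bound above; uniformity in $t$ over $[0,1]$ comes from the fact that $\Gamma(t,\psi)$ is bounded on $[0,1]$ by the same $r$-dependent constant for every $t$ (the integrand is controlled pointwise); and uniformity in $\lambda$ over $\C^{K_r}$ is built into the choice of $K_r$. One should also note that $Z_p$, $Z_1$, $Z_2^{\mathrm{od}}$ are continuous in $(t,\psi)$ and analytic in $\psi$ — though this is not needed for the bare invertibility statement, the series representation \eqref{Z_p_inverse} transparently inherits whatever regularity is wanted. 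If the intended reading of the lemma is that $K_r$ should additionally not depend on the particular bounded set but scale correctly with $r$, the explicit formula $K_r = \max(1, 2(C_1(r)+C_2(r)))$ with $C_1, C_2$ polynomial in $r$ already exhibits this dependence, so nothing more is required.
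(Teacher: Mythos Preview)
Your proof is correct and follows essentially the same approach as the paper: bound the correction term $A = Z_1/\lambda + Z_2^{\mathrm{od}}/\lambda^2$ by $1/2$ for $|\lambda|$ large using uniform $C^0$-bounds on $\psi^j$, then invoke the Neumann series. The paper's version is more terse (it does not write out the Sobolev embedding or the explicit choice of $K_r$), while you supply more detail; one cosmetic point is that you cite the embedding on $\T$ whereas the relevant space here is $\LL_1 = H^1([0,1],\C)^4$, but the same embedding holds on the interval, so this does not affect the argument.
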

\begin{proof}
We use the general fact that if an element $A$ of a Banach algebra $(\mathcal A,\|\cdot\|)$ satisfies $\|A\| <1$, then $\id - A$ is invertible and its inverse is given by the Neumann series $\sum_{n\geq0}A^n$.
Let $K_r>0$ be so large that
\bew
\bigg| \frac{Z_1(t,\psi)}{\lambda} + \frac{Z^{\mathrm{od}}_2(t,\psi)}{\lambda^2} \bigg| < \frac{1}{2}
\eew 
for all $t\in[0,1]$, $\lambda\in \C^{K_r}$ and $\psi \in B_r(0,\LL_1)$. This can always be achieved, because the functions $\{\psi^j\}_1^4$, and hence also the functions $| Z_1(t,\psi)|$ and $| Z^{\mathrm{od}}_2(t,\psi)|$, are uniformly bounded on $[0,1]\times B_r(0,\LL_1)$.
It follows that the inverse of $Z_p$ on $[0,1]\times \C^{K_r} \times B_r(0,\LL_1)$ exists and is given by its Neumann series, i.e.~\eqref{Z_p_inverse} is satisfied.
\end{proof}

Lemma \ref{lem_W_p_Z_p_invertible} and its proof suggest the introduction of the following notation.
\begin{definition} \label{def_K_K'}
For each $r>0$, we define
\bew
K_r \coloneqq
\inf_{\substack{|\lambda|>1\\ \lambda\in\C}} \big\{ 
|\id - Z_p(t,\lambda,\psi)| < 1/2 \; \forall t \in [0,1] \, \forall \psi \in B_r(0,\LL_1) \big\}.
\eew
\end{definition}

\begin{corollary} \label{cor_W_p_Z_p_invertible}
Let $r>0$.
The matrix $Z_p$ is invertible on $[0,1]\times \C^{K_r}\times B_r(0,\LL_1)$ and its inverse $Z^{-1}_p$ is given by \eqref{Z_p_inverse}. Both $Z_p$ and $Z^{-1}_p$ are uniformly bounded on $[0,1]\times \C^{K_r}\times B_r(0,\LL_1)$. Furthermore,
\be \label{Z_p_inverse_asymp}
Z^{-1}_p =  \id - \frac{Z_1}{\lambda} + \frac{Z^2_1 - Z^{\mathrm{od}}_2}{\lambda^2} 
+ \mathcal O\big(|\lambda|^{-3}\big)
\ee
uniformly on $[0,1]\times \C^{K_r}\times B_r(0,\LL_1)$ as $|\lambda|\to\infty$.
\end{corollary}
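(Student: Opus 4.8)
The plan is to derive all three assertions from Lemma~\ref{lem_W_p_Z_p_invertible} together with the uniform pointwise boundedness of the matrices $Z_1$ and $Z^{\mathrm{od}}_2$. First, the invertibility of $Z_p$ on $[0,1]\times\C^{K_r}\times B_r(0,\LL_1)$ and the series formula \eqref{Z_p_inverse} are precisely what Lemma~\ref{lem_W_p_Z_p_invertible} gives, once one records that the threshold there may be taken to be the constant $K_r$ fixed in Definition~\ref{def_K_K'}: for $\lambda\in\C^{K_r}$ and $\psi\in B_r(0,\LL_1)$ we have $|\id-Z_p(t,\lambda,\psi)|<1/2$ for all $t\in[0,1]$, and since $\id-Z_p=-(Z_1/\lambda+Z^{\mathrm{od}}_2/\lambda^2)$ the general Neumann-series fact recalled in the proof of Lemma~\ref{lem_W_p_Z_p_invertible} yields that $Z_p$ is invertible with $Z_p^{-1}=\sum_{n\ge0}(\id-Z_p)^n$, which is exactly \eqref{Z_p_inverse}.

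For the uniform bounds I would argue as follows. The entries of $Z_1(t,\psi)$ and $Z^{\mathrm{od}}_2(t,\psi)$ are polynomial expressions in $\psi^1(t),\dots,\psi^4(t)$ and in $\Gamma(t,\psi)=\int_0^t(\psi^1\psi^4-\psi^2\psi^3)\,\mathrm d\tau$. By the one-dimensional Sobolev embedding $H^1([0,1],\C)\hookrightarrow C([0,1],\C)$, the functions $\psi^j$---and therefore $\Gamma$---are bounded on $[0,1]$ uniformly for $\psi\in B_r(0,\LL_1)$, so there is a constant $c_r>0$ with $|Z_1(t,\psi)|+|Z^{\mathrm{od}}_2(t,\psi)|\le c_r$ on $[0,1]\times B_r(0,\LL_1)$. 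Since $|\lambda|>K_r\ge1$, this gives $|Z_p|\le 1+c_r$ on $[0,1]\times\C^{K_r}\times B_r(0,\LL_1)$; and from \eqref{Z_p_inverse} together with $|\id-Z_p|<1/2$ and submultiplicativity of $|\cdot|$ one gets $|Z_p^{-1}|\le\sum_{n\ge0}|\id-Z_p|^n\le\sum_{n\ge0}2^{-n}=2$ there.

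Finally, for the expansion \eqref{Z_p_inverse_asymp} I would set $A\coloneqq Z_1/\lambda+Z^{\mathrm{od}}_2/\lambda^2$, so that \eqref{Z_p_inverse} reads $Z_p^{-1}=\sum_{n\ge0}(-A)^n=\id-A+A^2-A^3+\cdots$. The bound $|A|\le C_r/|\lambda|$ for a suitable $C_r>0$ (using $|\lambda|\ge1$), together with $|A|\le1/2$, gives
\[
\Big|\sum_{n\ge3}(-A)^n\Big|\le\frac{|A|^3}{1-|A|}\le 2\,|A|^3\le \frac{2C_r^3}{|\lambda|^3},
\]
so the tail is $\mathcal O(|\lambda|^{-3})$ uniformly in $(t,\psi)$. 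Expanding $-A=-Z_1/\lambda-Z^{\mathrm{od}}_2/\lambda^2$ and $A^2=Z_1^2/\lambda^2+\mathcal O(|\lambda|^{-3})$ (the remaining contributions to $A^2$ being of orders $\lambda^{-3}$ and $\lambda^{-4}$) and collecting powers of $\lambda^{-1}$ yields \eqref{Z_p_inverse_asymp}.

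I do not anticipate a genuine obstacle; the argument is essentially bookkeeping on top of Lemma~\ref{lem_W_p_Z_p_invertible}. The one point that must be handled with care is that every estimate has to hold \emph{simultaneously} uniformly in $t\in[0,1]$, $\lambda\in\C^{K_r}$ and $\psi\in B_r(0,\LL_1)$; this reduces entirely to the uniform boundedness of the $\psi^j$, and hence of $\Gamma$, $Z_1$ and $Z^{\mathrm{od}}_2$, over $[0,1]\times B_r(0,\LL_1)$, which is the single input---coming from the Sobolev embedding $H^1\hookrightarrow C$---that I would isolate at the start of the proof.
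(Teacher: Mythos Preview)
Your proposal is correct and follows essentially the same approach as the paper: both derive everything from the Neumann series representation \eqref{Z_p_inverse} of Lemma~\ref{lem_W_p_Z_p_invertible} together with the uniform pointwise bound $|\id-Z_p|<1/2$ on $[0,1]\times\C^{K_r}\times B_r(0,\LL_1)$. The paper's proof is a one-line remark to this effect, whereas you have spelled out the uniform bounds (via the Sobolev embedding) and the tail estimate explicitly; there is no substantive difference in method.
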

\begin{proof}
The expansion \eqref{Z_p_inverse_asymp} follows directly from \eqref{Z_p_inverse}, the uniform $\mathcal O(|\lambda|^{-3})$ error follows from the uniform convergence of the respective Neumann series on $[0,1]\times \C^{K_r}\times B_r(0,\LL_1)$, see the proof of Lemma \ref{lem_W_p_Z_p_invertible}.
\end{proof}

For $t\in[0,1]$, $\lambda\in \C$ and $\psi\in\LL_1$, we define
\bew
A_{p,Z} \coloneqq \big[ (\Dt Z_p) -\I \theta Z_p \sigma_3 \big] Z^{-1}_p,  
\quad
\mathcal A \coloneqq V - \I \theta \sigma_3,
\quad
\Delta_Z \coloneqq \mathcal A - A_{p,Z}, 
\eew
whenever the inverse $Z_p^{-1}$ exists. By Lemma \ref{lem_W_p_Z_p_invertible}, the inverse of $Z_p$ exists uniformly on $[0,1]$ and on bounded sets in $\LL_1$ provided that $|\lambda|$ is large enough.

\begin{lemma}  \label{lem_(Z^{-1}_p M^+)_t}
For $\lambda\in \C$ and $\psi \in \LL_1$, let $M$ be the fundamental solution of \eqref{fund_solution} on the unit interval.  
If $|\lambda|$ is so large that $Z^{-1}_p$ exists for all $t\in[0,1]$, then
\begin{align} \label{(Z^{-1}_p M^+)_t}
&(Z^{-1}_p M^+)_t = Z^{-1}_p \Delta_Z M^+ - \I \theta [\sigma_3, Z^{-1}_p M^+], \\
&(Z^{-1}_p M^-)_t = Z^{-1}_p \Delta_Z M^- - \I \theta (\sigma_3 Z^{-1}_p M^- + Z^{-1}_p M^- \sigma_3). \label{(Z^{-1}_p M^-)_t}
\end{align}
\end{lemma}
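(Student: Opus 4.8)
The plan is to prove both identities by a direct computation using the product rule, exactly in the spirit of the proof of Lemma~\ref{lem_M^+_fund_sol_char}. The starting point is that, by Lemma~\ref{lem_M^+_fund_sol_char}, $M^+$ satisfies $M^+_t = V M^+ - \I\theta[\sigma_3, M^+] = \mathcal{A}M^+ + \I\theta M^+ \sigma_3$ after rewriting $-\I\theta[\sigma_3,M^+] = -\I\theta\sigma_3 M^+ + \I\theta M^+\sigma_3$ and absorbing $V - \I\theta\sigma_3 = \mathcal{A}$; similarly $M^- = M\e^{-\I\theta t\sigma_3}$ satisfies $M^-_t = \mathcal{A}M^- - \I\theta M^-\sigma_3$ (the sign flip on the last term coming from differentiating $\e^{-\I\theta t\sigma_3}$). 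I would record these two ODEs for $M^\pm$ as the first step, either citing Lemma~\ref{lem_M^+_fund_sol_char} directly for $M^+$ and running the analogous one-line computation for $M^-$.

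Next I would differentiate the product $Z_p^{-1}M^+$. Since $Z_p^{-1}$ exists and is differentiable in $t$ on $[0,1]$ for $|\lambda|$ large (Lemma~\ref{lem_W_p_Z_p_invertible}), the product rule gives
\[
(Z_p^{-1}M^+)_t = (Z_p^{-1})_t M^+ + Z_p^{-1}M^+_t = (Z_p^{-1})_t M^+ + Z_p^{-1}\mathcal{A}M^+ + \I\theta Z_p^{-1}M^+\sigma_3.
\]
The only thing to handle is $(Z_p^{-1})_t$, which I would compute from $Z_p Z_p^{-1} = \id$, i.e. $(Z_p^{-1})_t = -Z_p^{-1}(\Dt Z_p)Z_p^{-1}$. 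Now using the definitions $A_{p,Z} = [(\Dt Z_p) - \I\theta Z_p\sigma_3]Z_p^{-1}$ and $\Delta_Z = \mathcal{A} - A_{p,Z}$, one has $\Dt Z_p = (A_{p,Z} + \I\theta Z_p \sigma_3)\cdots$ — more precisely $(\Dt Z_p)Z_p^{-1} = A_{p,Z} + \I\theta Z_p\sigma_3 Z_p^{-1}$, so $(Z_p^{-1})_t = -Z_p^{-1}A_{p,Z} - \I\theta\sigma_3 Z_p^{-1}$. Substituting this back, the term $Z_p^{-1}\mathcal{A} - Z_p^{-1}A_{p,Z} = Z_p^{-1}\Delta_Z$ collects into the first term on the right-hand side, while the two $\I\theta$-terms combine into $-\I\theta\sigma_3 Z_p^{-1}M^+ + \I\theta Z_p^{-1}M^+\sigma_3 = -\I\theta[\sigma_3, Z_p^{-1}M^+]$, giving~\eqref{(Z^{-1}_p M^+)_t}.

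For the second identity the computation is identical except that $M^-_t = \mathcal{A}M^- - \I\theta M^-\sigma_3$ carries a minus sign on the last term; tracking it through, the $\I\theta Z_p^{-1}M^+\sigma_3$ term becomes $-\I\theta Z_p^{-1}M^-\sigma_3$, and combining with $-\I\theta\sigma_3 Z_p^{-1}M^-$ yields $-\I\theta(\sigma_3 Z_p^{-1}M^- + Z_p^{-1}M^-\sigma_3)$ (an anticommutator-type expression rather than a commutator), which is precisely~\eqref{(Z^{-1}_p M^-)_t}. This proof is entirely mechanical — there is no genuine obstacle — but the step requiring the most care is the bookkeeping of the $\I\theta$-terms and the correct use of the identity $(Z_p^{-1})_t = -Z_p^{-1}(\Dt Z_p)Z_p^{-1}$ together with the definitions of $A_{p,Z}$ and $\Delta_Z$, so that the cancellation producing $Z_p^{-1}\Delta_Z$ is transparent; I would present that recombination explicitly rather than leaving it to the reader.
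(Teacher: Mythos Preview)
Your proposal is correct and follows essentially the same approach as the paper: both start from $M^+_t = \mathcal{A}M^+ + \I\theta M^+\sigma_3$ (and the corresponding equation for $M^-$), apply the product rule with $(Z_p^{-1})_t = -Z_p^{-1}(\Dt Z_p)Z_p^{-1}$, and use the definition of $A_{p,Z}$ to rewrite $\Dt Z_p = A_{p,Z}Z_p + \I\theta Z_p\sigma_3$ so that the $\Delta_Z$ term and the commutator/anticommutator of $\sigma_3$ emerge. The only cosmetic difference is that the paper substitutes $\Dt Z_p = A_{p,Z}Z_p + \I\theta Z_p\sigma_3$ directly, whereas you first compute $(Z_p^{-1})_t$ explicitly; the algebra is identical.
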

\begin{proof}
By Lemma  \ref{lem_M^+_fund_sol_char} we can write $M^+_t = \mathcal A M^+ + \I \theta M^+ \sigma_3$, hence
equation \eqref{(Z^{-1}_p M^+)_t} is directly obtained by:
\begin{align*}
(Z^{-1}_p M^+)_t &= - Z^{-1}_p (\Dt Z_p) Z^{-1}_p M^+ + Z^{-1}_p M^+_t \\
&= - Z^{-1}_p (A_{p,Z} Z_p + \I \theta Z_p \sigma_3) Z^{-1}_p M^+ + Z^{-1}_p(\mathcal A M^+ + \I \theta M^+ \sigma_3) \\
&= Z^{-1}_p \Delta_Z M^+ - \I \theta [\sigma_3, Z^{-1}_p M^+].
\end{align*}
Equation \eqref{(Z^{-1}_p M^-)_t} is similarly obtained by noting that
\begin{align*}
M^-_t = (M^+ \e^{-2\I \theta t \sigma_3})_t &= (V M^+ - \I \theta [\sigma_3,M^+])  \e^{-2\I \theta t \sigma_3} -2\I \theta M^+ \e^{-2\I \theta t \sigma_3} \sigma_3 \\
&=  \mathcal A M^- - \I \theta M^- \sigma_3.
\end{align*}
\end{proof}

For $z\in\C$, we define the linear map $\e^{z \hat\sigma_3}$ on the space of complex $2\times2$-matrices by 
\bew
\e^{z\hat\sigma_3}(A) \coloneqq \e^{z \sigma_3} A \e^{-z \sigma_3};
\eew
furthermore we define $\e^{z \check\sigma_3}$ via
\bew
\e^{z\check\sigma_3}(A) \coloneqq \e^{z \sigma_3} A \e^{z \sigma_3}.
\eew

Lemma \ref{lem_(Z^{-1}_p M^+)_t} yields Volterra equations for $M^+$ and $M^-$.

\begin{lemma} \label{lem_Volterra_M^+_M^-}
For $\lambda\in \C$ and $\psi \in \LL_1$, let $M$ be the fundamental solution of \eqref{fund_solution} on the unit interval. 
If $|\lambda|$ is so large that $Z^{-1}_p$ exists for all $t\in[0,1]$, then $M^+$ satisfies
\begin{align}
\begin{aligned} \label{eq_lem_Volterra_M^+_M^-_1}
M^+(t,\lambda,\psi) &=  Z_p(t,\lambda,\psi) \, \e^{-\I \theta t \hat \sigma_3}[Z^{-1}_p(0,\lambda,\psi)] \\
& \quad+ \int^t_0 Z_p(t,\lambda,\psi) \, \e^{-\I \theta (t-\tau) \hat \sigma_3} [(Z^{-1}_p \Delta_Z M^+)(\tau,\lambda,\psi)] \,\mathrm d \tau,
\end{aligned}
\end{align}
and $M^-$ satisfies
\begin{align}
\begin{aligned} \label{eq_lem_Volterra_M^+_M^-_2}
M^-(t,\lambda,\psi) &=  Z_p(t,\lambda,\psi) \, \e^{-\I \theta t \check \sigma_3}[Z^{-1}_p(0,\lambda,\psi)] \\
& \quad+ \int^t_0 Z_p(t,\lambda,\psi) \, \e^{-\I \theta (t-\tau) \check \sigma_3} [(Z^{-1}_p \Delta_Z M^+)(\tau,\lambda,\psi)] \,\mathrm d \tau.
\end{aligned}
\end{align}
\end{lemma}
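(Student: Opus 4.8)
The idea is to read the two identities of Lemma~\ref{lem_(Z^{-1}_p M^+)_t} as inhomogeneous linear matrix ODEs for $Z^{-1}_p M^+$ and $Z^{-1}_p M^-$ whose homogeneous parts are the constant-coefficient operators $B\mapsto-\I\theta[\sigma_3,B]$ and $B\mapsto-\I\theta(\sigma_3 B+B\sigma_3)$ on $M_{2\times2}(\C)$, and then to write down the corresponding variation-of-constants representations. The crucial point is that these two operators are generated precisely by the linear maps $\e^{z\hat\sigma_3}$ and $\e^{z\check\sigma_3}$ defined just above the lemma, so the asserted Volterra equations are nothing but Duhamel's formula for these ODEs followed by a left multiplication by $Z_p$.

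Concretely, under the hypothesis of the lemma $Z^{-1}_p$ exists on all of $[0,1]$ (Lemma~\ref{lem_W_p_Z_p_invertible}), so $F\coloneqq Z^{-1}_p M^+$ is well defined, $F(0)=Z^{-1}_p(0,\lambda,\psi)$ since $M^+(0)=M(0)=\id$, and by~\eqref{(Z^{-1}_p M^+)_t} one has $F_t=-\I\theta[\sigma_3,F]+Z^{-1}_p\Delta_Z M^+$. I would record two elementary facts: for fixed $A\in M_{2\times2}(\C)$ the curve $t\mapsto\e^{-\I\theta t\hat\sigma_3}[A]=\e^{-\I\theta t\sigma_3}A\,\e^{\I\theta t\sigma_3}$ solves $B_t=-\I\theta[\sigma_3,B]$ with $B(0)=A$ (differentiate, using that $\sigma_3$ commutes with $\e^{\pm\I\theta t\sigma_3}$), and $\e^{z_1\hat\sigma_3}\circ\e^{z_2\hat\sigma_3}=\e^{(z_1+z_2)\hat\sigma_3}$ (immediate from $\e^{z_1\sigma_3}\e^{z_2\sigma_3}=\e^{(z_1+z_2)\sigma_3}$). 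Setting $G\coloneqq\e^{\I\theta t\hat\sigma_3}[F]$ and differentiating, the commutator contributions cancel, using $\e^{\I\theta t\hat\sigma_3}[[\sigma_3,F]]=[\sigma_3,G]$, and one is left with $G_t=\e^{\I\theta t\hat\sigma_3}\big[(Z^{-1}_p\Delta_Z M^+)(t)\big]$. Integrating from $0$ to $t$, pulling the fixed linear operator $\e^{-\I\theta t\hat\sigma_3}$ through the integral and invoking the composition rule gives $F(t)=\e^{-\I\theta t\hat\sigma_3}[Z^{-1}_p(0)]+\int_0^t\e^{-\I\theta(t-\tau)\hat\sigma_3}\big[(Z^{-1}_p\Delta_Z M^+)(\tau)\big]\,\mathrm d\tau$, and left multiplication by $Z_p(t)$ is precisely~\eqref{eq_lem_Volterra_M^+_M^-_1}.

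The treatment of $M^-$ is identical, with the operator $B\mapsto-\I\theta(\sigma_3 B+B\sigma_3)$ in place of $B\mapsto-\I\theta[\sigma_3,B]$: its flow through $A$ is $t\mapsto\e^{-\I\theta t\check\sigma_3}[A]=\e^{-\I\theta t\sigma_3}A\,\e^{-\I\theta t\sigma_3}$, and again $\e^{z_1\check\sigma_3}\circ\e^{z_2\check\sigma_3}=\e^{(z_1+z_2)\check\sigma_3}$. Starting from~\eqref{(Z^{-1}_p M^-)_t}, setting $\widetilde G\coloneqq\e^{\I\theta t\check\sigma_3}[Z^{-1}_p M^-]$ and differentiating, the terms $\I\theta(\sigma_3\widetilde G+\widetilde G\sigma_3)$ cancel; integrating and undoing $\e^{\I\theta t\check\sigma_3}$, then left-multiplying by $Z_p(t)$, one arrives at~\eqref{eq_lem_Volterra_M^+_M^-_2}. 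Equivalently, \eqref{eq_lem_Volterra_M^+_M^-_2} can be deduced directly from \eqref{eq_lem_Volterra_M^+_M^-_1} by right-multiplying with $\e^{-2\I\theta t\sigma_3}$ and using $M^-=M^+\e^{-2\I\theta t\sigma_3}$, after converting the resulting exponential factors into $\check\sigma_3$-maps by means of the commutativity of diagonal exponentials.

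I do not expect a genuine obstacle: the whole argument is careful bookkeeping with the conjugation/scaling operators. All matrices occurring in the integrands ($Z_p$, $Z^{-1}_p$, $\Delta_Z$, $M^\pm$) depend on $t$ at least integrably on the relevant set — recall $\psi\in\LL_1$, so $M^\pm$ is continuous by Theorem~\ref{thm_fund_sol} and $Z^{-1}_p$ is the sum of the Neumann series of Corollary~\ref{cor_W_p_Z_p_invertible} — so the Volterra integrals make sense and the fundamental theorem of calculus applies. The only points requiring attention are the commutator/anticommutator cancellations in the expressions for $G_t$ and $\widetilde G_t$, all of which hinge on $\sigma_3$ commuting with diagonal matrices, and the passage from $\int_0^t\e^{-\I\theta t\hat\sigma_3}\big[\e^{\I\theta\tau\hat\sigma_3}[\,\cdot\,]\big]\,\mathrm d\tau$ to $\int_0^t\e^{-\I\theta(t-\tau)\hat\sigma_3}[\,\cdot\,]\,\mathrm d\tau$, which is exactly the composition rule recorded above.
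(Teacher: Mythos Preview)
Your proof is correct and follows exactly the same approach as the paper: define $G=\e^{\I\theta t\hat\sigma_3}[Z_p^{-1}M^+]$, use Lemma~\ref{lem_(Z^{-1}_p M^+)_t} to reduce to $G_t=\e^{\I\theta t\hat\sigma_3}[Z_p^{-1}\Delta_Z M^+]$, integrate, apply $\e^{-\I\theta t\hat\sigma_3}$ and left-multiply by $Z_p(t)$, with the $M^-$ case handled analogously via $\check\sigma_3$. Your derivation correctly yields $M^-$ in the integrand of~\eqref{eq_lem_Volterra_M^+_M^-_2}, in agreement with the paper's own proof; the $M^+$ printed in that integrand is a typo.
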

\begin{proof}
Using identity \eqref{(Z^{-1}_p M^+)_t} in Lemma \ref{lem_(Z^{-1}_p M^+)_t}, we infer that
\be \label{pr_lem_Volterra_M^+_M^-_1}
\big(\e^{\I \theta t \hat \sigma_3} (Z^{-1}_p M^+)\big)_t = \e^{\I \theta t \hat \sigma_3} (Z^{-1}_p \Delta_Z M^+).
\ee
In order to obtain \eqref{eq_lem_Volterra_M^+_M^-_1}, we first integrate \eqref{pr_lem_Volterra_M^+_M^-_1} from $0$ to $t$ and use that $M^+(0,\lambda)=\id$ to determine the integration constant. Applying $\e^{-\I \theta t \hat \sigma_3}$ to both sides of the resulting integral equation and multiplying by $Z_p$ from the left, we find \eqref{eq_lem_Volterra_M^+_M^-_1}. 

The Volterra equation for $M^-$ follows in an analogous way from the equation
\bew 
(\e^{\I \theta t \check \sigma_3} (Z^{-1}_p M^-))_t = \e^{\I \theta t \check \sigma_3} (Z^{-1}_p \Delta_Z M^-),
\eew
which is a consequence of \eqref{(Z^{-1}_p M^-)_t}.
\end{proof}

For a $t$-dependent matrix $A$ with entries in $L^p([0,1],\C)$ we define 
\bew
\|A\|_{L^p([0,1],\C)} \coloneqq \bigg(\int^1_0 |A(t)|^p \, \mathrm d t \bigg)^{1/p}, \quad 1\leq p < \infty.
\eew

\begin{lemma} \label{lem_Z_k_W_k_O(1)}
Let $B$ be an arbitrary bounded subset of $\LL_1$ and let $1\leq q \leq 2$. Then 
\bew
\|\Dt Z_1(\psi) \|_{L^q([0,1],\C)} = \mathcal O(1),  \quad \| \Dt Z^{\mathrm{od}}_2(\psi)\|_{L^q([0,1],\C)} = \mathcal O(1), 
\eew
uniformly on $B$. 
\end{lemma}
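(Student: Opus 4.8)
The plan is to differentiate the explicit formulas for $Z_1$ and $Z^{\mathrm{od}}_2$ and to bound each resulting summand in $L^q([0,1],\C)$ using three elementary tools: the Sobolev embedding $H^1([0,1],\C)\hookrightarrow L^\infty([0,1],\C)$, H\"older's inequality, and the inclusion $L^2([0,1],\C)\hookrightarrow L^q([0,1],\C)$ for $1\le q\le 2$ (valid since $[0,1]$ has finite measure, so $\|f\|_{L^q([0,1])}\le\|f\|_{L^2([0,1])}$).

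First I would record two preliminary uniform bounds. For $\psi\in B$, Sobolev embedding gives $\|\psi^j\|_{L^\infty([0,1])}\le c\,\|\psi^j\|\le c\,\|\psi\|=\mathcal O(1)$ for $j=1,2,3,4$. Hence $\Gamma_t=\psi^1\psi^4-\psi^2\psi^3$ satisfies $\|\Gamma_t\|_{L^\infty([0,1])}\le 2c^2\|\psi\|^2=\mathcal O(1)$, and therefore also $\|\Gamma\|_{L^\infty([0,1])}\le\|\Gamma_t\|_{L^1([0,1])}\le\|\Gamma_t\|_{L^\infty([0,1])}=\mathcal O(1)$, all uniformly on $B$. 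Likewise, $\|\psi^j_t\|_{L^q([0,1])}\le\|\psi^j_t\|_{L^2([0,1])}\le\|\psi^j\|=\mathcal O(1)$ for $1\le q\le2$.

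Next, for $Z_1$: differentiating its definition gives $\Dt Z_1$ with off-diagonal entries $\pm\tfrac{\I}{2}\psi^{1}_t,\ \mp\tfrac{\I}{2}\psi^{2}_t$ and diagonal entries $\pm\tfrac12\Gamma_t$, so $|\Dt Z_1|\lesssim |\psi^1_t|+|\psi^2_t|+|\Gamma_t|$ pointwise; each term on the right has $L^q([0,1])$-norm $\mathcal O(1)$ by the preliminary bounds, hence $\|\Dt Z_1\|_{L^q([0,1],\C)}=\mathcal O(1)$. For $Z^{\mathrm{od}}_2$, differentiating yields entries of the form $\tfrac14(\psi^3_t+\I\psi^1_t\Gamma+\I\psi^1\Gamma_t)$ and $\tfrac14(\psi^4_t+\I\psi^2_t\Gamma+\I\psi^2\Gamma_t)$. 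Here $\|\psi^3_t\|_{L^q}=\mathcal O(1)$; $\|\psi^1_t\Gamma\|_{L^q}\le\|\Gamma\|_{L^\infty([0,1])}\|\psi^1_t\|_{L^q}=\mathcal O(1)$; and $\|\psi^1\Gamma_t\|_{L^q}\le\|\psi^1\|_{L^\infty([0,1])}\|\Gamma_t\|_{L^q}=\mathcal O(1)$, since $\|\Gamma_t\|_{L^q([0,1])}\le\|\Gamma_t\|_{L^\infty([0,1])}$. The analogous bounds hold for the other entry, and summing gives $\|\Dt Z^{\mathrm{od}}_2\|_{L^q([0,1],\C)}=\mathcal O(1)$ uniformly on $B$.

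There is no serious obstacle in this lemma; the only point requiring (minor) care is to upgrade the $H^1$-control of the potential to $L^\infty$-control of $\Gamma$ and $\Gamma_t$ via the Sobolev embedding, so that the products produced by the product rule can be estimated by H\"older with one factor in $L^\infty$ and the other in $L^q$. Everything else is bookkeeping with the explicit formulas for $Z_1$, $Z^{\mathrm{od}}_2$, $\Gamma$ and the finite-measure inclusion $L^2([0,1])\subset L^q([0,1])$.
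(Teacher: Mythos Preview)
Your proof is correct and follows essentially the same approach as the paper's: the paper invokes that $H^1(0,1)$ is a Banach algebra (which is precisely the Sobolev embedding $H^1\hookrightarrow L^\infty$ you use) together with the continuity of $\Dt\colon H^1\to L^2$ and the inclusion $L^2\hookrightarrow L^q$ for $1\le q\le 2$. You simply unpack the algebra property by differentiating the explicit formulas and estimating each product via H\"older with one $L^\infty$ factor, which amounts to the same argument written out in more detail.
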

\begin{proof}
The case $q=2$ follows directly from the definitions of $Z_1$ and $Z^{\mathrm{od}}_2$, the continuity of the operator
\bew
\|\cdot\|_{L^2([0,1],\C)}  \circ \Dt  \colon H^1(0,1) \to \R,
\eew
and the fact that $H^1(0,1)$ is an algebra. 
The cases $1\leq q < 2$ follow from the case $q=2$ in view of the continuous embeddings $L^2([0,1],\C) \hookrightarrow  L^q([0,1],\C)$, $1\leq q < 2$. 
\end{proof}

\begin{lemma} \label{lem_Delta_ZW_asymp}
Let $r>0$. There exists a constant $C>0$ such that uniformly for $(\lambda,\psi)$ in $\C^{K_r}\times B_r(0,\LL_1)$,
\be\label{DeltaZbound}
 |\lambda| \,\|\Delta_Z(\lambda,\psi)\|_{L^1([0,1],\C)}
 \leq C.
\ee
\end{lemma}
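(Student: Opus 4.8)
The plan is to estimate $\Delta_Z = \mathcal A - A_{p,Z}$ directly from the definitions, exploiting the fact that the coefficients $Z_1$, $Z_2^{\mathrm{od}}$ were chosen precisely so that the leading terms cancel. Writing $\mathcal A = V - \I\theta\sigma_3 = V_0 + \lambda V_1 - \I\theta\sigma_3$ with $\theta = 2\lambda^2$, and using $A_{p,Z} = \big[(\Dt Z_p) - \I\theta Z_p\sigma_3\big]Z_p^{-1}$, I would first multiply through by $Z_p$ on the right (which is harmless since $Z_p$ and $Z_p^{-1}$ are uniformly bounded on $[0,1]\times\C^{K_r}\times B_r(0,\LL_1)$ by Corollary \ref{cor_W_p_Z_p_invertible}) and instead bound $\Delta_Z Z_p = \mathcal A Z_p - (\Dt Z_p) + \I\theta Z_p\sigma_3$. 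The key point is that the highest-order terms in $\lambda$ — the $\lambda^2$ terms coming from $-\I\theta\sigma_3$ and $\I\theta Z_p\sigma_3$, the $\lambda^1$ terms from $V_1$ and from $\I\theta Z_1\sigma_3/\lambda$, and the $\lambda^0$ terms — all cancel by construction, exactly as encoded in the recursive relations $(Z_k)_t + 4\I\sigma_3 Z_{k+2}^{\mathrm{od}} = V_0 Z_k + V_1 Z_{k+1}$ of Remark \ref{rem_derivation_of_Z_k_W_k}. What survives is $\mathcal O(\lambda^{-1})$ in the matrix norm, with the surviving contributions being: (i) terms of the form $V_0 Z_1/\lambda$ and $V_1 Z_2^{\mathrm{od}}/\lambda$ and the undetermined piece involving $Z_2^{\mathrm d}$ (which we simply did not include in $Z_p$, so it contributes at order $\lambda^{-1}$ after multiplication by $\theta$), and (ii) the genuinely $t$-dependent term $-(\Dt Z_p) = -(\Dt Z_1)/\lambda - (\Dt Z_2^{\mathrm{od}})/\lambda^2$.

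The next step is to take $L^1([0,1])$ norms in $t$. The polynomial-in-$\psi$ terms like $V_0 Z_1$, $V_1 Z_2^{\mathrm{od}}$, $\theta Z_2^{\mathrm d}\sigma_3/\lambda^2 = \mathcal O(Z_2^{\mathrm d})$, etc.\ are products of $H^1$ functions and hence bounded in $L^\infty([0,1])$, a fortiori in $L^1([0,1])$, uniformly over $\psi\in B_r(0,\LL_1)$ (using that $H^1(0,1)$ is a Banach algebra continuously embedded in $C[0,1]$, and that $\Gamma$ is bounded by $\|\psi^1\psi^4\|_{L^1} + \|\psi^2\psi^3\|_{L^1} \lesssim r^2$). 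For the derivative terms $\Dt Z_1$ and $\Dt Z_2^{\mathrm{od}}$, which need not be in $L^\infty$, Lemma \ref{lem_Z_k_W_k_O(1)} gives exactly the required $\|\Dt Z_1\|_{L^1([0,1])} = \mathcal O(1)$ and $\|\Dt Z_2^{\mathrm{od}}\|_{L^1([0,1])} = \mathcal O(1)$ uniformly on bounded sets — this is the reason the statement is phrased with the $L^1$ norm rather than $L^\infty$. Collecting, $\|\Delta_Z Z_p\|_{L^1([0,1],\C)} = \mathcal O(|\lambda|^{-1})$ uniformly on $\C^{K_r}\times B_r(0,\LL_1)$, and then $\|\Delta_Z\|_{L^1} \le \|\Delta_Z Z_p\|_{L^1}\,\|Z_p^{-1}\|_{L^\infty}$ gives \eqref{DeltaZbound}.

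The main obstacle, and the only genuinely delicate point, is bookkeeping the cancellation of the $\lambda^2$, $\lambda^1$ and $\lambda^0$ terms cleanly — in particular making sure that the $\Gamma\sigma_3$ contribution in $Z_1$ and the $\I\psi^j\Gamma$ contributions in $Z_2^{\mathrm{od}}$ conspire correctly with $V_0$ (whose diagonal part is $\mp\I\psi^1\psi^2$) and with $\Dt\Gamma = \psi^1\psi^4 - \psi^2\psi^3$. Concretely one should verify $(\Dt Z_1) + 4\I\sigma_3 Z_2^{\mathrm{od}} = V_0 + V_1 Z_1 - (\I\theta/\lambda^2)\cdot(\text{the cancelled piece})$ matches the $k=-1$ and $k=0$ recursions, so that after the dust settles $\mathcal A Z_p - \Dt Z_p + \I\theta Z_p\sigma_3$ has no term of order $\ge 0$ in $\lambda$. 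I would do this by expanding everything in powers of $\lambda$ and checking degree by degree; since $Z_p$ was defined with exactly these coefficients, the cancellation is forced, but writing it out is where care is needed. Everything else — uniform boundedness of $Z_p, Z_p^{-1}$, Banach-algebra estimates on $H^1$, the embedding $L^2\hookrightarrow L^1$ — is routine given the earlier results.
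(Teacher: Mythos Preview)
Your proposal is correct and follows essentially the same approach as the paper. Both arguments hinge on the two algebraic identities $4\I\sigma_3 Z_1^{\mathrm{od}} = V_1$ and $4\I\sigma_3 Z_2^{\mathrm{od}} = V_0 + V_1 Z_1$ (which are precisely the $k=-1$ and $k=0$ cases of the recursion in Remark~\ref{rem_derivation_of_Z_k_W_k}) to kill the $\lambda^2$, $\lambda^1$ and $\lambda^0$ contributions, together with Lemma~\ref{lem_Z_k_W_k_O(1)} to control $\Dt Z_1$ and $\Dt Z_2^{\mathrm{od}}$ in $L^1$. Your device of multiplying by $Z_p$ on the right before expanding, and only afterwards passing back via $\|\Delta_Z\|_{L^1}\le \|\Delta_Z Z_p\|_{L^1}\|Z_p^{-1}\|_{L^\infty}$, is a mild streamlining of the paper's computation (which instead expands $Z_p\sigma_3 Z_p^{-1}$ using the Neumann series for $Z_p^{-1}$), but the content is the same. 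One small clarification: your remark about ``the undetermined piece involving $Z_2^{\mathrm d}$'' is not quite right --- since $Z_2^{\mathrm d}$ is diagonal, the would-be order-$\lambda^0$ contribution $-2\I[\sigma_3,Z_2^{\mathrm d}]$ vanishes identically, so its omission from $Z_p$ causes no residual term at that order; it simply plays no role. This does not affect your argument.
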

\begin{proof}
Note that 
\be \label{pr_lem_Delta_ZW_asymp_1}
4 \I \sigma_3 Z^{\mathrm{od}}_1 = V_1, \quad 
4 \I \sigma_3 Z^{\mathrm{od}}_2 = V_0 + V_1 Z_1
\ee
for arbitrary $\psi\in\LL_1$. By Corollary \ref{cor_W_p_Z_p_invertible} the asymptotic estimate 
\eqref{Z_p_inverse_asymp} holds uniformly on $[0,1]\times \C^{K_r} \times B_r(0,\LL_1)$ as $|\lambda|\to \infty$.
In particular, $\Delta_Z$ is well-defined on $[0,1]\times \C^{K_r} \times B_r(0,\LL_1)$ and satisfies
\begin{align} \label{pr_lem_Delta_ZW_asymp_1b}
\Delta_Z =V_0 &+ \lambda V_1 - 2 \I \lambda^2 \sigma_3 
+  2 \I \lambda^2 \bigg(\id + \frac{Z_1}{\lambda} + \frac{Z^{\mathrm{od}}_2}{\lambda^2}\bigg) \sigma_3
\bigg( \id - \frac{Z_1}{\lambda} + \frac{Z^2_1 - Z^{\mathrm{od}}_2}{\lambda^2} \bigg) 
+ \mathcal O\big(|\lambda|^{-1}\big)
\end{align}
in $L^1([0,1],\C)$ uniformly on $\C^{K_r} \times B_r(0,\LL_1)$ as $|\lambda|\to \infty$, where we have used Lemma \ref{lem_Z_k_W_k_O(1)} to estimate the $\Dt Z_p$-term.
By keeping only the  $\lambda^k$-terms for $k=0,1,2$ in \eqref{pr_lem_Delta_ZW_asymp_1b} and by employing \eqref{pr_lem_Delta_ZW_asymp_1}, we obtain 
\begin{align*}
\Delta_Z &=V_0 + \lambda V_1 - 2 \I \lambda [\sigma_3,Z_1] + 2 \I [Z^{\mathrm{od}}_2,\sigma_3] + 2 \I [\sigma_3,Z_1] Z_1
+  \mathcal O\big(|\lambda|^{-1}\big) \\
&= V_0 - 4 \I \sigma_3 Z^{\mathrm{od}}_2 +  4 \I \sigma_3 Z^{\mathrm{od}}_1 Z_1 
+  \mathcal O\big(|\lambda|^{-1}\big) \\
&= 0
+  \mathcal O\big(|\lambda|^{-1}\big)
\end{align*}
in $L^1([0,1],\C)$ uniformly on $\C^K_r \times B_r(0,\LL_1)$ as $|\lambda|\to \infty$. 
\end{proof}

Let $[A]_1$ and $[A]_2$ denote the first and second columns of a $2\times2$-matrix $A$. 
Let $|[A]_i|$, $i=1,2$, denote the standard $\C^2$-norm of the vector $[A]_i$. 

\begin{lemma} \label{lem_asymp_M_Z_p}
Let $r>0$.
There exists a constant $C>0$ such that 
\be \label{lem_asymp_M_Z_p_1}
|\lambda|\, \big|\big[M^+(t,\lambda,\psi) - Z_p (t,\lambda,\psi) \, \e^{-\I \theta t \hat \sigma_3}\big(Z^{-1}_p(0,\lambda,\psi)\big) \big]_2 \big|
\leq C,
\ee
\be \label{lem_asymp_M_Z_p_2_2}
|\lambda| \,\big|\big[M^-(t,\lambda,\psi) - Z_p (t,\lambda,\psi) \, \e^{-\I \theta t \check \sigma_3}\big(Z^{-1}_p(0,\lambda,\psi)\big) \big]_1\big| 
\leq C
\ee
uniformly on $[0,1]  \times \overbar{D^{K_r}_-} \times B_r(0,\LL_1)$, and
\be \label{lem_asymp_M_Z_p_1_2}
|\lambda|\, \big|\big[M^-(t,\lambda,\psi) - Z_p (t,\lambda,\psi) \, \e^{-\I \theta t \check \sigma_3}\big(Z^{-1}_p(0,\lambda,\psi)\big) \big]_2 \big|
\leq C,
\ee
\be \label{lem_asymp_M_Z_p_2}
|\lambda| \,\big|\big[M^+(t,\lambda,\psi) - Z_p (t,\lambda,\psi) \, \e^{-\I \theta t \hat \sigma_3}\big(Z^{-1}_p(0,\lambda,\psi)\big) \big]_1\big| 
\leq C
\ee
uniformly on $[0,1]  \times \overbar{D^{K_r}_+} \times B_r(0,\LL_1)$.
\end{lemma}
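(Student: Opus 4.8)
The plan is to read off all four estimates directly from the Volterra representations of Lemma~\ref{lem_Volterra_M^+_M^-}. In each case the matrix under consideration equals its ``leading term'' $Z_p\,\e^{-\I\theta t\hat\sigma_3}[Z_p^{-1}(0)]$ (respectively its $\check\sigma_3$-counterpart) \emph{plus} the Volterra integral, so that the expression inside $|\cdot|$ in \eqref{lem_asymp_M_Z_p_1}--\eqref{lem_asymp_M_Z_p_2} is precisely the corresponding column of that integral; it remains to bound it by $C/|\lambda|$ on the indicated region. The key structural point is that the conjugation maps $A\mapsto\e^{z\hat\sigma_3}(A)$ and $A\mapsto\e^{z\check\sigma_3}(A)$ act columnwise: a direct computation gives that the second column of $\e^{-\I\theta z\hat\sigma_3}[B]$ equals $\mathrm{diag}(\e^{-2\I\theta z},1)\,[B]_2$ and its first column equals $\mathrm{diag}(1,\e^{2\I\theta z})\,[B]_1$, while $\e^{-\I\theta z\check\sigma_3}[B]$ has first column $\mathrm{diag}(\e^{-2\I\theta z},1)\,[B]_1$ and second column $\mathrm{diag}(1,\e^{2\I\theta z})\,[B]_2$. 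Hence, fixing the relevant column index $i$, the $\C^2$-valued function $v(t):=[M^\pm(t,\lambda,\psi)]_i$ satisfies a \emph{closed} Volterra equation
\begin{equation*}
v(t) = v_0(t) + \int_0^t Z_p(t,\lambda,\psi)\,P(t-\tau)\,\big(Z_p^{-1}\Delta_Z\big)(\tau,\lambda,\psi)\,v(\tau)\,\mathrm d\tau,
\end{equation*}
where $v_0(t)=Z_p(t)\,P(t)\,[Z_p^{-1}(0)]_i$ is the $i$-th column of the leading term and $P(z)\in\{\mathrm{diag}(\e^{-2\I\theta z},1),\,\mathrm{diag}(1,\e^{2\I\theta z})\}$ is the diagonal factor dictated by the column index.

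Next I would pair each column with the region on which its diagonal factor is nonexpansive. Since $\theta=2\lambda^2$ and $|\e^{\pm 2\I\theta z}|=|\e^{\pm 2\I\lambda^2 z}|^2$, the equivalences recorded just before the statement give: whenever $\lambda\in\overbar{D_-}$ one has $|\e^{-2\I\theta z}|\le1$ for all $z\ge0$, and whenever $\lambda\in\overbar{D_+}$ one has $|\e^{2\I\theta z}|\le1$ for all $z\ge0$. This is precisely why $[M^+]_2$ and $[M^-]_1$ are estimated on $\overbar{D^{K_r}_-}$ and $[M^+]_1$, $[M^-]_2$ on $\overbar{D^{K_r}_+}$: on the appropriate region one has $|P(t-\tau)|\le1$ for $0\le\tau\le t\le1$, and the same bound shows that $v_0$ is uniformly bounded there. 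Combining this with the uniform bounds on $Z_p$ and $Z_p^{-1}$ on $[0,1]\times\C^{K_r}\times B_r(0,\LL_1)$ from Corollary~\ref{cor_W_p_Z_p_invertible} and the estimate $\|\Delta_Z\|_{L^1([0,1],\C)}\le C/|\lambda|$ from Lemma~\ref{lem_Delta_ZW_asymp}, the closed Volterra equation yields the scalar integral inequality
\begin{equation*}
|v(t)|\le c_1 + c_2\int_0^t |\Delta_Z(\tau,\lambda,\psi)|\,|v(\tau)|\,\mathrm d\tau,\qquad 0\le t\le1,
\end{equation*}
with constants $c_1,c_2$ independent of $\lambda$ and $\psi$ in the relevant ranges.

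Gronwall's inequality then gives $|v(t)|\le c_1\exp\big(c_2\|\Delta_Z\|_{L^1([0,1],\C)}\big)\le c_1\e^{c_2 C/K_r}$ for $|\lambda|>K_r$, so the relevant column of $M^\pm$ is uniformly bounded on its region. Substituting this bound back into the Volterra integral gives $|v(t)-v_0(t)|\le c\,\|\Delta_Z\|_{L^1([0,1],\C)}\le cC/|\lambda|$ uniformly, which is exactly the assertion \eqref{lem_asymp_M_Z_p_1}, \eqref{lem_asymp_M_Z_p_2_2}, \eqref{lem_asymp_M_Z_p_1_2}, \eqref{lem_asymp_M_Z_p_2} for the four choices of column and region.

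The only genuinely delicate point---and the main obstacle---is this columnwise ``diagonalization'' of the Volterra equation together with the correct matching of each column to the quadrant-union $\overbar{D_\pm}$ on which its exponential factor stays bounded: one cannot argue with the full matrix $M^\pm$, since $|M|$ generically grows like $\e^{2|\Im(\lambda^2)|t}$, so the Gronwall estimate must be carried out separately for each scalar ($\C^2$-valued) column problem on its own region. Once that decoupling is in place, the remaining steps are routine consequences of the uniform bounds in Corollary~\ref{cor_W_p_Z_p_invertible} and Lemma~\ref{lem_Delta_ZW_asymp} and of Gronwall's inequality.
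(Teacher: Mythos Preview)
Your proof is correct and follows essentially the same approach as the paper: both isolate a single column of $M^\pm$, observe that the Volterra equation of Lemma~\ref{lem_Volterra_M^+_M^-} closes up for that column because $\e^{-\I\theta z\hat\sigma_3}$ and $\e^{-\I\theta z\check\sigma_3}$ act columnwise via the diagonal factor $P(z)$, and then match each column to the region $\overbar{D_\pm}$ on which $|P(z)|\le1$ for $z\ge0$. The only technical difference is in the final step: the paper writes the solution of the closed Volterra equation as the Neumann series $\mathcal M=\sum_{n\ge0}\mathcal M_n$ and bounds $|\mathcal M-\mathcal M_0|\le\sum_{n\ge1}\frac{C^n}{n!\,|\lambda|^n}\le\frac{C\e^{C/|\lambda|}}{|\lambda|}$ directly, whereas you first apply Gronwall to get a uniform bound on $|v|$ and then substitute back into the integral to obtain $|v-v_0|\le c\|\Delta_Z\|_{L^1}\le C/|\lambda|$. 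These are interchangeable standard devices for Volterra equations with $L^1$-small kernels, so the two arguments are really the same proof in slightly different clothing.
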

\begin{proof}
For $\lambda \in \C^{K_r}$, the functions
\begin{align*}
\mathcal M(t,\lambda,\psi) &\coloneqq [ M^+(t,\lambda,\psi)]_2, 
	\\
\mathcal M_0(t,\lambda,\psi) &\coloneqq \big[Z_p (t,\lambda,\psi) \e^{-\I \theta t \hat \sigma_3}\big(Z^{-1}_p(0,\lambda,\psi)\big) \big]_2,
	\\
E(t,\tau,\lambda,\psi) & \coloneqq Z_p (t,\lambda,\psi) 
\begin{pmatrix}
\e^{-2\I \theta (t-\tau)} & \\
&1
\end{pmatrix}
  Z^{-1}_p (\tau,\lambda,\psi),
\end{align*}
are well-defined on their domains $[0,1]\times \C^{K_r} \times B_r(0,\LL_1)$ and $[0,1]^2\times \C^{K_r} \times B_r(0,\LL_1)$ respectively, where the inverse $Z^{-1}_p$ is given by \eqref{Z_p_inverse} and is uniformly bounded on $[0,1]\times \C^{K_r} \times B_r(0,\LL_1)$ by Lemma \ref{lem_W_p_Z_p_invertible} and Corollary \ref{cor_W_p_Z_p_invertible}.
Due to Lemma \ref{lem_Volterra_M^+_M^-}, $\mathcal M$ satisfies the following Volterra equation for $t\in[0,1]$, $\lambda \in \C^{K_r}$ and $\psi \in B_r(0,\LL_1)$:
\bew 
\mathcal M (t,\lambda) = \mathcal M_0(t,\lambda) 
+ \int^t_0 E(t,\tau,\lambda) \Delta_Z(\tau,\lambda) \mathcal M(\tau,\lambda)  \, \mathrm d \tau,
\eew
where the $\psi$-dependence has been suppressed for simplicity. Thus $\mathcal M$ admits the power series representation
\bew
\mathcal M (t,\lambda)  = \sum^\infty_{n=0} \mathcal M_n(t,\lambda), 
\eew
which converges (pointwise) absolutely and uniformly on $[0,1]\times\C^{K_r} \times B_r(0,\LL_1)$, where 
\bew
\mathcal M_n(t,\lambda) \coloneqq 
 \int^t_0 E(t,\tau,\lambda) \Delta_Z(\tau,\lambda) \mathcal M_{n-1}(\tau,\lambda)  \, \mathrm d \tau \qquad (n\geq1)
\eew
satisfies the estimate
\begin{align*}
|\mathcal M_n(t,\lambda)| &\leq \int_{0\leq \tau_n \leq \cdots \leq \tau_1 \leq t} \prod^n_{i=1}
|E(t,\tau_i,\lambda) \Delta_Z(\tau,\lambda) \mathcal M_0(\tau,\lambda)| 
\, \mathrm d \tau_n \cdots \mathrm d \tau_1 \\
&\leq \frac{1}{n!} \bigg (   \int^t_0 |  E(t,\tau,\lambda) | \, | \Delta_Z(\tau,\lambda)  |  \, |\mathcal M_0(\tau,\lambda)|   \, \mathrm d \tau \bigg)^n
\end{align*}
uniformly on $[0,1]\times \C^{K_r} \times B_r(0,\LL_1)$.
The functions $E$ and $\mathcal M_0$ satisfy 
\begin{align*}
|\mathcal M_0(t,\lambda)| &\leq |Z_p (t,\lambda)| \, | Z^{-1}_p(0,\lambda)|,
	 \\
| E(t,\tau,\lambda) | & \leq  | Z_p (t,\lambda) | \, | Z^{-1}_p (\tau,\lambda)|,
\end{align*}
for $0\leq \tau \leq t\leq 1$ and $(\lambda, \psi) \in \overbar{D^{K_r}_-} \times B_r(0,\LL_1)$. Therefore, in view of Corollary \ref{cor_W_p_Z_p_invertible} and Lemma \ref{lem_Delta_ZW_asymp}, there exists a constant $C>0$ such that 
\bew
|\mathcal M_n(t,\lambda)| \leq \frac{C^n}{n! \, |\lambda|^n}
\eew
uniformly on $[0,1]\times \overbar{D^{K_r}_-}\times  B_r(0,\LL_1)$, and thus
\begin{align*}
|\mathcal M(t,\lambda) - \mathcal M_0 (t,\lambda)| \leq \sum^\infty_{n=1} \frac{C^n}{n! \, |\lambda|^n} 
\leq \frac{C  \e^{\frac{C}{ |\lambda|}}}{ |\lambda|} 
\end{align*}
uniformly on $[0,1]\times \overbar{D^{K_r}_-}\times B_r(0,\LL_1)$. This proves \eqref{lem_asymp_M_Z_p_1}; the proofs of \eqref{lem_asymp_M_Z_p_2_2}--\eqref{lem_asymp_M_Z_p_2} are similar.
\end{proof}

\begin{lemma} \label{lem_asymp_M_p_Z_p}
Let $r>0$.
There exists a constant $C>0$ such that 
\be \label{asymp_M_p_Z_p_1}
 |\lambda|^2 \, \big| \big[  Z_p(t,\lambda,\psi) \, \e^{-\I \theta t \hat \sigma_3}\big(Z^{-1}_p(0,\lambda,\psi)\big)  
- M^+_p (t,\lambda,\psi)  \big]_2  \big| \leq C,
\ee
\be \label{asymp_M_p_Z_p_4}
|\lambda|^2 \, 
\big| \big[  Z_p(t,\lambda,\psi) \, \e^{-\I \theta t \check \sigma_3}\big(Z^{-1}_p(0,\lambda,\psi)\big)  
- M^-_p (t,\lambda,\psi)  \big]_1  \big| \leq C
\ee
uniformly on $[0,1]\times \overbar{D^{K_r}_-} \times B_r(0,\LL_1)$, and
\be \label{asymp_M_p_Z_p_3}
|\lambda|^2 \, 
\big| \big[  Z_p(t,\lambda,\psi) \, \e^{-\I \theta t \check \sigma_3}\big(Z^{-1}_p(0,\lambda,\psi)\big)  
- M^-_p (t,\lambda,\psi)  \big]_2  \big| \leq C,
\ee
\be \label{asymp_M_p_Z_p_2}
|\lambda|^2 \, \big| \big[  Z_p(t,\lambda,\psi) \, \e^{-\I \theta t \hat \sigma_3}\big(Z^{-1}_p(0,\lambda,\psi)\big)  
- M^+_p (t,\lambda,\psi)  \big]_1  \big| \leq C
\ee
uniformly on $[0,1]\times \overbar{D^{K_r}_+} \times B_r(0,\LL_1)$. 
\end{lemma}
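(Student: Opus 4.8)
The plan is to establish \eqref{asymp_M_p_Z_p_1} in detail; the remaining three estimates follow from exactly the same argument after interchanging the two columns, replacing $\hat\sigma_3$ by $\check\sigma_3$ and $M^+_p$ by $M^-_p$, and swapping the roles of $\overbar{D_-}$ and $\overbar{D_+}$. Throughout I suppress the dependence on $\lambda$ and $\psi$. The starting point is the elementary identity
\bew
Z_p(t)\,\e^{-\I\theta t\hat\sigma_3}\big(Z^{-1}_p(0)\big) - M^+_p(t)
= Z_p(t)\,\e^{-\I\theta t\sigma_3}\big[Z^{-1}_p(0) - \id\big]\e^{\I\theta t\sigma_3}
- W_p(t)\,\e^{2\I\theta t\sigma_3},
\eew
valid for $t\in[0,1]$ and $\lambda\in\C^{K_r}$, which follows at once from $M^+_p(t) = Z_p(t) + W_p(t)\e^{2\I\theta t\sigma_3}$, from the definition $\e^{z\hat\sigma_3}(A)=\e^{z\sigma_3}A\e^{-z\sigma_3}$, and from $\e^{-\I\theta t\sigma_3}\e^{\I\theta t\sigma_3}=\id$.

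The next step is to expand the right-hand side in powers of $\lambda^{-1}$. By Corollary \ref{cor_W_p_Z_p_invertible}, $Z^{-1}_p(0) = \id - \lambda^{-1}Z_1(0) + \mathcal O(\lambda^{-2})$ uniformly on $\C^{K_r}\times B_r(0,\LL_1)$, where $Z_1(0)$ is the constant (i.e.~$t$-independent) matrix obtained from the formula for $Z_1$ on setting $\Gamma(0)=0$, and the $\mathcal O(\lambda^{-2})$ remainder is likewise independent of $t$. From the definitions, $W_1 = -Z_1(0)$ and $W_p(t) = \lambda^{-1}W_1 + \mathcal O(\lambda^{-2})$. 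Since $Z_p$ is uniformly bounded on $[0,1]\times\C^{K_r}\times B_r(0,\LL_1)$ (again Corollary \ref{cor_W_p_Z_p_invertible}), the right-hand side of the identity above equals
\bew
-\frac1\lambda\Big(\e^{-\I\theta t\sigma_3}Z_1(0)\e^{\I\theta t\sigma_3}
+ W_1\e^{2\I\theta t\sigma_3}\Big) + \frac1{\lambda^2}\,G(t,\lambda,\psi),
\eew
where $G$ is a finite sum of terms of the form $P\,\e^{-\I\theta t\sigma_3}\,B\,\e^{\I\theta t\sigma_3}$ and $\tilde P\,\e^{2\I\theta t\sigma_3}$, with $P,\tilde P$ bounded matrix functions carrying no exponential factor and $B$ a bounded matrix function independent of $t$. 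The crucial point is that the $\lambda^{-1}$-term vanishes identically: since $Z_1(0)$ is off-diagonal, $\e^{-\I\theta t\sigma_3}Z_1(0)\e^{\I\theta t\sigma_3}$ has $(1,2)$-entry $-\tfrac\I2\psi^1_0\,\e^{-2\I\theta t}$ and $(2,1)$-entry $\tfrac\I2\psi^2_0\,\e^{2\I\theta t}$, whereas $W_1\e^{2\I\theta t\sigma_3}$ has $(1,2)$-entry $\tfrac\I2\psi^1_0\,\e^{-2\I\theta t}$ and $(2,1)$-entry $-\tfrac\I2\psi^2_0\,\e^{2\I\theta t}$, and the two cancel --- this is precisely the relation $Z_1(0)+W_1=0$ built into the construction of $Z_p$ and $W_p$ (cf.~Remark \ref{rem_derivation_of_Z_k_W_k}).

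It remains to extract the second column. Since $\e^{-\I\theta t\sigma_3}=\mathrm{diag}(\e^{-\I\theta t},\e^{\I\theta t})$, for any $t$-independent matrix $B$ and any matrix $P$ carrying no exponential factor one computes $[P\,\e^{-\I\theta t\sigma_3}B\,\e^{\I\theta t\sigma_3}]_2 = P\,(B_{12}\,\e^{-2\I\theta t},\,B_{22})^{\intercal}$, and similarly $[\tilde P\,\e^{2\I\theta t\sigma_3}]_2 = \e^{-2\I\theta t}\,[\tilde P]_2$; in particular only the exponential $\e^{-2\I\theta t}$ and the constant $1$ can occur in the second column, never $\e^{2\I\theta t}$. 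On $[0,1]\times\overbar{D^{K_r}_-}$ we have $|\e^{-2\I\theta t}| = |\e^{-\I\theta t}|^2 \le 1$ by the definition of $D_-$, so the second column of $\lambda^{-2}G$ is $\mathcal O(\lambda^{-2})$ uniformly on $[0,1]\times\overbar{D^{K_r}_-}\times B_r(0,\LL_1)$, which gives \eqref{asymp_M_p_Z_p_1}. For \eqref{asymp_M_p_Z_p_2} one extracts the first column instead, in which only $\e^{2\I\theta t}$ (bounded on $\overbar{D_+}$) appears; for \eqref{asymp_M_p_Z_p_3}--\eqref{asymp_M_p_Z_p_4} one starts from the analogous identity $Z_p(t)\e^{-\I\theta t\check\sigma_3}(Z^{-1}_p(0)) - M^-_p(t) = Z_p(t)\e^{-\I\theta t\sigma_3}[Z^{-1}_p(0)-\id]\e^{-\I\theta t\sigma_3} - W_p(t)$, whose $\lambda^{-1}$-term again vanishes because $\check\sigma_3$-conjugation fixes off-diagonal matrices and $W_1=-Z_1(0)$. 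I do not expect a genuine obstacle here: the argument is pure bookkeeping, and the only thing demanding care is to check, in each of the four cases, that the column singled out in the statement contains only the exponential that is bounded on the half-plane specified there, so that the $\mathcal O(\lambda^{-2})$ estimate survives the restriction to $\overbar{D_\pm}$.
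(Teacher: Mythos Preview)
Your proof is correct and follows essentially the same approach as the paper. Both arguments expand $Z_p^{-1}(0)$ via Corollary~\ref{cor_W_p_Z_p_invertible}, use the relation $W_1=-Z_1(0)$ to cancel the $\lambda^{-1}$ contribution, and then bound the $\lambda^{-2}$ remainder by observing that only the exponential $\e^{\mp 2\I\theta t}$ bounded on $\overbar{D_\mp}$ survives in the column specified; your organization via the exact identity for the difference and the explicit column-by-column tracking of exponentials is slightly more systematic than the paper's side-by-side expansion, but the substance is identical.
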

\begin{proof}
Since
\bew
\big[\e^{\I \theta t \hat \sigma_3} \big(Z_1(0,\psi) \big)\big]_2 = 
\e^{2\I \theta t } 
\begin{pmatrix}
-\frac{\I}{2} \psi^1_0 \\
0
\end{pmatrix}
= \e^{2\I \theta t } \big[Z_1(0,\psi)\big]_2
= - \e^{2\I \theta t } \big[W_1(0,\psi)\big]_2
\eew 
for $\psi\in\LL_1$, Corollary \ref{cor_W_p_Z_p_invertible} yields  
\begin{align*}
\big[Z_p(t,\lambda,\psi) \, &\e^{-\I \theta t \hat \sigma_3}(Z^{-1}_p(0,\lambda,\psi)) \big]_2 \\
&= \Big[ Z_p(t,\lambda,\psi) \, \e^{-\I \theta t \hat \sigma_3}
\Big(\id - \frac{Z_1(0,\psi)}{\lambda}  + \mathcal O(|\lambda|^{-2})\Big) \Big]_2\\
&= \Big[Z_p(t,\lambda,\psi)\Big]_2 - 
\frac{\e^{-2\I \theta t }  }{\lambda} \Big[Z_1(0,\psi)\Big]_2  
+ \mathcal O\bigg(\frac{\e^{2\Im \theta t }}{|\lambda|^2}\bigg)  \\
&= \Big[Z_p(t,\lambda,\psi) \Big]_2 + 
\frac{ \e^{-2\I \theta t }}{\lambda} \Big[ W_1(0,\psi) \Big]_2  
+ \mathcal O\bigg(\frac{\e^{2\Im \theta t }}{|\lambda|^2}\bigg)
\end{align*}
uniformly on $[0,1] \times \C^{K_r} \times B_r(0,\LL_1)$ as $|\lambda|\to\infty$. 
On the other hand, we have  that
\bew
\big[ M^+_p(t,\lambda,\psi) \big]_2 = 
\big[  Z_p (t,\lambda,\psi) \big]_2
+ \frac{ \e^{-2\I \theta t }}{\lambda} \Big[ W_1(0,\psi) \Big]_2  
+ \mathcal O\bigg(\frac{\e^{2\Im \theta t }}{|\lambda|^2}\bigg)
\eew
uniformly on $[0,1] \times \C^{K_r} \times B_r(0,\LL_1)$ as $|\lambda|\to\infty$. Since $\Im \theta \leq 0$ for $\lambda \in \overbar{D_-}$, the estimate \eqref{asymp_M_p_Z_p_1} follows. The estimates \eqref{asymp_M_p_Z_p_4}--\eqref{asymp_M_p_Z_p_3} are proved in a similar way.
\end{proof}

\smallskip

\begin{proof}[Proof of Theorem \ref{thm_asymptotics_M_1}]
The first assertion of the theorem follows by combining Lemma \ref{lem_asymp_M_Z_p} and Lemma \ref{lem_asymp_M_p_Z_p}. Let $r>0$. By \eqref{lem_asymp_M_Z_p_1} and \eqref{asymp_M_p_Z_p_1}, there exists a $C>0$ such that 
\bew
|\lambda| \, \big| \big[M^+(t,\lambda,\psi) - M^+_p (t,\lambda,\psi)  \big]_2 \big| \leq C
\eew
uniformly on $[0,1]\times \overbar{D^{K_r}_-} \times B_r(0,\LL_1)$. Thus, 
\bew
|\lambda| \, \e^{-2|\Im (\lambda^2)| t} \, \big| \big[M(t,\lambda,\psi) - M_p (t,\lambda,\psi)  \big]_2 \big| \leq C
\eew
uniformly on $[0,1]\times \overbar{D^{K_r}_-} \times B_r(0,\LL_1)$. Since 
$M_p  (t,\lambda,\psi) = E_\lambda (t) + \mathcal O(|\lambda|^{-1} \e^{2|\Im (\lambda^2)| t} )$ uniformly on 
$[0,1]\times \overbar{D^{K_r}_-} \times B_r(0,\LL_1)$ as $|\lambda|\to\infty$, we infer that there exists a $C>0$ such that 
\bew
|\lambda| \, \e^{-2|\Im (\lambda^2)| t} \, \big| \big[M(t,\lambda,\psi) - E_\lambda (t)  \big]_2 \big| \leq C
\eew
uniformly on $[0,1]\times \overbar{D^{K_r}_-} \times B_r(0,\LL_1)$. Analogously, by using \eqref{lem_asymp_M_Z_p_2} and \eqref{asymp_M_p_Z_p_2}, one infers the existence of a constant $C>0$ such that 
\bew
|\lambda| \, \e^{-2|\Im (\lambda^2)| t} \, \big| \big[M(t,\lambda,\psi) - E_\lambda (t)  \big]_1 \big| \leq C
\eew
uniformly on $[0,1]\times \overbar{D^{K_r}_+} \times B_r(0,\LL_1)$. 
The estimates \eqref{lem_asymp_M_Z_p_1_2} and \eqref{asymp_M_p_Z_p_3} (resp.~\eqref{lem_asymp_M_Z_p_2_2} and \eqref{asymp_M_p_Z_p_4}) yield the same asymptotic estimates for $[M-E_\lambda]_2$ (resp.~$[M-E_\lambda]_1$) for $\lambda$ restricted to  $\overbar{D^{K_r}_+}$ (resp.~$\overbar{D^{K_r}_-}$). In summary, this yields the existence of constants $C,K>0$ such that 
\bew
|\lambda| \, \e^{-2|\Im (\lambda^2)| t} \, \big| M(t,\lambda,\psi) - E_\lambda (t)   \big| \leq C
\eew
uniformly on $[0,1]\times \C^K \times B_r(0,\LL_1)$. 
This proves \eqref{thm_asymptotics_M_2}.

To prove \eqref{thm_asymptotics_Mdot_2}, we recall Cauchy's inequality: the derivative $f'$ of a holomorphic function $f \colon \C \supseteq G \to\C$  satisfying $|f(z)| \leq C$ on a disc $D(r,a) \subseteq G$ of radius $r$ centered at $a$ in the open domain $G$ can be estimated at the point $a$ by $|f'(a)| \leq C r^{-1}$.
According to the first part of the theorem, for any $r>0$ there is a $K>0$ such that
\bew
|\lambda|\, \e^{-2|\Im (\lambda^2)|t} \, \big|M(t,\lambda,\psi)-E_\lambda(t)\big|=\mathcal O (1)
\eew
uniformly for $t\in[0,1]$, $\lambda\in \C^K$ and $\psi\in B_r(0,\LL_1)$ as $|\lambda|\to\infty$. 
By applying Cauchy's inequality to this estimate, we immediately obtain \eqref{thm_asymptotics_Mdot_2}.
\end{proof}

Let us for $n\in\N$ and $i=1,2,3,4$ consider the complex numbers $\zeta^i_n$, which are given by
\bew
\zeta^1_n \coloneqq \sqrt{\frac{n \pi}{2}}, \quad 
\zeta^2_n \coloneqq -\sqrt{\frac{n \pi}{2}}, \quad 
\zeta^3_n \coloneqq \I \sqrt{\frac{n \pi}{2}}, \quad 
\zeta^4_n \coloneqq -\I \sqrt{\frac{n \pi}{2}}.
\eew
\begin{theorem} \label{thm_asymp_seq_near_per_ev}
For any potential $\psi\in \LL_1$ and any sequence $(z^i_n)_{n\in\N}$ of complex numbers, whose elements 
$z^i_n$, $i=1,2,3,4$, satisfy
\bew
z^i_n = \zeta^i_n + O\bigg(\frac{1}{\sqrt{n}}\bigg) \quad \text{as} \quad n\to\infty,
\eew 
it holds that
\begin{align}
\sup_{0\leq t\leq 1} \big| M(t,z^i_n)- E_{z^i_n}(t) \big| &= \mathcal O\big(n^{-1/2}\big), 
 \label{thm_asymp_seq_near_per_ev_1} \\
\sup_{0\leq t\leq 1} \big| \dot M(t,z^i_n)- \dot E_{z^i_n}(t) \big| &= \mathcal O(1)
\label{thm_asymp_seq_near_per_ev_1'}
\end{align}
as $n\to\infty$.
If moreover the squares $(z^i_n)^2$ satisfy
\bew
(z^i_n)^2=(\zeta^i_n)^2 + \mathcal O\big(n^{-1/2}\big) \quad \text{as} \quad n\to\infty,
\eew
then it holds additionally that
\be \label{thm_asymp_seq_near_per_ev_2}
\sup_{0\leq t\leq 1} \big| M(t,z_n)- E_{\zeta^i_n}(t) \big| =  \mathcal O\big(n^{-1/2}\big).
\ee
The estimates in \eqref{thm_asymp_seq_near_per_ev_1} and \eqref{thm_asymp_seq_near_per_ev_1'}  hold uniformly on bounded subsets of $\mathrm \LL_1$ and for sequences $(z^i_n)_{n\in\N}$, which  satisfy  $|z^i_n - \zeta^i_n |\leq C \sqrt{1/n}$ for all $n\geq1$  with a uniform constant $C>0$. 
The estimate in \eqref{thm_asymp_seq_near_per_ev_2} holds uniformly on bounded subsets of $\mathrm \LL_1$ and for sequences $(z_n)_{n\in\Z}$, which satisfy 
$|(z^i_n)^2-(\zeta^i_n)^2|\leq C \sqrt{1/n}$ for all $n\geq1$  with a uniform  constant $C>0$. 
\end{theorem}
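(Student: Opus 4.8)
The plan is to deduce Theorem~\ref{thm_asymp_seq_near_per_ev} directly from the large-$|\lambda|$ asymptotics of Theorem~\ref{thm_asymptotics_M_1}, the only real task being to control the exponential factor $\e^{2|\Im(\lambda^2)|t}$ along the sequences in question. I would start from the elementary observation that $(\zeta^i_n)^2 = n\pi/2$ for $i=1,2$ and $(\zeta^i_n)^2 = -n\pi/2$ for $i=3,4$, so that in every case $(\zeta^i_n)^2$ is \emph{real} while $|\zeta^i_n| = \sqrt{n\pi/2}\to\infty$. Writing $\delta_n \coloneqq z^i_n - \zeta^i_n$, the hypothesis $\delta_n = \mathcal O(n^{-1/2})$ together with $\zeta^i_n = \mathcal O(\sqrt n)$ gives
\bew
(z^i_n)^2 - (\zeta^i_n)^2 = 2\zeta^i_n\delta_n + \delta_n^2 = \mathcal O(1);
\eew
since $(\zeta^i_n)^2 \in \R$ this forces $|\Im((z^i_n)^2)| = \mathcal O(1)$, hence $\e^{2|\Im((z^i_n)^2)|t} = \mathcal O(1)$ uniformly for $t\in[0,1]$.

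Granting this bound, \eqref{thm_asymp_seq_near_per_ev_1} and \eqref{thm_asymp_seq_near_per_ev_1'} are immediate: since $|z^i_n|\to\infty$ the sequence eventually lies in $\C^K$, so the estimates \eqref{thm_asymptotics_M_2} and \eqref{thm_asymptotics_Mdot_2} yield, uniformly for $t\in[0,1]$,
\bew
|M(t,z^i_n) - E_{z^i_n}(t)| \le \frac{C}{|z^i_n|}\,\e^{2|\Im((z^i_n)^2)|t} = \mathcal O(n^{-1/2}), \qquad
|\dot M(t,z^i_n) - \dot E_{z^i_n}(t)| \le C\,\e^{2|\Im((z^i_n)^2)|t} = \mathcal O(1).
\eew
The claimed uniformity over bounded subsets of $\LL_1$ and over sequences with $|z^i_n - \zeta^i_n| \le C/\sqrt n$ is inherited verbatim from Theorem~\ref{thm_asymptotics_M_1}, since the constants produced above depend only on $C$ and on the radius of the ball in $\LL_1$.

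For \eqref{thm_asymp_seq_near_per_ev_2} I would split
\bew
M(t,z^i_n) - E_{\zeta^i_n}(t) = \big(M(t,z^i_n) - E_{z^i_n}(t)\big) + \big(E_{z^i_n}(t) - E_{\zeta^i_n}(t)\big).
\eew
The first term is $\mathcal O(n^{-1/2})$ by the part already proved (note that $|(z^i_n)^2 - (\zeta^i_n)^2| \le C/\sqrt n$, together with $z^i_n$ lying near $\zeta^i_n$, even gives $|z^i_n - \zeta^i_n| = \mathcal O(1/n)$). For the second term, $E_{z^i_n}(t) - E_{\zeta^i_n}(t)$ is a diagonal matrix whose two entries have the form $\e^{w} - \e^{\tilde w}$ with $w = \mp 2\I(z^i_n)^2 t$ and $\tilde w = \mp 2\I(\zeta^i_n)^2 t$; since $(\zeta^i_n)^2\in\R$ one has $\Re\tilde w = 0$, and $w - \tilde w = \mp 2\I t\big((z^i_n)^2 - (\zeta^i_n)^2\big) = \mathcal O(n^{-1/2})$, so the elementary bound $|\e^{w} - \e^{\tilde w}| \le \e^{\Re\tilde w}\,|w-\tilde w|\,\e^{|w - \tilde w|}$ gives $|E_{z^i_n}(t) - E_{\zeta^i_n}(t)| = \mathcal O(n^{-1/2})$ uniformly for $t\in[0,1]$. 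Adding the two contributions yields \eqref{thm_asymp_seq_near_per_ev_2}, and the uniformity once more follows by keeping track of constants.

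I do not anticipate a genuine obstacle: the statement is in essence Theorem~\ref{thm_asymptotics_M_1} specialized to the sequences $(\zeta^i_n)$. The one point requiring care is the bookkeeping of the exponential factor---verifying that an $\mathcal O(n^{-1/2})$ perturbation of $z^i_n$ away from the real or imaginary axis contributes only an $\mathcal O(1)$ (and not an unbounded) imaginary part to $(z^i_n)^2$, and, for \eqref{thm_asymp_seq_near_per_ev_2}, keeping straight which hypothesis---the $\mathcal O(n^{-1/2})$ rate for $z^i_n$ or the one for $(z^i_n)^2$---governs which of the two terms in the splitting.
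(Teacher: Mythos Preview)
Your proposal is correct and follows essentially the same route as the paper: deduce \eqref{thm_asymp_seq_near_per_ev_1} and \eqref{thm_asymp_seq_near_per_ev_1'} from Theorem~\ref{thm_asymptotics_M_1} after observing that $\Im((z^i_n)^2)=\mathcal O(1)$, and obtain \eqref{thm_asymp_seq_near_per_ev_2} via the triangle inequality together with the elementary bound $|\e^z-1|\le|z|\e^{|z|}$ applied to $E_{z^i_n}(t)-E_{\zeta^i_n}(t)$. Your treatment is slightly more explicit in the bookkeeping, but the argument is the same.
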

\begin{proof}
The estimates \eqref{thm_asymp_seq_near_per_ev_1} and \eqref{thm_asymp_seq_near_per_ev_1'}
follow directly from Theorem \ref{thm_asymptotics_M_1}, because $\Im z^2_n = \mathcal O(1)$ as $|n|\to \infty$ by assumption, and therefore $\e^{2|\Im z^2_n| t}=\mathcal O (1)$ uniformly in $t\in[0,1]$ as $|n|\to\infty$.

To prove \eqref{thm_asymp_seq_near_per_ev_2} we note that $|\e^z -1| \leq | z | \, \e^{| z|}$ for arbitrary $z\in\C$, thus the additional restriction on $z^i_n$ implies that
\begin{align}
\begin{aligned}
\label{pr_thm_asymptotics_M_2}
\big| \e^{2 (z^i_n)^2 \I t} - \e^{2(\zeta^i_n)^2 \I t } \big| 
= \big| \e^{2( (z^i_n)^2 - (\zeta^i_n)^2) \I t} - 1 \big| 
= \mathcal O\big(n^{-1/2}\big)
\end{aligned}
\end{align}
uniformly for $t\in[0,1]$ as $n\to\infty$.
The triangle inequality implies that
\bew
\big| M(t,z^i_n)- E_{\zeta^i_n}(t) \big|
\leq \big| M(t,z^i_n)- E_{z^i_n}(t)\big| + \big| E_{z^i_n}(t) - E_{\zeta^i_n}(t) \big|
\eew
for $t\in[0,1]$, and hence \eqref{thm_asymp_seq_near_per_ev_2} follows from \eqref{thm_asymp_seq_near_per_ev_1} and \eqref{pr_thm_asymptotics_M_2}.
\end{proof}

\begin{remark}
For convenience, the asymptotic results in this section are stated for the space $\LL_\tau$ with $\tau=1$ (which contains the periodic space $\LL$ as a subspace). It is clear that analogous results hold for an arbitrary fixed $\tau>0$. 
\end{remark}

\vspace{0em}


\section{Spectra} \label{sec_spectra}
\vspace{0em}

\noindent
We will consider three different notions of spectra associated with the spectral problem \eqref{main_eq}: the Dirichlet, Neumann and periodic spectrum. These spectra are the zero sets of certain spectral functions, which are defined in terms of the entries of the fundamental solution $M$ evaluated at time $t=1$. We introduce the following notation:
\bew
\M \coloneqq M|_{t=1}, \quad \m_i \coloneqq m_i|_{t=1}, \quad i=1,2,3,4.
\eew

\vspace{0em}
\subsection{Dirichlet and Neumann spectrum}

We define the Dirichlet domain $\mathcal A_{\mathrm D}$ of the AKNS-system \eqref{AKNS_system} by\footnote{Following~\cite{GrebertKappeler14}, we define the Dirichlet spectrum in terms of $f_2$ and the Neumann spectrum in terms of $f_1$.}
\bew
\mathcal A_{\mathrm D} \coloneqq \big\{ f\in H^1([0,1],\C) \times H^1([0,1],\C) \; \big| \; f_2(0)=0=f_2(1) \big\}.
\eew
The Dirichlet domain $\mathcal{D}_{\mathrm D}$ of the corresponding ZS-system \eqref{main_eq} is then given by
\bew
\mathcal D_{\mathrm D} \coloneqq \big\{ g\in H^1([0,1],\C) \times H^1([0,1],\C) \; \big| \; (g_1-g_2)(0)=0=(g_1-g_2)(1)\big\},
\eew
as $\mathcal A_{\mathrm D}$ corresponds to $\mathcal D_{\mathrm D}$ under the transformation $T$, cf.~\eqref{def_T}.
For a given potential $\psi\in\LL$, we say that $\lambda\in\C$ lies in the \emph{Dirichlet spectrum} if there exists a 
$\phi\in\mathcal D_{\mathrm D}\setminus\{0\}$ which solves \eqref{main_eq}. 

\begin{theorem}
Fix $\psi\in\LL$. The Dirichlet spectrum of \eqref{main_eq} is the zero set of the entire function 
\be \label{char_func_dirich}
\chi_{\mathrm D}(\lambda,\psi) \coloneqq \frac{\m_4+\m_3-\m_2-\m_1}{2\I}\bigg|_{(\lambda,\psi)}.
\ee
In particular, $\chi_{\mathrm D}(\lambda,0) = \sin 2\lambda^2$.
\end{theorem}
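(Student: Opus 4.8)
The plan is to translate the Dirichlet boundary condition from the ZS-formulation into a condition on the fundamental solution $M$ at $t=1$, and then identify the resulting scalar equation with the vanishing of the claimed entire function $\chi_{\mathrm D}$. First I would observe that any solution $\phi$ of \eqref{main_eq} is of the form $\phi(t) = M(t,\lambda,\psi)\phi_0$ with $\phi_0 = \phi(0)\in\C^2$, so membership of $\phi$ in $\mathcal D_{\mathrm D}\setminus\{0\}$ is equivalent to: $\phi_0\neq 0$, $(\phi^1_0-\phi^2_0)=0$ (the condition at $t=0$), and $\big((M\phi_0)^1-(M\phi_0)^2\big)\big|_{t=1}=0$ (the condition at $t=1$). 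The first two conditions say $\phi_0$ is a nonzero multiple of $(1,1)^\intercal$, so we may take $\phi_0=(1,1)^\intercal$, and then the $t=1$ condition becomes $(\m_1+\m_2)-(\m_3+\m_4)=0$, i.e. $\m_1+\m_2-\m_3-\m_4=0$, which up to the nonzero constant factor $-2\I$ is exactly $\chi_{\mathrm D}(\lambda,\psi)=0$. Hence $\lambda$ lies in the Dirichlet spectrum if and only if $\chi_{\mathrm D}(\lambda,\psi)=0$.

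Next I would justify that $\chi_{\mathrm D}(\cdot,\psi)$ is entire: by Theorem~\ref{thm_fund_sol}, $M(1,\lambda,\psi)$ is analytic in $\lambda$ on all of $\C$ for the fixed potential $\psi\in\LL\subset\LL_1$, and $\chi_{\mathrm D}$ is a fixed linear combination of the four entries $\m_i=m_i|_{t=1}$, hence analytic, hence entire as a function of $\lambda$. (One may alternatively phrase this via the AKNS fundamental solution $K$ using \eqref{fund_K}: indeed, with the transformation $T$ of \eqref{def_T} one checks that $(\m_4+\m_3-\m_2-\m_1)/(2\I)$ coincides with $k_2|_{t=1}=k_2$, the $(1,2)$-entry of $K$ at $t=1$, which makes the relation to the Dirichlet condition $f_2(0)=0=f_2(1)$ in AKNS coordinates manifest: solving from $f(0)=(1,0)^\intercal$ and requiring $f_2(1)=0$ gives precisely $k_2=0$.)

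Finally, for the trivial potential $\psi=0$ we have $M(1,\lambda,0)=E_\lambda(1)=\operatorname{diag}(\e^{-2\I\lambda^2},\e^{2\I\lambda^2})$, so $\m_1=\e^{-2\I\lambda^2}$, $\m_4=\e^{2\I\lambda^2}$, and $\m_2=\m_3=0$. Substituting into \eqref{char_func_dirich} gives
\[
\chi_{\mathrm D}(\lambda,0)=\frac{\e^{2\I\lambda^2}-\e^{-2\I\lambda^2}}{2\I}=\sin 2\lambda^2,
\]
as claimed.

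I do not anticipate a serious obstacle here; the only point requiring a little care is the bookkeeping that reduces the two boundary conditions (at $t=0$ and $t=1$) to a single scalar equation and matches the signs and the factor $2\I$ with the stated formula for $\chi_{\mathrm D}$. Everything else — entireness of $M(1,\cdot,\psi)$ and the zero-potential computation — is immediate from Theorem~\ref{thm_fund_sol} and the explicit form of $E_\lambda$.
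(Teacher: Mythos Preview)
Your main argument is correct and essentially identical to the paper's: reduce the boundary conditions to $M(1,\lambda,\psi)(1,1)^\intercal$ having equal components, i.e.\ $\m_1+\m_2=\m_3+\m_4$, and then read off $\chi_{\mathrm D}$ and its value at $\psi=0$.

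One small slip in your parenthetical AKNS remark: with $f(0)=(1,0)^\intercal$ one has $f(1)=K(1)\,(1,0)^\intercal=(k_1,k_3)^\intercal$, so the condition $f_2(1)=0$ reads $k_3|_{t=1}=0$, not $k_2|_{t=1}=0$. Correspondingly, using the paper's formulas for the $k_i$ in terms of the $m_i$, one finds $\chi_{\mathrm D}=-k_3|_{t=1}$ (while $k_2|_{t=1}=\chi_{\mathrm N}$). This does not affect your main proof.
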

\begin{proof}
Due to the definition of $\mathcal D_{\mathrm D}$, a complex number $\lambda$ lies in the Dirichlet spectrum of \eqref{main_eq} if and only if the fundamental solution $M$ maps the initial value $(1,1)$ to a collinear vector at $t=1$. That is,
if and only if $\m_1+\m_2 =\m_3+\m_4$.
\end{proof}

By Theorem \ref{thm_asymptotics_M_1} the characteristic function $\chi_{\mathrm D}$ satisfies 
\be \label{chi_asymptotics}
\chi_{\mathrm D}(\lambda,\psi) = \sin 2\lambda^2 + \mathcal O \big( |\lambda|^{-1}\, \e^{2|\Im (\lambda^2)|}\big)
\ee
uniformly on bounded sets in $\LL$ as $|\lambda|\to\infty$.
For $\psi \in \LL$, we set
\bew
\sigma_{\mathrm D}(\psi) \coloneqq \big\{\lambda \in \C \colon \chi_{\mathrm D}(\lambda,\psi)=0 \big\}.
\eew

We aim to localize the Dirichlet eigenvalues with the help of \eqref{chi_asymptotics}, see Lemma \ref{lem_count_dirichlet_ev} below. The proof makes use of the following elementary estimate, cf.~\cite[Appendix F]{GrebertKappeler14}: if $\lambda\in\C$ satisfies  $|\lambda-n \pi| \geq \pi/4$ for all integers $n$, then
$4 \, | \sin \lambda| > \e^{|\Im \lambda|}$.
Let us rephrase this inequality as follows.
\begin{lemma} \label{lem_est_sin_exp}
If $\lambda\in\C$ satisfies 
$|2 \lambda^2 - n \pi|\geq \pi/4$
for all integers $n$, then
\bew
4\, |\sin 2\lambda^2|>  \e^{2|\Im (\lambda^2)|}.
\eew
\end{lemma}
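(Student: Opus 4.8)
The plan is to reduce the claimed inequality directly to the elementary estimate quoted just before the lemma, namely that $4|\sin z| > \e^{|\Im z|}$ whenever $|z - n\pi| \geq \pi/4$ for all $n \in \Z$. The only work is to substitute $z = 2\lambda^2$ and check that the two hypotheses and the two conclusions match up under this substitution.

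First I would set $z = 2\lambda^2$ and observe that the hypothesis of the present lemma, $|2\lambda^2 - n\pi| \geq \pi/4$ for all integers $n$, is literally the statement $|z - n\pi| \geq \pi/4$ for all $n \in \Z$. Hence the quoted inequality applies and gives $4|\sin z| > \e^{|\Im z|}$, i.e.\ $4|\sin 2\lambda^2| > \e^{|\Im(2\lambda^2)|}$. Second, I would simplify the exponent: since $\Im(2\lambda^2) = 2\,\Im(\lambda^2)$, we have $|\Im(2\lambda^2)| = 2|\Im(\lambda^2)|$, so the right-hand side is exactly $\e^{2|\Im(\lambda^2)|}$, which is the desired form. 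This yields $4|\sin 2\lambda^2| > \e^{2|\Im(\lambda^2)|}$, completing the proof.

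There is essentially no obstacle here — the lemma is a verbatim reformulation of the cited elementary estimate with the cosmetic change of variable $\lambda \mapsto 2\lambda^2$; the only thing to be careful about is that the factor of $2$ inside $\sin$ and the factor of $2$ in the exponent are consistent, which they are because both come from the same $z = 2\lambda^2$. Accordingly I would keep the proof to two or three lines, simply citing the inequality $4|\sin z| > \e^{|\Im z|}$ (valid under $|z - n\pi| \geq \pi/4$ for all $n$), setting $z = 2\lambda^2$, and noting $|\Im(2\lambda^2)| = 2|\Im(\lambda^2)|$.
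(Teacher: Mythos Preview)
Your proposal is correct and matches the paper's approach exactly: the paper does not give a separate proof but simply introduces the lemma with the phrase ``Let us rephrase this inequality as follows,'' treating it as an immediate substitution $z = 2\lambda^2$ into the cited estimate $4|\sin z| > \e^{|\Im z|}$ from \cite[Appendix F]{GrebertKappeler14}.
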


We denote the right, left, upper and lower open complex halfplane by
\begin{align*}
\C_+ \coloneqq \{z\in\C\colon \Re z >0\}, \quad \C_- \coloneqq \{z\in\C\colon \Re z <0\}, \\
\C^+ \coloneqq \{z\in\C\colon \Im z >0\}, \quad \C^- \coloneqq \{z\in\C\colon \Im z <0\}.
\end{align*}
Lemma \ref{lem_est_sin_exp} motivates the definition of the discs $D^i_n$, which are introduced below, to localize the Dirichlet eigenvalues.
For $|n|\geq1$, we consider the set
\bew
D_n \coloneqq 
\Big\{ \lambda \in \C \colon \big|2 \lambda^2 - n \pi \big| < \frac{\pi}{4}  \Big\}, 
\eew
which consists of two open discs, and define the disc $D^i_n$, $i=1,2$, by
\bew
D^1_n \coloneqq 
 \begin{cases}
     D_n \cap \C_+, &  n \geq 1, \\
     D_n \cap \C_-, &  n \leq -1, 
   \end{cases} \qquad
D^2_n \coloneqq 
 \begin{cases}
     D_n \cap \C^+, &  n \geq 1, \\
     D_n \cap \C^-, &  n \leq -1.
   \end{cases}
\eew  
For a given integer $N\geq1$ we define the disc $B_N$ by  
\bew
B_N \coloneqq \bigg\{ \lambda \in \C \colon  | \lambda | < \sqrt{\frac{(N+1/4) \pi}{2} } \, \bigg\}.
\eew
Furthermore we set $D_0 \coloneqq B_0 \coloneqq \{ \lambda \in \C \colon |\lambda| < \sqrt{\pi/8} \}$ and impose the convention $D^i_0 \coloneqq D_0$, $i=1,2$. Then for each $N\geq0$ the disc $B_N$ contains all the discs $D^i_n$ with $|n|\leq N$.
An illustration of the discs $B_N$ and $D^i_n$ can be found in Fig.~\ref{fig:counting_lemma} (see also Fig.~\ref{fig:singexp2}).

\begin{lemma}[Counting Lemma for Dirichlet eigenvalues] \label{lem_count_dirichlet_ev}
Let $B$ be a bounded subset of $\LL$.
There exists an integer $N\geq1$, such that for every $\psi\in B$, the entire function $\chi_{\mathrm D}(\cdot,\psi)$ has exactly one root in each of the two discs $D^i_n$, $i=1,2$, for $n\in\Z$ with $|n|>N$, and exactly $2(2N+1)$ roots in the disc $B_N$
when counted with multiplicity. There are no other roots.
\end{lemma}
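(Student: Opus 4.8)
The plan is to prove the Counting Lemma via a standard Rouché argument, comparing the entire function $\chi_{\mathrm D}(\cdot,\psi)$ with the ``unperturbed'' function $\lambda \mapsto \sin 2\lambda^2$, whose zero set is explicitly $\{\lambda : 2\lambda^2 \in \pi\Z\}$. Concretely, for a positive integer $N$ and $|n|>N$, consider the boundary $\partial D^i_n$ of the small disc $D^i_n$, and for $N$ itself consider the boundary $\partial B_N$ of the large disc. On $\partial B_N$ and on each $\partial D^i_n$ with $|n|$ large, I would show that
\be \label{rouche_ineq}
\big| \chi_{\mathrm D}(\lambda,\psi) - \sin 2\lambda^2 \big| < \big| \sin 2\lambda^2 \big|
\ee
for all $\psi \in B$, provided $N$ is chosen large enough (depending only on the bounded set $B$). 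Since $\sin 2\lambda^2$ has exactly one simple zero in each $D^i_n$ (the point where $2\lambda^2 = n\pi$, split into the four quadrant-families via the square root) and exactly $2(2N+1)$ zeros in $B_N$ counted with multiplicity, Rouché's theorem then transfers these counts to $\chi_{\mathrm D}(\cdot,\psi)$. The ``no other roots'' claim follows because the discs $D^i_n$, $|n|\ge 1$, together with the region already covered, exhaust $\C$ up to the complement of all these discs, on which \eqref{rouche_ineq} also shows $\chi_{\mathrm D}\neq 0$ since $\sin 2\lambda^2 \neq 0$ there; more carefully, one covers $\C$ by $B_N$ together with the annular regions, and on the part of each annulus outside the discs $D^i_n$ one still has the lower bound from Lemma \ref{lem_est_sin_exp}.

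The key estimates are the two ingredients already prepared in the excerpt. First, the asymptotic formula \eqref{chi_asymptotics} gives a constant $C_0>0$ with
\bew
\big| \chi_{\mathrm D}(\lambda,\psi) - \sin 2\lambda^2 \big| \le \frac{C_0}{|\lambda|}\, \e^{2|\Im(\lambda^2)|}
\eew
uniformly for $\psi \in B$ and $|\lambda|$ large. Second, Lemma \ref{lem_est_sin_exp} gives the lower bound $|\sin 2\lambda^2| > \tfrac14 \e^{2|\Im(\lambda^2)|}$ whenever $\lambda$ stays at distance $\ge \pi/4$ (in the $2\lambda^2$ variable) from $\pi\Z$. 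On $\partial D^i_n$ we have exactly $|2\lambda^2 - n\pi| = \pi/4$, and for $m\neq n$ one checks $|2\lambda^2 - m\pi|\ge \pi/4$ as well since the discs $\{|2\lambda^2-m\pi|<\pi/4\}$ are pairwise disjoint; likewise on $\partial B_N$ one has $|2\lambda^2| = (N+1/4)\pi$, so $|2\lambda^2 - m\pi| \ge \pi/4$ for every integer $m$. Combining the two estimates, \eqref{rouche_ineq} holds as soon as $\tfrac{C_0}{|\lambda|} < \tfrac14$, i.e. $|\lambda| > 4C_0$; choosing $N$ so that the smallest $|\lambda|$ occurring on $\partial B_N$ or on any $\partial D^i_n$ with $|n|>N$ exceeds $\max(4C_0, K)$ (with $K$ from Theorem \ref{thm_asymptotics_M_1}) makes the argument uniform over $B$.

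A couple of bookkeeping points need care but are routine. One must verify the zero count of $\sin 2\lambda^2$ itself: the equation $2\lambda^2 = n\pi$ for $n\ge 1$ has the two solutions $\pm\sqrt{n\pi/2}$ lying in $\C_+$ and $\C_-$ respectively, i.e. one in $D^1_n$; for $n\le -1$ it has the two solutions $\pm \I\sqrt{|n|\pi/2}$, one in $D^2_n$; and $n=0$ contributes the double zero $\lambda = 0$ in $D_0 = B_0$. Summing, $B_N$ contains the double zero at $0$ plus two zeros for each $n$ with $1\le |n|\le N$, that is $2 + 4N = 2(2N+1)$ zeros counted with multiplicity, matching the claim; and each $D^i_n$ with $|n|\ge 1$ contains exactly one, which is simple. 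One also needs that $\partial D^i_n$ and $\partial B_N$ are genuine simple closed contours in the $\lambda$-plane on which $\chi_{\mathrm D}$ is analytic (it is entire), and that the discs $D^i_n$ are mutually disjoint and disjoint from the relevant boundary circles --- all of which follow from the geometry of the map $\lambda \mapsto 2\lambda^2$ together with the $\pi/4$-separation. I expect the main obstacle to be purely organizational rather than deep: carefully splitting each ``lemniscate'' disc $D_n$ into its two components $D^i_n$ according to quadrant, keeping track of the $n\ge 1$ versus $n\le -1$ cases (real-axis versus imaginary-axis clusters), and confirming that together with $B_N$ they tile $\C$ so that the ``there are no other roots'' assertion is complete; the analytic heart of the argument is just the one Rouché inequality \eqref{rouche_ineq}.
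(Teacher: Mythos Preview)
Your approach is correct and essentially identical to the paper's: both compare $\chi_{\mathrm D}(\cdot,\psi)$ with $\sin 2\lambda^2$ via Rouch\'e, using the asymptotic bound \eqref{chi_asymptotics} together with the lower bound of Lemma~\ref{lem_est_sin_exp} on $\partial D^i_n$ and $\partial B_N$. The only noticeable difference is in the ``no other roots'' step: you argue directly that the Rouch\'e inequality forces $\chi_{\mathrm D}\neq 0$ on the complement of $B_N\cup\bigcup_{|n|>N}D^i_n$, whereas the paper instead applies Rouch\'e to the larger discs $B_{N+k}$, $k\ge 1$, and checks that the root count matches the roots already located. Both arguments are valid; yours is marginally more direct, the paper's avoids having to verify the lower bound on an unbounded region.

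One small caution on the bookkeeping you flag at the end: your accounting ``for $n\ge 1$ one root in $D^1_n$; for $n\le -1$ one root in $D^2_n$'' covers only half the discs. For each $n\ne 0$ the lemma asserts one root in \emph{each} of $D^1_n$ and $D^2_n$, and indeed $D^1_n$ is centred at the real point $\sgn(n)\sqrt{|n|\pi/2}$ (a root of $\sin 2\lambda^2$ with $2\lambda^2=|n|\pi$) while $D^2_n$ is centred at the imaginary point $\sgn(n)\,\I\sqrt{|n|\pi/2}$ (a root with $2\lambda^2=-|n|\pi$); so $D^1_n$ and $D^2_n$ for fixed $n$ are components of $D_{|n|}$ and $D_{-|n|}$ respectively, not both of $D_n$. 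Once this is sorted the count goes through exactly as you state, and the paper does not spell it out either.
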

\begin{proof}
Outside of the set
\be \label{set_Pi}
\Pi \coloneqq \bigcup_{ \substack{ n\in\Z \\ i\in\{1,2\} }} D^i_n    
\ee
it holds that 
$\frac{\e^{2|\Im (\lambda^2)|}}{|\sin 2\lambda^2|} < 4$
by the previous lemma. 
Therefore we obtain from \eqref{chi_asymptotics} that 
\bew
\chi_{\mathrm D}(\lambda,\psi) = \sin 2\lambda^2 + o\big(\e^{2|\Im (\lambda^2)|}\big) = \chi_{\mathrm D}(\lambda,0) \big(1+o(1)\big)
\eew
for $|\lambda|\to\infty$ with $\lambda\notin \Pi$ uniformly for $\psi\in B$.
More precisely, this means that there exists an integer $N\geq1$ such that, for all $\psi \in B$,
\bew
|\chi_{\mathrm D}(\lambda,\psi) - \chi_{\mathrm D}(\lambda,0)| < |\chi_{\mathrm D}(\lambda,0)|
\eew
on the boundaries of all discs $D^i_n$ with $|n|>N$, $i=1,2$, and also on the boundary of $B_N$. (Note that $|\chi_{\mathrm D}(\lambda,0)|>\delta$ on these boundaries for some $\delta>0$ which can be chosen independently of $|n|>N$.) Then Rouch\'e's theorem tells us that the analytic functions $\chi_{\mathrm D}(\cdot,\psi)$ possess the same number of roots inside these discs as 
$\chi_{\mathrm D}(\cdot,0)$. This proves the first statement, because 
$\chi_{\mathrm D}(\cdot,0)\colon \lambda \mapsto\sin 2\lambda^2$ has exactly one root in each $D^i_n$ for 
$|n|>N$, $i=1,2$, and $2(2N+1)$ roots in the disc $B_N$.

It is now obvious that there are no other roots, because the number of roots of $\chi_{\mathrm D}(\cdot,\psi)$ in each of the discs $B_{N+k}$, $k\geq1$, is exactly $2(N+k+1)$ due to the same argument as we used before. But these roots correspond to the $2(2N+1)$ roots of $\chi_{\mathrm D}(\cdot,\psi)$ inside $B_N\subseteq B_{N+k}$ \emph{plus} the $2k$ roots inside the discs $D_l \subseteq B_{N+k}$ with $N<|l|\leq N+k$
that we have already identified earlier in the proof. 
\end{proof}

\smallskip

Lemma \ref{lem_count_dirichlet_ev} allows us to introduce a systematic procedure for labeling the Dirichlet eigenvalues. For this purpose, we first consider the spectrum $\sigma_{\mathrm D}(0)$ of the zero potential, which consists of the two bi-infinite sequences
\be \label{dirichlet_ev_zero}
\mu^i_n(0) = \sgn (n) \sqrt{\frac{(-1)^{i-1}|n|\pi}{2}}, \quad i=1,2, \; n \in \Z,
\ee
where
\bew
\sgn (n) \coloneqq 
 \begin{cases}
     1 &  n > 0, \\
     0 &  n = 0, \\
     -1 &  n < 0. 
   \end{cases}
\eew
The eigenvalues $\mu^1_n(0)$, $n \in \Z$, are real while the eigenvalues $\mu^2_n(0)$, $n \in \Z\setminus\{0\}$, are purely imaginary. The Dirichlet eigenvalue $0$ has multiciplity two; all the other Dirichlet eigenvalues corresponding to $\psi=0$ are simple roots of $\chi_{\mathrm D}(\cdot,0)$. 

Let now $\psi\in\LL$ be an arbitrary potential. By Lemma~\ref{lem_count_dirichlet_ev} there exists a minimal integer $N\geq 0$ such that for all  $|n|>N$, each disc $D^i_n$, $i=1,2$, contains precisely one Dirichlet eigenvalue of multiplicity one---this eigenvalue will henceforth be denoted by $\mu^i_n \equiv \mu^i_n(\psi)$---and $B_N$ contains the remaining $2(2N+1)$ eigenvalues when counted with multiplicity. 
In order to label the $2(2N+1)$ roots of $\chi_{\mathrm D}(\cdot,\psi)$ in $B_N$, we proceed as follows. 
We make a (so far unordered) list of all the elements of $\sigma_{\mathrm D}(\psi) \cap B_N$. For any multiple root of $\chi_{\mathrm D}(\psi)$ in this list, we include multiple copies of it in the list according to its multiplicity. In this way, we make sure that the list has exactly $2(2N+1)$ entries (the set $\sigma_{\mathrm D}(\psi) \cap B_N$ contains strictly less than $2(2N+1)$ elements if $\chi_{\mathrm D}(\psi)$ has non-simple roots).  
We employ the lexicographical ordering of the complex numbers, i.e.~for $z_1,z_2\in \C$, 
\bew
z_1 \preceq z_2 \quad \Leftrightarrow \quad 
 \begin{cases}
    \Re z_1 < \Re z_2 \\
     \quad \text{or} \\
      \Re z_1 = \Re z_2 \quad \text{and} \quad \Im z_1 \leq \Im z_2,
   \end{cases}
\eew
to label the $2(2N+1)$ entries of list list of roots in $B_N$ in such a way that 
\begin{align*}
\mu^1_{-N} \preceq  \cdots \preceq 
\mu^1_{-1} \preceq \mu^2_{-N}   
\preceq  \cdots \preceq\mu^2_{-1}
 \preceq \mu^1_0   \preceq \mu^2_0  \preceq\mu^2_{1} \preceq
\cdots \preceq \mu^2_{N} \preceq \mu^1_1 \preceq \cdots \preceq \mu^1_{N}.
\end{align*}
The labeling of the roots of $\chi_{\mathrm D}(\cdot,\psi)$ according to this procedure is unique except that the label of a particular Dirichlet eigenvalue is ambiguous whenever it is not a simple root of $\chi_{\mathrm D}(\cdot,\psi)$. Sequences of Dirichlet eigenvalues of the form $(\mu^i_n)_{n\in\Z}$, where $i=1$ or $i=2$, are always well-defined, since each element of such a sequence has a uniquely defined value.

\begin{remark} \label{rem_mult_ord}
In general neither the multiplicity, nor the label of a Dirichlet eigenvalue $\mu^i_n(\psi)$ is preserved under continuous deformations of the potential $\psi$---not even locally around the zero potential, where discontinuities of the functions $\psi\mapsto\mu^i_0(\psi)$ may occur due to the lexicographical ordering. Merging and splitting of Dirichlet eigenvalues can occur under continuous deformations of the potential, cf.~Fig.~\ref{fig:singexp2} where such behavior is illustrated in the case of periodic eigenvalues. However, for sufficiently large $|n|$, the mappings $\psi \mapsto \mu^i_n(\psi)$, $i=1,2$, are continuous on bounded subsets of $\LL$; in fact, we will see in the proof of Theorem \ref{prop_mu} that these mappings are analytic. In particular, the eigenvalue $\mu^i_n(\psi)$ remains simple under continuous deformations within a bounded subset of $\LL$ for large enough $|n|$.
\end{remark}

\smallskip

To continue our analysis, we introduce the Banach spaces $\ell^{p,s}_\K$ of (bi-infinite) sequences
\be\label{lpsdef}
\ell^{p,s}_\K \coloneqq \Big\{ u= (u_n)_{n\in\Z}  \; \big| \; \big((1+n^2)^{\frac{s}{2}} u_n\big)_{n\in\Z} \in \ell^p_\K  \Big\},
\ee
where $1\leq p \leq \infty$, $s\in\R$, and $\K=\R$ or $\K=\C$, as well as the closed subspaces
\bew
\check \ell^{p,s}_\K \coloneqq \Big\{ u= (u_n)_{n\in\Z} \in \ell^{p,s}_\K  \colon  u_0=0  \Big\}.
\eew
We consider basic properties of these Banach spaces in Section \ref{sec_potRI}.
Theorem \ref{prop_mu} below establishes asymptotic estimates and continuity properties of the Dirichlet eigenvalues in the $\ell^{\infty,1}_\C$ setting. 
We use the notation $\ell^{p,s}_n$ to denote the $n$-th coordinate of a sequence $(\ell^{p,s}_n)_{n\in\Z}$ in $\ell^{p,s}_\C$, see~\cite{GrebertKappeler14, KappelerPoeschel03, Kappeler_etal_09}.

From Lemma \ref{lem_count_dirichlet_ev} we infer that, uniformly on bounded subsets of $\LL$, 
\be \label{est_mu_1}
\mu^i_n(\psi) = \mu^i_n(0) + \ell^{\infty,1/2}_n, \quad i=1,2, 
\ee
where $\mu^i_n(0)$ is given by \eqref{dirichlet_ev_zero}. More precisely, this means that the sequence $(\mu^i_n(\psi)- \mu^i_n(0))_{n\in\Z}$, $i=1,2$, lies in $\ell^{\infty,1/2}_\C$ for every $\psi\in\LL$ with a uniform bound in $\ell^{\infty,1/2}_\C$ for $\psi$ ranging within arbitrary but fixed bounded subsets of $\LL$.
To see this, we note that the radius of the disc $D^i_n$ centered at 
$\mu^i_n(0)=\sgn(n)\sqrt{(-1)^{i-1}|n|\pi/2}$, $i=1,2$, which contains the Dirichlet eigenvalue $\mu^i_n(\psi)$ for each $|n|>N$, is of order $\mathcal O (|n|^{-1/2})$ as $|n|\to\infty$; the integer $N\geq1$ can be chosen uniformly for all $\psi$ within a fixed bounded subset of $\LL$. 

The following theorem improves the estimate in \eqref{est_mu_1} considerably; furthermore it establishes an equicontinuity property of the set of Dirichlet eigenvalues  as functions of $\psi$.

\begin{theorem}
 \label{prop_mu}
Let $B$ be a bounded neighborhood of $\psi=0$ in $\LL$. 
\begin{enumerate}
\item
Uniformly on $B$, 
\be \label{est_mu_2} 
\mu^i_n(\psi) = \mu^i_n(0) + \ell^{\infty,1}_n, \quad i=1,2, 
\ee
where $\mu^i_n(0)$ is given by \eqref{dirichlet_ev_zero}. 
\item
There exists a  neighborhood $W\subseteq B$ of the zero potential such that for all $\psi\in W$, 
\be \label{prop_mu_2}
\mu^i_n(\psi)\in D^i_n \quad \text{for} \quad |n|\geq1, \quad  \mu^i_0(\psi)\in D_0 \qquad (i=1,2).
\ee
\item
Let $W\subseteq B$
be an open neighborhood  of the zero potential such that \eqref{prop_mu_2} is fulfilled for all $\psi\in W$.
Then the Dirichlet eigenvalues $\mu^i_n$, considered as functions of $\psi$, are analytic on $W$ for all $n\in\Z\setminus\{0\}$, $i=1,2$.
Furthermore, for every $\psi\in W$ and every sequence $(\psi_k)_{k\in\N}$ in $W$ with $\psi_k\to\psi$ as $k\to\infty$ it holds that
\be \label{prop_mu_3}
\lim_{k\to\infty} \bigg(\sup_{n\in\Z\setminus\{0\}} (1+n^2)^\frac{1}{2} \big|\mu^i_n(\psi_k) - \mu^i_n(\psi)\big| \bigg) = 0, \quad i=1,2.
\ee
\end{enumerate}
\end{theorem}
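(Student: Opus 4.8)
The plan is to prove the three parts in order, bootstrapping the crude localization from Lemma~\ref{lem_count_dirichlet_ev} into the sharp $\ell^{\infty,1}$ estimate and then into analyticity. For part~(1), I would start from the characteristic function $\chi_{\mathrm D}(\lambda,\psi) = \frac{\m_4+\m_3-\m_2-\m_1}{2\I}$ and feed in the \emph{refined} asymptotics of $M$ at $t=1$ coming from the approximation $M_p$ constructed for Theorem~\ref{thm_asymptotics_M_1}. Rather than using only $\chi_{\mathrm D} = \sin 2\lambda^2 + \mathcal O(|\lambda|^{-1}\e^{2|\Im(\lambda^2)|})$, I would retain one more order: write $\chi_{\mathrm D}(\lambda,\psi) = \sin 2\lambda^2 + \frac{g(\lambda,\psi)}{\lambda}\e^{2|\Im(\lambda^2)|}\big(1 + o(1)\big)$ using the explicit entries of $Z_1$, $W_1$ and the first off-diagonal correction (for small $\psi$ the leading correction term is of size $\mathcal O(|\lambda|^{-1})$ relative to $\e^{2|\Im(\lambda^2)|}$, and since $\mu^i_n(0)$ lies on the real or imaginary axis where $\Im(\lambda^2)=0$, this becomes a genuine $\mathcal O(|\lambda|^{-1})$ perturbation near the unperturbed roots). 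Evaluating at $\lambda = \mu^i_n(\psi) = \mu^i_n(0) + \delta^i_n$ with $\delta^i_n = \ell^{\infty,1/2}_n$ already known from \eqref{est_mu_1}, Taylor expanding $\sin 2\lambda^2$ around $\mu^i_n(0)$ (where its derivative is $\pm 4\mu^i_n(0) \sim \pm 4\sqrt{n\pi/2} \asymp \sqrt{n}$), and using $\chi_{\mathrm D}(\mu^i_n(\psi),\psi)=0$, one gets $\sqrt{n}\,\delta^i_n = \mathcal O(n^{-1/2})$ at each $n$ with the implied constant uniform on $B$; hence $\delta^i_n = \mathcal O(n^{-1})$, i.e. $(\delta^i_n) \in \ell^{\infty,1}_\C$. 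The only subtlety is organizing the error term in \eqref{chi_asymptotics} (or its one-order-sharper version) into a quantity whose \emph{product with $\lambda$} is $\mathcal O(\e^{2|\Im(\lambda^2)|})$ uniformly, which Theorem~\ref{thm_asymptotics_M_1} supplies directly.

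For part~(2), I would simply observe that the error constant $C$ in the asymptotics, and hence the threshold $N$ in the Counting Lemma, depends only on a bound $r$ for $\|\psi\|$; moreover, the correction terms in $\chi_{\mathrm D}$ vanish as $\psi\to 0$. Therefore, shrinking the neighborhood $W\subseteq B$ of $0$, one can make the perturbation $|\chi_{\mathrm D}(\lambda,\psi)-\sin 2\lambda^2|$ on the boundaries of all the discs $D^i_n$ (including the small ones near the origin) strictly smaller than $|\sin 2\lambda^2|$ there, and additionally small enough that $\mu^i_0(\psi)$ cannot escape $D_0$. A Rouch\'e argument on each individual $D^i_n$, $|n|\geq 1$, and on $D_0$, then gives exactly one root in each, which is \eqref{prop_mu_2}. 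This is the least technical part; the content is just continuity of the various constants in $r$ together with $\chi_{\mathrm D}(\cdot,0)=\sin 2(\cdot)^2$.

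For part~(3), analyticity of $\psi\mapsto\mu^i_n(\psi)$ on $W$ for $n\neq 0$ follows from the implicit function theorem (analytic version, in the complex-analytic category): by \eqref{prop_mu_2} the eigenvalue $\mu^i_n(\psi)$ is the unique simple zero of the jointly analytic function $(\lambda,\psi)\mapsto\chi_{\mathrm D}(\lambda,\psi)$ inside $D^i_n$, and since it is a \emph{simple} root (again by the Counting Lemma, for $|n|$ large; for the remaining finitely many nonzero $n$ simplicity holds after possibly shrinking $W$, because the unperturbed roots $\mu^i_n(0)$ with $n\neq 0$ are simple), $\partial_\lambda\chi_{\mathrm D}\neq 0$ there, so the implicit function theorem applies and yields an analytic $\psi\mapsto\mu^i_n(\psi)$. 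For the equicontinuity statement \eqref{prop_mu_3}, I would split the supremum over $n$ into $|n|\leq N$ and $|n|>N$. For the finitely many $|n|\leq N$, continuity of each $\mu^i_n$ handles it. For $|n|>N$, the quantitative estimate from part~(1) — applied with the \emph{two} potentials $\psi_k$ and $\psi$ simultaneously — shows $(1+n^2)^{1/2}|\mu^i_n(\psi_k)-\mu^i_n(\psi)|$ is controlled by a tail that is uniformly summable/bounded and, crucially, by the difference of the correction functions $g(\lambda,\psi_k)-g(\lambda,\psi)$, which tends to $0$ as $\psi_k\to\psi$ in $\LL$ because $g$ is built from $\psi$ and $\psi(0)$ via continuous (indeed $H^1\hookrightarrow C^0$) operations. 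I expect the main obstacle to be part~(1): one must track the leading correction to $\chi_{\mathrm D}$ precisely enough — beyond the bare $\mathcal O(|\lambda|^{-1}\e^{2|\Im(\lambda^2)|})$ stated in \eqref{chi_asymptotics} — to see that, when divided by $\partial_\lambda\sin 2\lambda^2 \asymp \sqrt{n}$ at $\lambda=\mu^i_n(0)$, it produces an $\ell^{\infty,1}$ (not merely $\ell^{\infty,1/2}$) contribution, and to show the relevant constants are uniform on $B$; everything after that is a clean application of Rouch\'e and the analytic implicit function theorem.
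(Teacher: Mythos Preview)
Your outline is sound, and parts~(2) and the analyticity half of~(3) match the paper almost verbatim (Rouch\'e plus continuity; the analytic implicit function theorem applied to $\chi_{\mathrm D}$ on each $D^i_n$). Two points deserve comment.

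\textbf{Part~(1): you are working harder than necessary.} The bare asymptotic \eqref{chi_asymptotics} already suffices---no refined expansion with an explicit $g(\lambda,\psi)$ is needed. The reason is exactly what you state in passing: for $\lambda\in D^i_n$ one has $|\Im(\lambda^2)|\le\pi/4$, so $\e^{2|\Im(\lambda^2)|}=\mathcal O(1)$ and the error term in \eqref{chi_asymptotics} is a genuine $\mathcal O(|\lambda|^{-1})=\mathcal O(n^{-1/2})$. Plugging $\lambda=\mu^i_n(\psi)$ into $\chi_{\mathrm D}=0$ gives $\sin[2\mu^i_n(\psi)^2]=\ell^{\infty,1/2}_n$; the fundamental theorem of calculus then converts this into $\mu^i_n(\psi)^2-\mu^i_n(0)^2=\ell^{\infty,1/2}_n$, and factoring out $\mu^i_n(\psi)+\mu^i_n(0)\asymp\sqrt{n}$ yields $\ell^{\infty,1}_n$. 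This is the paper's route (via Theorem~\ref{thm_asymp_seq_near_per_ev}); your Taylor-expansion phrasing amounts to the same thing. The sentence ``one must track the leading correction \dots\ beyond the bare $\mathcal O(|\lambda|^{-1}\e^{2|\Im(\lambda^2)|})$'' is a misdiagnosis.

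\textbf{Part~(3), equicontinuity: a genuinely different route.} The paper does not track $g(\lambda,\psi_k)-g(\lambda,\psi)$ explicitly. Instead it observes that the functions $\beta^i_n(\psi)\coloneqq(1+n^2)^{1/2}\big(\mu^i_n(\psi)-\mu^i_n(0)\big)$ form a family of analytic functions on $W$ that is \emph{uniformly bounded} by part~(1); a general fact about analytic maps between complex Banach spaces (uniformly bounded $\Rightarrow$ uniformly equicontinuous, e.g.\ via Cauchy's estimate) then gives \eqref{prop_mu_3} in one stroke. Your explicit approach can be made to work, but note the hidden subtlety: the correction $g$ is evaluated at $\lambda=\mu^i_n(\psi_k)$, which itself moves with $k$, so you need joint regularity of $g$ in $(\lambda,\psi)$ with constants uniform in $n$. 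The abstract argument sidesteps all of this, and is what buys the paper its brevity here.
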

\begin{proof}
To prove the first assertion, we note that \eqref{est_mu_1} and Theorem \ref{thm_asymp_seq_near_per_ev} imply (cf.~the asymptotic estimate in \eqref{thm_asymp_seq_near_per_ev_1}) 
\bew
0= \chi_{\mathrm D} (\mu^i_n(\psi)) = \sin[2 (\mu^i_n(\psi))^2] + \ell^{\infty,1/2}_n, \quad i=1,2
\eew 
uniformly on $B$.
Therefore, the fundamental theorem of calculus implies that, uniformly on $B$,
\begin{align*}
\ell^{\infty,1/2}_n &= \sin[2 \mu^i_n(\psi)^2] - \sin[2 \mu^i_n(0)^2] \\ 
&= 2\big(\mu^i_n(\psi)^2 - \mu^i_n(0)^2\big) 
\int^1_0  \cos[t \, 2 \mu^i_n(\psi)^2 + (1-t) \, 2 \mu^i_n(0)^2] \, \mathrm d t, \quad  i=1,2.
\end{align*}
The integral in the above equation is uniformly bounded in $\ell^\infty_\C$ for all potentials in $B$, since the line segments connecting $\mu^i_n(\psi)^2$ with $\mu^i_n(0)^2$ are uniformly bounded in $\ell^\infty_\C$ due to \eqref{est_mu_1}. Thus,
\bew
\ell^{\infty,1/2}_n =  \mu^i_n(\psi)^2 - \mu^i_n(0)^2 
= \big(\mu^i_n(\psi) - \mu^i_n(0)\big) \big(\mu^i_n(\psi) + \mu^i_n(0)\big),  \quad  i=1,2
\eew
uniformly  on $B$. By employing \eqref{est_mu_1} once again, we infer that  
\bew
\ell^{\infty,1/2}_n = \mu^i_n(0) \big(\mu^i_n(\psi) - \mu^i_n(0)\big),  \quad  i=1,2
\eew
uniformly on $B$,
which is equivalent to the first assertion of the theorem.

To prove the second assertion, we recall that the characteristic function $\chi_{\mathrm D}$ is analytic\footnote{Cf.~Theorem \ref{thm_fund_sol}, where we proved analyticity of the fundamental matrix solution with respect to the potential; see also the short review of analytic maps between complex Banach spaces in Section~\ref{sec_potRI}.} (and hence also continuous) with respect to $\psi$, and that \eqref{prop_mu_2} clearly holds for $\psi=0$.
Hence, if $W\subseteq B$ is a sufficiently small neighborhood of the zero potential, then, for each $\psi\in W$, the disc $D^i_n$ contains precisely the simple Dirichlet eigenvalue $\mu^i_n(\psi)$ while $D_0$ contains either a double eigenvalue $\mu^1_0(\psi)=\mu^2_0(\psi)$ or two distinct simple eigenvalues $\mu^1_0(\psi)\neq\mu^2_0(\psi)$.

To prove the third assertion, we first show that the function $\mu^i_n\colon W\to \C$, $n\in\Z\setminus\{0\}$, $i=1,2$, is analytic and takes values in the disc $D^i_n$. Analyticity is inherited from $\chi_{\mathrm D}$ as a consequence of the implicit function theorem for analytic mappings between complex Banach spaces; see e.g.~\cite{Whittlesey65} for various generalizations of the classical implicit function theorem to infinite dimensional Banach spaces. Indeed, the restriction of the characteristic function for the Dirichlet eigenvalues to the domain $D^i_n\times W$,
\bew
\chi_{\mathrm D}\big|_{D^i_n\times W}\colon D^i_n\times W \to \C, \quad n\in\Z\setminus\{0\}, \; i=1,2,
\eew
is analytic. By assumption, for each $\psi'\in W$ there exists a unique $\lambda'\in D^i_n$ such that $\chi_{\mathrm D}\big|_{D^i_n\times W}(\lambda',\psi')=0$. 
Furthermore, denoting by $\frac{\partial}{\partial\lambda}$ the partial derivative of $\chi_{\mathrm D}\big|_{D^i_n\times W}$ with respect to its first variable $\lambda$, we claim that  
\be \label{pr_prop_mu_0}
\frac{\partial}{\partial\lambda} \chi_{\mathrm D}\Big|_{D^i_n\times W} (\lambda',\psi') \neq 0,
\quad n\in\Z\setminus\{0\}, \; i=1,2,
\ee
so that the partial derivative is a linear isomorphism $\C\to\C$. Indeed, by Theorem \ref{thm_asymp_seq_near_per_ev} (cf.~\eqref{thm_asymp_seq_near_per_ev_2}),
\be \label{pr_prop_mu_01}
\frac{\partial }{\partial\lambda}\chi_{\mathrm D}\Big|_{\dot\Pi\times B}(\lambda,\psi) 
= 4\lambda \cos 2\lambda^2 + \mathcal O(1), \quad 
\dot\Pi \coloneqq \bigcup_{\substack{n\in\Z\setminus\{0\} \\ i=1,2}}D^i_n
\ee
uniformly on $B$ as $|\lambda|\to\infty$. 
This implies that 
\be \label{pr_prop_mu_1}
\frac{\partial}{\partial\lambda} \chi_{\mathrm D}\neq 0
\quad \text{on} \quad
 \bigcup_{\substack{|n|>N \\ i=1,2}} D^i_n   \times B
\ee
for some large enough $N\geq0$.
By continuity of $\frac{\partial \chi_{\mathrm D}}{\partial\lambda}$ with respect to $\psi$ and the fact that the formula for $\frac{\partial \chi_{\mathrm D}}{\partial\lambda}$ in \eqref{pr_prop_mu_01} holds without the error term when $\psi=0$, it is clear that \eqref{pr_prop_mu_1} holds for $N=0$ with $B$ replaced by $W$. 
This proves \eqref{pr_prop_mu_0}. 

In view of \eqref{pr_prop_mu_0}, the implicit function theorem guarantees the existence of a unique (global) analytic function $\tilde \mu^i_n\colon W\to \C$ such that, for all $(\lambda,\psi)\in D^i_n\times W$,
\bew
\chi_{\mathrm D}\big|_{D^i_n\times W} (\lambda,\psi) =0 \iff \lambda=\tilde \mu^i_n(\psi), 
\quad n\in\Z\setminus\{0\}, \; i=1,2.
\eew
Since $\tilde \mu^i_n= \mu^i_n$, this shows that $\mu^i_n\colon W\to\C$ is analytic for $n\in\Z\setminus\{0\}$, $i=1,2$.

For $n \in \Z$ and $i=1,2$, we consider the analytic mappings
\bew
\beta^i_n \colon W\to\C, \quad \psi\mapsto\beta^i_n(\psi)  \coloneqq
\begin{cases}
 (1+n^2)^\frac{1}{2}\big( \mu^i_n(\psi)-\mu^i_n(0)\big) &n\in\Z\setminus\{0\} \\
0 &n=0.
\end{cases}
\eew
The first assertion of the theorem implies that the family $\{\beta^i_n\}^{i=1,2}_{n\in\Z}$ is uniformly bounded in $\C$. Since all the functions of this family are analytic, it follows that  $\{\beta^i_n\}^{i=1,2}_{n\in\Z}$ is uniformly equicontinuous on $B$, cf.~\cite[Proposition 9.15]{Mujica86}. 
That is, for each $\eps>0$ there exists a $\delta>0$ such that, for all $n\in\Z$, $i\in\{1,2\}$, and all $\psi,\psi' \in B$,
\bew
\|\psi-\psi'\| < \delta \quad \Rightarrow \quad \eps > |\beta^i_n(\psi)-\beta^i_n(\psi')| 
= \big|(1+n^2)^\frac{1}{2}\big( \mu^i_n(\psi)-\mu^i_n(\psi')\big)\big|.
\eew 
This  implies  that the two mappings 
\bew
W\to \check \ell^{\infty,1/2}_\C, \quad \psi \mapsto
\begin{cases}
\mu^i_n(\psi) &n\in\Z\setminus\{0\} \\
0 & n=0,
\end{cases}
\qquad i=1,2,
\eew
are continuous, which proves the third assertion.
\end{proof}

\begin{remark}
A more abstract proof of \eqref{prop_mu_3} proceeds as follows. By the general version of Montel's theorem for analytic functions on separable complex Banach spaces, see e.g.~\cite[Proposition 9.16]{Mujica86}, the family $\{\beta^i_n\}^{i=1,2}_{n\in\Z}$ in the proof of Theorem~\ref{prop_mu} is \emph{normal} in the locally convex topological vector space $\mathcal H(W)$, the space  of all analytic functions from $W$ to $\C$, endowed with the topology $\tau_c$ of uniform convergence on compact subsets of $W$. That is, each sequence of elements of $\{\beta^i_n\}^{i=1,2}_{n\in\Z}$ has a subsequence which converges in $(\mathcal H(W), \tau_c)$.  This allows us to obtain \eqref{prop_mu_3} by interchanging the order of taking the limit and supremum as follows:
\bew
\lim_{k\to\infty} \bigg(\sup_{n\in\Z} \big|\beta^i_n(\psi_k) - \beta^i_n(\psi)\big| \bigg) =
\sup_{n\in\Z} \bigg( \lim_{k\to\infty}  \big|\beta^i_n(\psi_k) - \beta^i_n(\psi)\big|  \bigg).
\eew
\end{remark}

We define the Neumann domain $\mathcal A_{\mathrm N}$ of the AKNS-system \eqref{AKNS_system} by
\bew
\mathcal A_{\mathrm N} \coloneqq \big\{ f\in H^1([0,1],\C) \times H^1([0,1],\C) \; \big| \; f_1(0)=0=f_1(1)   \big\}.
\eew
The Neumann domain $\mathcal D_{\mathrm N} $ of the corresponding ZS-system \eqref{main_eq} is then given by
\bew
\mathcal D_{\mathrm N} \coloneqq \big\{ g\in H^1([0,1],\C) \times H^1([0,1],\C) \; \big| \; (h_1+h_2)(0)=0=(h_1+h_2)(1)   \big\},
\eew
as $\mathcal A_{\mathrm N}$ corresponds to $\mathcal D_{\mathrm N}$ under the transformation $T$.
For a given potential $\psi\in\LL$, we say that $\lambda\in\C$ lies in the \emph{Neumann spectrum} if there exists a 
$\phi\in\mathcal D_{\mathrm N}\setminus\{0\}$ which solves \eqref{main_eq}. 

\begin{theorem}
Fix $\psi\in\LL$. The Neumann spectrum related to \eqref{main_eq} is the zero set of the entire function 
\bew 
\chi_{\mathrm N}(\lambda,\psi) \coloneqq \frac{\m_4-\m_3 + \m_2 - \m_1}{2\I}\bigg|_{(\lambda,\psi)}.
\eew
In particular, $\chi_{\mathrm N}(\lambda,0) = \chi_{\mathrm D}(\lambda,0) = \sin 2\lambda^2$.
\end{theorem}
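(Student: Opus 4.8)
The plan is to mirror the argument used for the Dirichlet spectrum. First I would unwind the definition of the Neumann domain $\mathcal D_{\mathrm N}$: a nonzero solution $\phi$ of \eqref{main_eq} belongs to $\mathcal D_{\mathrm N}$ exactly when $(\phi^1+\phi^2)(0)=0$ and $(\phi^1+\phi^2)(1)=0$. The condition at $t=0$ forces $\phi(0)$ to be a nonzero scalar multiple of $(1,-1)^\intercal$, and since \eqref{main_eq} is linear and homogeneous we may take $\phi(0)=(1,-1)^\intercal$ without loss of generality; by $\det M\equiv 1$ (Proposition \ref{Wronskian_identity}) the resulting solution $\phi(t)=M(t)(1,-1)^\intercal$ is automatically nonvanishing. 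Then $\phi(1)=\M\,(1,-1)^\intercal=(\m_1-\m_2,\ \m_3-\m_4)^\intercal$, so the boundary condition at $t=1$ reads $\m_1-\m_2+\m_3-\m_4=0$, which is precisely $\chi_{\mathrm N}(\lambda,\psi)=0$.

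Conversely, if $\chi_{\mathrm N}(\lambda,\psi)=0$, then the solution of \eqref{main_eq} with initial value $(1,-1)^\intercal$ lies in $\mathcal D_{\mathrm N}\setminus\{0\}$ by the same computation, so $\lambda$ belongs to the Neumann spectrum. This shows that the Neumann spectrum is the zero set of $\chi_{\mathrm N}$. That $\chi_{\mathrm N}(\cdot,\psi)$ is entire follows from the analyticity in $\lambda$ of the entries $\m_i$ of $\M=M|_{t=1}$, established in Theorem \ref{thm_fund_sol}. For the final claim one evaluates at $\psi=0$, where $M=E_\lambda$ gives $\m_1=\e^{-2\I\lambda^2}$, $\m_2=\m_3=0$, $\m_4=\e^{2\I\lambda^2}$, hence $\chi_{\mathrm N}(\lambda,0)=\tfrac{1}{2\I}(\e^{2\I\lambda^2}-\e^{-2\I\lambda^2})=\sin 2\lambda^2=\chi_{\mathrm D}(\lambda,0)$.

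There is no genuine difficulty here; the statement is the Neumann analogue of the Dirichlet characterization and the proof reduces to a short linear-algebra computation together with the existence and analyticity of $M$ from Theorem \ref{thm_fund_sol}. The only points requiring a little care are the bookkeeping of signs when translating the boundary conditions on $\phi^1\pm\phi^2$ into linear relations among the $\m_i$, and verifying that the passage from $\mathcal A_{\mathrm N}$ to $\mathcal D_{\mathrm N}$ via the transformation $T$ indeed produces the condition on $\phi^1+\phi^2$ (rather than on $\phi^1-\phi^2$, which is the Dirichlet case), using $T^{-1}=\tfrac12\begin{pmatrix}1&1\\-\I&\I\end{pmatrix}$.
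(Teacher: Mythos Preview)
Your proof is correct and follows essentially the same approach as the paper: reduce the Neumann boundary condition to the requirement that $\M$ sends $(1,-1)^\intercal$ to a vector with $\phi^1+\phi^2=0$, which yields $\m_1-\m_2+\m_3-\m_4=0$. You supply a few extra details (nonvanishing of $\phi$ via $\det M=1$, the explicit $\psi=0$ computation, and entireness from Theorem~\ref{thm_fund_sol}) that the paper leaves implicit, but the argument is the same.
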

\begin{proof}
Due to the definition of $\mathcal D_{\mathrm N}$, a complex number $\lambda$ lies in the Neumann spectrum of system \eqref{main_eq} if and only if the fundamental solution $M$ maps the initial value $(1,-1)$ to a collinear vector at $t=1$. That is,
if and only if $\m_1-\m_2 =-\m_3+\m_4$.
\end{proof}

By Theorem \ref{thm_asymptotics_M_1}, the characteristic function $\chi_{\mathrm N}$ satisfies 
\bew
\chi_{\mathrm N}(\lambda,\psi) = \sin 2\lambda^2 + \mathcal O \big( |\lambda|^{-1}\, \e^{2|\Im (\lambda^2)|}\big)
\eew
uniformly on bounded  subsets of $\LL$ as $|\lambda|\to\infty$.
For $\psi \in \LL$ we set
\bew
\sigma_{\mathrm N}(\psi) \coloneqq \big\{\lambda \in \C \colon \chi_{\mathrm N}(\lambda,\psi)=0 \big\}.
\eew

As for the Dirichlet case, we obtain the following asymptotic localization for the elements of the Neumann spectrum.

\begin{lemma}[Counting Lemma for Neumann eigenvalues] \label{lem_count_neum_ev}
Let $B$ be a bounded subset of $\LL$.
There exists an integer $N\geq1$, such that for every $\psi\in B$, the entire function $\chi_{\mathrm N}(\cdot,\psi)$ has exactly one root in each of the two discs $D^i_n$, $i=1,2$, for $n\in\Z$ with $|n|>N$, and exactly $2(2N+1)$ roots in the disc $B_N$
when counted with multiplicity. There are no other roots.
\end{lemma}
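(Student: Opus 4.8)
The plan is to transcribe the proof of Lemma~\ref{lem_count_dirichlet_ev} almost verbatim. The only structural input needed is that $\chi_{\mathrm N}$ has the same leading-order behavior as $\chi_{\mathrm D}$ and that the two characteristic functions coincide at the zero potential, $\chi_{\mathrm N}(\cdot,0)=\chi_{\mathrm D}(\cdot,0)=\sin 2\lambda^2$; consequently the root count for the zero potential and the configuration of the localizing discs $D^i_n$ and $B_N$ are identical to the Dirichlet case. First I would record, as a consequence of Theorem~\ref{thm_asymptotics_M_1} applied to the entries $\m_i$ of $\M$, that
\[
\chi_{\mathrm N}(\lambda,\psi) = \sin 2\lambda^2 + \mathcal O\big(|\lambda|^{-1}\e^{2|\Im(\lambda^2)|}\big)
\]
uniformly for $\psi\in B$ as $|\lambda|\to\infty$.

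Next I would combine this estimate with Lemma~\ref{lem_est_sin_exp}: outside the set $\Pi=\bigcup_{n,i}D^i_n$ defined in \eqref{set_Pi} one has $\e^{2|\Im(\lambda^2)|}/|\sin 2\lambda^2|<4$, so that $\chi_{\mathrm N}(\lambda,\psi)=\chi_{\mathrm N}(\lambda,0)(1+o(1))$ as $|\lambda|\to\infty$ with $\lambda\notin\Pi$, uniformly for $\psi\in B$. From this I would extract an integer $N\geq1$, valid for all $\psi\in B$, such that $|\chi_{\mathrm N}(\lambda,\psi)-\chi_{\mathrm N}(\lambda,0)|<|\chi_{\mathrm N}(\lambda,0)|$ holds simultaneously on the boundary of $B_N$ and on the boundaries of all discs $D^i_n$ with $|n|>N$, $i=1,2$ (using that $|\sin 2\lambda^2|$ is bounded below by a positive $\delta$ on these circles, uniformly in $n$). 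Rouch\'e's theorem then yields that $\chi_{\mathrm N}(\cdot,\psi)$ has the same number of zeros, counted with multiplicity, as $\lambda\mapsto\sin 2\lambda^2$ inside each of these discs: exactly one in each $D^i_n$ with $|n|>N$, and exactly $2(2N+1)$ inside $B_N$.

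Finally, the absence of further roots follows by the same nesting argument as in the Dirichlet case: applying the Rouch\'e comparison on $\partial B_{N+k}$ for each $k\geq1$ shows that $\chi_{\mathrm N}(\cdot,\psi)$ has exactly $2(2(N+k)+1)$ zeros in $B_{N+k}$, which are precisely the $2(2N+1)$ zeros in $B_N$ together with the $2k$ zeros already located in the discs $D^i_l$ with $N<|l|\leq N+k$; hence $\chi_{\mathrm N}(\cdot,\psi)$ has no zeros outside $B_N\cup\bigcup_{|n|>N,\,i}D^i_n$. I do not anticipate any genuine obstacle here—the argument is a direct adaptation of Lemma~\ref{lem_count_dirichlet_ev}, the only point worth flagging explicitly being the identity $\chi_{\mathrm N}(\cdot,0)=\sin 2\lambda^2$, which makes the Dirichlet disc geometry and zero-potential root count applicable without change.
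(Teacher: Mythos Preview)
Your proposal is correct and follows exactly the approach the paper intends: the paper gives no separate proof for Lemma~\ref{lem_count_neum_ev}, merely noting that the Dirichlet argument carries over verbatim since $\chi_{\mathrm N}$ satisfies the same asymptotics and $\chi_{\mathrm N}(\cdot,0)=\sin 2\lambda^2$. One minor arithmetic slip in your nesting step: the number of zeros already located in the discs $D^i_l$ with $N<|l|\leq N+k$ and $i=1,2$ is $4k$ (two values of $i$, $2k$ values of $l$), not $2k$---this is what makes the count $2(2N+1)+4k=2(2(N+k)+1)$ balance.
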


We label the Neumann eigenvalues in the same way as the Dirichlet eigenvalues.
The Neumann spectrum of the zero potential $\psi=0$ coincides with the corresponding Dirichlet spectrum:
\be \label{neumann_ev_zero}
\nu^i_n(0) = \mu^i_n(0) = \sgn (n) \sqrt{\frac{(-1)^{i-1}|n|\pi}{2}}, \quad i=1,2. 
\ee
Remark \ref{rem_mult_ord} applies also to the Neumann eigenvalues. The analog of Theorem \ref{prop_mu} for Neumann eigenvalues reads as follows.

\begin{theorem} \label{prop_nu}
Let $B$ be a bounded neighborhood of $\psi=0$ in $\LL$. 
\begin{enumerate}
\item
Uniformly on $B$, 
\bew 
\nu^i_n(\psi) = \nu^i_n(0) + \ell^{\infty,1}_n, \quad i=1,2, 
\eew
where $\nu^i_n(0)$ is given by \eqref{neumann_ev_zero}. 
\item
There exists a  neighborhood $W\subseteq B$ of the zero potential such that for all $\psi\in W$, 
\be \label{prop_nu_2}
\nu^i_n(\psi)\in D^i_n \quad \text{for} \quad |n|\geq1, \quad  \nu^i_0(\psi)\in D_0 \qquad (i=1,2).
\ee
\item
Let $W\subseteq B$ be an open neighborhood  of the zero potential such that \eqref{prop_nu_2} is fulfilled for all $\psi\in W$.
Then the Neumann eigenvalues $\nu^i_n$ are analytic on $W$ for all $n\in\Z\setminus\{0\}$, $i=1,2$.
Furthermore, for every $\psi\in W$ and every sequence $(\psi_k)_{k\in\N}$ in $W$ with $\psi_k\to\psi$ as $k\to\infty$ it holds that
\bew
\lim_{k\to\infty} \bigg(\sup_{n\in\Z\setminus\{0\}} (1+n^2)^\frac{1}{2} \big|\nu^i_n(\psi_k) - \nu^i_n(\psi)\big| \bigg) = 0, \quad i=1,2.
\eew
\end{enumerate}
\end{theorem}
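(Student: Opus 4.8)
The plan is to transcribe the proof of Theorem~\ref{prop_mu} essentially verbatim, replacing $\chi_{\mathrm D}$, $\mu^i_n$, and Lemma~\ref{lem_count_dirichlet_ev} throughout by $\chi_{\mathrm N}$, $\nu^i_n$, and Lemma~\ref{lem_count_neum_ev}. This works because $\chi_{\mathrm N}$ inherits every structural feature of $\chi_{\mathrm D}$ that the Dirichlet argument used: by~\eqref{neumann_ev_zero} one has $\chi_{\mathrm N}(\lambda,0)=\sin 2\lambda^2=\chi_{\mathrm D}(\lambda,0)$; by Theorem~\ref{thm_asymptotics_M_1}, $\chi_{\mathrm N}(\lambda,\psi)=\sin 2\lambda^2+\mathcal O\big(|\lambda|^{-1}\e^{2|\Im(\lambda^2)|}\big)$ uniformly on bounded subsets of $\LL$; and $\chi_{\mathrm N}$ is analytic in $\psi$, being a fixed linear combination of the entries of $\M$ (Theorem~\ref{thm_fund_sol}).

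For the first assertion I would first extract from the Counting Lemma~\ref{lem_count_neum_ev}, exactly as~\eqref{est_mu_1} was extracted from Lemma~\ref{lem_count_dirichlet_ev}, the rough bound $\nu^i_n(\psi)=\nu^i_n(0)+\ell^{\infty,1/2}_n$ uniformly on $B$, using that the discs $D^i_n$ have radius $\mathcal O(|n|^{-1/2})$. Feeding this into Theorem~\ref{thm_asymp_seq_near_per_ev} gives $0=\chi_{\mathrm N}(\nu^i_n(\psi))=\sin[2\nu^i_n(\psi)^2]+\ell^{\infty,1/2}_n$, $i=1,2$, uniformly on $B$. Subtracting $\sin[2\nu^i_n(0)^2]=0$, writing the difference of sines as the integral over $[0,1]$ of its derivative along the segment joining $2\nu^i_n(0)^2$ to $2\nu^i_n(\psi)^2$ (these segments being uniformly bounded in $\ell^\infty_\C$), and factoring, I would bootstrap to $\ell^{\infty,1/2}_n=\nu^i_n(\psi)^2-\nu^i_n(0)^2=\big(\nu^i_n(\psi)-\nu^i_n(0)\big)\big(\nu^i_n(\psi)+\nu^i_n(0)\big)$, and then, applying the rough bound once more, to $\ell^{\infty,1/2}_n=\nu^i_n(0)\big(\nu^i_n(\psi)-\nu^i_n(0)\big)$, which is the claimed $\ell^{\infty,1}$ estimate.

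The second assertion I would obtain from continuity of $\chi_{\mathrm N}$ in $\psi$: the inclusions~\eqref{prop_nu_2} hold at $\psi=0$, hence on a sufficiently small neighbourhood $W\subseteq B$ of the zero potential, with each $D^i_n$ ($|n|\geq1$) carrying the unique simple eigenvalue $\nu^i_n(\psi)$ and $D_0$ carrying $\nu^1_0(\psi)$ and $\nu^2_0(\psi)$. For the third assertion I would observe that $\partial_\lambda\chi_{\mathrm N}$ is a fixed linear combination of the entries of $\dot M$ at $t=1$, so Theorem~\ref{thm_asymptotics_M_1} yields $\partial_\lambda\chi_{\mathrm N}(\lambda,\psi)=4\lambda\cos 2\lambda^2+\mathcal O(1)$ uniformly on $B$ as $|\lambda|\to\infty$, with no error term at $\psi=0$; shrinking $W$ if necessary, $\partial_\lambda\chi_{\mathrm N}\neq 0$ on $\bigcup_{n\neq 0,\,i=1,2}D^i_n\times W$, so the implicit function theorem for analytic maps between complex Banach spaces yields analyticity of $\psi\mapsto\nu^i_n(\psi)$ on $W$ for all $n\in\Z\setminus\{0\}$, $i=1,2$. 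Finally, the family $\psi\mapsto(1+n^2)^{1/2}\big(\nu^i_n(\psi)-\nu^i_n(0)\big)$ ($n\in\Z\setminus\{0\}$, $i=1,2$) is uniformly bounded by the first assertion and consists of analytic functions, hence is uniformly equicontinuous on $B$ by~\cite[Proposition~9.15]{Mujica86}; this gives continuity of $\psi\mapsto(\nu^i_n(\psi))_{n\in\Z}$ into $\check\ell^{\infty,1/2}_\C$ and therefore the asserted limit in part~(3).

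I do not expect a genuine obstacle here: the only facts not already verified in the Dirichlet case are $\chi_{\mathrm N}(\lambda,0)=\sin 2\lambda^2$ and $\partial_\lambda\chi_{\mathrm N}(\lambda,0)=4\lambda\cos 2\lambda^2$, both immediate from $E_\lambda(1)=\e^{-2\I\lambda^2\sigma_3}$ and the definition of $\chi_{\mathrm N}$, together with the bookkeeping that the double eigenvalue $\nu^1_0(0)=\nu^2_0(0)=0$ is handled by the lexicographic labelling exactly as for $\mu^i_0$.
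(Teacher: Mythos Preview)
Your proposal is correct and matches the paper's approach exactly: the paper does not write out a separate proof of Theorem~\ref{prop_nu} but simply presents it as ``the analog of Theorem~\ref{prop_mu} for Neumann eigenvalues,'' relying on the identical structure of $\chi_{\mathrm N}$ and $\chi_{\mathrm D}$ that you have spelled out.
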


\subsection{Periodic spectrum}

The trace of the fundamental matrix solution $M(t,\lambda,\psi)$ at time $t=1$ is called the \emph{discriminant} and is denoted by $\Delta$:
\bew
\Delta \equiv \Delta(\lambda,\psi) \coloneqq \mathrm{tr} \, \M =  \m_1 + \m_4.
\eew
The sum of the off-diagonal entries of $\M$ is referred to as the \emph{anti-discriminant}:
\bew
\delta \equiv \delta(\lambda,\psi) \coloneqq \m_2+\m_3.
\eew

\begin{theorem}
The discriminant $\Delta$, the anti-discriminant $\delta$ and their respective $\lambda$-derivatives $\dot\Delta$ and $\dot\delta$ are compact analytic functions on $\C\times\LL$.
At the zero potential, 
\be  \label{Delta_zero}
\Delta(\lambda,0)=2 \cos 2\lambda^2, \quad \lambda\in\C.
\ee
\end{theorem}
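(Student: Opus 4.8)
The plan is to reduce the entire statement to the regularity and compactness properties of $M$ and $\dot M$ that have already been established, together with the trivial observation that $\Delta$, $\delta$, $\dot\Delta$, $\dot\delta$ are finite linear combinations of the entries of $\M = M|_{t=1}$ and of $\dot M|_{t=1}$. So essentially no new work is needed beyond bookkeeping.

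For analyticity I would argue as follows. By Theorem \ref{thm_fund_sol}, the map $(\lambda,\psi)\mapsto M(t,\lambda,\psi)$ is analytic on $\C\times\LL$ for each fixed $t\geq0$; taking $t=1$ shows that $\M$, and hence its entries $\m_1,\dots,\m_4$, are analytic on $\C\times\LL$, and therefore so are $\Delta = \m_1+\m_4$ and $\delta = \m_2+\m_3$. Since $\lambda$-differentiation commutes with the finite-dimensional trace (and with the off-diagonal sum), one has $\dot\Delta = \mathrm{tr}\,(\dot M|_{t=1})$ and $\dot\delta = (\dot m_2+\dot m_3)|_{t=1}$; by Corollary \ref{Cor_lambda_deriv_M}, $\dot M$ is analytic on $\C\times\LL$, so evaluating at $t=1$ and forming these linear combinations gives analyticity of $\dot\Delta$ and $\dot\delta$.

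For compactness, which in this setting means weak sequential continuity in $\psi$ that is locally uniform in $\lambda$, suppose $\psi_k\rightharpoonup\psi$ in $\LL$. Proposition \ref{Prop_M_comp} gives $|M(t,\lambda,\psi_k)-M(t,\lambda,\psi)|\to0$ uniformly on bounded subsets of $[0,\infty)\times\C$; restricting to $t=1$ and reading off entries shows $\m_i(\lambda,\psi_k)\to\m_i(\lambda,\psi)$ uniformly on bounded $\lambda$-sets for $i=1,\dots,4$, and the same then holds for the linear combinations $\Delta$ and $\delta$. The compactness of $\dot\Delta$ and $\dot\delta$ follows in exactly the same way from the compactness of $\dot M$ asserted in Corollary \ref{Cor_lambda_deriv_M}. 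Together with the analyticity above, this shows that $\Delta$, $\delta$, $\dot\Delta$, $\dot\delta$ are compact analytic functions on $\C\times\LL$.

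Finally, for the value at the zero potential, note that $\psi=0$ forces $V\equiv0$, so $M(t,\lambda,0)$ solves $\mathrm D M = R M$, $M(0)=\id$, which means $M(t,\lambda,0) = E_\lambda(t) = \mathrm{diag}(\e^{-2\lambda^2\I t},\e^{2\lambda^2\I t})$. Setting $t=1$ yields $\Delta(\lambda,0) = \e^{-2\lambda^2\I}+\e^{2\lambda^2\I} = 2\cos 2\lambda^2$ (and incidentally $\delta(\lambda,0)=0$, since $E_\lambda$ is diagonal). I do not expect a genuine obstacle in this proof; the only points worth a sentence of care are that $\dot\Delta$ really equals $\frac{\mathrm d}{\mathrm d\lambda}\Delta$ (immediate, since the trace is linear and finite-dimensional, so it commutes both with evaluation at $t=1$ and with $\lambda$-differentiation) and that the uniformity on bounded $\lambda$-sets in Proposition \ref{Prop_M_comp} and Corollary \ref{Cor_lambda_deriv_M} is precisely what makes it possible to transfer compactness to the four scalar functions in question.
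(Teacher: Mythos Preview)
Your proposal is correct and follows essentially the same approach as the paper: analyticity from Theorem~\ref{thm_fund_sol}, compactness from Proposition~\ref{Prop_M_comp}, the corresponding properties for the $\lambda$-derivatives from Corollary~\ref{Cor_lambda_deriv_M}, and the value at $\psi=0$ from $M(t,\lambda,0)=E_\lambda(t)$. The paper's proof is simply a terse three-sentence version of what you have written out in detail.
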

\begin{proof}
Both the discriminant and the anti-discriminant are analytic due to Theorem \ref{thm_fund_sol} and compactness follows from Proposition \ref{Prop_M_comp}. From Corollary \ref{Cor_lambda_deriv_M} we infer that the $\lambda$-derivatives $\dot\Delta$ and $\dot\delta$ inherit both properties. 
\end{proof}

The \emph{periodic domain} $\mathcal D_{\mathrm P}$ of the ZS-system \eqref{main_eq} is defined by\footnote{Note that $\mathcal D_{\mathrm P}$ consists of both periodic and antiperiodic functions.} 
\bew
\mathcal D_{\mathrm P} \coloneqq \big\{ f\in H^1([0,1],\C) \times H^1([0,1],\C) \; \big| \; f(1)=f(0) 
\; \text{or} \;  f(1)=-f(0)  \big\} 
\eew
A complex number $\lambda$ is called a \emph{periodic eigenvalue} if \eqref{main_eq} is satisfied for some $\phi\in\mathcal D_{\mathrm P}\setminus\{0\}$.

\begin{theorem} \label{thm_char_func_per_ev}
Let $\psi\in\LL$. A complex number $\lambda$ is a periodic eigenvalue if and only if it is a zero of the entire function 
\be \label{char_func_per}
\chi_{\mathrm P}(\lambda,\psi) \coloneqq \Delta^2 (\lambda,\psi) -4.
\ee
\end{theorem}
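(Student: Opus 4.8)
The plan is to characterize periodic eigenvalues via the Floquet-type (monodromy) matrix $\M = M(1,\lambda,\psi)$. A complex number $\lambda$ is a periodic eigenvalue precisely when there is a nonzero $\phi\in\mathcal D_{\mathrm P}$ solving \eqref{main_eq}, i.e.\ when there is a nonzero initial vector $v\in\C^2$ with $\M v = v$ or $\M v = -v$. In other words, $\lambda$ is a periodic eigenvalue if and only if $+1$ or $-1$ is an eigenvalue of $\M(\lambda,\psi)$, which happens if and only if $\det(\M - \id)=0$ or $\det(\M + \id)=0$.

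First I would compute these two determinants using the Wronskian identity from Proposition~\ref{Wronskian_identity}, namely $\det\M = \m_1\m_4 - \m_2\m_3 = 1$. Expanding,
\bew
\det(\M \mp \id) = (\m_1\mp1)(\m_4\mp1) - \m_2\m_3 = \m_1\m_4 - \m_2\m_3 \mp(\m_1+\m_4) + 1 = 2 \mp \Delta.
\eew
Hence $\det(\M-\id)=0 \iff \Delta = 2$ and $\det(\M+\id)=0 \iff \Delta=-2$. Therefore $\lambda$ is a periodic eigenvalue if and only if $\Delta(\lambda,\psi)\in\{-2,2\}$, which is equivalent to $\Delta^2(\lambda,\psi)-4=0$, i.e.\ $\chi_{\mathrm P}(\lambda,\psi)=0$.

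It remains to confirm that $\chi_{\mathrm P}$ is an entire function of $\lambda$; this is immediate since $\Delta = \m_1+\m_4$ is entire in $\lambda$ by Theorem~\ref{thm_fund_sol} (the fundamental solution $M$, and hence its entries evaluated at $t=1$, is analytic in $\lambda$), so $\Delta^2-4$ is entire as well. The only point requiring a word of care is the ``only if'' direction combined with multiplicities: if $\Delta=\pm2$ but $\M$ happens to be a nontrivial Jordan block (i.e.\ $\M\neq\pm\id$), one still has a genuine eigenvector, so a nonzero periodic (or antiperiodic) solution exists; and if $\M=\pm\id$ then every vector works. Either way the existence of $\phi\in\mathcal D_{\mathrm P}\setminus\{0\}$ is guaranteed whenever $\Delta^2=4$, so the equivalence is exact. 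I do not anticipate a serious obstacle here: the argument is the standard Floquet/Hill-discriminant computation, and the main (very minor) subtlety is simply the bookkeeping that $\det(\M\mp\id) = 2\mp\Delta$ follows cleanly from $\det\M=1$.
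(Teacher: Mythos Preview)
Your proof is correct and follows essentially the same approach as the paper: both reduce the question to whether $\pm1$ is an eigenvalue of the monodromy matrix $\M$ and use the Wronskian identity $\det\M=1$ to conclude $\Delta=\pm2$. The only cosmetic difference is that the paper argues via the product of eigenvalues (if one eigenvalue is $\pm1$ and their product is $1$, both are $\pm1$, so the trace is $\pm2$), whereas you compute $\det(\M\mp\id)=2\mp\Delta$ directly.
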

\begin{proof}
Let us fix $\psi\in\LL$. Since $M$ is the fundamental solution of \eqref{main_eq}, a complex number $\lambda$ is a periodic eigenvalue if and only if there exists a nonzero element $f\in \mathcal D_{\mathrm P}$ with
\bew
f(1) = M(1,\lambda) f(0) = \pm f(0),
\eew
hence if and only if $1$ or $-1$ is an eigenvalue of $M(1,\lambda)$. As $\det M(1,\lambda) =1$ by Proposition \ref{Wronskian_identity}, the two eigenvalues of $M(1,\lambda)$ are either both equal to $1$ or both equal to $-1$. Therefore we either have $\Delta(\lambda)=2$ or $\Delta(\lambda)=-2$, that is, $\chi_{\mathrm P}(\lambda)=0$.
\end{proof}

For $\psi \in \LL$, we set
\bew
\sigma_{\mathrm P}(\psi) \coloneqq \big\{\lambda \in \C \colon \chi_{\mathrm P}(\lambda,\psi)=0 \big\}.
\eew
The characteristic function for the zero potential $\psi=0$ is given by
\bew
\chi_{\mathrm P}(\lambda,0) = -4 \sin^2 2\lambda^2;
\eew
each root has multiplicity two, except the root $\lambda=0$ has multiplicity four. Thus the periodic spectrum of the zero potential consists of two bi-infinite sequences of double eigenvalues 
\be \label{per_ev_zero_pot}
\lambda^{i,\pm}_n(0) = \sgn (n) \sqrt{\frac{(-1)^{i-1}|n|\pi}{2}}, \quad i=1,2 
\ee
on the real and imaginary axes in the complex plane.
The $\lambda$-derivative of the discriminant at the zero potential is given by
\bew
\dot \Delta(\lambda,0) = -8 \lambda \sin 2\lambda^2,
\eew
and its roots, the so-called \emph{critical points} of the discriminant for the zero potential, denoted by $\dot \lambda^i_n(0)$, $i=1,2$, $n\in \Z$, coincide with the periodic eigenvalues (and the Dirichlet and Neumann eigenvalues):
\be \label{crit_val_Delta_zero_pot}
\dot \lambda^i_n(0) = \sgn (n) \sqrt{\frac{(-1)^{i-1}|n|\pi}{2}}, \quad i=1,2. 
\ee
Note that $\lambda=0$ has multiplicity three; all the other roots of $\dot\Delta(\cdot,0)$ are simple roots.

\begin{lemma} \label{lem_for_count_lem}
Fix $\psi\in\LL$. As $|\lambda|\to\infty$ with $\lambda \notin \Pi = \bigcup_{n\in\Z,\, i=1,2}D^i_n$,
\begin{align}
\chi_{\mathrm P}(\lambda,\psi) 
&= (-4\sin^2 2\lambda^2) \big(1+\mathcal O(|\lambda|^{-1})\big), \label{lem_for_count_lem_1} \\
 \dot \Delta(\lambda,\psi) &= (-8\lambda \sin  2\lambda^2) \big(1+\mathcal O(|\lambda|^{-1})\big). 
 \label{lem_for_count_lem_2}
\end{align}
These asymptotic estimates hold uniformly on bounded subsets of $\LL$. For the zero potential these formulas hold without the error terms.
\end{lemma}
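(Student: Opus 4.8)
The plan is to read off both asymptotic formulas from the leading-order behaviour of $M$ at $t=1$ supplied by Theorem~\ref{thm_asymptotics_M_1}, and then to convert the resulting \emph{additive} error terms into \emph{relative} ones by means of the elementary bound of Lemma~\ref{lem_est_sin_exp}.

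First I would record the free quantities. Since $E_\lambda(1)=\mathrm{diag}(\e^{-2\I\lambda^2},\e^{2\I\lambda^2})$, one has $\mathrm{tr}\,E_\lambda(1)=2\cos 2\lambda^2$, and differentiating in $\lambda$ gives $\mathrm{tr}\,\dot E_\lambda(1)=-4\I\lambda(\e^{-2\I\lambda^2}-\e^{2\I\lambda^2})=-8\lambda\sin 2\lambda^2$. Because $\Delta=\mathrm{tr}\,\M$ and $\LL\subseteq\LL_1$, taking traces in \eqref{thm_asymptotics_M_2} and \eqref{thm_asymptotics_Mdot_2} at $t=1$ yields, uniformly on bounded subsets of $\LL$ as $|\lambda|\to\infty$,
\[
\Delta(\lambda,\psi)=2\cos 2\lambda^2+\mathcal O\big(|\lambda|^{-1}\e^{2|\Im(\lambda^2)|}\big),\qquad
\dot\Delta(\lambda,\psi)=-8\lambda\sin 2\lambda^2+\mathcal O\big(\e^{2|\Im(\lambda^2)|}\big).
\]
For $\psi=0$ we have $M\equiv E_\lambda$ identically, so these error terms are absent, which already establishes the last sentence of the lemma.

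Next I would square the first expansion. Using $|\cos 2\lambda^2|\le\e^{2|\Im(\lambda^2)|}$, the cross and square terms are $\mathcal O(|\lambda|^{-1}\e^{4|\Im(\lambda^2)|})$, so
\[
\chi_{\mathrm P}(\lambda,\psi)=\Delta^2-4=4\cos^2 2\lambda^2-4+\mathcal O\big(|\lambda|^{-1}\e^{4|\Im(\lambda^2)|}\big)
=-4\sin^2 2\lambda^2+\mathcal O\big(|\lambda|^{-1}\e^{4|\Im(\lambda^2)|}\big).
\]
Now comes the one real point of the argument: restrict to $\lambda\notin\Pi$. Outside $\Pi$ one has $|2\lambda^2-n\pi|\ge\pi/4$ for every integer $n$ by the very definition of the discs $D^i_n$, so Lemma~\ref{lem_est_sin_exp} gives $\e^{2|\Im(\lambda^2)|}<4|\sin 2\lambda^2|$, and hence also $\e^{4|\Im(\lambda^2)|}<16|\sin 2\lambda^2|^2=16\,|\sin^2 2\lambda^2|$. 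Inserting the latter bound into the error term for $\chi_{\mathrm P}$ shows it is $\le(\text{const})\,|\lambda|^{-1}|\sin^2 2\lambda^2|=(-4\sin^2 2\lambda^2)\,\mathcal O(|\lambda|^{-1})$, which is \eqref{lem_for_count_lem_1}; inserting the former bound into the error term for $\dot\Delta$ shows it is $\mathcal O(|\sin 2\lambda^2|)=(-8\lambda\sin 2\lambda^2)\,\mathcal O(|\lambda|^{-1})$, which is \eqref{lem_for_count_lem_2}. Uniformity on bounded subsets of $\LL$ is inherited directly from Theorem~\ref{thm_asymptotics_M_1}.

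There is no genuinely hard step here. The only care needed is to track the exponential weights $\e^{c|\Im(\lambda^2)|}$ through the squaring operation (which promotes $\e^{2|\Im(\lambda^2)|}$ to $\e^{4|\Im(\lambda^2)|}$) and then to use Lemma~\ref{lem_est_sin_exp} off $\Pi$ to trade these weights back for powers of $|\sin 2\lambda^2|$; it is precisely this exchange that makes the error multiplicative, of the form $1+\mathcal O(|\lambda|^{-1})$, rather than merely additive. It is also worth recording the triviality $|\sin^2 z|=|\sin z|^2$, so that the constant factor arising from $16|\sin 2\lambda^2|^2$ can indeed be absorbed into $-4\sin^2 2\lambda^2$.
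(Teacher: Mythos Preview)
Your proof is correct and follows essentially the same approach as the paper's own proof: both invoke Theorem~\ref{thm_asymptotics_M_1} at $t=1$ to obtain the additive asymptotics for $\Delta$ and $\dot\Delta$, and then use Lemma~\ref{lem_est_sin_exp} off $\Pi$ to convert the exponential error terms into relative $\mathcal O(|\lambda|^{-1})$ errors. The only cosmetic difference is that the paper factors out $-4\sin^2 2\lambda^2$ first and bounds the bracketed remainder (keeping the cross term $\mathcal O(|\lambda|^{-1}\e^{2|\Im(\lambda^2)|})\cos 2\lambda^2/\sin^2 2\lambda^2$ and the square term $\mathcal O(|\lambda|^{-2}\e^{4|\Im(\lambda^2)|})/\sin^2 2\lambda^2$ separate), whereas you combine them into a single $\mathcal O(|\lambda|^{-1}\e^{4|\Im(\lambda^2)|})$ before dividing; the arithmetic is the same.
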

\begin{proof}
By Theorem \ref{thm_asymptotics_M_1}, we have $\Delta(\lambda,\psi)= 2 \cos 2\lambda^2 + \mathcal O(|\lambda|^{-1}\e^{2|\Im (\lambda^2)|})$  uniformly on bounded subsets of $\LL$, and thus 
\bew
\chi_{\mathrm P}(\lambda,\psi)  = 
(-4 \sin^2 2\lambda^2) 
\bigg[ 1 + \frac{ \mathcal O\big(|\lambda|^{-1}  \, \e^{2|\Im (\lambda^2)|}\big) \cos 2\lambda^2 }{\sin^2 2\lambda^2} 
+  \frac{ \mathcal O\big(|\lambda|^{-2} \, \e^{4|\Im (\lambda^2)|}\big) }{ \sin^2 2\lambda^2}  \bigg].
\eew
For $\lambda \notin \Pi$, we have $4 \, |\sin 2\lambda^2| >  \e^{2|\Im (\lambda^2)|}$, cf.~Lemma~\ref{lem_est_sin_exp}, and therefore
\be \label{pr_lem_for_count_lem_1}
\bigg| \frac{\cos 2\lambda^2}{\sin 2\lambda^2} \bigg|  \leq \frac{ \e^{2|\Im (\lambda^2)|}}{|\sin 2\lambda^2|} < 4
\quad  \text{for} \quad \lambda \in \C \setminus \Pi.
\ee 
The estimate \eqref{lem_for_count_lem_1} follows. 
Moreover, by Theorem \ref{thm_asymptotics_M_1},  
\bew
\dot \Delta(\lambda,\psi) = (-8\lambda \sin  2\lambda^2)
\bigg[ 1 + \frac{\mathcal O\big(\e^{2|\Im (\lambda^2)|}\big)}{\lambda \sin  2\lambda^2} \bigg]
\eew
uniformly on bounded subsets of $\LL$
and thus \eqref{pr_lem_for_count_lem_1} yields \eqref{lem_for_count_lem_2}.
\end{proof}

The next result provides an asymptotic localization of the periodic eigenvalues.

\begin{lemma}[Counting Lemma for periodic eigenvalues] \label{counting_lemma}
Let $B$ be a bounded subset of $\LL$.
There exists an integer $N\geq1$, such that for every $\psi\in B$, the entire function $\chi_{\mathrm P}(\cdot,\psi)$ has exactly two roots in each of the two discs $D^i_n$, $i=1,2$, and exactly $4(2N + 1)$ roots in the disc $B_N$, when counted with multiplicity. There are no other roots.
\end{lemma}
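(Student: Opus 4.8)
The plan is to run the same Rouché argument that was used for the Dirichlet eigenvalues in Lemma \ref{lem_count_dirichlet_ev}, but now applied to $\chi_{\mathrm P}(\cdot,\psi) = \Delta^2(\cdot,\psi)-4$ and to the comparison function $\chi_{\mathrm P}(\cdot,0) = -4\sin^2 2\lambda^2$. The key analytic input is already available: Lemma \ref{lem_for_count_lem} gives, uniformly on bounded subsets $B$ of $\LL$,
\[
\chi_{\mathrm P}(\lambda,\psi) = (-4\sin^2 2\lambda^2)\bigl(1 + \mathcal O(|\lambda|^{-1})\bigr)
\quad\text{as } |\lambda|\to\infty \text{ with } \lambda\notin\Pi,
\]
where $\Pi = \bigcup_{n\in\Z,\,i=1,2} D^i_n$. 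First I would fix a bounded set $B$ and observe that, by Lemma \ref{lem_est_sin_exp}, on the boundary $\partial D^i_n$ of each disc with $|n|>N$ (and on $\partial B_N$) one has a lower bound $|\sin 2\lambda^2|^2 \geq \delta > 0$ that can be chosen independently of $n$; hence $|\chi_{\mathrm P}(\lambda,0)|$ is bounded below there. Combined with the displayed asymptotics, this yields an integer $N\geq 1$ such that, for every $\psi\in B$,
\[
|\chi_{\mathrm P}(\lambda,\psi) - \chi_{\mathrm P}(\lambda,0)| < |\chi_{\mathrm P}(\lambda,0)|
\]
on $\partial D^i_n$ for all $|n|>N$, $i=1,2$, and on $\partial B_N$.

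Next I would invoke Rouché's theorem on each of these contours: $\chi_{\mathrm P}(\cdot,\psi)$ has the same number of zeros (with multiplicity) inside each region as $\chi_{\mathrm P}(\cdot,0)$. Since $\chi_{\mathrm P}(\lambda,0) = -4\sin^2 2\lambda^2$ vanishes to order two at each point $\lambda$ with $2\lambda^2\in\pi\Z\setminus\{0\}$ — equivalently at each of the centers of the discs $D^i_n$ for $n\neq 0$ — and to order four at $\lambda=0$, we conclude that $\chi_{\mathrm P}(\cdot,\psi)$ has exactly two roots in each $D^i_n$ with $|n|>N$, and, counting the $4(2N+1)$ roots of $-4\sin^2 2\lambda^2$ inside $B_N$ (namely $2$ from each of the $4N$ discs $D^i_n$ with $1\leq|n|\leq N$ plus $4$ from the quadruple root at the origin), exactly $4(2N+1)$ roots inside $B_N$, all with multiplicity. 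The "there are no other roots" clause follows exactly as in Lemma \ref{lem_count_dirichlet_ev}: applying the same Rouché count to the larger discs $B_{N+k}$ shows $\chi_{\mathrm P}(\cdot,\psi)$ has $4(2(N+k)+1)$ roots in $B_{N+k}$, which are accounted for by the $4(2N+1)$ roots in $B_N$ together with the $2\cdot(2k)$ roots already located in the discs $D^i_l$ with $N<|l|\leq N+k$; letting $k\to\infty$ exhausts $\C$.

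Finally, to upgrade the statement to "two roots in each $D^i_n$ for all $n$" (not just $|n|>N$), I would note that $B_N$ contains all discs $D^i_n$ with $|n|\leq N$ and that the total count $4(2N+1)$ inside $B_N$ equals the sum $2\cdot 4N + 4$ of the local counts dictated by $\chi_{\mathrm P}(\cdot,0)$; but this bookkeeping is merely the aggregate statement, and individual localization inside each small $D^i_n$ with $|n|\leq N$ is not claimed for arbitrary $\psi\in B$ (it holds only after shrinking to a small neighborhood of $\psi=0$, as in Theorem \ref{prop_mu}). The main subtlety — and the only place requiring care beyond transcribing the Dirichlet argument — is the correct multiplicity bookkeeping: because $\chi_{\mathrm P}(\cdot,0)$ has double (resp.\ quadruple) zeros rather than simple ones, one must track the factor of $2$ (resp.\ $4$) consistently both in the per-disc count and in the global count inside $B_N$, so that the arithmetic $2\cdot 4N + 4 = 4(2N+1)$ closes. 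I expect no genuine obstacle; this is a direct adaptation of Lemma \ref{lem_count_dirichlet_ev} with $\sin 2\lambda^2$ replaced by $\sin^2 2\lambda^2$.
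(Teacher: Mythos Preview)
Your proof is correct and follows essentially the same approach as the paper's own proof, which is very terse and simply invokes Lemma \ref{lem_for_count_lem}, derives the Rouch\'e inequality on the boundaries of the $D^i_n$ (for $|n|>N$) and of $B_N$, and then refers back to the argument of Lemma \ref{lem_count_dirichlet_ev}. Your write-up is in fact more detailed than the paper's.

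One minor arithmetic slip: in your ``no other roots'' bookkeeping, the number of periodic roots in the annulus $B_{N+k}\setminus B_N$ should be $8k$, not $2\cdot(2k)=4k$. Indeed, for $N<|l|\leq N+k$ there are $2k$ values of $l$, each contributing two discs $D^1_l$ and $D^2_l$, and each disc holds two roots, giving $2k\cdot 2\cdot 2=8k$; this matches $4(2(N+k)+1)-4(2N+1)=8k$. This does not affect the validity of the argument.
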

\begin{proof}
Let $B\subseteq \LL$ be bounded. By Lemma \ref{lem_for_count_lem}, 
\bew
\chi_{\mathrm P}(\lambda,\psi) = \chi_{\mathrm P}(\lambda,0) \big(1+o(1)\big)
\eew
for $|\lambda| \to \infty$ with $\lambda \notin \Pi$ uniformly for $\psi\in B$. Hence there exists an integer $N\geq 1$ such that, for all $\psi \in B$,
\bew
|\chi_{\mathrm P}(\lambda,\psi) - \chi_{\mathrm P}(\lambda,0)| < | \chi_{\mathrm P}(\lambda,0)|
\eew
on the boundaries of all discs $D^i_n$ with $|n|>N$, $i=1,2$, and also on the boundary of $B_N$.
As in the proof of Lemma \ref{lem_count_dirichlet_ev}, the result follows by an application of Rouch\'e's theorem.
\end{proof}

\begin{figure}[h!] 
\begin{center}
\begin{overpic}[width=.43\textwidth]{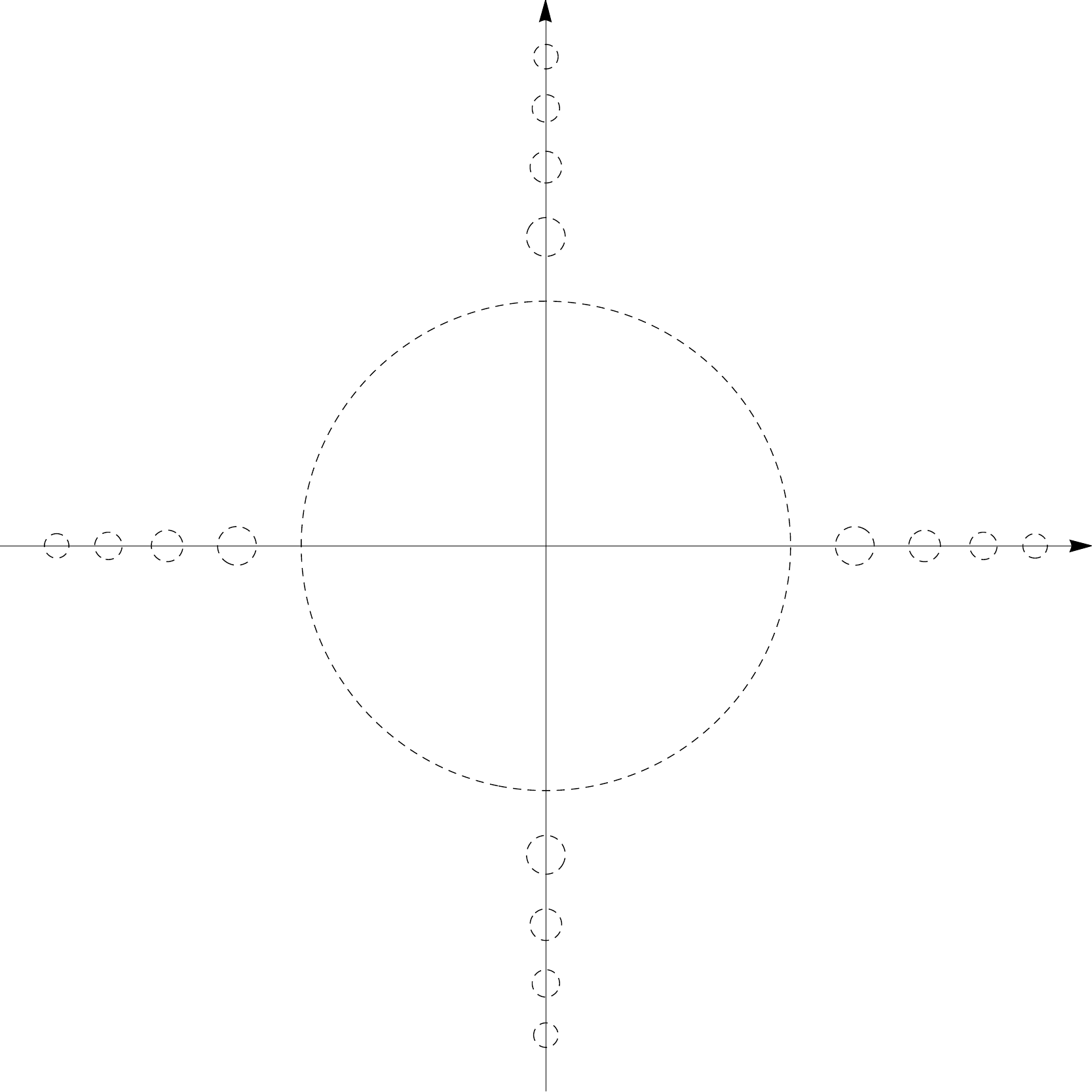}
     \put(55,55){{\tiny $B_N$}}
     \put(73,53.5){{\tiny $D^1_{N+1}$}}
     \put(17,53.5){{\tiny $D^1_{-N-1}$}}
     \put(52,77){{\tiny $D^2_{N+1}$}}
     \put(52,20){{\tiny $D^2_{-N-1}$}}
     \put(94.5,52.5){{\tiny $\Re \lambda$}}
     \put(52,97){{\tiny $\Im \lambda$}}
\end{overpic}
\caption{Localization of the periodic eigenvalues according to the Counting Lemma. The first $4(2N+1)$ roots of $\chi_{\mathrm P}(\cdot,\psi)$ lie in the large disc $B_N$. The remaining periodic eigenvalues lie in the discs $D^i_n$ centered at $\lambda^{\pm,i}_n(0)$, $i=1,2$, $|n|>N$; each disc contains precisely two of them. The radii of these discs shrink to zero at order $\mathcal O(|n|^{-1/2})$ as $|n|\to\infty$.}
\label{fig:counting_lemma}
\end{center}
\end{figure}

Lemma \ref{lem_for_count_lem} yields a Counting Lemma for the critical points of $\Delta$ as well: 

\begin{lemma}[Counting Lemma for critical points] \label{counting_lemma_roots_Delta}
Let $B$ be a bounded subset of $\LL$.
There exists an integer $N\geq1$, such that for every $\psi\in B$, the entire function $\dot \Delta(\cdot,\psi)$ has exactly one root in each of the two discs $D^i_n$, $i=1,2$
for all $|n|>N$, and exactly $4N + 3$ roots in the disc $B_N$, when counted with multiplicity. There are no other roots.
\end{lemma}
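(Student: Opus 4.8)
The plan is to repeat the Rouché-theorem argument from the proof of Lemma~\ref{lem_count_dirichlet_ev} (and of Lemma~\ref{counting_lemma}), now with the entire function $\dot\Delta(\cdot,\psi)$ playing the role of $\chi_{\mathrm D}(\cdot,\psi)$ and with the reference function $\dot\Delta(\lambda,0)=-8\lambda\sin 2\lambda^2$ in place of $\chi_{\mathrm D}(\lambda,0)$. The one piece of input needed is the asymptotic estimate \eqref{lem_for_count_lem_2},
\[
\dot\Delta(\lambda,\psi)=\dot\Delta(\lambda,0)\,\bigl(1+\mathcal O(|\lambda|^{-1})\bigr),
\]
valid uniformly for $\psi$ in the bounded set $B$ as $|\lambda|\to\infty$ with $\lambda\notin\Pi$; recall that $\dot\Delta$ is analytic on $\C\times\LL$, hence entire in $\lambda$.

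First I would record the lower bound $|\dot\Delta(\lambda,0)|=8|\lambda|\,|\sin 2\lambda^2|\ge 8c_0|\lambda|$ on the relevant boundary curves: on $\partial D^i_n$ one has $|2\lambda^2-n\pi|=\pi/4$, and since the nearest roots of $\sin$ are at distance $\ge 3\pi/4$ from this circle, $|\sin 2\lambda^2|\ge c_0>0$ with $c_0$ independent of $n$; moreover $\partial B_N$ stays outside $\Pi$ (one checks $\min_n|2\lambda^2-n\pi|\ge\pi/4$ there), so Lemma~\ref{lem_est_sin_exp} applies on $\partial B_N$ as well. Combining this with the displayed estimate produces an integer $N\ge1$, depending only on $B$, such that for every $\psi\in B$
\[
|\dot\Delta(\lambda,\psi)-\dot\Delta(\lambda,0)|<|\dot\Delta(\lambda,0)|
\]
on every circle $\partial D^i_n$ with $|n|>N$, $i=1,2$, and on $\partial B_N$.

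Then Rouché's theorem gives that $\dot\Delta(\cdot,\psi)$ has the same number of roots (with multiplicity) inside each such $D^i_n$, and inside $B_N$, as $\dot\Delta(\cdot,0)=-8\lambda\sin 2\lambda^2$. As noted after \eqref{crit_val_Delta_zero_pot}, the latter has a simple root at $\dot\lambda^i_n(0)=\sgn(n)\sqrt{(-1)^{i-1}|n|\pi/2}$ for every $n\ne0$ and a root of multiplicity three at $\lambda=0$; hence it has exactly one root in each $D^i_n$ with $|n|>N$ and exactly $4N+3$ roots in $B_N$ (the three at $\lambda=0$ together with the $4N$ roots $\dot\lambda^i_n(0)$ with $1\le|n|\le N$, $i=1,2$). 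That there are no other roots follows exactly as in the proof of Lemma~\ref{lem_count_dirichlet_ev}: carrying out the same comparison on each $\partial B_{N+k}$, $k\ge1$, shows that $\dot\Delta(\cdot,\psi)$ has $4(N+k)+3$ roots in $B_{N+k}$, and these are accounted for by the $4N+3$ roots already located in $B_N$ together with the $4k$ roots in the discs $D^i_l$ with $N<|l|\le N+k$, $i=1,2$.

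I do not expect a genuine obstacle: this is essentially the argument already used for the Dirichlet, Neumann and periodic counting lemmas, and the quadratic dependence on $\lambda$ is harmless here, since it only multiplies $\dot\Delta(\cdot,0)$ by the nonvanishing factor $\lambda$, which in fact strengthens the lower bound. The only points requiring a little care are the uniform-in-$n$ lower bound for $|\sin 2\lambda^2|$ on $\partial D^i_n$ and the verification that $\partial B_N$ does not meet $\Pi$, both of which are dealt with via Lemma~\ref{lem_est_sin_exp} just as in the earlier counting lemmas.
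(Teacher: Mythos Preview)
Your proposal is correct and follows exactly the approach indicated in the paper, which simply states that Lemma~\ref{lem_for_count_lem} (specifically the estimate \eqref{lem_for_count_lem_2}) yields this counting lemma without writing out the details. Your Rouch\'e argument, modeled on the proof of Lemma~\ref{lem_count_dirichlet_ev}, together with the root count $4N+3$ for $\dot\Delta(\cdot,0)=-8\lambda\sin 2\lambda^2$ in $B_N$, is precisely what is intended.
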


Let $\psi\in\LL$ be an arbitrary potential.
Inspired by \eqref{per_ev_zero_pot} and \eqref{crit_val_Delta_zero_pot}, we denote the corresponding periodic eigenvalues and critical points by $\lambda^{i,\pm}_n \equiv \lambda^{i,\pm}_n(\psi)$ and $\dot \lambda^i_n \equiv  \dot \lambda^i_n(\psi)$ respectively, $n\in\Z$, $i=1,2$. 

The critical points are labeled in the same way as the Dirichlet and Neumann eigenvalues (except that there is one additional root close the origin, which we will ignore whenever we consider sequences of critical points of the form $(\dot\lambda^i_n)_{n\in\Z}$ with $i=1$ or $i=2$). Remark \ref{rem_mult_ord} applies also to the critical points.

Concerning periodic eigenvalues, we adapt our labeling procedure as follows. 
Let $N\geq 0$ be be the minimal integer such that (a) for each $|n|>N$ and each $i=1,2$, the disc $D^i_n$ contains either two simple periodic eigenvalues or one periodic double eigenvalue and (b) $B_N$ contains precisely $4(2N+1)$ roots of $\chi_{\mathrm P}(\cdot,\psi)$ when counted with multiplicity. 
The two eigenvalues in the disc $D^i_n$, $|n|>N$, $i=1,2$, will be denoted by $\lambda^{i,\pm}_n$ and ordered so that $\lambda^{i,-}_n  \preceq \lambda^{i,+}_n$.
The remaining $4(2N+1)$ roots in $B_N$ are labeled such that 
\begin{align*}
\lambda^{1,-}_{-N} &\preceq \lambda^{1,+}_{-N} \preceq \cdots \preceq 
\lambda^{1,-}_{-1} \preceq \lambda^{1,+}_{-1}\preceq \lambda^{2,-}_{-N}   \preceq \lambda^{2,+}_{-N}
\preceq \cdots  
\preceq\lambda^{2,-}_{-1} \preceq \lambda^{2,+}_{-1} \preceq\lambda^{1,-}_0 \preceq \lambda^{1,+}_0
 \\
& \preceq \lambda^{2,-}_0 \preceq \lambda^{2,+}_0 
\preceq \lambda^{2,-}_1  \preceq \lambda^{2,+}_1 \preceq
\cdots \preceq \lambda^{2,-}_{N}  \preceq \lambda^{2,+}_{N} 
\preceq \lambda^{1,-}_1 \preceq \lambda^{1,+}_1 \preceq \cdots 
\preceq \lambda^{1,-}_{N} \preceq \lambda^{1,+}_{N}.
\end{align*}
The labeling of the roots of $\chi_{\mathrm P}(\cdot,\psi)$ according to the above procedure is unique except that the label of a particular periodic eigenvalue is ambiguous whenever it is not a simple root of $\chi_{\mathrm P}(\cdot,\psi)$. Sequences of Dirichlet eigenvalues of the form $(\mu^i_n)_{n\in\Z}$, where $i=1$ or $i=2$, are always well-defined, since each element of such a sequence has a uniquely defined value.  
If $N=0$ happens to be the minimal integer, we agree on the convention that merely 
\bew
\lambda^{1,-}_0  \preceq \lambda^{1,+}_0 \quad  \text{and}  \quad \lambda^{2,-}_0 \preceq \lambda^{2,-}_0
\eew
is required rather than $\lambda^{1,-}_0  \preceq \lambda^{1,+}_0 \preceq \lambda^{2,-}_0 \preceq \lambda^{2,-}_0$. 
This convention provides the freedom to label periodic eigenvalues inside the disc $B_0=D_0$ in the intuitive way (compare e.g.~the labelings of Fig.~\ref{fig:singexp1A} and Fig.~\ref{fig:singexp1B}, where we labeled the two periodic \emph{double} eigenvalues closest to the origin either by $\lambda^{2,+}_0$ and $\lambda^{2,-}_0$ (Fig.~\ref{fig:singexp1A}), or by $\lambda^{1,+}_0$ and $\lambda^{1,-}_0$ (Fig.~\ref{fig:singexp1B}), since they lie close to the imaginary axis in Fig.~\ref{fig:singexp1A}, and on the real axis in Fig.~\ref{fig:singexp1B}).

As in the case of Dirichlet eigenvalues, Neumann eigenvalues, and critical points, no general statement can be made regarding the multiplicity of periodic eigenvalues located near the origin; Fig.~\ref{fig:singexp2} illustrates this fact by means of an explicit example. 
Likewise, the labeling of periodic eigenvalues is generally not preserved under continuous deformations of the potential.  
Unlike the situation for Dirichlet eigenvalues, Neumann eigenvalues, and critical points, both multiplicity and labeling of periodic eigenvalues are generally not preserved under continuous deformations asymptotically for large $|n|$. Since each $D^i_n$ contains two periodic eigenvalues, their lexicographical ordering may entail discontinuities.

\begin{remark} \label{rem_sign_Delta_per_ev}
The Counting Lemma allows us to determine the sign of the discriminant at periodic eigenvalues with sufficiently large index $|n|$. Indeed, recall that $|\Delta(\lambda,\psi)|=2$ when $\lambda$ is a periodic eigenvalue, cf.~Theorem \ref{thm_char_func_per_ev}. Fix $\psi\in\LL$ and choose $N\geq 1$
according to the Counting Lemma so that each of the two discs $D^i_n$, $i=1,2$, for $n\in \Z$ with $|n|>N$  contains exactly two periodic eigenvalues.  
In fact, we can without loss of generality assume that $D^i_n$ contains exactly two periodic eigenvalues $\lambda^{i,\pm}_n(s \psi)$ for each potential $s \psi$ belonging to the line segment 
$S\coloneqq\{s \psi \colon 0\leq s\leq 1 \}$. Since $S \subseteq \LL$ is compact, we can choose $N$ uniformly with respect to $S$. Let us now consider the continuous paths $\rho^{i,\pm}_n \colon [0,1] \to \C$, $s\mapsto \lambda^{i,\pm}_n(s \psi)$, $i=1,2$.
Since $\Delta$ is continuous and $\Delta(\lambda^{i,\pm}_n(s \psi),s \psi) \in \{-2,2\}$ for $s \in [0,1]$, we conclude that either
\begin{align*}
\Delta(\rho^{i,\pm}(s), s \psi) \equiv 2 \ \text{on} \ [0,1] 
\qquad &\text{or} \qquad \Delta(\rho^{i,\pm}(s), s \psi) \equiv -2 \ \text{on} \ [0,1].
\end{align*}
Thus
\bew
\Delta(\lambda^{i,\pm}_n(\psi),\psi)  = \Delta(\lambda^{i,\pm}_n(0),0) 
= 2\, \cos n \pi = 2(-1)^n \quad \text{for} \quad |n|>N, \; i=1,2.
\eew
\end{remark}

The next lemma establishes a relation between the discriminant and the anti-discriminant evaluated at Dirichlet or Neumann eigenvalues.
 
\begin{lemma} \label{lem_id_discr_anti-discr}
If $\psi\in\LL$ and $\mu^i_n \equiv \mu^i_n(\psi)$ is any Dirichlet eigenvalue of $\psi$, then
\bew
\Delta^2(\mu^i_n,\psi) - 4 = \delta^2(\mu^i_n,\psi).
\eew 
This identity holds also at any Neumann eigenvalue $\nu^i_n \equiv \nu^i_n(\psi)$ of $\psi$.
\end{lemma}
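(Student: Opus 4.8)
The plan is to reduce the asserted identity to the Wronskian identity $\det\M=\m_1\m_4-\m_2\m_3=1$ (Proposition~\ref{Wronskian_identity} evaluated at $t=1$) together with the characterizations of the Dirichlet and Neumann spectra already obtained above. The point is that, thanks to $\det\M=1$, the quantity $\Delta^2-4-\delta^2$ \emph{factors} into two linear expressions in the entries of $\M$, and each of these factors is precisely (a multiple of) one of the characteristic functions $\chi_{\mathrm D}$, $\chi_{\mathrm N}$.

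Concretely, I would first record the elementary algebraic identity
\[
\Delta^2(\lambda,\psi)-4-\delta^2(\lambda,\psi)
=(\m_1+\m_4)^2-4(\m_1\m_4-\m_2\m_3)-(\m_2+\m_3)^2
=(\m_1-\m_2+\m_3-\m_4)\,(\m_1+\m_2-\m_3-\m_4),
\]
which holds at every $(\lambda,\psi)\in\C\times\LL$ by the Wronskian identity (all $\m_i$ being evaluated at $(\lambda,\psi)$); this is verified by expanding the product on the right and using $\m_1\m_4-\m_2\m_3=1$ to absorb the constant $4$. Hence the identity of the lemma follows as soon as one of the two factors on the right is shown to vanish at the eigenvalue under consideration.

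Next I would invoke the characterization of the Dirichlet spectrum established above: $\lambda$ is a Dirichlet eigenvalue of $\psi$ exactly when $\chi_{\mathrm D}(\lambda,\psi)=\frac{\m_4+\m_3-\m_2-\m_1}{2\I}=0$, i.e.\ exactly when the second factor $\m_1+\m_2-\m_3-\m_4$ vanishes (equivalently, when $\M$ maps $(1,1)^\intercal$ to a collinear vector). Evaluating the displayed identity at $\mu^i_n$ therefore gives $\Delta^2(\mu^i_n,\psi)-4=\delta^2(\mu^i_n,\psi)$. In the same way, $\lambda$ is a Neumann eigenvalue exactly when $\chi_{\mathrm N}(\lambda,\psi)=\frac{\m_4-\m_3+\m_2-\m_1}{2\I}=0$, i.e.\ exactly when the first factor $\m_1-\m_2+\m_3-\m_4$ vanishes (when $\M$ maps $(1,-1)^\intercal$ to a collinear vector), which yields the identity at $\nu^i_n$.

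I do not anticipate any genuine obstacle: the argument is purely algebraic once the Wronskian identity and the two spectral characterizations are in place. The only place that needs a little care is the sign bookkeeping in the factorization and in matching each linear factor to the correct characteristic function $\chi_{\mathrm D}$ or $\chi_{\mathrm N}$; but the relevant linear relations among the $\m_i$ are exactly those extracted in the proofs of the respective characterization theorems, so this amounts to routine verification rather than a new idea.
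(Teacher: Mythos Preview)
Your proposal is correct and follows essentially the same approach as the paper: both arguments rest on the Wronskian identity $\m_1\m_4-\m_2\m_3=1$ together with the linear relations $\m_1+\m_2=\m_3+\m_4$ (Dirichlet) and $\m_1-\m_2=\m_4-\m_3$ (Neumann). The only cosmetic difference is that the paper first rewrites $\Delta^2-4=(\m_1-\m_4)^2+4\m_2\m_3$ and then substitutes the eigenvalue relation to obtain $(\m_2+\m_3)^2$, whereas you compute $\Delta^2-4-\delta^2$ in one stroke and factor it as $(-2\I\chi_{\mathrm N})(-2\I\chi_{\mathrm D})$; the underlying algebra is identical.
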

\begin{proof}
We recall that $\m_1 \m_4 - \m_2 \m_3=1$ by Proposition \ref{Wronskian_identity}. Therefore,
\begin{align*}
\Delta^2 - 4 &= (\m_1 + \m_4)^2 - 4 \\
&= (\m_1 + \m_4)^2 - 4(\m_1 \m_4 - \m_2 \m_3) \\
&= (\m_1 - \m_4)^2 + 4 \m_2 \m_3.
\end{align*}
Let $\mu^i_n$ be a Dirichlet eigenvalue of $\psi\in\LL$, $i=1,2$. Then $\mu^i_n$ is a root of $\m_4 + \m_3 - \m_2 - \m_1$, that is
\bew
(\m_1 - \m_4)\big|_{(\mu^i_n,\psi)} = (\m_3 - \m_2)\big|_{(\mu^i_n,\psi)}.
\eew
Therefore,
\bew
\Delta^2(\mu^i_n,\psi) - 4 = \big(\m_2(\mu^i_n,\psi) + \m_3 (\mu^i_n,\psi)\big)^2 = \delta^2(\mu^i_n,\psi).
\eew 

For Neumann eigenvalues $\nu^i_n(\psi)$, $i=1,2$, we have 
\bew
(\m_1- \m_4)\big|_{(\nu^i_n,\psi)} = (\m_2- \m_3)\big|_{(\nu^i_n,\psi)},
\eew
which again yields the desired identity.
\end{proof}

By employing the identity of Lemma \ref{lem_id_discr_anti-discr}, we can prove the following analog of Theorems \ref{prop_mu} and \ref{prop_nu} for the periodic eigenvalues and the critical points.

\begin{theorem}  \label{prop_la}
Let $B$ be a bounded neighborhood of $\psi=0$ in $\LL$. 
\begin{enumerate}
\item
Uniformly on $B$, 
\be \label{prop_la_1}
\lambda^{i,\pm}_n(\psi) =\lambda^{i,\pm}_n(0) + \ell^{\infty,1}_n \quad \text{and} \quad \,
\dot \lambda^i_n(\psi) =\dot \lambda^i_n(0) + \ell^{\infty,1}_n,
 \quad i=1,2, 
\ee
where $\lambda^{i,\pm}_n(0)=\dot \lambda^i_n(0)$ are given by \eqref{per_ev_zero_pot}
and \eqref{crit_val_Delta_zero_pot}.
\item \label{prop_la_2}
There exists a  neighborhood $W\subseteq B$ of the zero potential such that for all $\psi\in W$ and every $n\in\Z$: 
\begin{enumerate}
\item 
$\sigma_{\mathrm P}(\psi) \cap D^i_n  \label{prop_la_2a}
= \{ \lambda^{i,-}_n (\psi),\lambda^{i,+}_n (\psi) \}$, $i=1,2$; 
\item 
$\Delta(\lambda^{i,\pm}_n (\psi),\psi) = 2(-1)^n$, $i=1,2$;
\item 
$\{\lambda \in \C \colon \dot\Delta(\lambda,\psi)=0 \} \cap D^i_n 
= \{ \dot\lambda^i_n (\psi) \}$, $i=1,2$.  \label{prop_la_2c}
\end{enumerate}
\item \label{prop_la_3}
Let $W\subseteq B$ be an open neighborhood  of the zero potential such that  part \emph{(c)} of \eqref{prop_la_2} is fulfilled.
Then the critical  points $\dot\lambda^i_n$, considered as functions of $\psi$, are analytic on $W$ for all $n\in\Z\setminus\{0\}$, $i=1,2$.
Furthermore, for every $\psi\in W$ and every sequence $(\psi_k)_{k\in\N}$ in $W$ with $\psi_k\to\psi$ as $k\to\infty$ it holds that
\bew
\lim_{k\to\infty} \bigg(\sup_{n\in\Z\setminus\{0\}} (1+n^2)^\frac{1}{2} \big|\dot\lambda^i_n(\psi_k) - \dot\lambda^i_n(\psi)\big| \bigg) = 0, \quad i=1,2.
\eew
\end{enumerate}
\end{theorem}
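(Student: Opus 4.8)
The plan is to follow the template of the proof of Theorem~\ref{prop_mu} (and of~\cite{Kappeler_etal_09}): first pin down the critical points $\dot\lambda^i_n$ asymptotically, then use the identity of Lemma~\ref{lem_id_discr_anti-discr} to control the periodic eigenvalues $\lambda^{i,\pm}_n$ by trapping them around the single critical point sitting in the same disc, and finally obtain assertions (2) and (3) by Rouch\'e-type arguments and the analytic implicit function theorem. Throughout we use that on $D^i_n$ one has $\e^{2|\Im(\lambda^2)|}=\mathcal O(1)$, $|\lambda|\sim\sqrt{|n|}$, $\sin 2\dot\lambda^i_n(0)^2=0$ and $\cos 2\dot\lambda^i_n(0)^2=(-1)^n$, since $2\dot\lambda^i_n(0)^2=(-1)^{i-1}|n|\pi$.

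\emph{Critical points, first assertion.} Since $\dot\Delta(\dot\lambda^i_n,\psi)=0$ and, by Theorem~\ref{thm_asymp_seq_near_per_ev} (cf.~\eqref{thm_asymp_seq_near_per_ev_1'}), $\dot\Delta(\lambda,\psi)=-8\lambda\sin 2\lambda^2+\mathcal O(1)$ along any sequence lying in the discs $D^i_n$, one gets $\sin 2(\dot\lambda^i_n)^2=\mathcal O(|n|^{-1/2})$. Because $2\dot\lambda^i_n(0)^2$ is an integer multiple of $\pi$, the fundamental theorem of calculus applied to $\sin$ — exactly as in the proof of the first assertion of Theorem~\ref{prop_mu} — yields $(\dot\lambda^i_n)^2-\dot\lambda^i_n(0)^2=\ell^{\infty,1/2}_n$; dividing by $\dot\lambda^i_n+\dot\lambda^i_n(0)$, which is of modulus $\sim\sqrt{|n|}$, gives $\dot\lambda^i_n(\psi)=\dot\lambda^i_n(0)+\ell^{\infty,1}_n$ uniformly on $B$. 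The analogue of \eqref{pr_prop_mu_01} needed below, namely $\ddot\Delta(\lambda,\psi)=-8\sin 2\lambda^2-32\lambda^2\cos 2\lambda^2+\mathcal O(|\lambda|)$ on the $D^i_n$, follows from Theorem~\ref{thm_asymptotics_M_1} by applying Cauchy's inequality twice on discs of radius $\sim|\lambda|^{-1}$ (each differentiation costs one power of $|\lambda|$, as in the proof of \eqref{thm_asymptotics_Mdot_2}).

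\emph{Periodic eigenvalues, first assertion (main obstacle).} By Theorem~\ref{prop_mu}(1) and the previous step, $\mu^i_n-\dot\lambda^i_n=\ell^{\infty,1}_n$. By Theorem~\ref{thm_asymptotics_M_1}, $\delta(\mu^i_n)=(\m_2+\m_3)(\mu^i_n)=\mathcal O(|n|^{-1/2})$, so Lemma~\ref{lem_id_discr_anti-discr} upgrades this crude bound to $\chi_{\mathrm P}(\mu^i_n)=\delta^2(\mu^i_n)=\ell^{\infty,1}_n$. Since $\dot\chi_{\mathrm P}=2\Delta\dot\Delta$, the point $\dot\lambda^i_n$ is a critical point of $\chi_{\mathrm P}=\Delta^2-4$; a second order Taylor expansion at $\dot\lambda^i_n$, together with $\ddot\chi_{\mathrm P}=\mathcal O(|n|)$ on $D^i_n$ and $(\mu^i_n-\dot\lambda^i_n)^2=\mathcal O(|n|^{-2})$, propagates the bound: $\chi_{\mathrm P}(\dot\lambda^i_n)=\chi_{\mathrm P}(\mu^i_n)+\mathcal O(|n|^{-1})=\ell^{\infty,1}_n$. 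Finally, writing $\chi_{\mathrm P}(\lambda)=\chi_{\mathrm P}(\dot\lambda^i_n)+a_n(\lambda-\dot\lambda^i_n)^2+\mathcal O(|n|^{3/2}|\lambda-\dot\lambda^i_n|^3)$ with $2a_n=\ddot\chi_{\mathrm P}(\dot\lambda^i_n)$ and $|a_n|\sim|n|$ (as $\dot\Delta(\dot\lambda^i_n)=0$ and $\ddot\Delta(\dot\lambda^i_n)\sim|n|$), a Rouch\'e argument on the shrinking circle $|\lambda-\dot\lambda^i_n|=K/|n|$, $K$ a large constant, shows that both roots $\lambda^{i,\pm}_n$ of $\chi_{\mathrm P}(\cdot,\psi)$ in $D^i_n$ satisfy $\lambda^{i,\pm}_n-\dot\lambda^i_n=\mathcal O(|n|^{-1})$; with the previous step this gives $\lambda^{i,\pm}_n(\psi)=\lambda^{i,\pm}_n(0)+\ell^{\infty,1}_n$ uniformly on $B$. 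The delicate point is exactly this chain: a pair of periodic eigenvalues with possibly vanishing gap cannot be localized individually, so one first traps the pair around $\dot\lambda^i_n$, and the sharpening from $\mathcal O(|n|^{-1/2})$ to $\ell^{\infty,1}_n$ hinges on Lemma~\ref{lem_id_discr_anti-discr}, keeping careful track of the $|n|$-dependence of $\ddot\chi_{\mathrm P}$ and of the cubic remainder.

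\emph{Assertions (2) and (3).} For $|n|$ large, (2)(a) and (2)(c) are the Counting Lemmas~\ref{counting_lemma} and~\ref{counting_lemma_roots_Delta}, and (2)(b) is Remark~\ref{rem_sign_Delta_per_ev}. For the finitely many remaining discs one shrinks $B$ to a neighborhood $W$ of $0$: at $\psi=0$, $\chi_{\mathrm P}(\cdot,0)=-4\sin^2 2\lambda^2$ and $\dot\Delta(\cdot,0)=-8\lambda\sin 2\lambda^2$ have the prescribed root multiplicities in each $D^i_n$ and in $D_0$, and $\Delta(\lambda,0)=2\cos 2\lambda^2\neq-2(-1)^n$ on $\overline{D^i_n}$ (since $2\lambda^2$ stays within $\pi/4$ of $(-1)^{i-1}|n|\pi$); by continuity of $\Delta$, $\chi_{\mathrm P}$, $\dot\Delta$ in $\psi$ and Rouch\'e these properties persist on a small enough $W$, and $\lambda\in D^i_n$ with $\chi_{\mathrm P}(\lambda,\psi)=0$ then forces $\Delta(\lambda,\psi)=2(-1)^n$, which gives (2) (for $n=0$ we adopt the labeling convention introduced after Lemma~\ref{counting_lemma_roots_Delta}). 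For (3), one checks $\ddot\Delta(\dot\lambda^i_n(\psi),\psi)\neq0$ for $n\in\Z\setminus\{0\}$ and $\psi\in W$: by the asymptotics of $\ddot\Delta$ noted above, its leading term on $D^i_n$ has modulus $\sim|n|$, so $\ddot\Delta\neq0$ on $D^i_n\times B$ for $|n|$ large, while for the remaining $n\neq0$ this follows after shrinking $W$ from $\ddot\Delta(\dot\lambda^i_n(0),0)=-32\dot\lambda^i_n(0)^2(-1)^n\neq0$ and continuity. The analytic implicit function theorem (see~\cite{Whittlesey65}) then makes $\psi\mapsto\dot\lambda^i_n(\psi)$ analytic on $W$ for all $n\in\Z\setminus\{0\}$, and the equicontinuity statement follows verbatim as in Theorem~\ref{prop_mu}: the family $\psi\mapsto(1+n^2)^{1/2}(\dot\lambda^i_n(\psi)-\dot\lambda^i_n(0))$ (with the $n=0$ entry set to $0$) is analytic and, by the first assertion, uniformly bounded, hence uniformly equicontinuous on bounded subsets of $W$ by~\cite[Proposition~9.15]{Mujica86}, which yields continuity of $\psi\mapsto(\dot\lambda^i_n(\psi))_{n\neq0}$ into $\check\ell^{\infty,1}_\C$.
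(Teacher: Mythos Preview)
Your proof is correct and follows essentially the same architecture as the paper's: handle the critical points by the argument of Theorem~\ref{prop_mu}, then use Lemma~\ref{lem_id_discr_anti-discr} together with the Dirichlet asymptotics to pin down $\Delta^2-4$ at $\dot\lambda^i_n$, and finally trap the pair $\lambda^{i,\pm}_n$ around $\dot\lambda^i_n$; parts (2) and (3) are done identically.

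The only noteworthy difference is in the last step for the periodic eigenvalues. You work with $\chi_{\mathrm P}=\Delta^2-4$, expand it to second order at $\dot\lambda^i_n$, keep track of the cubic remainder $\mathcal O(|n|^{3/2}|\lambda-\dot\lambda^i_n|^3)$, and run a Rouch\'e argument on the circle $|\lambda-\dot\lambda^i_n|=K/|n|$. The paper instead works with $\Delta$ directly: since $\dot\Delta(\dot\lambda^i_n)=0$, the exact second-order Taylor formula with integral remainder gives
\[
\Delta(\lambda^{i,\pm}_n)-\Delta(\dot\lambda^i_n)=(\lambda^{i,\pm}_n-\dot\lambda^i_n)^2\int_0^1(1-t)\,\ddot\Delta\big(t\lambda^{i,\pm}_n+(1-t)\dot\lambda^i_n\big)\,\mathrm dt,
\]
and one then uses that $\Delta(\lambda^{i,\pm}_n)=2(-1)^n$ \emph{exactly} (Remark~\ref{rem_sign_Delta_per_ev}) together with $\Delta(\dot\lambda^i_n)=2(-1)^n+\ell^{\infty,1}_n$ and the fact that the integral is $\Theta(|n|)$ to read off $(\lambda^{i,\pm}_n-\dot\lambda^i_n)^2=\ell^{\infty,2}_n$. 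This is a bit slicker: no cubic term to estimate and no Rouch\'e, since the periodic eigenvalues are already known to sit in $D^i_n$ by the Counting Lemma and their exact value of $\Delta$ replaces the root-counting. Your route is perfectly valid too; it just carries one extra order of Taylor bookkeeping.
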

\begin{proof}
The proofs of the assertions for the critical points are similar to the proofs of the analogous assertions for Dirichlet eigenvalues; see~Theorem~\ref{prop_mu} and its proof. Furthermore, the second assertion follows---in view of the Counting Lemma and Remark \ref{rem_sign_Delta_per_ev}---from the continuity and asymptotics of $\chi_{\mathrm P}$.  
It only remains to show \eqref{prop_la_1} for the periodic eigenvalues $\lambda^{i,\pm}_n\equiv\lambda^{i,\pm}_n(\psi)$.

Since $\mu^i_n= \mu^i_n(0) + \ell^{\infty,1}$ uniformly on $B$ by Theorem \ref{prop_mu} and $E_{\mu^i_n}$ is off-diagonal, we can apply Theorem \ref{thm_asymp_seq_near_per_ev} to infer that $\delta(\mu^i_n) = \ell^{\infty,1/2}_n$ uniformly on $B$. 
Since the quadratic mapping $u_n\mapsto u^2_n$ is continuous as a map $\ell^{\infty,1/2}_\C \to \ell^{\infty,1}_\C$, Lemma \ref{lem_id_discr_anti-discr} yields  
\be \label{pr_prop_la_1}
\Delta^2(\mu^i_n) - 4 = \delta^2(\mu^i_n) = \ell^{\infty,1}_n
\ee
uniformly on $B$.
Since $\Delta(\mu^i_n)=2(-1)^n + \ell^{\infty,1/2}_n$ due to the second part of Theorem \ref{thm_asymp_seq_near_per_ev}, we deduce from \eqref{pr_prop_la_1} by writing the left hand side as $(\Delta(\mu^i_n) - 2) (\Delta(\mu^i_n) + 2)$ that
\be \label{pr_prop_la_2}
\Delta(\mu^i_n) = 2(-1)^n +  \ell^{\infty,1}_n
\ee
uniformly on $B$.

Next we will employ the identity 
\be \label{pr_prop_la_3}
\Delta(\dot\lambda^i_n) - \Delta(\mu^i_n) = (\dot\lambda^i_n - \mu^i_n)
\int^1_0 \dot \Delta(t \dot\lambda^i_n  + (1-t) \mu^i_n )  \, \mathrm d t,
\ee
which is a consequence of the fundamental theorem of calculus.
By Theorem  \ref{thm_asymp_seq_near_per_ev}, all values on the lines connecting $\dot\lambda^i_n$ and $\mu^i_n$ are $\mathcal O(1)$ as $|n|\to\infty$ uniformly on $B$. Moreover, $\dot\lambda^i_n - \mu^i_n=\ell^{\infty,1}_n$ uniformly on $B$ by the first assertion of this theorem concerning the critical points and Theorem \ref{prop_mu}. Hence, we infer from \eqref{pr_prop_la_2} and \eqref{pr_prop_la_3} that 
\be \label{pr_prop_la_4}
\Delta(\dot\lambda^i_n) = 2(-1)^n +  \ell^{\infty,1}_n
\ee
uniformly on $B$.
Furthermore, since $\dot\Delta(\dot\lambda^i_n)=0$ by definition, we obtain
\be \label{pr_prop_la_5}
\Delta(\lambda^{i,\pm}_n) = \Delta(\dot\lambda^i_n) + (\lambda^{i,\pm}_n - \dot\lambda^i_n)^2
\int^1_0 (1-t) \ddot \Delta(t \lambda^{i,\pm}_n + (1-t) \dot\lambda^i_n) \,\mathrm d t.
\ee
By recalling that $\Delta(\lambda^{i,\pm}_n)=2(-1)^n$ for all sufficiently large $|n|$, cf.~Remark \ref{rem_sign_Delta_per_ev}, we deduce from \eqref{pr_prop_la_4} and \eqref{pr_prop_la_5} that 
\be \label{pr_prop_la_6}
(\lambda^{i,\pm}_n - \dot\lambda^i_n)^2
\int^1_0 (1-t) \ddot \Delta(t \lambda^{i,\pm}_n + (1-t) \dot\lambda^i_n) \, \mathrm d t =  \ell^{\infty,1}_n
\ee
uniformly on $B$.
Using Cauchy's estimate and Theorem \ref{thm_asymptotics_M_1}, we find
\bew
\ddot \Delta(\lambda) = -16 \lambda^2 \cos 2 \lambda^2 + \mathcal O\big(|\lambda|\, \e^{2|\Im(\lambda^2)|}\big)
\eew
uniformly on $B$ as $|\lambda|\to\infty$. Hence, for a bi-infinite sequence $(z_n)_{n\in\Z}$, whose entries $z_n$ remain asymptotically in the discs $D^i_n$ ($i=1$ or $i=2$) we have $\ddot \Delta(z_n) = -16 z_n^2 \cos 2 z_n^2 + \mathcal O(|z_n| )$. By the Counting Lemmas, $(t \lambda^{i,\pm}_n + (1-t) \dot\lambda^i_n)=\ell^{\infty,1/2}_n$ uniformly for $t\in[0,1]$ and $\psi\in B$. Thus, the absolute value of the integral in \eqref{pr_prop_la_6} is $\Theta(|n|)$ as $|n|\to\infty$, which means that it grows precisely as fast as $|n|$. As a consequence, 
$(\lambda^{i,\pm}_n - \dot\lambda^i_n)^2= \ell^{\infty,2}_n$ uniformly on $B$; in other words,
\be \label{pr_prop_la_7}
\lambda^{i,\pm}_n - \dot\lambda^i_n = \ell^{\infty,1}_n
\ee
uniformly on $B$.
Since $\dot\lambda^i_n=\dot\lambda^i_n(0)+\ell^{\infty,1}_n = \lambda^{i,\pm}_n(0)+\ell^{\infty,1}_n$ uniformly on $B$, we conclude from \eqref{pr_prop_la_7} that $\lambda^{i,\pm}_n = \lambda^{i,\pm}_n(0) + \ell^{\infty,1}_n$ uniformly on $B$, hence the periodic eigenvalues satisfy the asymptotic estimate of  the first assertion of this theorem. This completes the proof. \end{proof}

\begin{remark}
Let $W$ be an open neighborhood of the zero potential such that Theorem \ref{prop_la} \eqref{prop_la_2} is satisfied. 
Then the map
\bew
W \to \ell^{\infty,1}_\C, \quad \psi \mapsto \big(\lambda^{i,\pm}_n(\psi) - \lambda^{i,\pm}_n(0)\big), \quad i=1,2
\eew
is continuous at $\psi=0$,
that is, for every sequence $(\psi_k)_{k\in\N}$ in $W$ with $\psi_k\to0$ as $k\to\infty$, it holds that
\bew
\lim_{k\to\infty} \bigg(\sup_{n\in\Z} (1+n^2)^\frac{1}{2} \big|\lambda^{i,\pm}_n(\psi_k) - \dot\lambda^{i,\pm}_n(0)\big| \bigg) = 0, \quad i=1,2.
\eew
However, as a consequence of the lexicographical ordering, the periodic eigenvalues $\lambda^{i,-}_n$ and $\lambda^{i,+}_n$ do not define analytic functions from $W$ to $D^i_n$, $n\in\Z\setminus \{0\}$. In order to formulate a version of Theorem \ref{prop_la} \eqref{prop_la_3} for periodic eigenvalues, one can consider suitable subsets of $W$ where $\lambda^{i,-}_n$ and $\lambda^{i,+}_n$ are isolated from each other---the situation near a potential $\psi\in W$ with a double periodic eigenvalue is cumbersome. 
Section \ref{sec_potRI} addresses these questions for potentials of so-called real and imaginary type. For such potentials the fundamental matrix solution possesses additional symmetries, which implies that the two periodic eigenvalues $\lambda^{i,\pm}_n$ in $D^i_n$ are either real or form a complex conjugate pair. In both cases, they are connected by an analytic arc along which the discriminant is real-valued.
\end{remark}

\begin{remark}
The asymptotic localization of Dirichlet eigenvalues, Neumann eigenvalues, periodic eigenvalues, and critical points provided by Theorems \ref{prop_mu}, \ref{prop_nu} and \ref{prop_la} can be slightly improved. Indeed, it is straightforward to adapt the proofs of these theorems to obtain, for each $p>2$, 
\begin{align*}
\mu^i_n &= \mu^i_n(0) + \ell^{p,1/2}_n, \\
\nu^i_n &= \mu^i_n(0) + \ell^{p,1/2}_n, \\
\lambda^{i,\pm}_n &= \mu^i_n(0) + \ell^{p,1/2}_n, \\
\dot\lambda^i_n &= \mu^i_n(0) + \ell^{p,1/2}_n,
\end{align*}
uniformly on bounded subsets of $\LL$, $i=1,2$.
Studies of other related spectral problems---see~e.g.~\cite{DjaMit06,KapMit01,Kappeler_etal_06,Poe11}---suggest that these
localization results can be further sharpened if attention is restricted to subspaces of more regular potentials. 
\end{remark}

\section{Potentials of real and imaginary type} \label{sec_potRI}
\noindent
This section considers  potentials  of  so-called real and imaginary type. These subspaces of the space $\LL$ of general $t$-periodic potentials  consist precisely of those potentials, which are relevant for the  $x$-evolution of the $t$-periodic defocusing NLS (real type) and focusing NLS (imaginary type).
Our main results are Theorems \ref{thm_conn_per_ev} and \ref{cor_conn_per_ev}, which state that for sufficiently small real and imaginary type potentials $\psi$, the corresponding periodic eigenvalues $\lambda^{1,-}_n(\psi)$ and $\lambda^{1,+}_n(\psi)$ are connected by analytic arcs in the complex plane for each $n\in\Z\setminus\{0\}$. These arcs form a subset of $\{\lambda\in\C\colon\Delta(\lambda,\psi)\in\R\}$. These results are needed for the establishment of local Birkhoff coordinates and shall  serve as a solid foundation for future investigations in this direction.
Theorem \ref{cor_conn_per_ev} is inspired by~\cite[Proposition 2.6]{Kappeler_etal_09}, which establishes similar properties for the $x$-periodic potentials of imaginary type for the focusing\footnote{The analogous result for $x$-periodic real type potentials for the defocusing NLS is trivial, since in this case all periodic eigenvalues are real valued due to selfadjointness of the corresponding ZS-operator,~cf.~\cite{GrebertKappeler14}.} NLS.
Our proof makes use of the ideas and techniques of~\cite{Kappeler_etal_09}.  
We refer to~\cite{Kappeler_etal_14} for further related results.

For potentials $\psi\in \LL$, we define
\bew
\psi^* \coloneqq P\bar{\psi} \coloneqq (\bar{\psi}^2, \bar{\psi}^1, \bar{\psi}^4, \bar{\psi}^3),
\eew
where
\bew
P \coloneqq \begin{pmatrix} \sigma_1 & 0 \\ 0 & \sigma_1 \end{pmatrix}, \qquad \sigma_1 = \begin{pmatrix} 0 & 1 \\ 1 & 0 \end{pmatrix}.
\eew
We say that a potential $\psi$ of the Zakharov-Shabat $t$-part \eqref{tlax} is of \emph{real type} if $\psi^* = \psi$.
In this case, $\psi^2 = \bar{\psi}^1$ and $\psi^4 = \bar{\psi}^3$, that is, $\psi = (q_0 + \I p_0, q_0 - \I p_0, q_1 + \I p_1, q_1 - \I p_1)$ for some real-valued functions $\{q_j, p_j\}_{j=0}^1$.
Hence a potential is of real type iff all coefficients of the corresponding AKNS system are real-valued.
The subspace of $\LL$ of all real type potentials will be denoted by
\bew
\LL_{\mathcal R} \coloneqq \{\psi \in \LL \, | \, \psi^* = \psi\}.
\eew
Note that this is a \emph{real} subspace of $\LL$, not a complex one; it consists of those potentials that are relevant for the \emph{defocusing} NLS. 

We can write the Zakharov-Shabat $t$-part \eqref{tlax} as
\bew
\bigg(-\I\sigma_3\partial_t + 2\lambda^2\id + \begin{pmatrix} \psi^1 \psi^2 & \I(2\lambda \psi^1 + \I \psi^3) \\
-2\I \lambda \psi^2 - \psi^4 & \psi^1 \psi^2 \end{pmatrix}\bigg)\phi = 0,
\eew
or, in other words,
\begin{align}\label{LphiRphi}
L(\psi)\phi = R(\lambda,\psi) \phi
\end{align}
with
\begin{align} \label{L,R}
L(\psi) &\coloneqq -\I\sigma_3\partial_t + 
\begin{pmatrix} \psi^1 \psi^2 & -\psi^3 \\
- \psi^4 & \psi^1 \psi^2 \end{pmatrix}, \qquad
R(\lambda,\psi) \coloneqq -2\lambda^2 \id - 2\I\lambda
\begin{pmatrix} 0& \psi^1  \\
-\psi^2 & 0 \end{pmatrix}.
\end{align}

For $v = (v_1, v_2)$ and $w = (w_1, w_2)$, let
\bew
\langle v, w \rangle 
= \int_0^1 (v_1 \bar{w}_1 + v_2 \bar{w}_2) \, \mathrm d t.
\eew
If the eigenfunctions $v,w$ lie in the periodic domain $\mathcal D_{\mathrm P}$, we can integrate by parts without boundary terms and find that
\begin{equation*}
\langle w, L(\psi) v \rangle 
=  \langle L(\psi^*)w, v\rangle.	
\end{equation*}
Therefore, if the potential $\psi$ is of real type and $v$ is a periodic eigenfunction with eigenvalue $\lambda$,  
\bew
\langle R(\lambda,\psi)v, v\rangle = \langle L(\psi)v, v\rangle = \langle v,  L(\psi)v\rangle
= \langle v,  R(\lambda,\psi)v\rangle
\eew
and thus we find that
\bew
\Im \lambda = 0 \quad \text{or} \quad \Re \lambda = \frac{\Im \big( \int^1_0 \psi^1 v_2\bar{v}_1 \, \mathrm d t \big)}{\langle v, v\rangle}.
\eew
According to the Counting Lemma, the periodic eigenvalues of type $\lambda^2_n(\psi)$ for arbitrary $\psi\in\LL$ necessarily possess non-vanishing imaginary parts for sufficiently large $|n|$.
In analogy with the $x$-part \eqref{xlax}, one might expect that $\Im \lambda^{1,\pm}_n(\psi)=0$ for real type potentials. However, we will see in Section \ref{sec_singexp} that this is not the case: there are single exponential potentials of real type for which some $\lambda^{1,\pm}_n$ are nonreal, cf.~Fig~\ref{fig:singexp2}. 

\begin{lemma} \label{lem_SymmM_1}
Let $\psi$ be of real type and let $\lambda\in \R$. Then $m_4=\bar m_1$ and $m_3=\bar m_2$. In particular, $\Delta$ is real-valued on $\R\times \LL_{\mathcal R}$.
Moreover, if a solution $v$ of $L(\psi) v = R(\lambda,\psi)v$ is real in AKNS-coordinates, then $v=\sigma_1 \bar v$.
\end{lemma}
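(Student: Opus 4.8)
The plan is to reduce everything to the single matrix identity $\sigma_1\overline{M}\,\sigma_1 = M$, valid on $[0,\infty)$ for every real type potential $\psi$ and every real $\lambda$, and then read off the stated consequences. First I would record the conjugation symmetries of the coefficient matrices in \eqref{fund_solution}. Since $R=-2\I\lambda^2\sigma_3$ and $\sigma_1\sigma_3\sigma_1=-\sigma_3$, one has $\overline{R}=2\I\bar\lambda^2\sigma_3$ and $\sigma_1 R\sigma_1 = 2\I\lambda^2\sigma_3$, so $\overline{R}=\sigma_1 R\sigma_1$ precisely because $\lambda\in\R$. For the potential, the real type relations $\psi^2=\bar\psi^1$ and $\psi^4=\bar\psi^3$ make $\psi^1\psi^2=|\psi^1|^2$ real; a direct entrywise computation from \eqref{Vdef}, again using $\lambda\in\R$, then shows that complex conjugation negates the two (purely imaginary) diagonal entries of $V(\lambda,\psi)$ and exchanges its two off-diagonal entries, which is exactly the effect of $A\mapsto\sigma_1 A\sigma_1$ (this map swaps the diagonal entries — which here differ only by a sign — and swaps the off-diagonal entries). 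Hence $\overline{V}=\sigma_1 V\sigma_1$ on $\R\times\LL_{\mathcal R}$.

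Next I would feed these symmetries into the Cauchy problem. Conjugating $\mathrm D M=(R+V)M$, $M(0)=\id$, and substituting $\overline{R}=\sigma_1 R\sigma_1$, $\overline{V}=\sigma_1 V\sigma_1$ gives $\mathrm D(\sigma_1\overline{M}\sigma_1)=(R+V)(\sigma_1\overline{M}\sigma_1)$ together with $\sigma_1\overline{M}(0)\sigma_1=\id$. Thus $\sigma_1\overline{M}\sigma_1$ solves the very same problem \eqref{fund_solution} as $M$, and by the uniqueness part of Theorem~\ref{thm_fund_sol} we get $\sigma_1\overline{M}\sigma_1=M$ for all $t\geq0$. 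Written in components this is $m_1=\bar m_4$ and $m_2=\bar m_3$, i.e.\ $m_4=\bar m_1$ and $m_3=\bar m_2$, for every $t$; specializing to $t=1$ yields $\Delta=\m_1+\m_4=\m_1+\overline{\m_1}=2\Re\m_1\in\R$, so $\Delta$ is real-valued on $\R\times\LL_{\mathcal R}$.

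For the last assertion I would use only the linear change of variables relating ZS- and AKNS-eigenfunctions, $\phi\mapsto T^{-1}\phi$, cf.~\eqref{def_T} and \eqref{fund_K}; in particular a solution $v$ of $L(\psi)v=R(\lambda,\psi)v$ has AKNS-coordinates $T^{-1}v$. From the explicit $T$ in \eqref{def_T} one checks at once that $\overline{T}=\sigma_1 T$, hence $\overline{T^{-1}}=T^{-1}\sigma_1$. If $T^{-1}v$ is real, i.e.\ $\overline{T^{-1}v}=T^{-1}v$, then $T^{-1}\sigma_1\bar v=\overline{T^{-1}}\,\bar v=T^{-1}v$, and multiplying by $T$ gives $v=\sigma_1\bar v$ (the converse follows by running the same chain backwards).

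All steps are elementary; the one that demands genuine care is the entrywise verification $\overline{V}=\sigma_1 V\sigma_1$, where one must track the signs of the $\pm\I\psi^3$, $\pm\I\psi^4$, and $\mp\I\psi^1\psi^2$ terms and use that the real type relations pair $\psi^1$ with $\psi^2$ and $\psi^3$ with $\psi^4$ under conjugation. This is also precisely the step in which both hypotheses are needed: the identity $\overline{R}=\sigma_1 R\sigma_1$ alone would also survive for purely imaginary $\lambda$, but the symmetry of $V$ forces $\lambda$ to be real, and $\overline{\psi^1\psi^2}=\psi^1\psi^2$ forces $\psi$ to be of real type.
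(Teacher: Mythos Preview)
Your proof is correct. Both you and the paper end up with the key identity $\sigma_1\overline{M}\sigma_1=M$, but the routes differ slightly. The paper passes through the AKNS system: since $\psi$ is of real type and $\lambda\in\R$, the AKNS coefficients in \eqref{AKNS_system} are real, hence the AKNS fundamental solution $K$ is real-valued; the relation $M=TKT^{-1}$ together with $\bar T=\sigma_1 T$ then yields $\bar M=\sigma_1 M\sigma_1$. You stay entirely in ZS coordinates, verifying $\overline{R}=\sigma_1 R\sigma_1$ and $\overline{V}=\sigma_1 V\sigma_1$ directly and invoking uniqueness from Theorem~\ref{thm_fund_sol}. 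The paper's route has the conceptual advantage of explaining \emph{why} the symmetry holds (real coefficients in a real ODE), while yours is more self-contained and avoids the AKNS detour. For the second claim, your argument is in fact cleaner: you observe that $\overline{T^{-1}}=T^{-1}\sigma_1$ alone makes ``$T^{-1}v$ real'' equivalent to $v=\sigma_1\bar v$ pointwise, without any appeal to the differential equation or to the symmetry of $M$; the paper instead writes $v=MTv_0$ with real $v_0$ and uses $\bar M=\sigma_1 M\sigma_1$.
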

\begin{proof}
Since $\psi=\psi^*$, the AKNS coordinates $(p_j,q_j)$, $j=0,1$, are real. If in addition $\lambda \in \R$, the system \eqref{AKNS_system} has real coefficients, so its fundamental solution $K$ is real-valued. The relation $M=T K T^{-1}$, cf.~\eqref{fund_K}, 
then implies that  $m_4=\bar m_1$ and $m_3=\bar m_2$.
To prove the second claim, we note that $\bar T = \sigma_1 T$ and hence  
\bew
\bar M = \bar T K \bar T^{-1} = \sigma_1 T K T^{-1} \sigma_1 = \sigma_1 M \sigma_1.
\eew 
If $v$ is real in AKNS coordinates, it has real initial data $v_0$ and $v=M T v_0$. Therefore 
\bew
\bar v = \bar M \bar T v_0 = \sigma_1 M T v_0 = \sigma_1 v.
\eew
\end{proof}

We say that a potential $\psi\in\LL$ is of \emph{imaginary type} if $\psi^*=-\psi$. The subspace 
\bew
\LL_{\mathcal I} \coloneqq \{\psi\in\LL\colon \psi^*=-\psi\} 
\eew
of potentials of imaginary type is relevant for the \emph{focusing} NLS.

\begin{proposition} \label{symm_M}
For $\psi \in \LL_{\mathcal R}$ the fundamental solution $M$ satisfies 
\be\label{Msymm}
 M(t,\bar\lambda,\psi)= \sigma_1 \overbar{M(t,\lambda,\psi)} \sigma_1, \quad 
 \lambda \in \C, \; t\geq 0;
\ee
if $\psi \in \LL_{\mathcal I}$ then $M$ satisfies 
\be\label{Msymm'}
 M(t,\bar\lambda,\psi)= \sigma_1 \sigma_3 \overbar{M(t,\lambda,\psi)} \sigma_3 \sigma_1    , \quad 
 \lambda \in \C, \; t \geq 0.
\ee
In particular,
\be \label{symm_Delta}
\Delta (\bar \lambda,\psi) = \overbar{\Delta (\lambda,\psi)} \quad \text{and} \quad 
\dot\Delta (\bar \lambda,\psi) = \overbar{\dot\Delta (\lambda,\psi)}
\ee
for all $\psi \in \LL_{\mathcal R}\cup \LL_{\mathcal I}$ and $\lambda \in \C$, so that $\Delta$ and $\dot\Delta$ are real-valued on $\R \times (\LL_{\mathcal R} \cup \LL_{\mathcal I})$.
\end{proposition}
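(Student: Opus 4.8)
The plan is to exploit the explicit power series representation of $M$ from \eqref{M}--\eqref{M_n} together with the algebraic symmetries that the potential matrix $V$ inherits from the real (resp.~imaginary) type condition on $\psi$. First I would record the key conjugation identities for $V$. Writing $V = V(\lambda,\psi)$ as in \eqref{Vdef}, a direct computation shows that for any $\psi$ and any $\lambda$,
\bew
\sigma_1 \overbar{V(\lambda,\psi)} \sigma_1 = V(\bar\lambda,\psi^*) - 2\I(\lambda - \bar\lambda)\begin{pmatrix} 0 & (\psi^*)^1 \\ (\psi^*)^2 & 0\end{pmatrix}^{\!\!?}
\eew
— more cleanly, one checks directly from the entries of \eqref{Vdef} that $\sigma_1\overbar{V(\lambda,\psi)}\sigma_1 = V(\bar\lambda,P\bar\psi)$ and $\sigma_3\sigma_1\overbar{V(\lambda,\psi)}\sigma_1\sigma_3 = -V(\bar\lambda,-P\bar\psi)$, using that $\sigma_1$ swaps the two diagonal entries and the two off-diagonal entries, complex conjugation turns $\psi^j$ into $\bar\psi^j$ and $\lambda$ into $\bar\lambda$, and $P\bar\psi = \psi^*$. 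Hence for $\psi$ of real type ($\psi^*=\psi$) we get $\sigma_1\overbar{V(\lambda,\psi)}\sigma_1 = V(\bar\lambda,\psi)$, and for $\psi$ of imaginary type ($\psi^* = -\psi$) we get $\sigma_3\sigma_1\overbar{V(\lambda,\psi)}\sigma_1\sigma_3 = V(\bar\lambda,\psi)$, since $V$ is affine-linear in $\psi$ in a way that the sign flips cancel against the $\sigma_3$-conjugation flipping the off-diagonal part. I would verify the corresponding identity for $R(\lambda) = -2\I\lambda^2\sigma_3$: one has $\sigma_1\overbar{R(\lambda)}\sigma_1 = 2\I\bar\lambda^2\sigma_1\sigma_3\sigma_1 = -2\I\bar\lambda^2\sigma_3 = R(\bar\lambda)$ because $\sigma_1\sigma_3\sigma_1 = -\sigma_3$; and $\sigma_3\sigma_1\overbar{R(\lambda)}\sigma_1\sigma_3 = \sigma_3 R(\bar\lambda)\sigma_3 = R(\bar\lambda)$ since $R$ is diagonal. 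Equivalently, $\sigma_1\overbar{E_\lambda(t)}\sigma_1 = E_{\bar\lambda}(t)$ and $\sigma_3\sigma_1\overbar{E_\lambda(t)}\sigma_1\sigma_3 = E_{\bar\lambda}(t)$.

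Given these, the cleanest route is induction on the terms $M_n$ of the series \eqref{M}. Set $\Sigma = \sigma_1$ in the real case and $\Sigma = \sigma_3\sigma_1$ in the imaginary case; note $\Sigma^{-1} = \Sigma^{*} $ up to sign, and in both cases $\Sigma X \Sigma^{-1}$ is the conjugation appearing in \eqref{Msymm}, \eqref{Msymm'} once we also apply complex conjugation. The claim to prove by induction is
\bew
M_n(t,\bar\lambda,\psi) = \Sigma \, \overbar{M_n(t,\lambda,\psi)} \, \Sigma^{-1}, \qquad n\geq 0.
\eew
The base case $n=0$ is exactly $M_0 = E_\lambda$ and the identities for $E_\lambda$ just recorded. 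For the inductive step, apply complex conjugation to the defining integral \eqref{M_n}, insert $\Sigma^{-1}\Sigma = \id$ between each factor, and use the conjugation identities for $E_\lambda$ and $V$ termwise together with the induction hypothesis for $M_n$; since $\Sigma$ is $\lambda$-independent and constant in $t$ it passes freely through the integral $\int_0^t$. This yields the identity for $M_{n+1}$. Because the series \eqref{M} converges uniformly on bounded subsets of $[0,\infty)\times\C\times\LL$ (Theorem \ref{thm_fund_sol}) and conjugation by $\Sigma$ and complex conjugation are continuous, summing over $n$ gives \eqref{Msymm} and \eqref{Msymm'}.

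Finally, \eqref{symm_Delta} follows immediately by taking traces: from \eqref{Msymm}, $\Delta(\bar\lambda,\psi) = \mathrm{tr}\,\M(\bar\lambda,\psi) = \mathrm{tr}(\sigma_1\overbar{\M(\lambda,\psi)}\sigma_1) = \mathrm{tr}\,\overbar{\M(\lambda,\psi)} = \overbar{\Delta(\lambda,\psi)}$, and likewise from \eqref{Msymm'} using that conjugation by $\sigma_3\sigma_1$ preserves the trace. The statement for $\dot\Delta$ follows by differentiating the identity $\Delta(\bar\lambda,\psi) = \overbar{\Delta(\lambda,\psi)}$ in $\lambda$ (or, equivalently, by applying $\partial_\lambda$ to \eqref{Msymm}, \eqref{Msymm'}, using that $\partial_\lambda\overbar{M(\lambda,\psi)} = \overbar{\dot M(\bar\lambda,\psi)}$ by the chain rule for the antiholomorphic dependence). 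Restricting to $\lambda\in\R$ gives $\Delta(\lambda,\psi) = \overbar{\Delta(\lambda,\psi)}$, i.e.~$\Delta$ and $\dot\Delta$ are real-valued on $\R\times(\LL_{\mathcal R}\cup\LL_{\mathcal I})$.

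I expect the only genuinely delicate point to be bookkeeping the algebraic identities for $V$ under the two conjugations — in particular getting the signs right in the imaginary-type case, where the extra $\sigma_3$-conjugation must be seen to interact correctly with both the off-diagonal $\lambda$-linear part and the diagonal $\psi^1\psi^2$ part of $V$, and with the imaginary-type relations $\psi^2 = -\bar\psi^1$, $\psi^4 = -\bar\psi^3$. Once those identities are in hand the induction is entirely routine, and the trace argument for \eqref{symm_Delta} is immediate.
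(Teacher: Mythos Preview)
Your proof is correct. The approach differs from the paper's in that you establish the symmetry term-by-term via induction on the Neumann series \eqref{M}, whereas the paper argues directly at the level of the differential equation: it rewrites \eqref{main_eq} in the form $L(\psi)\phi = R(\lambda,\psi)\phi$ (cf.~\eqref{LphiRphi}--\eqref{L,R}) and checks that if $v$ solves this equation for parameter $\lambda$, then $\sigma_1\bar v$ (real type) respectively $\sigma_1\sigma_3\bar v$ (imaginary type) solves it for parameter $\bar\lambda$; the symmetry of $M$ then follows from uniqueness of the fundamental solution together with the invariance of the initial condition $\id$ under conjugation by $\Sigma$. Both arguments ultimately rest on the same conjugation identities $\Sigma\,\overbar{R(\lambda)+V(\lambda,\psi)}\,\Sigma^{-1} = R(\bar\lambda)+V(\bar\lambda,\psi)$ that you identify (and your verification of these is correct, including the sign bookkeeping in the imaginary case). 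The paper's route is a little shorter---one invocation of uniqueness replaces your induction-plus-summation step---while your route is entirely self-contained within the series framework of Section~\ref{sec_fund_sol} and makes no separate appeal to ODE uniqueness. The trace argument for \eqref{symm_Delta} is identical in both.
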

\begin{proof}
Let us first assume that $\psi \in \LL_{\mathcal R}$ and $\lambda \in \C$. Then a computation using \eqref{L,R} shows that
\bew
L(\psi) v = R (\lambda,\psi) v \quad \iff \quad L(\psi)  v^* = R (\bar \lambda,\psi)  v^*,
\eew
where $ v^*  \coloneqq \sigma_1 \bar v = (\bar v_2, \bar v_1)$. 
The symmetry \eqref{Msymm} follows from uniqueness of the solution of \eqref{LphiRphi} and the initial condition $M(0, \lambda, \psi) = I$. Evaluation of \eqref{Msymm} at $t = 1$ gives \eqref{symm_Delta}. This finishes the proof for the case of real type potentials.
If $\psi \in \LL_{\mathcal I}$, we instead have
\bew
L(\psi) v = R(\lambda,\psi) v
\quad \iff \quad
L(\psi) \hat v = R(\bar \lambda,\psi) \hat v 
\eew
where $\hat v\coloneqq \sigma_1 \sigma_3 \bar v=(-\bar v_2,\bar v_1)$, which leads to \eqref{Msymm'} and \eqref{symm_Delta}.
\end{proof}

\begin{corollary} \label{cor_crit_val_real}
There exists a neighborhood $W$ of the zero potential in $\LL$ such that for each 
$\psi \in W \cap ( \LL_{\mathcal R}\cup \LL_{\mathcal I})$ and each $n\in\Z$, 
\bew
\{\lambda \in \C \colon \dot\Delta(\lambda,\psi)=0 \} \cap D^1_n 
= \{ \dot\lambda^1_n (\psi) \} \quad \text{and} \quad \dot\lambda^1_n (\psi)\in \R.
\eew
\end{corollary}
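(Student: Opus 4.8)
The plan is to read off the statement from two results already proved: the localization and uniqueness of the critical points of the discriminant in the discs $D^1_n$ (Theorem~\ref{prop_la}, part (2)(c)) and the conjugation symmetry of $\dot\Delta$ for potentials of real and imaginary type (Proposition~\ref{symm_M}). Concretely, I would take $W$ to be the open neighborhood of the zero potential furnished by part (2)(c) of Theorem~\ref{prop_la}; then, for every $\psi\in W$ and every $n\in\Z$, the only zero of $\dot\Delta(\cdot,\psi)$ in $D^1_n$ is the critical point $\dot\lambda^1_n(\psi)$. Passing to the subset $W\cap(\LL_{\mathcal R}\cup\LL_{\mathcal I})$ does not affect this, so the first assertion, $\{\lambda\in\C\colon\dot\Delta(\lambda,\psi)=0\}\cap D^1_n=\{\dot\lambda^1_n(\psi)\}$, is immediate, and it only remains to prove $\dot\lambda^1_n(\psi)\in\R$.

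The key observation is that each disc $D^1_n$, $n\in\Z$, is invariant under complex conjugation. Indeed, since $n\pi\in\R$ one has $2\bar\lambda^2-n\pi=\overline{2\lambda^2-n\pi}$, hence $|2\bar\lambda^2-n\pi|=|2\lambda^2-n\pi|$, so that $\lambda\in D_n\iff\bar\lambda\in D_n$; as the half-planes $\C_+,\C_-$ and the disc $D_0$ are likewise conjugation-invariant, so is $D^1_n$ for every $n$. Now fix $\psi\in W\cap(\LL_{\mathcal R}\cup\LL_{\mathcal I})$ and write $\mu\coloneqq\dot\lambda^1_n(\psi)\in D^1_n$. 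By Proposition~\ref{symm_M}, cf.~\eqref{symm_Delta}, we have $\dot\Delta(\bar\mu,\psi)=\overline{\dot\Delta(\mu,\psi)}=0$, so $\bar\mu$ is again a zero of $\dot\Delta(\cdot,\psi)$, and by the conjugation-invariance just noted it lies in $D^1_n$. The uniqueness of the zero of $\dot\Delta(\cdot,\psi)$ in $D^1_n$ then forces $\bar\mu=\mu$, i.e.\ $\dot\lambda^1_n(\psi)\in\R$, which completes the argument.

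I do not expect any serious analytic obstacle here: the substantial work is already contained in Theorem~\ref{prop_la} (localization and simplicity of critical points) and in Proposition~\ref{symm_M} (the symmetry $\dot\Delta(\bar\lambda,\psi)=\overline{\dot\Delta(\lambda,\psi)}$), and what remains is the short conjugation argument above. The only point that demands a little care is the disc $D^1_0=D_0$, where $\dot\Delta(\cdot,0)$ has a triple zero at the origin: here one must invoke the labelling convention for the critical points near $0$ so that the uniqueness statement of Theorem~\ref{prop_la}, part (2)(c), indeed pins down $\dot\lambda^1_0(\psi)$ as the unique zero of $\dot\Delta(\cdot,\psi)$ in $D_0$; once this is granted, the same symmetry argument yields $\dot\lambda^1_0(\psi)\in\R$ as well.
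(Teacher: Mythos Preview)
Your proposal is correct and follows essentially the same approach as the paper's own proof: invoke Theorem~\ref{prop_la}(2)(c) for the uniqueness of the critical point in $D^1_n$, then use the conjugation symmetry \eqref{symm_Delta} from Proposition~\ref{symm_M} to conclude that $\dot\lambda^1_n(\psi)$ must equal its own conjugate. You make explicit the conjugation-invariance of $D^1_n$ (which the paper leaves implicit) and flag the delicacy at $n=0$, but otherwise the two arguments are the same.
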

\begin{proof}
We already know from Theorem \ref{prop_la} that there exists a neighborhood $W$ of the zero potential such that, for all general potentials $\psi\in W$ and all $n\in \Z$, 
\bew
\{\lambda \in \C \colon \dot\Delta(\lambda,\psi)=0 \} \cap D^1_n 
= \{ \dot\lambda^1_n (\psi) \}.
\eew
Due to the symmetry \eqref{symm_Delta} we infer that, for all potentials $\psi\in W \cap ( \LL_{\mathcal R}\cup \LL_{\mathcal I})$ and $n\in\Z$, 
\bew
 0 = \dot\Delta (\dot\lambda^1_n (\psi),\psi) = \overbar{\dot\Delta (\dot\lambda^1_n (\psi),\psi)} 
 = \dot\Delta (\bar{\dot\lambda}^1_n (\psi),\psi).
\eew
Since $\dot\lambda^1_n (\psi)$ is the only root of $\dot \Delta (\cdot,\psi)$ in $D^1_n$, we conclude that 
$\dot\lambda^1_n (\psi)$ is real.
\end{proof}

\begin{theorem} \label{thm_conn_per_ev}
There exists a neighborhood $W$ of the zero potential in $\LL$ and a sequence of nondegenerate rectangles 
\be \label{Rectangle_eps_delta}
R^{\eps,\delta}_n \coloneqq \big\{ \lambda \in \C \colon 
|\Re\lambda-\dot\lambda^1_n(0)|<\delta_n,  \;  |\Im \lambda| <\eps_n \big\}, \quad n\in\Z,
\ee
with $\eps, \delta \in \ell^{\infty,1/2}_\R$,  such that for all $\psi \in W \cap (\LL_{\mathcal R}\cup \LL_{\mathcal I})$ and every 
$n\in\Z \setminus\{0\}$,
\bew
\{\lambda \in \C \colon \Delta(\lambda,\psi)\in\R \} \cap R^{\eps,\delta}_n 
= \gamma_n(\psi) \cup (R^{\eps,\delta}_n\cap \R),
\eew
where the subset $\gamma_n(\psi)\subseteq\C$ forms an analytic arc transversal to the real axis, which crosses the real line in the critical point $\dot \lambda^1_n(\psi)$ of $\Delta(\cdot,\psi)$. These arcs are symmetric under reflection in the real axis and the orthogonal projection of $\gamma_n(\psi)$ to the imaginary axis is a real analytic diffeomorphism onto its image. 
\end{theorem}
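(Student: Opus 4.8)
The plan is to reduce the statement to a study of the real-analytic function $(a,b)\mapsto\Im\Delta(a+\I b,\psi)$ near each real critical point $\dot\lambda^1_n(\psi)$ of $\Delta(\cdot,\psi)$, to split off the trivial zero locus $\{b=0\}$, and to apply the implicit function theorem to the remaining factor; the one delicate point will be to make every estimate uniform in $n$, which I would achieve by a rescaling adapted to the $\mathcal O(|n|^{-1/2})$ spacing of the $\dot\lambda^1_n$. \emph{Step 1 (splitting off the real axis).} For $\psi\in\LL_{\mathcal R}\cup\LL_{\mathcal I}$, Proposition \ref{symm_M} gives $\Delta(\bar\lambda,\psi)=\overline{\Delta(\lambda,\psi)}$, so, writing $\lambda=a+\I b$ with $a,b\in\R$, the function $v(a,b,\psi):=\Im\Delta(a+\I b,\psi)$ is real-analytic and odd in $b$; hence $v(a,b,\psi)=b\,G(a,b,\psi)$ with $G$ real-analytic and even in $b$. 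One checks $G(a,0,\psi)=\dot\Delta(a,\psi)$ and $\partial_aG(a,0,\psi)=\ddot\Delta(a,\psi)$, both real on $\R$ by \eqref{symm_Delta}. Therefore
\[
\{\lambda\in\C:\Delta(\lambda,\psi)\in\R\}=\{\Im\lambda=0\}\cup\{G(\Re\lambda,\Im\lambda,\psi)=0\},
\]
and by Corollary \ref{cor_crit_val_real} the only zero of $a\mapsto G(a,0,\psi)=\dot\Delta(a,\psi)$ inside $D^1_n$ is $a=\dot\lambda^1_n(\psi)$. It then remains to show that inside a suitable rectangle $R^{\eps,\delta}_n$ centred at $c_n:=\dot\lambda^1_n(0)=\sgn(n)\sqrt{|n|\pi/2}$ the set $\{G=0\}$ is a single analytic graph over the imaginary direction, crossing $\R$ at $\dot\lambda^1_n(\psi)$.

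\emph{Step 2 (uniform implicit function theorem via rescaling).} For $|n|$ large set $\zeta=4c_n(\lambda-c_n)$ and
\[
\tilde G_n(\zeta,\psi):=-\frac{(-1)^n}{8c_n}\,G\!\Big(c_n+\tfrac{\Re\zeta}{4c_n},\ \tfrac{\Im\zeta}{4c_n},\ \psi\Big).
\]
Since $2c_n^2=|n|\pi$ one has $2\lambda^2=|n|\pi+\zeta+\zeta^2/(8c_n^2)$; combining this with the asymptotics $\Delta(\lambda,\psi)=2\cos2\lambda^2+\mathcal O\big(|\lambda|^{-1}\e^{2|\Im(\lambda^2)|}\big)$ of Theorem \ref{thm_asymptotics_M_1} (on a fixed bounded $\zeta$-region one has $|\lambda|^{-1}=\mathcal O(|n|^{-1/2})$ and $\Im(\lambda^2)=\mathcal O(1)$), and using that $v$ is exactly odd in $\Im\lambda$, one obtains --- uniformly for $\zeta$ in a fixed bounded region, for $\psi$ in a fixed bounded subset of $\LL$, and as $|n|\to\infty$ ---
\[
\tilde G_n(\zeta,\psi)=\sin\zeta_1\cdot\frac{\sinh\zeta_2}{\zeta_2}+\mathcal O\big(|n|^{-1/2}\big),\qquad\zeta=\zeta_1+\I\zeta_2,
\]
and correspondingly for the $\zeta$-derivatives of $\tilde G_n$ (by Cauchy's inequality). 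Since $\sinh\zeta_2/\zeta_2\ge1$, on a fixed small rectangle $Q=\{|\zeta_1|<\rho,\,|\zeta_2|<\rho\}$ with $\rho<\pi/2$ the profile has $\partial_{\zeta_1}$-derivative $\cos\zeta_1\,\sinh\zeta_2/\zeta_2$ bounded below by a positive constant, hence so is $\partial_{\zeta_1}\tilde G_n$ for all large $|n|$, uniformly in $\psi$. The quantitative implicit function theorem (with constants independent of $n$ and of $\psi$) then provides, for each such $n$ and each $\psi$ in a fixed small ball $W\subseteq\LL$, a unique real-analytic $\zeta_1=\eta_n^\psi(\zeta_2)$ on $\{|\zeta_2|<\rho\}$ along which $\tilde G_n$ vanishes, with no other zeros of $\tilde G_n$ in $Q$; moreover $\eta_n^\psi$ is even (as $\tilde G_n$ is even in $\zeta_2$) and $|\eta_n^\psi|=\mathcal O(|n|^{-1/2})$, so the graph stays inside $Q$. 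Undoing the rescaling, $Q$ pulls back to a rectangle $R^{\eps,\delta}_n$ with $\delta_n=\eps_n=\rho/(4|c_n|)=\mathcal O(|n|^{-1/2})$, so $\eps,\delta\in\ell^{\infty,1/2}_\R$; decreasing $\rho$ we may also assume $R^{\eps,\delta}_n\subseteq D^1_n$. For the finitely many remaining indices $n\in\Z\setminus\{0\}$ the same conclusion follows from the plain implicit function theorem, using $\ddot\Delta(c_n,0)=-16|n|\pi(-1)^n\neq0$ (differentiate \eqref{Delta_zero}) and shrinking $W$ if necessary. Put $\gamma_n(\psi):=\{G(\cdot,\psi)=0\}\cap R^{\eps,\delta}_n$.

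\emph{Step 3 (properties of the arcs and conclusion).} By construction $\gamma_n(\psi)$ is the graph of a real-analytic function $a=\phi_n^\psi(b)$ on $|b|<\eps_n$ with $\phi_n^\psi$ even; thus it is an analytic arc, it meets $\R$ exactly where $\dot\Delta(a,\psi)=0$, i.e.\ at $\dot\lambda^1_n(\psi)$, and there its tangent direction is $\big((\phi_n^\psi)'(0),1\big)=(0,1)$, hence vertical and transversal to $\R$. Evenness of $\phi_n^\psi$ makes $\gamma_n(\psi)$ invariant under $b\mapsto-b$, i.e.\ symmetric under reflection in the real axis, and the orthogonal projection $\phi_n^\psi(b)+\I b\mapsto\I b$ of $\gamma_n(\psi)$ onto the imaginary axis is manifestly a real-analytic diffeomorphism onto its image. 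Together with Step 1 this gives $\{\lambda:\Delta(\lambda,\psi)\in\R\}\cap R^{\eps,\delta}_n=\gamma_n(\psi)\cup(R^{\eps,\delta}_n\cap\R)$, as claimed; the cases $\LL_{\mathcal R}$ and $\LL_{\mathcal I}$ are handled simultaneously since Proposition \ref{symm_M} supplies the same symmetry $\Delta(\bar\lambda,\psi)=\overline{\Delta(\lambda,\psi)}$ for both.

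\emph{Main obstacle.} Everything above is routine except the uniformity in $n$: because $\ddot\Delta,\ \Delta''',\dots$ grow polynomially in $|n|$, the naive implicit function theorem produces arcs only over rectangles of size $\mathcal O(|n|^{-1})$, whereas one needs size $\mathcal O(|n|^{-1/2})$ (so that $\eps,\delta\in\ell^{\infty,1/2}_\R$ and the $R^{\eps,\delta}_n$ cover the relevant part of the plane). The rescaling $\zeta=4c_n(\lambda-c_n)$ is chosen precisely so that $\tilde G_n$ and all its $\zeta$-derivatives converge uniformly to the explicit $n$-independent profile $\sin\zeta_1\cdot\sinh\zeta_2/\zeta_2$; this is what lets the implicit function theorem run with $n$- and $\psi$-independent constants, and what rules out spurious components of $\{\Delta\in\R\}$ inside the rectangle. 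The main computational work is the derivation of the displayed asymptotics for $\tilde G_n$ --- in particular verifying that the error term in Theorem \ref{thm_asymptotics_M_1} survives both the rescaling and the division by $\Im\lambda$, for which one uses that $v(a,b,\psi)$ is exactly odd in $b$.
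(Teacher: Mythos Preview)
Your proof is correct and takes a genuinely different route from the paper. Both arguments begin the same way: exploit the symmetry $\Delta(\bar\lambda,\psi)=\overline{\Delta(\lambda,\psi)}$ to write $\Im\Delta(a+\I b,\psi)=b\cdot G(a,b,\psi)$ with $G(a,0,\psi)=\dot\Delta(a,\psi)$, so that the problem reduces to describing the zero set of $G$ near each $\dot\lambda^1_n(0)$. From there the two approaches diverge.

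The paper treats all $n$ simultaneously by packaging $\big(G(\dot\lambda^1_n(0)+u_n,v_n,\psi)\big)_{n\in\Z}$ as a single map $\mathcal F$ between weighted sequence spaces, proving that $\mathcal F\colon\check\ell^{\infty,1/2}_\R\times\check\ell^{\infty,1/2}_\R\times(\LL_{\mathcal R}\cup\LL_{\mathcal I})\to\check\ell^{\infty,-1/2}_\R$ is real analytic (this is the technical core, requiring Lemma~\ref{lem_unif_bd_D_yDelta} and a complexification argument via Lemma~\ref{lem_analytic_linfty}), and then applying the implicit function theorem once in this infinite-dimensional setting. You instead handle each $n$ separately but make the estimates uniform by the rescaling $\zeta=4c_n(\lambda-c_n)$ with $c_n=\dot\lambda^1_n(0)$, which turns the family $\{G\text{ near }c_n\}$ into a family $\{\tilde G_n\}$ converging uniformly (together with its derivatives) to the explicit profile $\sin\zeta_1\cdot\sinh\zeta_2/\zeta_2$; the ordinary quantitative implicit function theorem then runs with $n$- and $\psi$-independent constants, and the finitely many remaining indices are handled directly.

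Your approach is more elementary---it avoids the $\ell^{p,s}$ machinery and the Banach-space analyticity arguments altogether---and for the theorem as stated it is fully adequate. What the paper's approach buys in addition is that the resulting map $(v,\psi)\mapsto\mathcal G(v,\psi)$ is real analytic as a map into $\check\ell^{\infty,1/2}_\R$, i.e.\ the arcs $\gamma_n(\psi)$ depend real-analytically on $\psi$ \emph{jointly} in $n$; this joint regularity is natural input for the subsequent construction of Birkhoff coordinates, but is not part of the present statement.
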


We refer to Fig.~\ref{fig:R} for an illustration of the analytic arc $\gamma_n(\psi)$ within the rectangle $R^{\eps,\delta}_n$ centered at the critical point $\dot\lambda^1_n(0)$ of the discriminant $\Delta(\cdot,0)$. 

Before proving Theorem \ref{thm_conn_per_ev}, we state an important consequence of Theorem \ref{thm_conn_per_ev} and Theorem \ref{prop_la}. Namely that for all small enough real and imaginary type potentials, the periodic eigenvalues $\lambda^{1,-}_n(\psi)$ and $\lambda^{1,+}_n(\psi)$ are connected by an analytic arc along which the discriminant is real-valued. More precisely, we will deduce the following result.

\begin{theorem} \label{cor_conn_per_ev}
There exists a neighborhood $W^*$ of $\psi=0$ in $\LL$ such that for each 
$\psi \in  W^* \cap (\LL_{\mathcal R}\cup \LL_{\mathcal I})$ and each $n\in\Z\setminus\{0\}$ there exists an analytic arc $\gamma^*_n\equiv\gamma^*_n(\psi)\subseteq\C$  connecting the two periodic eigenvalues $\lambda^{1,\pm}_n \equiv \lambda^{1,\pm}_n(\psi)$. 
Qualitatively we distinguish two different cases: either (i)
$\gamma^*_n=[\lambda^{1,-}_n,\lambda^{1,+}_n]\subseteq\R$ or (ii) $\gamma^*_n$ is transversal to the real line,
symmetric under reflection in the real axis, and the orthogonal projection of $\gamma^*_n$ to the imaginary axis is a real analytic diffeomorphism onto its image. In both cases, it holds that
\begin{enumerate}
\item $\Delta(\gamma^*_n,\psi) \subseteq [-2,2]$,
\item $\overbar{\gamma^*_n} = \gamma^*_n$,
\item $\dot\lambda^1_n(\psi) \in  \gamma^*_n \cap \R$,
\item For a parametrization by arc length $\rho_n \equiv \rho_n(s)$ of $\gamma^*_n$ with 
$\rho_n(0)=\dot\lambda^1_n(\psi)$, the function 
$s\mapsto \Delta(\rho_n(s),\psi)$ is strictly monotonous along the two connected components of $\gamma^*_n\setminus\{\dot\lambda^1_n(\psi) \}$.  
\end{enumerate}
(We include the possible scenario $\lambda^{1,-}_n(\psi)=\lambda^{1,+}_n(\psi)=\dot\lambda^1_n(\psi)$, where the set $\gamma^*_n(\psi)$ consists of the single element $\dot\lambda^1_n(\psi)$, as a degenerate special case.) 
\end{theorem}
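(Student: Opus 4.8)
The plan is to deduce Theorem~\ref{cor_conn_per_ev} from Theorem~\ref{thm_conn_per_ev} together with the localization results of Theorem~\ref{prop_la} and the symmetry properties established in Proposition~\ref{symm_M} and Corollary~\ref{cor_crit_val_real}. First I would choose $W^* \subseteq W$ to be a neighbourhood of $\psi=0$ small enough that (a) the rectangles $R^{\eps,\delta}_n$ from Theorem~\ref{thm_conn_per_ev} are available and the conclusion of that theorem holds on $W^*$, (b) by Theorem~\ref{prop_la}\eqref{prop_la_2} each disc $D^1_n$ contains exactly the two periodic eigenvalues $\lambda^{1,\pm}_n(\psi)$ with $\Delta(\lambda^{1,\pm}_n(\psi),\psi)=2(-1)^n$, and (c) by shrinking $W^*$ further (using the continuity statement $\lambda^{i,\pm}_n(\psi)=\lambda^{i,\pm}_n(0)+\ell^{\infty,1}_n$ uniformly and the fact that $\dot\lambda^1_n(0)=\lambda^{1,\pm}_n(0)$) we may assume that both $\lambda^{1,\pm}_n(\psi)$ and the critical point $\dot\lambda^1_n(\psi)$ lie inside $R^{\eps,\delta}_n$ for every $n\in\Z\setminus\{0\}$ and every $\psi\in W^*\cap(\LL_{\mathcal R}\cup\LL_{\mathcal I})$. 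The key point enabling this last step is that $\dot\lambda^1_n(\psi)\in\R$ by Corollary~\ref{cor_crit_val_real}, and the periodic eigenvalues are $O(n^{-1/2})$-close to the centre $\dot\lambda^1_n(0)$ of $R^{\eps,\delta}_n$, whose half-widths are themselves in $\ell^{\infty,1/2}_\R$; so a uniform smallness requirement on $\|\psi\|$ suffices.

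Next, fix such a $\psi$ and $n$. Since $\psi$ is of real or imaginary type, Proposition~\ref{symm_M} gives $\Delta(\bar\lambda,\psi)=\overbar{\Delta(\lambda,\psi)}$; in particular $\overbar{\lambda^{1,-}_n}$ and $\overbar{\lambda^{1,+}_n}$ are again periodic eigenvalues lying in $\overbar{D^1_n}=D^1_n$ (the disc $D^1_n$ is symmetric about $\R$), so the unordered pair $\{\lambda^{1,-}_n,\lambda^{1,+}_n\}$ is invariant under conjugation. Hence either both are real, or they form a complex conjugate pair $\lambda^{1,+}_n=\overbar{\lambda^{1,-}_n}\notin\R$. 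I treat the two cases. In case (i), both eigenvalues are real; I set $\gamma^*_n\coloneqq[\lambda^{1,-}_n,\lambda^{1,+}_n]\subseteq\R\cap R^{\eps,\delta}_n$. Since $\Delta(\cdot,\psi)$ is real on $\R$ (Proposition~\ref{symm_M}) and $\Delta(\lambda^{1,\pm}_n,\psi)=2(-1)^n=\pm2$ while $|\Delta|\le 2$ cannot be exceeded near a double/periodic eigenvalue by the Counting Lemma argument, one checks $\Delta(\gamma^*_n,\psi)\subseteq[-2,2]$; that $\dot\lambda^1_n(\psi)$ is the unique critical point in $D^1_n$ forces it to lie between $\lambda^{1,-}_n$ and $\lambda^{1,+}_n$, and strict monotonicity of $s\mapsto\Delta(\rho_n(s),\psi)$ on each side of it follows because $\dot\Delta(\cdot,\psi)$ has no other zero in $D^1_n$ (Theorem~\ref{prop_la}\eqref{prop_la_2c}). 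In case (ii), I invoke Theorem~\ref{thm_conn_per_ev}: the set $\{\lambda:\Delta(\lambda,\psi)\in\R\}\cap R^{\eps,\delta}_n$ equals $\gamma_n(\psi)\cup(R^{\eps,\delta}_n\cap\R)$, where $\gamma_n(\psi)$ is an analytic arc transversal to $\R$, crossing it at $\dot\lambda^1_n(\psi)$, symmetric under reflection in $\R$, whose projection to $i\R$ is a real-analytic diffeomorphism onto its image. Since $\lambda^{1,\pm}_n$ are nonreal and satisfy $\Delta(\lambda^{1,\pm}_n,\psi)=\pm2\in\R$, they lie on $\gamma_n(\psi)$; being a conjugate pair and using the diffeomorphism-to-$i\R$ property, they lie on opposite sides of $\dot\lambda^1_n(\psi)$ along $\gamma_n(\psi)$. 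I then let $\gamma^*_n$ be the closed sub-arc of $\gamma_n(\psi)$ between $\lambda^{1,-}_n$ and $\lambda^{1,+}_n$; properties (2) and (3) are immediate, and (1) and (4) follow from the same discriminant/critical-point bookkeeping as in case (i), now transported along the arc via its parametrization.

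Finally I would record the degenerate sub-case $\lambda^{1,-}_n=\lambda^{1,+}_n$: then this common value is a double periodic eigenvalue, and since $\dot\Delta$ vanishes there (a double zero of $\Delta^2-4$), it must coincide with the unique critical point $\dot\lambda^1_n(\psi)$ in $D^1_n$; in particular it is real, and $\gamma^*_n\coloneqq\{\dot\lambda^1_n(\psi)\}$ satisfies all four assertions vacuously. The main obstacle I anticipate is the careful verification in case (ii) that the two periodic eigenvalues actually sit on the transversal arc $\gamma_n(\psi)$ rather than on the real segment $R^{\eps,\delta}_n\cap\R$ — this is exactly where one uses that they are nonreal — and the bookkeeping that pins down their position relative to $\dot\lambda^1_n(\psi)$ and yields the strict monotonicity in (4); everything else is a matter of assembling the already-established symmetry, localization, and real-analyticity statements and shrinking $W^*$ appropriately.
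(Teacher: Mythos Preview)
Your proposal is correct and follows essentially the same approach as the paper's own proof: both combine Theorem~\ref{thm_conn_per_ev} with the localization of Theorem~\ref{prop_la} and Corollary~\ref{cor_crit_val_real}, shrink the neighbourhood so that $\lambda^{1,\pm}_n(\psi)$ and $\dot\lambda^1_n(\psi)$ fall into $R^{\eps,\delta}_n$, then split into the real and nonreal cases and use the uniqueness of the critical point in $D^1_n$ for the monotonicity in (4). The only cosmetic difference is that in case~(ii) the paper defines $\gamma^*_n \coloneqq \gamma_n(\psi)\cap\{\lambda:|\Delta(\lambda,\psi)|\le 2\}$ rather than directly as the sub-arc between $\lambda^{1,-}_n$ and $\lambda^{1,+}_n$; by the strict monotonicity argument these two descriptions coincide.
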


The remainder of this section is devoted to the proofs of Theorems \ref{thm_conn_per_ev} and \ref{cor_conn_per_ev}. We follow closely the ideas and methods of the proof of~\cite[Proposition 2.6]{Kappeler_etal_09}---a related result for the $x$-periodic focusing NLS. The proof is based on an application of the implicit function theorem for real analytic mappings in an infinite dimensional setting. This level of generality is necessary in order to treat the arcs $\gamma_n$ in a uniform way. 

Let us first briefly discuss the strategy of the proof. Writing $\lambda = x + \I y$ with $x,y \in \R$, we split $\Delta(\lambda;\psi) \equiv \Delta(x,y;\psi)$ into its real and imaginary parts and write $\Delta = \Delta_1 + \I \Delta_2$ with
\bew
\Delta_1(x,y;\psi) \coloneqq \Re \big(\Delta (\lambda;\psi)\big), \quad 
\Delta_2(x,y;\psi) \coloneqq \Im \big(\Delta (\lambda;\psi)\big).
\eew
The problem is then transformed into the study of the zero level set of $\Delta_2 (\lambda;\psi)=\Delta_2 (x,y;\psi)$. 
By Proposition \ref{symm_M}, $\Delta_2(x, 0; \psi)=0$ for any $x \in \R$ and $\psi \in \LL_{\mathcal R} \cup \LL_{\mathcal I}$.
Therefore, following~\cite{Kappeler_etal_09}, we introduce the function 
\begin{align}
\begin{aligned} \label{def_tilde_F}
\tilde{F} \colon \R \times (\R\setminus\{0\}) \times (\LL_{\mathcal R}\cup \LL_{\mathcal I}) \to \R, \\
(x,y,\psi) \mapsto\tilde F(x,y;\psi) \coloneqq \frac{\Delta_2(x,y;\psi)}{y},
\end{aligned}
\end{align}
which has the same zeros on $\R \times (\R\setminus\{0\}) \times (\LL_{\mathcal R}\cup \LL_{\mathcal I})$ as $\Delta_2$.
We observe that $\tilde F$ has a real analytic extension 
\be \label{def_F}
F \colon \R \times \R \times (\LL_{\mathcal R}\cup \LL_{\mathcal I}) \to \R.
\ee
To see this, we recall that $\Delta$ is analytic on $\C\times \LL$ and real valued on  
$\R \times  (\LL_{\mathcal R}\cup \LL_{\mathcal I})$.
Hence $\Delta_2$ vanishes on $\R \times \{0\} \times (\LL_{\mathcal R}\cup \LL_{\mathcal I})$ and is real analytic there.
Thus $\Delta_2(x,y;\psi)/y$ admits a Taylor series representation at $y=0$, which converges absolutely to the analytic extension $F$ of $\tilde F$ locally near $y=0$.

For $\psi \in \LL_{\mathcal R}\cup \LL_{\mathcal I}$ and real sequences $u=(u_n)_{n\in\Z}$ and $v=(v_n)_{n\in\Z}$, we define the map 
\be \label{def_calF}
\mathcal F = (\mathcal F_n)_{n\in\Z}, \quad
\mathcal F_n(u,v;\psi) \coloneqq F(\dot\lambda^1_n + u_n,v_n;\psi).
\ee
For the zero potential and the zero sequence, both denoted by $0$, we calculate
\bew
\mathcal F(0,0;0) = \big(-8 \dot\lambda^1_n \sin(2(\dot\lambda^1_n)^2)  \big)_{n\in\Z}
= \big(-8 \dot\lambda^1_n \sin |n|\pi \,  \big)_{n\in\Z}
 = 0.
\eew
In order to determine $\frac{\partial \mathcal F}{\partial u}$ at the origin $(0,0;0)$, we first observe that 
$\frac{\partial \mathcal F}{\partial u}$ has diagonal form because $\mathcal F_j$ is independent of $u_n$ for $j\in\Z$ with $j\neq n$. On the diagonal, we obtain  
\begin{align*}
\frac{\partial \mathcal F_n}{\partial u_n} (u,v;0) 
&= \frac{\partial}{\partial u_n} \bigg[ \frac{\Delta_2(\dot\lambda^1_n +u_n,v_n;0)}{v_n} \bigg] \\
&= - \frac{2}{v_n} \bigg\{ 4(\dot\lambda^1_n + u_n) \cos[2((\dot\lambda^1_n + u_n)^2-v^2_n)]  
\sinh [4(\dot\lambda^1_n + u_n) v_n\big] \\
& \qquad  \qquad \qquad   
+  \sin[2((\dot\lambda^1_n + u_n)^2-v^2_n)] \, 4 v_n \cosh [4(\dot\lambda^1_n + u_n) v_n]  \bigg\},
\end{align*}
thus
\bew
\frac{\partial \mathcal F_n}{\partial u_n} (0,0;0) 
= - 32 (\dot\lambda^1_n)^2 \cos[2(\dot\lambda^1_n)^2],
\eew
and therefore
\be \label{derivative_calF}
\frac{\partial \mathcal F}{\partial u} (0,0;0) = \big(  16 \pi |n| \, \mathrm{diag}((-1)^{n+1}) \big)_{n\in\Z}.
\ee
Consequently, the right-hand side of \eqref{derivative_calF} is at least \emph{formally} bijective in a set-theoretic and algebraic sense, for example as a mapping from the linear space of real sequences $\{u=(u_n)_{n\in\Z} \colon \Z\to\R \,| \, u_0=0 \}$ to itself. In order to give these formal considerations a rigorous justification, we need to consider appropriate subspaces of sequences equipped with suitable topologies. 
Due to the quadratic nature of the underlying generalized eigenvalue problem, the right choice of spaces is quite a delicate issue. 
In contrast to the related $x$-periodic problem for the focusing NLS, see~\cite{Kappeler_etal_09}, we can not rely on $\ell^\infty$ sequences, but need to make use of the weighted $\ell^p$-based spaces of $\ell^{p,s}$ sequences, which we introduced earlier in \eqref{lpsdef}. The establishment of the necessary bounds for the mapping $\mathcal F$ between these spaces turns out to be highly nontrivial, cf.~Lemma \ref{lem_unif_bd_D_yDelta}.

\smallskip

Let us discuss the basic properties of the $\ell^{p,s}_\K$ spaces, where $\K=\R$ or $\K=\C$, which appear in the formulation of Theorem \ref{thm_conn_per_ev}, and Propositions \ref{prop_mu}, \ref{prop_nu} and \ref{prop_la}. 
For $1\leq p \leq \infty$ and $s\in\R$, we consider the linear spaces
\bew
\ell^{p,s}_\K \coloneqq \Big\{ u= (u_n)_{n\in\Z}  \; \big| \; \big((1+n^2)^{\frac{s}{2}} u_n\big)_{n\in\Z} \in \ell^p_\K  \Big\}
\eew
endowed with the norms
\begin{align*}
|u|_{p,s} \; \coloneqq \bigg( \sum^\infty_{n=-\infty} (1+n^2)^{\frac{s p}{2}} |u_n|^p \bigg)^\frac{1}{p}, \quad
1\leq p <\infty; \quad
|u|_{\infty,s} \coloneqq \sup_{n\in\Z} \big\{ (1+n^2)^{\frac{s}{2}} |u_n| \big\}.
\end{align*}
One easily checks that these spaces are Banach spaces. 
Furthermore, defining
\bew
\Lambda_n \coloneqq (1+n^2)^{\frac{1}{2}}, \quad n\in\Z, 
\eew
the map
\bew
\Lambda^r \colon \ell^{p,s}_\K \to \ell^{p,s-r}_\K, \quad u_n \mapsto \Lambda^r_n u_n,
\eew 
is an isometric isomorphism for each $r\in\R$. In particular $\Lambda^s$ maps $\ell^{p,s}_\K$ isometrically onto $\ell^p_\K$.
For $s\in\R$ and $1<p<\infty$, the topological dual of $\ell^{p,s}_\K$
is isometrically isomorphic to $\ell^{q,-s}_\K$, i.e., $(\ell^{p,s}_\K)' \cong \ell^{q,-s}_\K$, where $q$ is the H\"older conjugate of $p$ defined by $1/p+1/q=1$.
The isomorphism is given by the dual pairing
\be \label{duality}
\langle \cdot,\cdot \rangle_{p,s;q,-s} \colon \ell^{p,s}_\K \times \ell^{q,-s}_\K \to \K, \quad 
\langle u,v \rangle_{p,s;q,-s}  \coloneqq \sum^\infty_{n=-\infty} u_n v_n,
\ee
and can be deduced directly from the well-known $\ell^p$-$\ell^q$-duality.
Henceforth, we will identify the dual of $\ell^{p,s}_\K$ with $\ell^{q,-s}_\K$ by means of $\langle \cdot,\cdot \rangle_{p,s;q,-s}$. In particular, $\ell^{p,s}_\K$ is a reflexive Banach space for $1<p<\infty$. 

We will also use the closed subspaces
\bew
\check \ell^{p,s}_\K \coloneqq \{u \in \ell^{p,s}_\K \colon u_0=0  \};
\eew
for $1<p<\infty$ their topological duals are given by:
\be \label{duality_check}
(\check\ell^{p,s}_\K)' \cong \check \ell^{q,-s}_\K.
\ee
The linear operator $T$ defined by 
\bew
T_n u_n \mapsto |n| u_n, \quad T\colon \check \ell^{p,s}_\K \to \check \ell^{p,s-1}_\K
\eew
is a topological isomorphism. Likewise, $T^r\colon  u_n \mapsto T^r_n u_n= |n|^r u_n$ is an isomporhism 
$\check\ell^{p,s}_\K \to \check\ell^{p,s-r}_\K$ for real $r$.

\smallskip

The first part of the proof of Theorem \ref{thm_conn_per_ev} uses techniques from the theory of analytic maps between complex Banach spaces. We therefore review some aspects of this theory. Let $(E,\|\cdot\|_E)$, $(F,\|\cdot\|_F)$ be complex Banach spaces. 
Furthermore, we denote by $\mathcal L (E,F)$ the Banach space of bounded $\C$-linear operators $E\to F$ endowed with the operator norm 
$\|\cdot\|_{\mathcal L(E,F)}$, where $\|L\|_{\mathcal L(E,F)}=\sup_{0\neq h\in E}\frac{\|L h\|_F}{\|h\|_E}<\infty$ for $L\in \mathcal L(E,F)$. In the special case  $F=\C$, we denote by $E'=\mathcal L(E,\C)$ the topological dual space of $E$.
Let $O\subseteq E$ be an open subset. A map $f \colon O \to F$ is called \emph{analytic} or \emph{holomorphic}, if it is 
Fr\'echet differentiable in the complex sense
at every $u\in O$, i.e., if for each $u\in O$ there exists a bounded linear operator $A(u)\in\mathcal L(E,F)$ such that 
\bew
\lim_{\|h\|_E\to 0} \frac{\| f(u+h)-f(u)-A(u) h\|_F}{\|h\|_E} = 0.
\eew
In this case we call $A(u)$ the derivative of $f$ at $u$ and write $\mathrm d f(u)$ for $A(u)$. In the special case $E=F=\C$, we simply write $\mathrm d f(u)=f'(u)\in\C \cong \C'$.
We call $f$ 
\emph{weakly analytic on} $O$ if for every $u\in O$, $h\in E$ and $L\in F'$ the function
\bew
z \mapsto L f(u+z h)
\eew 
is analytic in some neighborhood of zero. 

We provide the basic characterization of analytic maps between complex Banach spaces in the following lemma.  
\begin{lemma}\cite[Theorem A.4]{KappelerPoeschel03} \label{lem_char_analytic_maps}
Let $E$ and $F$ be complex Banach spaces, let $O\subseteq E$ be open and let $f\colon O\to F$ be a mapping. The following statements are equivalent.
\begin{enumerate}
\item $f$ is analytic in $O$.
\item $f$ is  weakly analytic and locally bounded on $O$.
\item $f$ is infinitely many times differentiable on $O$ and for each $u\in O$ the Taylor series of $f$ at $u$, given by 
\bew
f(u+h) = f(u) + \mathrm d f(u) h + \frac{1}{2} \mathrm d^2 f(u) (h,h) + \cdots + \frac{1}{n!} \mathrm d^n f(u) (h,\dots,h) + \cdots
\eew 
converges to $f$ absolutely and uniformly in a neighborhood of $u$.
\end{enumerate}
\end{lemma}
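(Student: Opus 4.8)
The statement to be proved is the characterization lemma (Lemma~\ref{lem_char_analytic_maps}) taken from \cite{KappelerPoeschel03}, so strictly speaking the paper is simply citing it. Nonetheless, here is a plan for how one would prove the three-way equivalence.

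\textbf{Overall approach.} The plan is to prove the cycle of implications $(1)\Rightarrow(2)\Rightarrow(3)\Rightarrow(1)$, reducing each step to the classical one-variable theory of holomorphic functions of a single complex variable via composition with bounded linear functionals $L\in F'$ and restriction to complex lines $z\mapsto u+zh$. The Hahn--Banach theorem is the bridge that lets us transfer scalar-valued conclusions back to $F$-valued statements. The key classical inputs are: the Cauchy integral formula and Cauchy estimates for holomorphic functions on a disc, Weierstrass's theorem on locally uniform limits of holomorphic functions, and the fact that a locally bounded, separately (hence everywhere) Gateaux-holomorphic scalar function is holomorphic.

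\textbf{Step $(1)\Rightarrow(2)$.} If $f$ is Fr\'echet analytic, it is in particular continuous, hence locally bounded on the open set $O$. Weak analyticity is immediate: for fixed $u\in O$, $h\in E$, $L\in F'$, the map $z\mapsto L f(u+zh)$ is a composition of the affine map $z\mapsto u+zh$, the Fr\'echet-differentiable map $f$, and the bounded linear $L$; by the chain rule it is complex-differentiable in a neighbourhood of $0$, hence analytic there.

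\textbf{Step $(2)\Rightarrow(3)$.} This is the heart of the matter. Assume $f$ is weakly analytic and locally bounded. Fix $u\in O$ and choose $r>0$ and $M>0$ with $\|f\|\le M$ on the ball $B(u,2r)\subseteq O$. For $\|h\|_E\le r$ the scalar function $g_{L,h}(z):=Lf(u+zh)$ is holomorphic on $|z|<2$ and bounded by $M\|L\|_{F'}$ there. Expanding $g_{L,h}$ in its Taylor series at $z=0$ and using Cauchy's estimates gives coefficients $c_n(L,h)$ with $|c_n(L,h)|\le M\|L\|_{F'}$ for $\|h\|_E\le r$; moreover each $c_n(L,h)$ is, by the Cauchy integral formula $c_n(L,h)=\frac{1}{2\pi\mathrm i}\int_{|z|=1} \frac{Lf(u+zh)}{z^{n+1}}\,\mathrm dz$, a bounded homogeneous polynomial of degree $n$ in $h$; polarization then produces the symmetric $n$-linear forms $\mathrm d^n f(u)$. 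One shows these are $F$-valued (not merely functionals on $F'$) by Hahn--Banach together with the uniform bound, and that the series $\sum_n \frac1{n!}\mathrm d^n f(u)(h,\dots,h)$ converges absolutely and uniformly for $\|h\|_E\le r/2$, say, to an $F$-valued function; testing against all $L\in F'$ and invoking Hahn--Banach identifies this sum with $f(u+h)$. Infinite differentiability follows by differentiating the series term by term.

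\textbf{Step $(3)\Rightarrow(1)$.} If the Taylor series converges to $f$ absolutely and uniformly near $u$, then truncating after the linear term shows $f(u+h)-f(u)-\mathrm d f(u)h = o(\|h\|_E)$, i.e.\ $f$ is Fr\'echet differentiable at $u$ with derivative $\mathrm d f(u)$; since $u\in O$ was arbitrary, $f$ is analytic on $O$.

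\textbf{Main obstacle.} The delicate point is Step $(2)\Rightarrow(3)$: one must promote the scalar Cauchy-estimate bounds into genuine $F$-valued multilinear maps and prove locally uniform convergence of the resulting series in the norm of $F$, which requires combining the uniform boundedness over the line segments with Hahn--Banach (to recover $F$-valued objects and their norms from their action on $F'$) and a careful application of the Cauchy integral representation for the coefficients. Everything else is a routine transcription of one-variable complex analysis.
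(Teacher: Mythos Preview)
Your observation is correct: the paper does not prove this lemma at all; it is stated with a citation to \cite[Theorem A.4]{KappelerPoeschel03} and no proof is given. Your proposed cycle $(1)\Rightarrow(2)\Rightarrow(3)\Rightarrow(1)$ is the standard approach found in the literature (including in the cited reference and in \cite{Mujica86}), and your identification of the Hahn--Banach step in $(2)\Rightarrow(3)$ as the main technical point is accurate.
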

The next lemma, also referred to as Cauchy's inequality, provides an important estimate for the multilinear map $\mathrm d^n f(u)$.

\begin{lemma} \label{lem_Cauchy}
Let $E$ and $F$ be complex Banach spaces and let $f$ be analytic from the open ball of radius $r$ around $u$ in $E$ into $F$ such that $\|f\|_F \leq M$ on this ball. Then for all integers $n\geq0$,
\bew
\max_{0\neq h\in E}\frac{\|\mathrm d^n f(u) (h,\dots,h)\|_F}{\|h\|^n_E} \leq \frac{M n!}{r^n}.
\eew
\end{lemma}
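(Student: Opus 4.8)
The statement to prove is Lemma~\ref{lem_Cauchy} (Cauchy's inequality for the $n$-th Fréchet differential of a Banach-space-valued analytic map). Here is how I would proceed.

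\medskip

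\textbf{Reduction to the scalar case via duality.} The plan is to reduce the vector-valued estimate to a one-variable complex analysis computation. First I would fix $u\in E$, the radius $r$, and a direction $h\in E$ with $\|h\|_E=1$ (by homogeneity of both sides in $h$ it suffices to treat unit vectors, which removes the $\|h\|_E^n$ denominator). For any bounded linear functional $L\in F'$ with $\|L\|_{F'}\le 1$, consider the scalar function $g(z)\coloneqq L f(u+zh)$, which by Lemma~\ref{lem_char_analytic_maps} (weak analyticity) is holomorphic on the disc $\{z\in\C\colon |z|<r\}$ and satisfies $|g(z)|\le \|L\|_{F'}\,\|f(u+zh)\|_F\le M$ there. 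The key link is that differentiating the Taylor expansion in Lemma~\ref{lem_char_analytic_maps} term by term along the ray $z\mapsto u+zh$ gives $g^{(n)}(0)=L\big(\mathrm d^n f(u)(h,\dots,h)\big)$; this identity is what transfers the problem to $g$.

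\medskip

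\textbf{The scalar Cauchy estimate.} For the holomorphic function $g$ on the disc of radius $r$ bounded by $M$, the classical Cauchy integral formula for derivatives,
\[
g^{(n)}(0)=\frac{n!}{2\pi \I}\oint_{|z|=\rho}\frac{g(z)}{z^{n+1}}\,\mathrm d z,
\qquad 0<\rho<r,
\]
yields $|g^{(n)}(0)|\le \dfrac{n!\,M}{\rho^{n}}$, and letting $\rho\uparrow r$ gives $|g^{(n)}(0)|\le \dfrac{n!\,M}{r^{n}}$. Hence $\big|L\big(\mathrm d^n f(u)(h,\dots,h)\big)\big|\le n!\,M r^{-n}$ for every $L\in F'$ with $\|L\|_{F'}\le1$.

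\medskip

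\textbf{Recovering the norm in $F$ via Hahn--Banach.} Finally I would invoke the Hahn--Banach theorem in the form $\|w\|_F=\sup_{\|L\|_{F'}\le 1}|L(w)|$ for $w\in F$, applied to $w=\mathrm d^n f(u)(h,\dots,h)$. Taking the supremum over such $L$ in the previous inequality gives $\big\|\mathrm d^n f(u)(h,\dots,h)\big\|_F\le n!\,M r^{-n}$ for all unit $h$, and then taking the supremum (maximum, since the unit sphere argument works for every $h\neq0$ after normalizing) over $h\neq0$ completes the proof.

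\medskip

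The only mild subtlety—hardly an obstacle—is justifying that the Taylor series of Lemma~\ref{lem_char_analytic_maps} may be composed with $L$ and restricted to the line $z\mapsto u+zh$ to obtain the identity $g^{(n)}(0)=L(\mathrm d^n f(u)(h,\dots,h))$; this follows from the absolute and uniform convergence of that series in a neighborhood of $u$ (part (3) of Lemma~\ref{lem_char_analytic_maps}) together with continuity and linearity of $L$, which permit termwise application of $L$ and termwise differentiation in $z$.
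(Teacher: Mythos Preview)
Your proof is correct and follows essentially the same route as the paper: both restrict $f$ to the complex line $z\mapsto u+zh$ and invoke the one-variable Cauchy estimate there. The only difference is that the paper applies the Cauchy inequality directly for holomorphic \emph{Banach-space-valued} functions of one complex variable (citing Dunford--Schwartz and Kappeler--P\"oschel for that theory), whereas you insert an extra reduction via a dual functional $L\in F'$ and Hahn--Banach to bring everything down to the classical scalar-valued Cauchy estimate. Your version is thus marginally more self-contained, since it avoids quoting the vector-valued integral formula as a black box; the paper's version is terser but relies on that standard reference.
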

\begin{proof}
By the previous lemma, $f$ is infinitely often differentiable at $u$ with $n$-th derivative $\mathrm d^n f(u) \in \mathcal L^n(E,F)$, the space of continuous $n$-linear mappings $E\times \cdots \times E \to F$.
The lemma now follows directly from the usual Cauchy inequality for holomorphic  Banach space valued functions on a complex domain by considering the holomorphic map  $\varphi(z) \coloneqq f(u+ z h)$ for arbitrary $h\neq0$ on the disc with radius $r/|h|$ centered at the origin of $\C$. See e.g.~\cite[Lemma A.2]{KappelerPoeschel03}, see also e.g.~\cite[Chapter III.14]{DunSchw} for the generalization of the classical theory of complex analysis for functions $f\colon \C\supseteq O\to\C$ to complex Banach space valued functions $f\colon \C\supseteq O\to F$ defined on a complex domain, and~\cite{Mujica86} for a general account on complex analysis in Banach spaces. 
\end{proof}

The purpose of the next lemma is the establishment of certain bounds we will use later on.

\begin{lemma} \label{lem_unif_bd_D_yDelta}
Let $\psi\in\LL_{\mathcal R}\cup \LL_{\mathcal I}$, let $\lambda = x + \I y \in \C$ with $x,y \in \R$, and let $\Delta_1$ and $\Delta_2$ denote the real and imaginary parts of $\Delta = \Delta_1 + \I \Delta_2$. 
\begin{enumerate}
\item \label{lem_unif_bd_D_yDelta_pt1} 
As $|\lambda| \to\infty$, the partial derivative 
$\partial_y \Delta_2$ satisfies the asymptotic estimate
\begin{align}
\begin{aligned} \label{lem_unif_bd_D_yDelta_0}
\partial_y \Delta_2(x,y;\psi) = & -8\big(x \sin [2(x^2-y^2)] \cosh[4x y] - y \cos [2(x^2-y^2)] \sinh[4x y] \big) \\
& - 4 \I \grave\Gamma(\psi) \cos[2(x^2-y^2)] \cosh[4x y] + \mathcal O \bigg(\frac{\e^{4|x y|}}{\sqrt{x^2+y^2}}\bigg)
\end{aligned}
\end{align}
uniformly for $\psi$ in bounded subsets of $\LL_{\mathcal R}\cup \LL_{\mathcal I}$, where
\bew
\grave\Gamma(\psi) \coloneqq \Gamma(1,\psi) 
=  \int^1_0 ( \psi^1 \psi^4 - \psi^2 \psi^3 ) \, \mathrm d t.
\eew

\item \label{lem_unif_bd_D_yDelta_pt2}
Set $\dot \lambda^1_n\coloneqq\dot \lambda^1_n(0)$. The mapping 
\be \label{lem_unif_bd_D_yDelta_1}
(x_n,y_n) \mapsto \partial_y \Delta_2(\dot \lambda^1_n + x_n,y_n;\psi),
\ee
which is real analytic in each coordinate,
maps bounded sets in $\ell^{\infty,1/2}_\R \times \ell^{\infty,1/2}_\R$ to bounded sets in $\ell^{\infty,-1/2}_\R$; the corresponding bound in $\ell^{\infty,-1/2}_\R$ can be chosen uniformly for $\psi$ varying within bounded subsets of $\LL_{\mathcal R}\cup \LL_{\mathcal I}$. 

The assertion remains true when considering   \eqref{lem_unif_bd_D_yDelta_1} as a mapping  
\bew
\ell^{\infty,1/2}_\C \times \ell^{\infty,1/2}_\C \times \LL \supseteq 
\big(\ell^{\infty,1/2}_\R \times \ell^{\infty,1/2}_\R \times (\LL_{\mathcal R}\cup \LL_{\mathcal I})\big)\otimes \C 
\to  \ell^{\infty,-1/2}_\R \otimes \C
\eew
by means of the coordinatewise analytic extension to all of $\big(\ell^{\infty,1/2}_\R \times \ell^{\infty,1/2}_\R \times (\LL_{\mathcal R}\cup \LL_{\mathcal I})\big)\otimes \C$; that is, this mapping maps bounded sets in
$\ell^{\infty,1/2}_\C \times \ell^{\infty,1/2}_\C$ to bounded sets in $\ell^{\infty,-1/2}_\C$ uniformly on bounded subsets of 
$(\LL_{\mathcal R}\cup \LL_{\mathcal I})\otimes \C$.
\end{enumerate}
\end{lemma}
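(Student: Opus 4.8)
The plan is to obtain part (1) by sharpening the fundamental-solution asymptotics of Section \ref{subsec_asympM} by one order in $1/\lambda$, and then to deduce part (2) by substituting $\lambda = \dot\lambda^1_n(0) + x_n + \I y_n$ and estimating in the weighted sequence norms. For part (1), note first that Theorem \ref{thm_asymptotics_M_1} by itself is insufficient: it controls $\dot M$ only up to an $\mathcal O(\e^{2|\Im(\lambda^2)|})$ error, which is exactly the order of the $\grave\Gamma$-term to be extracted. I would therefore enlarge the parametrix $M_p$ of Section \ref{subsec_asympM} by including in $Z_p$ (and correspondingly in $W_p$) the next term prescribed by the recursion of Remark \ref{rem_derivation_of_Z_k_W_k}. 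With this choice the residual $\Delta_Z$ gains a factor $\lambda^{-1}$: its $\lambda^{-1}$-coefficient in the expansion \eqref{pr_lem_Delta_ZW_asymp_1b} vanishes by the same algebraic identities as in the proof of Lemma \ref{lem_Delta_ZW_asymp}, so that $|\lambda|^2\|\Delta_Z\|_{L^1([0,1],\C)} \le C$ uniformly on $\C^{K_r}\times B_r(0,\LL_1)$. Re-running the Volterra scheme of Lemmas \ref{lem_Volterra_M^+_M^-}--\ref{lem_asymp_M_p_Z_p} verbatim with this stronger bound then yields $M(t,\lambda,\psi) = M_p(t,\lambda,\psi) + \mathcal O(|\lambda|^{-2}\e^{2|\Im(\lambda^2)|t})$ uniformly on $[0,1]\times\C^K\times B_r(0,\LL_1)$. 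Taking the trace at $t = 1$ and using the explicit diagonal of $Z_1$ (which carries $\Gamma$) gives
\bew
\Delta(\lambda,\psi) = 2\cos 2\lambda^2 - \frac{\I\grave\Gamma(\psi)}{\lambda}\sin 2\lambda^2 + \mathcal O\big(|\lambda|^{-2}\e^{2|\Im(\lambda^2)|}\big)
\eew
uniformly on bounded subsets of $\LL_1$.

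Next I would apply Cauchy's inequality on discs of radius of order $|\lambda|^{-1}$ centered at $\lambda$---small enough that $\e^{2|\Im(\lambda^2)|}$ varies by only a bounded factor, so that exactly one power of $|\lambda|$ is lost---to obtain $\dot\Delta(\lambda,\psi) = -8\lambda\sin 2\lambda^2 - 4\I\grave\Gamma(\psi)\cos 2\lambda^2 + \mathcal O(|\lambda|^{-1}\e^{2|\Im(\lambda^2)|})$. Finally, for $\psi\in\LL_{\mathcal R}\cup\LL_{\mathcal I}$ and $\lambda = x + \I y$ with $x,y\in\R$, the Cauchy--Riemann equations together with the symmetry \eqref{symm_Delta} give $\partial_y\Delta_2 = \Re\dot\Delta = \tfrac12\big(\dot\Delta(x+\I y;\psi) + \dot\Delta(x - \I y;\psi)\big)$; substituting $2\lambda^2 = 2(x^2 - y^2) + 4\I xy$, expanding $\sin$ and $\cos$ into their real and hyperbolic parts, and using that $\grave\Gamma(\psi)$ is purely imaginary on $\LL_{\mathcal R}\cup\LL_{\mathcal I}$ (so that $-4\I\grave\Gamma(\psi)$ is real-valued), yields \eqref{lem_unif_bd_D_yDelta_0}.

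For part (2), I would start from the identity $\partial_y\Delta_2(x,y;\psi) = \tfrac12\big(\dot\Delta(x+\I y;\psi) + \dot\Delta(x - \I y;\psi)\big)$, which holds on $\LL_{\mathcal R}\cup\LL_{\mathcal I}$ by \eqref{symm_Delta} and is analytic in $(x,y,\psi)$ on all of $\C\times\C\times\LL$ by Theorem \ref{thm_fund_sol}; this is precisely the coordinatewise-analytic extension of the map \eqref{lem_unif_bd_D_yDelta_1}. For $(x_n),(y_n)$ in a bounded subset of $\ell^{\infty,1/2}_\C$ one has $|x_n|, |y_n| = \mathcal O(|n|^{-1/2})$, while $(\dot\lambda^1_n(0))^2 = |n|\pi/2$ gives $|\dot\lambda^1_n(0)| = \mathcal O(\sqrt{|n|})$; hence the two arguments $w^\pm_n := \dot\lambda^1_n(0) + x_n \pm \I y_n$ satisfy $|w^\pm_n| = \mathcal O(\sqrt{|n|})$ and $|\Im((w^\pm_n)^2)| \le 2|w^\pm_n|\,|\Im w^\pm_n| = \mathcal O(1)$, so $\e^{2|\Im((w^\pm_n)^2)|} = \mathcal O(1)$. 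Since $\dot\Delta(\lambda,\psi) = \mathcal O(|\lambda|\,\e^{2|\Im(\lambda^2)|})$ uniformly on bounded $\psi$-sets---immediate from Theorem \ref{thm_asymptotics_M_1}, since $|\dot E_\lambda(1)| = 4|\lambda|\,\e^{2|\Im(\lambda^2)|}$---it follows that $|\dot\Delta(w^\pm_n;\psi)| = \mathcal O(\sqrt{|n|}) = \mathcal O((1+n^2)^{1/4})$, so $\big(\partial_y\Delta_2(\dot\lambda^1_n(0) + x_n, y_n;\psi)\big)_{n\in\Z}$ stays in a bounded subset of $\ell^{\infty,-1/2}_\C$, with a bound depending only on the bounds for $(x_n),(y_n)$ and for $\|\psi\|$. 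Restricting to real sequences and to $\psi\in\LL_{\mathcal R}\cup\LL_{\mathcal I}$---where $\partial_y\Delta_2$ is real-valued by Proposition \ref{symm_M}---gives the first (real) assertion.

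I expect the main obstacle to be the one-order refinement underlying part (1): re-executing the already delicate Volterra analysis of Section \ref{subsec_asympM} with the enlarged parametrix and verifying that the new recursion relation from Remark \ref{rem_derivation_of_Z_k_W_k} indeed makes the leading coefficient of the residual vanish. An additional subtlety is that, for $H^1$ potentials, the extra parametrix entry is merely $L^2$, so the invertibility statement of Lemma \ref{lem_W_p_Z_p_invertible} and the Volterra estimates of Lemmas \ref{lem_Volterra_M^+_M^-}--\ref{lem_asymp_M_p_Z_p} must be re-examined in that regularity class. Everything downstream---the trace computation, the Cauchy-inequality step (with the $|\lambda|^{-1}$-radius discs absorbing the exponential weight), the real/hyperbolic expansion, and the substitution in part (2)---is then routine.
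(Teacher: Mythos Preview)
Your proposal is correct and, for Part \eqref{lem_unif_bd_D_yDelta_pt2}, actually simpler than the paper's argument. The paper proves Part \eqref{lem_unif_bd_D_yDelta_pt2} by substituting the three-term asymptotic \eqref{lem_unif_bd_D_yDelta_0} from Part \eqref{lem_unif_bd_D_yDelta_pt1} and verifying term by term that each of $-8\lambda\sin 2\lambda^2$, the $\grave\Gamma$-term, and the remainder maps bounded $\ell^{\infty,1/2}$-sequences to bounded $\ell^{\infty,-1/2}$-sequences (via trigonometric addition formulas and the like). You bypass this entirely: the crude bound $\dot\Delta = \mathcal O(|\lambda|\,\e^{2|\Im(\lambda^2)|})$ from Theorem \ref{thm_asymptotics_M_1}, together with $|w_n^\pm|=\mathcal O(\sqrt{|n|})$ and $|\Im((w_n^\pm)^2)| = \mathcal O(1)$, already yields the $\ell^{\infty,-1/2}$-bound in one stroke. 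Your analytic-extension formula $\partial_y\Delta_2 = \tfrac12\big(\dot\Delta(x+\I y)+\dot\Delta(x-\I y)\big)$ is also cleaner than the paper's ``$\widetilde{\partial_y\Delta_2}(x,y) = \dot\Delta(x+\I y)$'', which is not literally the analytic continuation (for real $x,y$ and nonzero $y$ it has nonzero imaginary part) though it produces the same estimates.

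For Part \eqref{lem_unif_bd_D_yDelta_pt1} the routes diverge. The paper does not re-run any Volterra analysis: it simply asserts that the proof of Theorem \ref{thm_asymptotics_M_1} already yields $\M = E_\lambda + \lambda^{-1}Z_1\e^{-\I\theta\sigma_3} + \lambda^{-1}W_1\e^{\I\theta\sigma_3} + \mathcal O(|\lambda|^{-2}\e^{2|\Im(\lambda^2)|})$, reads off the trace, and differentiates. Strictly speaking, Lemmas \ref{lem_asymp_M_Z_p}--\ref{lem_asymp_M_p_Z_p} only give $M = M_p + \mathcal O(|\lambda|^{-1}\e^{2|\Im(\lambda^2)|})$, so the $\mathcal O(|\lambda|^{-2})$ error the paper invokes is not fully spelled out there; your proposal to enlarge the parametrix and re-execute the estimates is one rigorous way to substantiate it. The regularity issue you flag (the next recursion coefficient $Z_3^{\mathrm{od}}$ involves $\partial_t\psi^j\in L^2$) is genuine but tractable: the new diagonal piece $Z_2^{\mathrm d}$, being an antiderivative, stays bounded, and Lemma \ref{lem_Delta_ZW_asymp} only needs $L^1$-control of $\Delta_Z$. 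An alternative (closer in spirit to the paper's shortcut) is to keep the original $Z_p$ and compute the first Volterra iterate $\mathcal M_1$ explicitly, extracting its $\mathcal O(|\lambda|^{-1})$ contribution to the trace; both routes lead to the same expansion for $\Delta$.
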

\begin{proof}
In order to prove part \eqref{lem_unif_bd_D_yDelta_pt1}, we recall from Theorem \ref{thm_asymptotics_M_1} that 
\bew
\M(\lambda,\psi) = \e^{-2\I \lambda^2 \sigma_3} + \mathcal O \bigg(\frac{\e^{2|\Im (\lambda^2)|}}{|\lambda|}\bigg).
\eew
In the proof of Theorem \ref{thm_asymptotics_M_1}, we gained additional information on the remainder term: it is of the form
\bew
\frac{Z_1(\psi)}{\lambda} \e^{-2\I \lambda^2 \sigma_3} + \frac{W_1(\psi)}{\lambda} \e^{2\I \lambda^2 \sigma_3}
+ \mathcal O \bigg(\frac{\e^{2|\Im (\lambda^2)|}}{|\lambda|^2}\bigg),
\eew
where the diagonal part of the $1/\lambda$-terms is given by 
\bew
\frac{1}{2\lambda}\grave\Gamma(\psi)\, \sigma_3  \e^{-2\I \lambda^2 \sigma_3}.
\eew
Thus the discriminant satisfies
\bew
\Delta(\lambda,\psi) = 2 \cos 2\lambda^2 - \frac{\I \grave\Gamma}{\lambda} \sin 2 \lambda^2 
+  \mathcal O \big(|\lambda|^{-2} \, \e^{2|\Im (\lambda^2)|}\big)
\eew
for any potential $\psi\in\LL$, and its $\lambda$-derivative satisfies
\be \label{lem_unif_bd_D_yDelta_2}
\dot\Delta(\lambda,\psi) = -8\lambda \sin 2\lambda^2 - 4\I \grave\Gamma(\psi) \cos 2\lambda^2 
+ O \big(|\lambda|^{-1} \, \e^{2|\Im (\lambda^2)|}\big).
\ee
Since $\grave\Gamma(\psi) \in \I\R$ for $\psi \in \LL_{\mathcal R} \cup \LL_{\mathcal I}$, the asymptotic estimate \eqref{lem_unif_bd_D_yDelta_0} follows by taking the real part of \eqref{lem_unif_bd_D_yDelta_2}.

We prove part \eqref{lem_unif_bd_D_yDelta_pt2} of the lemma in the complex setting; this includes the real setting as a special case. 
The analytic extension $\widetilde{\partial_y \Delta_2}$ of $\partial_y \Delta_2$ to $\C\times\C\times (\LL_{\mathcal R}\cup \LL_{\mathcal I})\otimes \C$ is given by 
\bew
\widetilde{\partial_y \Delta_2}(x,y;\psi) = \dot\Delta(x + \I y,\psi), \quad x,y\in\C, \; 
\psi\in (\LL_{\mathcal R}\cup \LL_{\mathcal I})\otimes \C,
\eew
which, according to the first part of the proof, satisfies the asymptotic estimate
\be \label{D_yDelta_complexified}
\widetilde{\partial_y \Delta_2}(x,y;\psi) = -8(x + \I y) \sin[2(x + \I y)^2] - 4\I \grave\Gamma(\psi) \cos[2(x + \I y)^2] 
+ O \bigg(\frac{\e^{2|\Im ((x + \I y)^2)|}}{|x + \I y|}  \bigg).
\ee
The error term holds uniformly on bounded subsets of $(\LL_{\mathcal R}\cup \LL_{\mathcal I})\otimes \C$ for $|x + \I y|\to\infty$; likewise $\grave\Gamma$ possesses a uniform bound on bounded subsets of this potential space. Therefore we only need to establish the desired bounds for arbitrary potentials $\psi \in (\LL_{\mathcal R}\cup \LL_{\mathcal I})\otimes \C$ and the uniformity on bounded subsets follows automatically.
We write the complexification of \eqref{lem_unif_bd_D_yDelta_1} (by means of analytic extensions in all  coordinates) as
\be \label{D_yDelta_complexified_2}
\ell^{\infty,1/2}_\C \times \ell^{\infty,1/2}_\C \times [(\LL_{\mathcal R}\cup \LL_{\mathcal I})\otimes \C] \to \ell^{\infty,-1/2}_\C, \quad
(x_n,y_n) \mapsto \widetilde{\partial_y \Delta_2}(\dot \lambda^1_n + x_n,  y_n;\psi), 
\ee
and employ the asymptotic estimate \eqref{D_yDelta_complexified} to deduce the asserted bounds for \eqref{D_yDelta_complexified_2}. 
Let us verify the bounds separately for the three components of \eqref{D_yDelta_complexified_2}, which arise from the three terms in \eqref{D_yDelta_complexified},  beginning with the error term. That is, we first show that 
\be \label{pr_lem_unif_bd_D_yDelta_errorterm}
(x_n,y_n) \mapsto \frac{\e^{2|\Im ((\dot \lambda^1_n+x_n + \I y_n)^2)|}}{|\dot \lambda^1_n+x_n + \I y_n|}
\ee
maps bounded sets in $\ell^{\infty,1/2}_\C \times \ell^{\infty,1/2}_\C$ to bounded sets in $\ell^{\infty,-1/2}_\R$.
We clearly have that 
\bew
(x_n,y_n) \mapsto |\Im ((\dot \lambda^1_n+x_n + \I y_n)^2)|
\eew
maps bounded sets in $\ell^{\infty,1/2}_\C \times \ell^{\infty,1/2}_\C$ to bounded sets in $\ell^\infty_\R$, hence the nominator in \eqref{pr_lem_unif_bd_D_yDelta_errorterm} is bounded in $\ell^\infty_\R$ uniformly on bounded sets in $\ell^{\infty,1/2}_\C \times \ell^{\infty,1/2}_\C$. 
It follows that the whole expression on the right-hand side of \eqref{pr_lem_unif_bd_D_yDelta_errorterm} is bounded in $\ell^{\infty,1/2}_\R \subseteq \ell^{\infty,-1/2}_\R$ 
on bounded sets in $\ell^{\infty,1/2}_\C \times \ell^{\infty,1/2}_\C$.
Next we show that 
\be \label{pr_lem_unif_bd_D_yDelta_secondterm}
(x_n,y_n) \mapsto \big|\cos[2(\lambda^1_n+x_n + \I y_n)^2]\big|
\ee
maps bounded sets in $\ell^{\infty,1/2}_\C \times \ell^{\infty,1/2}_\C$ to bounded sets in $\ell^{\infty,-1/2}_\R$, which ensures that the second component of \eqref{D_yDelta_complexified_2} has the asserted property. By recalling that $\sin[2(\dot\lambda^1_n)^2]=0$ and $\cos[2(\dot\lambda^1_n)^2]=(-1)^n$ and employing the classical trigonometric addition formulas, we obtain  
\begin{align} \label{pr_lem_unif_bd_D_yDelta_cos}
\begin{aligned}
\big|\cos[2(\lambda^1_n+x_n + \I y_n)^2]\big| &\leq
\big| \cos[2(x^2_n + 2\I x_n y_n - y^2_n)] \cos[4 \lambda^1_n (x_n + \I y_n)]  \big| \\ 
 &\qquad + \big|\sin[2(x^2_n + 2\I x_n y_n - y^2_n)] \sin[4 \lambda^1_n (x_n + \I y_n)] \big|.
\end{aligned}
\end{align}
The second term on the right-hand side of \eqref{pr_lem_unif_bd_D_yDelta_cos} is bounded in $\ell^{\infty,1/2}_\R$  on bounded sets of $\ell^{\infty,1/2}_\C \times \ell^{\infty,1/2}_\C$; the first term is bounded in $\ell^\infty_\R$. Thus \eqref{pr_lem_unif_bd_D_yDelta_secondterm} maps bounded sets in $\ell^{\infty,1/2}_\C \times \ell^{\infty,1/2}_\C$ to bounded sets in $\ell^\infty_\R \subseteq \ell^{\infty,-1/2}_\R$.
Finally, we infer by similar arguments that the first component of \eqref{D_yDelta_complexified_2} maps bounded sets in $\ell^{\infty,1/2}_\C \times \ell^{\infty,1/2}_\C$ to bounded sets in $\ell^{\infty,-1/2}_\C$. Indeed, 
\bew
(x_n,y_n) \mapsto \big|\sin[2(\lambda^1_n+x_n + \I y_n)^2]\big|
\eew
maps bounded sets in $\ell^{\infty,1/2}_\C \times \ell^{\infty,1/2}_\C$ to bounded sets in $\ell^\infty_\R$, thus 
\bew
(x_n,y_n) \mapsto \big|(\lambda^1_n+x_n + \I y_n)\sin[2(\lambda^1_n+x_n + \I y_n)^2]\big|
\eew
maps bounded sets in $\ell^{\infty,1/2}_\C \times \ell^{\infty,1/2}_\C$ to bounded sets in $\ell^{\infty,-1/2}_\R$. We conclude that the mapping \eqref{D_yDelta_complexified_2} has the asserted boundedness properties, which finishes the proof of the lemma.
\end{proof}

Below we provide an elementary criterion, which helps to show analyticity of  functions, which map to $\ell^{\infty,s}_\C$; 
unlike Lemma \ref{lem_char_analytic_maps} it does not involve the dual $(\ell^{\infty,s}_\C)'$ in this particular situation. 
The criterion is formulated for the target space $\ell^{\infty,s}_F$, $s\in\R$, where $F$ is a complex Banach space; i.e.~$\ell^{\infty,s}_F$ denotes the Banach space 
\bew
\Big\{ u=(u_n)_{n\in\Z}\colon \Z\to F \;\Big|\; \|u\|_{\infty,s}\coloneqq\sup_{n\in\Z} \big\{ (1+n^2)^\frac{s}{2} |u_n|_F \big\} < \infty  \Big\}
\eew
with norm $\|u\|_{\infty,s}$.

\begin{lemma} \label{lem_analytic_linfty}
Let $s\in\R$, let $E$, $F$ be complex Banach spaces  and let $O\subseteq E$ be an open subset.  
If the function
\bew
f \colon O \to \ell^{\infty,s}_F, \quad u\mapsto f(u) = \big(f_n(u)\big)_{n\in\Z} 
\eew 
is locally bounded and  each coordinate function $f_n \colon O \to F$ is analytic, then $f$ is analytic.
\end{lemma}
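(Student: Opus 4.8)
The strategy is to invoke the characterization of analytic maps in Lemma \ref{lem_char_analytic_maps}, namely that a map is analytic if and only if it is weakly analytic and locally bounded. Local boundedness is given by hypothesis, so the task reduces to verifying weak analyticity: for every $u \in O$, every $h \in E$, and every continuous linear functional $L \in (\ell^{\infty,s}_F)'$, the scalar function $z \mapsto L\big(f(u+zh)\big)$ must be analytic in a neighborhood of $0$ in $\C$. The point of the lemma is that, although the dual $(\ell^{\infty,s}_F)'$ is complicated, we never need to describe it: it suffices to produce \emph{enough} functionals, and the coordinate evaluations together with a Morera/uniform-limit argument will do the job.

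First I would fix $u \in O$ and $h \in E$, and pick $r>0$ so that the ball $\{u + zh : |z| < r\}$ lies in $O$ and $f$ is bounded on it, say $\|f(u+zh)\|_{\infty,s} \le M$ for $|z| < r$; this uses local boundedness together with the fact that $z \mapsto u+zh$ is continuous (indeed affine). Then for each fixed $n \in \Z$, the coordinate function $g_n(z) \coloneqq (1+n^2)^{s/2} f_n(u+zh)$ is analytic in $|z|<r$ as a composition of the analytic map $f_n$ with the affine (hence analytic) map $z \mapsto u+zh$, and it is uniformly bounded: $|g_n(z)|_F \le M$ for all $n$ and all $|z|<r$. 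Now for an arbitrary $L \in (\ell^{\infty,s}_F)'$, consider the candidate $\Phi(z) \coloneqq L\big(f(u+zh)\big)$; the idea is to approximate $f(u+zh)$ by its truncations. For $N \in \N$, let $P_N$ be the projection keeping the coordinates with $|n| \le N$ and zeroing the rest; then $z \mapsto L\big(P_N f(u+zh)\big) = \sum_{|n|\le N} (1+n^2)^{-s/2} L(e_n) \, g_n(z)$ is a finite linear combination of analytic functions of $z$, hence analytic on $|z|<r$, where $e_n$ denotes the sequence with the identity-acting slot at position $n$.

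The remaining point is to pass to the limit $N\to\infty$. Here one must be slightly careful, because $P_N f(u+zh) \to f(u+zh)$ need not hold in the $\ell^{\infty,s}_F$-norm (the tail of an $\ell^\infty$ sequence need not vanish). The clean fix is to invoke weak analyticity through a density/uniform-convergence argument on a smaller disc: by Cauchy's estimate applied to each $g_n$ on $|z|<r$, one controls the Taylor coefficients uniformly in $n$, so the series $\sum_n (1+n^2)^{-s/2} L(e_n) g_n(z)$ converges uniformly on compact subsets of $|z|<r$ to $\Phi(z)$ (using $\sum_n |L(e_n)| \cdot (1+n^2)^{-s/2} |g_n(z)|_F \le \|L\| \cdot M$ by duality, together with absolute convergence coming from the fact that $L$ restricted to the subspace $c_0$-type completion is represented by a summable sequence — alternatively, replace $(\ell^{\infty,s}_F)'$-functionals by the fact that weak analyticity only needs to be checked against a norming subspace, and the finitely supported functionals are norming). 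A uniform limit of analytic functions is analytic (Weierstrass), so $\Phi$ is analytic near $0$. Since $u$, $h$, $L$ were arbitrary, $f$ is weakly analytic, and together with local boundedness Lemma \ref{lem_char_analytic_maps} gives that $f$ is analytic.

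The main obstacle is precisely the last step: unlike the $\ell^{p,s}$ case with $1 \le p < \infty$, the finitely supported sequences are not dense in $\ell^{\infty,s}_F$, so one cannot literally write $f = \lim_N P_N f$ in norm. The resolution is to exploit that weak analyticity is a pointwise-in-$L$ condition and that it is enough to test against a subspace of functionals that separates points of $\ell^{\infty,s}_F$ and along which the approximation does converge — the coordinate functionals $u \mapsto u_n$ (composed with functionals on $F$) form such a family, and for these the analyticity of $z \mapsto L(f(u+zh))$ is immediate from analyticity of each $f_n$; one then upgrades from this separating family to all of $(\ell^{\infty,s}_F)'$ by the uniform-boundedness/Vitali argument sketched above. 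I expect this to be the only genuinely non-formal point; everything else is bookkeeping with compositions and Cauchy estimates.
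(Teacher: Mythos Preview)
Your approach via weak analyticity is different from the paper's, and the key step---upgrading from coordinate functionals to all of $(\ell^{\infty,s}_F)'$---is not justified as written. The claim that a general $L\in(\ell^{\infty,s}_F)'$ is ``represented by a summable sequence'' is false: already for $F=\C$ and $s=0$ the dual of $\ell^\infty$ is the space of finitely additive measures, strictly larger than $\ell^1$, so the series $\sum_n L(e_n)(1+n^2)^{-s/2}g_n(z)$ need not converge, let alone equal $L(f(u+zh))$. Your fallback, that weak analyticity only needs to be tested on a norming subspace, is a true theorem (and the coordinate functionals are indeed norming), but it is \emph{not} the content of Lemma~\ref{lem_char_analytic_maps} as stated in the paper, and you neither prove it nor give a reference; the phrase ``uniform-boundedness/Vitali argument'' is not a proof. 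So the argument has a genuine gap precisely at the point you yourself flag as the only non-formal one.

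The paper avoids the dual entirely---indeed, that is the stated purpose of the lemma---by verifying Fr\'echet differentiability directly. Given $u\in O$, local boundedness supplies a ball and a constant $M$ with $(1+n^2)^{s/2}|f_n|_F\le M$ uniformly in $n$ on that ball; applying Cauchy's inequality (Lemma~\ref{lem_Cauchy}) to each analytic $f_n$ gives a uniform second-order Taylor remainder estimate
\[
\|f_n(u+h)-f_n(u)-\mathrm{d}f_n(u)h\|_F \le C\,(1+n^2)^{-s/2}\|h\|_E^2,
\]
with $C$ independent of $n$. Taking the supremum over $n$ yields the $\ell^{\infty,s}_F$-estimate showing that $f$ is differentiable at $u$ with derivative $(\mathrm{d}f_n(u))_{n\in\Z}$. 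This is a three-line argument with no appeal to the structure of $(\ell^{\infty,s}_F)'$, which is exactly what your route gets tangled in.
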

\begin{proof}
A proof for the case $s=0$ can be found in~\cite[Theorem A.3]{KappelerPoeschel03}, and this proof can easily be generalized to the case of arbitrary $s\in\R$. Let us for convenience state this proof, which verifies the differentiability of $f$ at an arbitrary point $u\in O$ directly. 
By assumption there is a ball centered at  $u$ such that $f$ is bounded in $\ell^{\infty,s}_F$ on this ball. In particular, each 
$(1+n^2)^{s/2} |f_n|_F$ is bounded by the same constant. Since all $f_n$ are moreover analytic, it follows from the Taylor series representation applied to each $f_n$, cf.~Lemma~\ref{lem_char_analytic_maps}, and an application of Cauchy's estimate, cf.~Lemma~\ref{lem_Cauchy}, that 
\bew
\|f_n(u+h) - f_n(u) - \mathrm d f_n(u) h\|_F \leq C (1+n^2)^{-\frac{s}{2}}  \|h\|^2_E
\eew 
for small enough $\|h\|_E$, where $C$ is independent of $n\in\Z$. This means that for small  $\|h\|_E$, 
\bew
\|f(u+h) - f(u) - (\mathrm d f_n(u) h)_{n\in\Z} \|_{\infty,s} \leq C  \|h\|^2_E,
\eew
from which we infer that $f$ is differentiable at $u$ with derivative 
\bew
\mathrm d f(u) = (\mathrm d f_n(u))_{n\in\Z}\in \mathcal L(E,\ell^{\infty,s}_F).
\eew 
\end{proof}

\begin{proof}[Proof of Theorem \ref{thm_conn_per_ev}]
The real analytic extension $F\colon \R \times \R \times (\LL_{\mathcal R}\cup \LL_{\mathcal I}) \to\R$ of $\tilde F$, cf.~\eqref{def_tilde_F} and \eqref{def_F}, can be written as
\be \label{int_rep_F}
F(x,y;\psi) = \int^1_0 (\partial_2 \Delta_2) (x, s y; \psi)  \, \mathrm d s,
\ee
where $\partial_2$ denotes the partial derivative with respect to the second variable.
Indeed, 
\begin{align*}
\int^1_0 (\partial_2 \Delta_2)(x, s y; \psi) y \, \mathrm d s 
&= \int^y_0 (\partial_2 \Delta_2)(x, y'; \psi)  \, \mathrm d y' = \Delta_2(x,y;\psi) - \Delta_2(x,0;\psi),
\end{align*}
where $\Delta_2(x,0;\psi)=0$ because, by Proposition \ref{symm_Delta}, $\Delta$ is real-valued on $\R \times (\LL_{\mathcal R}\cup \LL_{\mathcal I})$. We obtain from \eqref{int_rep_F} that 
\be \label{est_int_rep_F}
|F(x,y;\psi)| \leq \max_{s\in[0,1]} |(\partial_2 \Delta_2)(x, s y; \psi)|.
\ee

In view of Lemma \ref{lem_unif_bd_D_yDelta} and \eqref{est_int_rep_F}, the operator $\mathcal F$ given by \eqref{def_calF} defines a well-defined map
\bew
\mathcal F \colon \check \ell^{\infty,1/2}_\R \times \check \ell^{\infty,1/2}_\R \times (\LL_{\mathcal R}\cup \LL_{\mathcal I}) \supseteq
B^{\infty,1/2}_1 \times B^{\infty,1/2}_1 \times  (\LL_{\mathcal R}\cup \LL_{\mathcal I})
\to \check \ell^{\infty,-1/2}_\R,
\eew
where $B^{\infty,1/2}_1 \equiv B^{\infty,1/2}_1(\check \ell^{\infty,1/2}_\R)$ denotes the open unit ball in $\check \ell^{\infty,1/2}_\R$ centered at $0$; the space
$\check \ell^{\infty,1/2}_\R \times \check \ell^{\infty,1/2}_\R \times ( \LL_{\mathcal R}\cup \LL_{\mathcal I})$ is endowed with the usual product topology. 
Since $F$ is real analytic, it admits an analytic extension $F_\C$ to some open set in $\C \times \C \times [(\LL_{\mathcal R}\cup \LL_{\mathcal I})\otimes \C]$, which contains $\R \times \R \times (\LL_{\mathcal R}\cup \LL_{\mathcal I})$.
Let us consider the complexification $\big(B^{\infty,1/2}_1 \times B^{\infty,1/2}_1 \times  (\LL_{\mathcal R}\cup \LL_{\mathcal I})\big)\otimes \C$ of $B^{\infty,1/2}_1 \times B^{\infty,1/2}_1 \times  (\LL_{\mathcal R}\cup \LL_{\mathcal I})$. 
By an application of Lemma \ref{lem_unif_bd_D_yDelta} and \eqref{est_int_rep_F}, there exists an open set
$\mathcal U_\C \subseteq \big( \check \ell^{\infty,1/2}_\R \times \check \ell^{\infty,1/2}_\R \times (\LL_{\mathcal R}\cup \LL_{\mathcal I})\big)\otimes \C$, which contains 
$B^{\infty,1/2}_1 \times B^{\infty,1/2}_1 \times ( \LL_{\mathcal R}\cup \LL_{\mathcal I})$, such that the coordinatewise analytic extension 
$\mathcal F_\C \colon \mathcal U_\C \to \check \ell^{\infty,-1/2}_\C$ of $\mathcal F$ is bounded on bounded subsets of $\mathcal U_\C$; in particular $\mathcal F_\C$ is locally bounded on $\mathcal U_\C$. 
From Lemma \ref{lem_analytic_linfty} we conclude that $\mathcal F_\C$ is analytic on $\mathcal U_\C$; in particular, $\mathcal F$ is real analytic.

The partial derivative $\partial_u \mathcal F(0,0;0)$, which is given by \eqref{derivative_calF}, is a topological isomorphism $\check \ell^{\infty,1/2}_\R \to \check \ell^{\infty,-1/2}_\R$ and $\mathcal F(0,0;0)=0$. Thus we can apply the implicit function theorem for Banach space valued real analytic functions, cf.~\cite{Whittlesey65}. 
We infer the existence of an open neighborhood $W$ of the zero potential in $\LL_{\mathcal R}\cup \LL_{\mathcal I}$, an open $\eps$-ball 
$B^{\infty,1/2}_\eps$ and a $\delta$-ball $B^{\infty,1/2}_\delta$ around the origin in $\check \ell^{\infty,1/2}_\R$ and a real analytic function 
\bew
\mathcal G \colon B^{\infty,1/2}_\eps \times W \to B^{\infty,1/2}_\delta
\eew
such that, for all $v\in B^{\infty,1/2}_\eps$ and $\psi \in  W$,
\bew
\mathcal F(\mathcal G(v,\psi),v,\psi) = 0,
\eew
and such that the map
\bew
(v,\psi) \mapsto (\mathcal G(v,\psi),v,\psi), \quad B^{\infty,1/2}_\eps \times W \to 
B^{\infty,1/2}_\delta \times B^{\infty,1/2}_\eps \times W
\eew
describes the zero level set of $\mathcal F$ in $B^{\infty,1/2}_\delta \times B^{\infty,1/2}_\eps \times W$. 

We may assume that the sequences $\eps=(\eps_n)_{n\in\Z}$ and $\delta =(\delta_n)_{n\in\Z}$ satisfy $\eps_n>0$ and $\delta_n>0$ for $n\in \Z\setminus\{0\}$ and $\eps_0=\delta_0=0$. Clearly, if $-1\leq\tau_n\leq1$ for each $n$, then $(\tau_n \eps_n)_{n\in\Z} \in B^{\infty,1/2}_\eps$ and $(\tau_n \delta_n)_{n\in\Z} \in B^{\infty,1/2}_\delta$. Thus we can run through the intervals in each coordinate in a uniform way. Let $R^{\eps,\delta}_n$, $n\in\Z\setminus\{0\}$, be the associated sequence of nondegenerate rectangles defined in \eqref{Rectangle_eps_delta}.
\begin{figure}[h!] \centering
\begin{subfigure}{.43\textwidth}  \centering
\begin{overpic}[width=.95\textwidth]{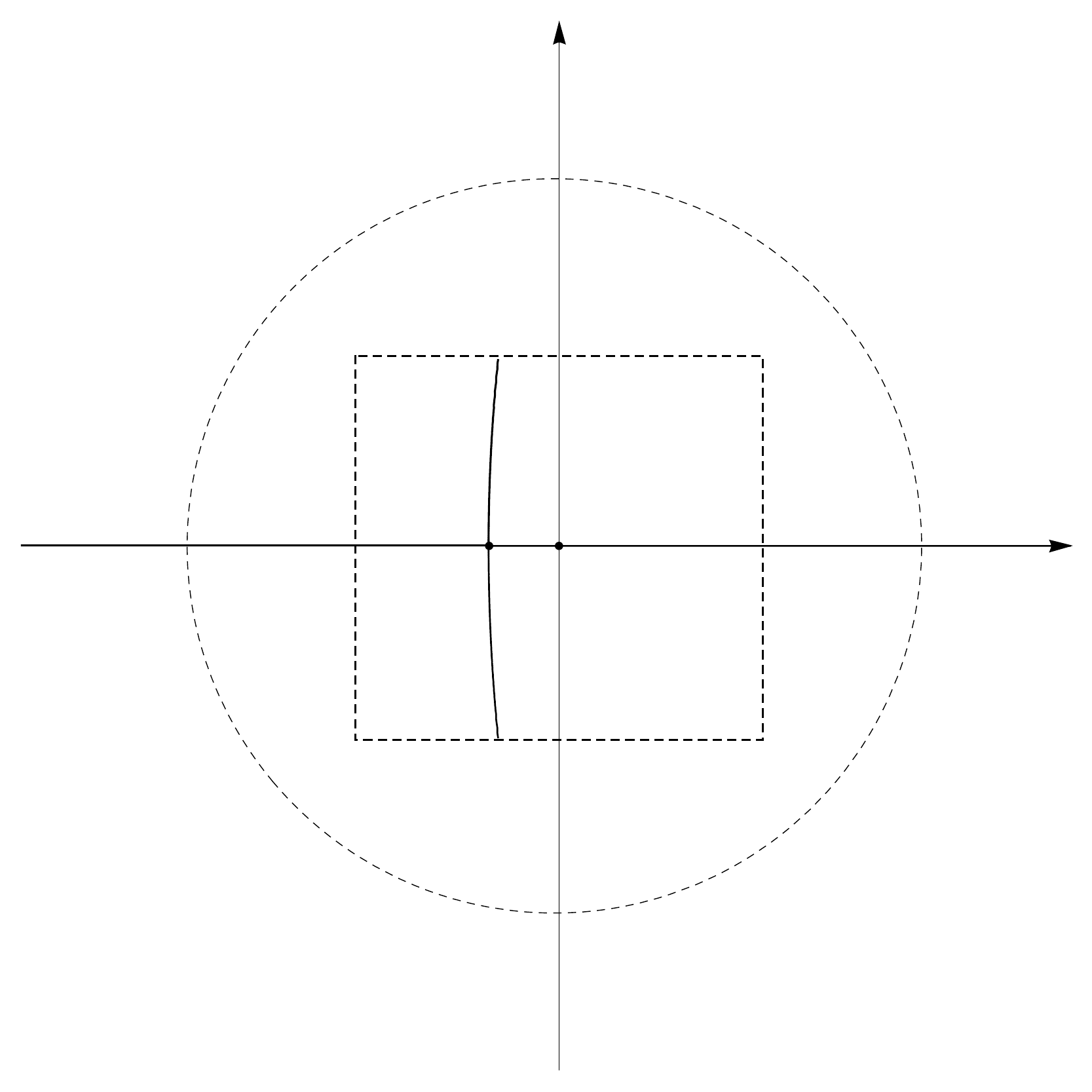}
      \put(60,35){\tiny{$R^{\eps,\delta}_n$}}
      \put(43,20){\tiny{$D^1_n$}}
      \put(39,52){\tiny{$\dot\lambda^1_n$}}
      \put(52,52){\tiny{$\dot\lambda^1_n(0)$}}
      \put(40,38){\tiny{$\gamma_n$}}   
      \put(92,52){\tiny{$\Re \lambda$}}  
\end{overpic}
\caption{}
\label{fig:R}
\end{subfigure}
\begin{subfigure}{.43\textwidth}  \centering
\begin{overpic}[width=.95\textwidth]{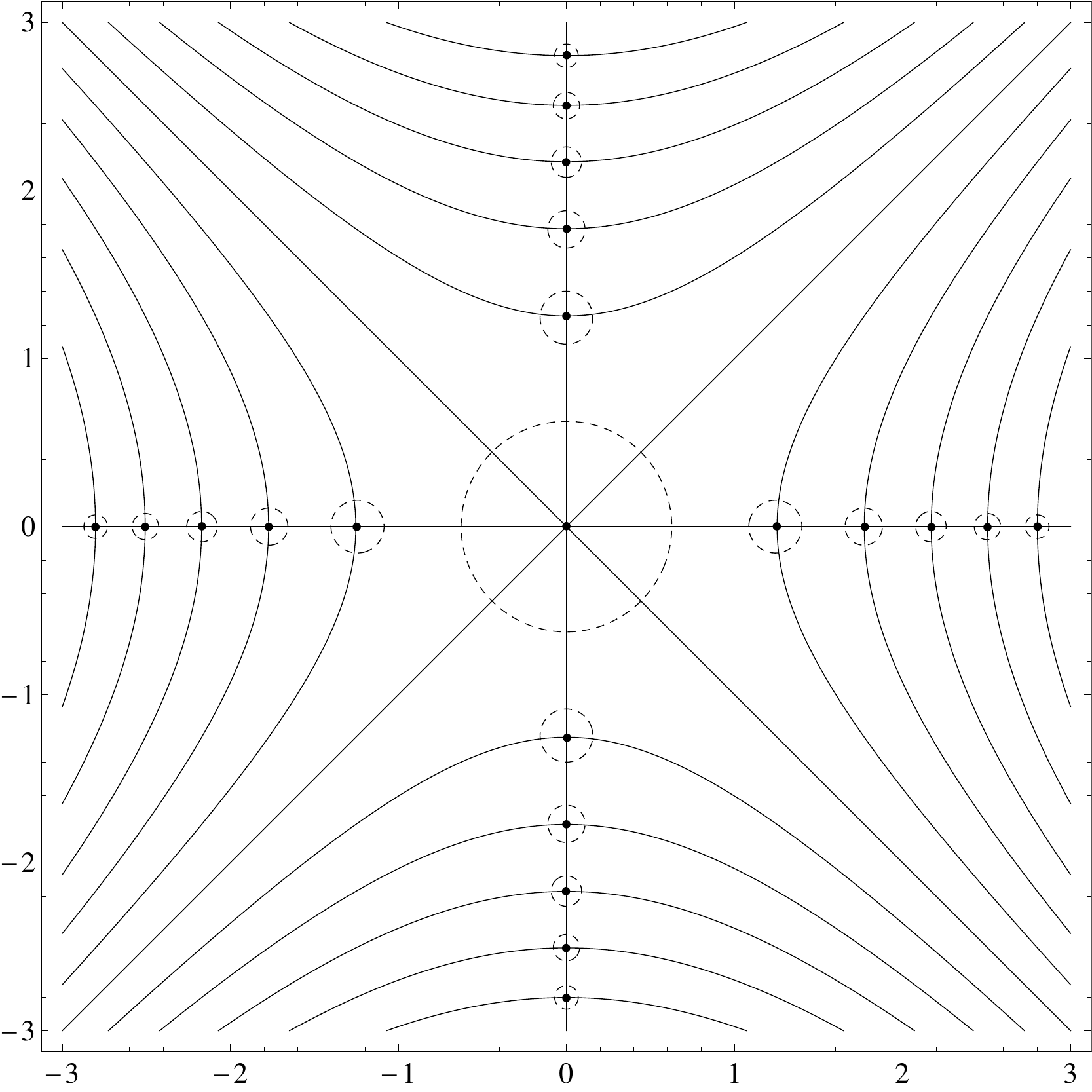}
\end{overpic}
\caption{}
\label{fig:0_pot}
\end{subfigure}
\caption{Fig.~\ref{fig:R} shows an illustration of the path $\gamma_n$ within the rectangle $R^{\eps,\delta}_n$ which is contained in the disc $D^1_n$. The critical points $\dot\lambda^1_n=\dot\lambda^1_n(\psi)=\gamma_n\cap\R$ and $\dot\lambda^1_n(0)$ are marked with dots.
Fig.~\ref{fig:0_pot} shows a plot of the zero set of $\Delta_2(\cdot,0)$ in the complex $\lambda$-plane; the boundaries of the discs $D^i_n$, $i=1,2$, are indicated by dashed circles, the periodic eigenvalues (which coincide with the critical points of $\Delta(\cdot,0)$ and the Dirichlet and Neumann eigenvalues) are indicated by dots.}
\label{}
\end{figure}
Our considerations show that, for every $\psi\in W$ and $n\in\Z\setminus\{0\}$, the zero set of $F$ can be parametrized locally near $\dot\lambda^1_n$ by the real analytic function
\bew
z_n(\psi) \colon (-\eps_n,\eps_n) \to R^{\eps,\delta}_n, \quad
y_n \mapsto \dot\lambda^1_n + \mathcal G_n(y_n,\psi) + \I y_n.
\eew
We set
\bew
\gamma_n(\psi) \coloneqq z_n(\psi)( (-\eps_n,\eps_n)) \subseteq R^{\eps,\delta}_n
\eew
and denote the zero set of $\Delta_2(\cdot,\psi)$ by 
\bew
N_{\Delta_2}(\psi) \coloneqq \{(x,y) \in \R^2 \colon \Delta_2(x,y,\psi)=0 \} \subseteq \R^2.
\eew
By construction,
\bew
 \gamma_n(\psi) \setminus \R = N_{\Delta_2}(\psi) \cap (R^{\eps,\delta}_n \setminus \R),
\eew
and furthermore, since $\Delta$ is real-valued on $\R \times (\LL_{\mathcal R}\cup \LL_{\mathcal I})$, cf.~Proposition \ref{symm_M}, we have 
\bew
N_{\Delta_2}(\psi) \cap R^{\eps,\delta}_n =  \gamma_n(\psi) \cup (R^{\eps,\delta}_n \cap\R) \eqqcolon Z_n(\psi) \subseteq \C.
\eew
Thus for arbitrary $\psi\in W$ and every $n\in\Z\setminus\{0\}$, $\lambda \in R^{\eps,\delta}_n$ satisfies
\be \label{pr_thm_conn_per_ev_01}
\Delta(\lambda,\psi)\in \R \iff \lambda \in Z_n(\psi).
\ee

The intersection $\gamma_n(\psi) \cap \R$ consists of a single point which we denote by $\xi_n \equiv \xi_n(\psi) \in R^{\eps,\delta}_n \subseteq D^1_n$. We will show that $\xi_n=\dot \lambda^1(\psi)$. Since $\Delta_2$ vanishes on the curve $\gamma_n(\psi)$, which is orthogonal to the real line at the point $\xi_n$, we have $\partial_y \Delta_2(\xi_n,\psi)=0$. Furthermore, we know that $\Delta_2$ vanishes on $\R$, hence $\partial_x \Delta_2(\xi_n,\psi)=0$. The Cauchy-Riemann equations then imply that 
$\dot \Delta(\xi_n,\psi)=\partial_y \Delta_2(\xi_n,\psi) + \I \partial_x \Delta_2(\xi_n,\psi) = 0$; hence $\xi_n(\psi)$ is a critical point of 
$\Delta(\cdot,\psi)$. Since $\Delta(\cdot,\psi)$ has only one critical point in $D^1_n$, namely $\dot \lambda^1_n(\psi)$ according to Corollary \ref{cor_crit_val_real}, we conclude that $\gamma_n(\psi)$ crosses the real line in the point $\dot \lambda^1_n(\psi) \in  R^{\eps,\delta}_n$.
\end{proof}

\smallskip

\begin{proof}[Proof of Theorem~\ref{cor_conn_per_ev}]
According to Theorem \ref{thm_conn_per_ev} there exists a neighborhood $W$ of $0$ in $\LL$ such that for $\psi \in W\cap (\LL_{\mathcal R}\cup \LL_{\mathcal I})$ and arbitrary $n\in\Z \setminus \{0\}$, the analytic arc $\gamma_n(\psi)$ and the respective part of the real line describe the preimage of $\R$ under $\Delta(\cdot,\psi)$ locally around $\dot\lambda^1_n(0)$.  This arc is transversal to the real line, symmetric under reflection in the real axis, and the orthogonal projection of $\gamma_n(\psi)$ to the imaginary axis is a real analytic diffeomorphism onto its image.

Let us consider the rectangles $R^{\eps,\delta}_n$, which are centered at  $\lambda^{1,\pm}_n(0)=\dot\lambda^1_n(0)$ such that $\gamma_n(\psi)\subseteq R^{\eps,\delta}_n$ uniformly for all $\psi \in W\cap (\LL_{\mathcal R}\cup \LL_{\mathcal I})$ and all $n\in\Z \setminus \{0\}$. 
Since the lengths  and widths $\delta_n$ and $\eps_n$ are of order $\ell^{\infty,1/2}_\R$, it is guaranteed by Theorem \ref{prop_la} that there exists a neighborhood $W^*$ of $\psi=0$ in $\LL$ such that 
$W^*\cap (\LL_{\mathcal R}\cup \LL_{\mathcal I}) \subseteq W$ and such that $\lambda^{1,\pm}_n(\psi),\dot\lambda^{1}_n(\psi) \in R^{\eps,\delta}_n$ for all $\psi \in W^* \cap (\LL_{\mathcal R}\cup \LL_{\mathcal I})$ and all $n\in\Z \setminus \{0\}$. 

Furthermore, Theorem \ref{prop_la} tells us that
\be \label{pr_thm_conn_per_ev_02}
\Delta(\lambda^{1,\pm}_n(\psi),\psi)= 2(-1)^n.
\ee
Using the notation of the proof of Theorem \ref{thm_conn_per_ev}, we infer from \eqref{pr_thm_conn_per_ev_01} in combination with \eqref{pr_thm_conn_per_ev_02} and Corollary 
\ref{cor_crit_val_real} that
\bew
\lambda^{1,\pm}_n(\psi) \in Z_n(\psi) \quad \text{and} \quad 
\dot\lambda^1_n(\psi) \in Z_n(\psi) \cap \R =R^{\eps,\delta}_n \cap \R
\eew 
for all $\psi \in W \cap( \LL_{\mathcal R}\cup \LL_{\mathcal I})$. If both $\lambda^{1,-}_n(\psi)$ and $\lambda^{1,+}_n(\psi)$ are real, we set 
\bew
\gamma^*_n \equiv \gamma^*_n(\psi) \coloneqq [\lambda^{1,-}_n(\psi),\lambda^{1,+}_n(\psi)]
\subseteq R^{\eps,\delta}_n \cap \R;
\eew
otherwise we set
\bew
\gamma^*_n \equiv \gamma^*_n(\psi) \coloneqq  \gamma_n(\psi) \cap \{ \lambda \in \C \colon |\Delta(\lambda,\psi)|\leq 2 \}.
\eew
In both cases, we have that
$\Delta(\{\gamma^*_n \},\psi) \subseteq [-2,2]$, $\overbar{\gamma^*_n} = \gamma^*_n$ and
$\dot\lambda^1_n(\psi) \in  \gamma^*_n \cap \R$. 

Finally, considering a parametrization by arc length $\rho_n \equiv \rho_n(s)$ of $\gamma^*_n$ with 
$\rho_n(0)=\dot\lambda^1_n(\psi)$, we have that
\bew
\frac{\mathrm d}{\mathrm d s}\big[\Delta(\rho_n(s),\psi)\big]=0 \iff 
\dot \Delta (\rho_n(s),\psi) =0 \iff s=0,
\eew
because $\big| \frac{\mathrm d}{\mathrm d s} \rho_n\big|\equiv 1$ by assumption and, by Corollary \ref{cor_crit_val_real}, $\rho_n(0)=\dot\lambda^1_n(\psi)$ is the only root of $\dot \Delta(\cdot,\psi)$ in 
$R^{\eps,\delta}_n \subseteq D^1_n$.
\end{proof}


\section{Example: single exponential potential} \label{sec_singexp}
\vspace{0em}

\noindent
In this section, we consider single exponential potentials $\psi$ of real and imaginary type:
\be \label{singexp}
\psi(t) = (\alpha \e^{\I \omega t}, \sigma\bar\alpha \e^{-\I \omega t},c \e^{\I \omega t}, \sigma \bar c \e^{-\I \omega t}),
\quad \alpha, c \in \C, \; \omega \in \R, \;\sigma\in\{\pm1\}.
\ee 
The fundamental matrix solution that corresponds to a single exponential potential can be calculated explicitly. 
In Figures~\ref{fig:ex_singexp1} and \ref{fig:singexp2} we provide numerical plots of the periodic eigenvalues and the set $\{\lambda\in\C \colon \Delta(\cdot,\psi) \in \R\}$ for several particular potentials $\psi$ of the form \eqref{singexp}.

\begin{figure}[h!] \centering
\begin{subfigure}{.43\textwidth}  \centering
\begin{overpic}[width=.95\textwidth]{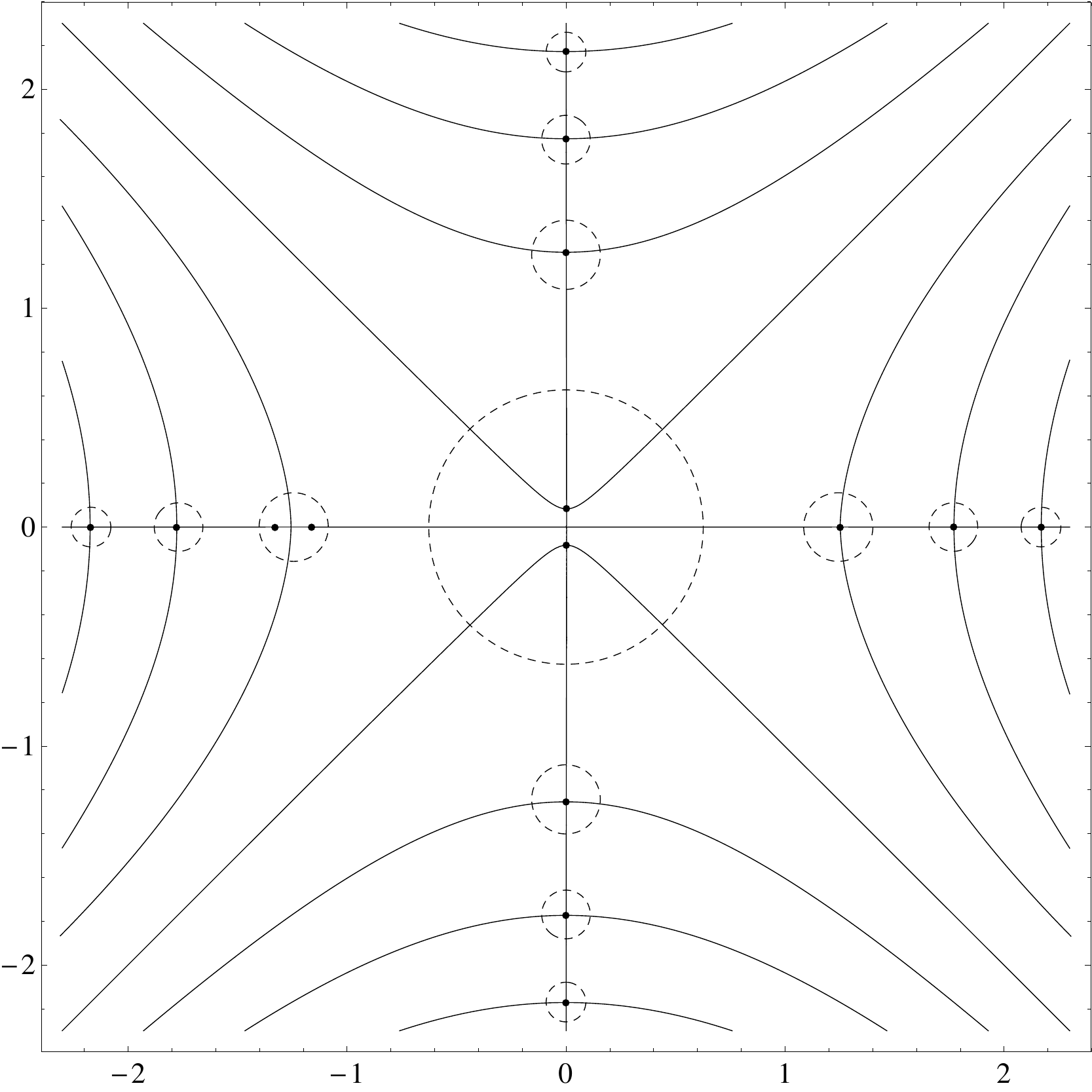}
     \put(19,47.5){{\tiny $\lambda^{1,-}_{-1}$}}
     \put(29,54){{\tiny $\lambda^{1,+}_{-1}$}}
     \put(55,74){{\tiny $\lambda^{2,\pm}_{1}$}}
     \put(55,27){{\tiny $\lambda^{2,\pm}_{-1}$}}
     \put(78,47){{\tiny $\lambda^{1,\pm}_{1}$}}
     \put(47,56){{\tiny $\lambda^{2,+}_{0}$}}
     \put(47,44){{\tiny $\lambda^{2,-}_{0}$}}
\end{overpic}
\caption{$\sigma=1$, $\omega=-2\pi$, $\alpha=\frac{1}{12}$, $c=\I \alpha \beta$}
  \label{fig:singexp1A}
\end{subfigure} 
\quad
\begin{subfigure}{.43\textwidth}  \centering
\begin{overpic}[width=.95\textwidth]{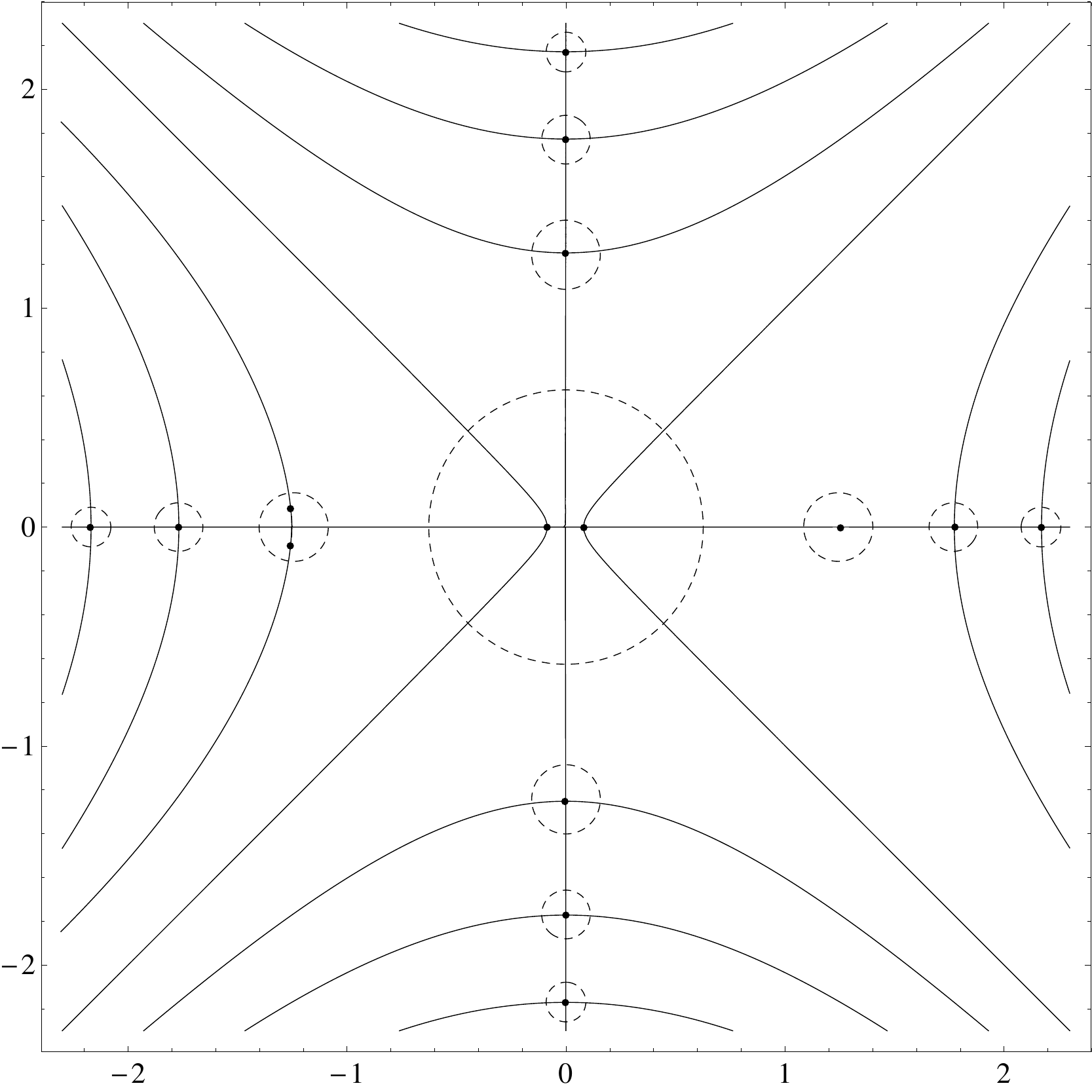}
     \put(27,56){{\tiny $\lambda^{1,+}_{-1}$}}
     \put(27,45){{\tiny $\lambda^{1,-}_{-1}$}}
     \put(42.5,53.5){{\tiny $\lambda^{1,-}_{0}$}}
     \put(56.5,53.5){{\tiny $\lambda^{1,+}_{0}$}}
     \put(54,72){{\tiny $\lambda^{2,\pm}_{1}$}}
     \put(55,28){{\tiny $\lambda^{2,\pm}_{-1}$}}
     \put(78,47){{\tiny $\lambda^{1,\pm}_{1}$}}
\end{overpic}
\caption{$\sigma=-1$, $\omega=-2\pi$, $\alpha=\frac{1}{12}$, $c=\I \alpha \beta$}
  \label{fig:singexp1B}
\end{subfigure}
\vspace{1em}

\begin{subfigure}{.43\textwidth}  \centering
\begin{overpic}[width=.95\textwidth]{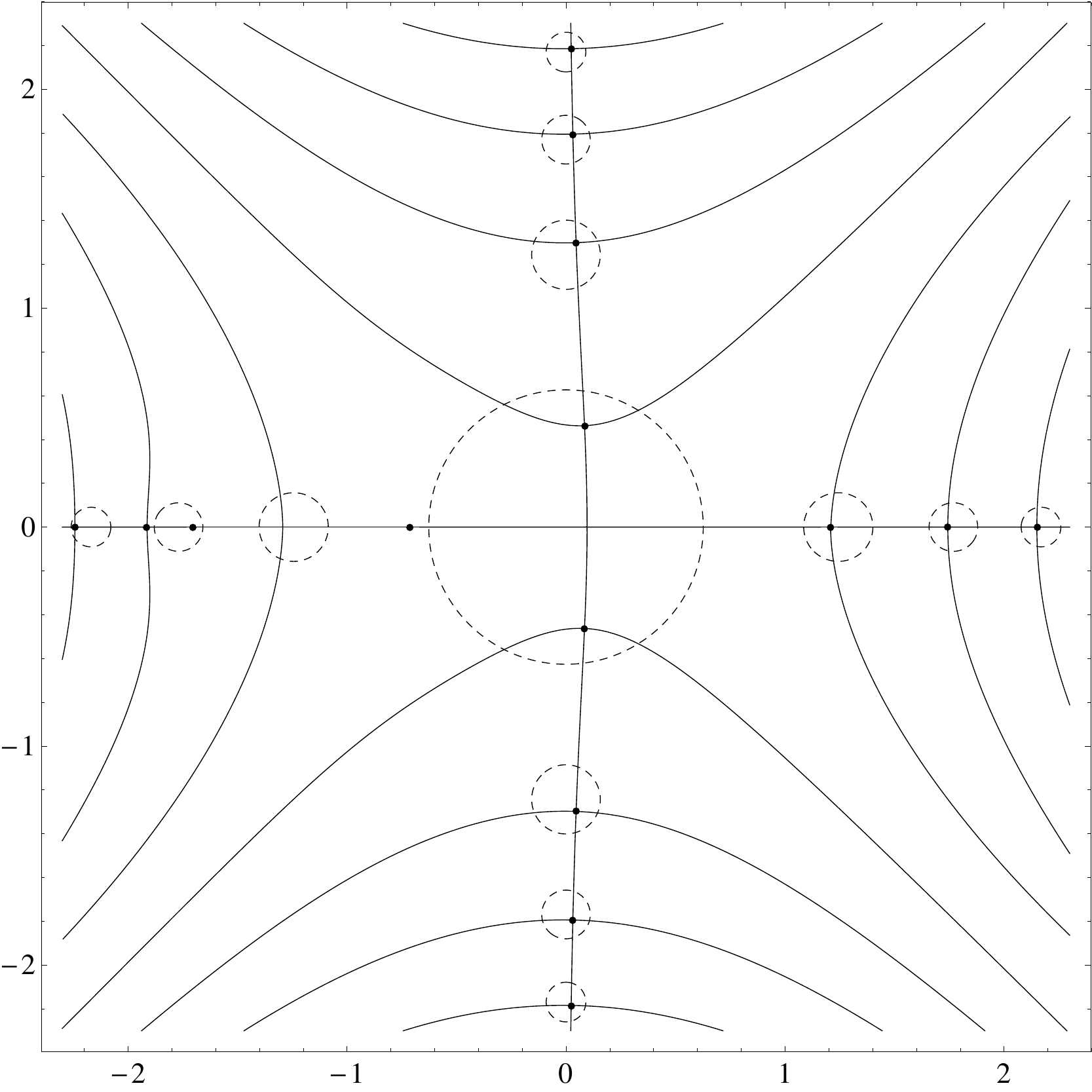}
     \put(17,47){{\tiny $\lambda^{1,-}_{-1}$}}
     \put(33,54){{\tiny $\lambda^{1,+}_{-1}$}}
     \put(55,74){{\tiny $\lambda^{2,\pm}_{1}$}}
     \put(55,27){{\tiny $\lambda^{2,\pm}_{-1}$}}
     \put(78,47){{\tiny $\lambda^{1,\pm}_{1}$}}
     \put(46,57){{\tiny $\lambda^{2,+}_{0}$}}
     \put(46,43){{\tiny $\lambda^{2,-}_{0}$}}
\end{overpic}
\caption{$\sigma=1$, $\omega=-2\pi$, $\alpha=\frac{1}{2}$, $c=\I \alpha \beta$}
  \label{fig:singexp1C}
\end{subfigure}
\quad
\begin{subfigure}{.43\textwidth}  \centering
\begin{overpic}[width=.95\textwidth]{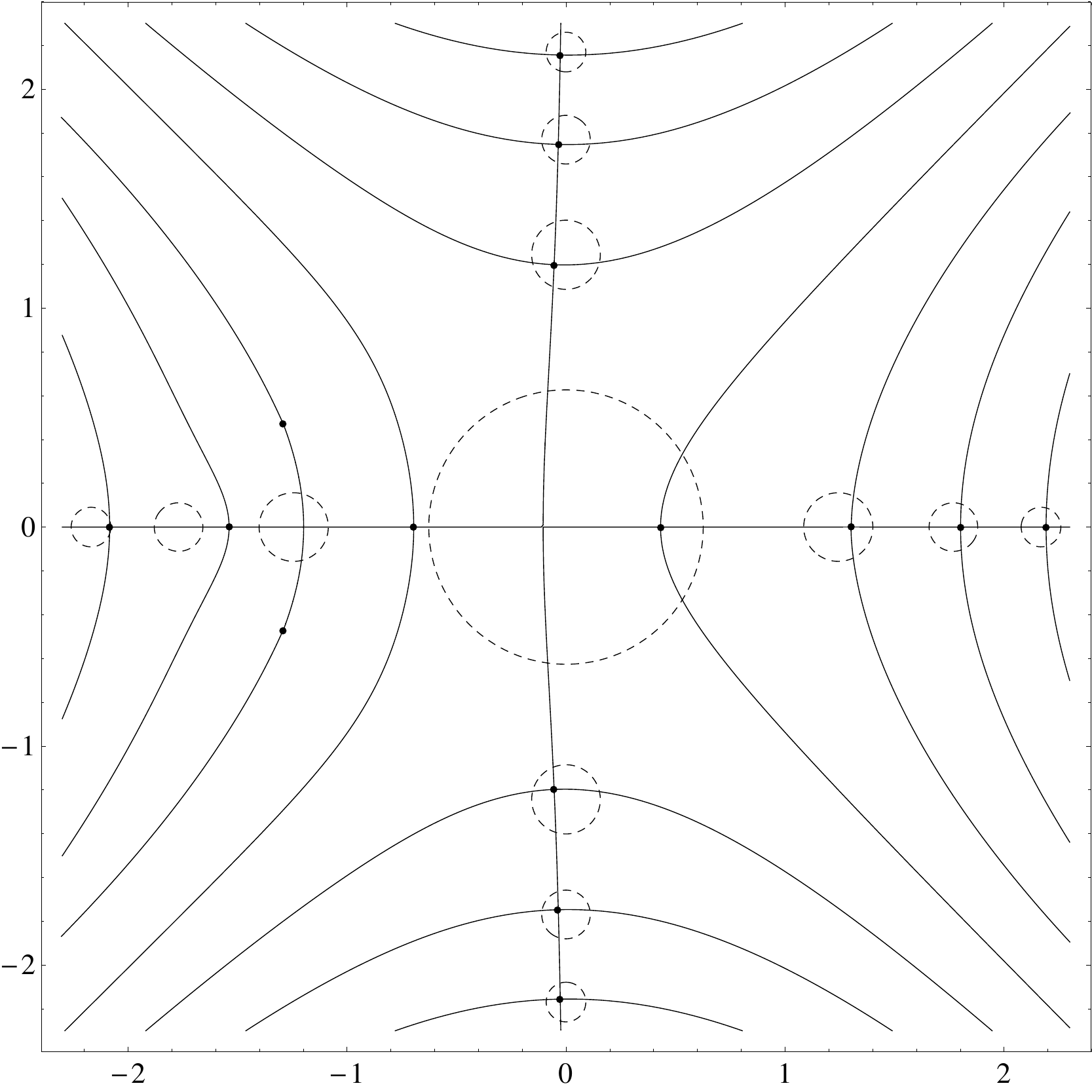}
     \put(27,62){{\tiny $\lambda^{1,+}_{-1}$}}
     \put(27,39){{\tiny $\lambda^{1,-}_{-1}$}}
     \put(32.5,53.5){{\tiny $\lambda^{1,-}_{0}$}}
     \put(61,53.5){{\tiny $\lambda^{1,+}_{0}$}}
     \put(54,72){{\tiny $\lambda^{2,\pm}_{1}$}}
     \put(55,28){{\tiny $\lambda^{2,\pm}_{-1}$}}
     \put(78,47){{\tiny $\lambda^{1,\pm}_{1}$}}
\end{overpic}
\caption{$\sigma=-1$, $\omega=-2\pi$, $\alpha=\frac{1}{2}$, $c=\I \alpha \beta$}
  \label{fig:singexp1D}
\end{subfigure}
\caption{Plots of the zero level sets of $\Delta_2(\cdot,\psi)$ for single exponential potentials of real type (left column) and imaginary type (right column); periodic eigenvalues are indicated with dots and the dashed circles are the boundaries of the discs $D^i_n$.}
\label{fig:ex_singexp1}
\end{figure}

To ensure that $\psi\in\LL$, i.e.~that $\psi$ has period one, we require that $\omega \in 2\pi\Z$.
If $\sigma=1$, the potential $\psi(t)$ in \eqref{singexp} is of real type and hence relevant for the defocusing NLS; if $\sigma=-1$, it is of imaginary type and hence relevant for the focusing NLS. 
A direct computation shows that the associated fundamental solution $M(t,\lambda,\psi)$ is explicitly given by 
\be \label{M_singexp}
\e^{\frac{\I\omega}{2}t\sigma_3}
\begin{pmatrix}
\cos(\Omega t) + \frac{4\lambda^2 + 2\sigma |\alpha|^2 + \omega}{2 \I \Omega} \sin(\Omega t) 
&\frac{2\alpha\lambda+\I c}{\Omega} \sin(\Omega t)\\
\sigma\frac{2\bar\alpha\lambda-\I \bar c}{\Omega} \sin(\Omega t)
&\cos(\Omega t) - \frac{4\lambda^2 + 2\sigma |\alpha|^2 + \omega}{2 \I \Omega} \sin(\Omega t) 
\end{pmatrix},
\ee
where
\be \label{Omega}
\Omega=\Omega(\lambda)=\sqrt{4\lambda^4 + 2\omega \lambda^2 + 4\sigma \Im(\bar \alpha c) \lambda
+\Big( \frac{\omega}{2} +\sigma |\alpha|^2 \Big)^2 - \sigma |c|^2}.
\ee
We fix the branch of the root in \eqref{Omega} by requiring that 
\bew
\Omega(\lambda)=2\lambda^2+\frac{\omega}{2} + \mathcal O(\lambda^{-1}) \quad  \text{as} \quad |\lambda|\to\infty.
\eew
Thus the discriminant $\Delta$, i.e.~the trace of \eqref{M_singexp}, and the characteristic function for the periodic eigenvalues $\chi_{\mathrm P}$ defined in \eqref{char_func_per} are given by
\bew
\Delta(\lambda,\psi)= -2 \cos(\Omega), \quad 
\chi_{\mathrm P}(\lambda,\psi) = 4\sin^2(\Omega).
\eew

\smallskip

Fig.~\ref{fig:ex_singexp1} shows plots of the zero set of 
$\Delta_2(\cdot,\psi)=\Im\Delta(\cdot,\psi)$ and the periodic eigenvalues
in the complex $\lambda$-plane for four different choices of the parameters $\sigma$, $\alpha$, $c$ and $\omega$. 
All four choices correspond to exact plane wave solutions of NLS. Indeed, if $\omega=-2\pi$ and  $\alpha>0$ is chosen such that $-\sigma 2\alpha^2-\omega>0$ is satisfied for $\sigma=\pm 1$, then 
\bew
u(x,t) = \alpha \, \e^{\I\beta x + \I\omega t} \quad \text{with} \quad \beta=\sqrt{-\sigma 2\alpha^2-\omega}
\eew
solves the defocusing (focusing) NLS if $\sigma=1$ ($\sigma=-1$). Moreover, it holds that
\bew
u(0,t)= \alpha \, \e^{\I\omega t}, \quad u_x(0,t)= c\, \e^{\I\omega t}, \quad \text{with} \quad  c=\I\alpha\beta. 
\eew
In the left and right columns of Fig.~\ref{fig:ex_singexp1} we find examples for the defocusing and focusing case, respectively.
In the top row, the norm of the potential is small enough ($\alpha=1/12$) that each periodic eigenvalue $\lambda^{i,\pm}_n$ is contained in the disc $D^i_n$, $i=1,2$, $n\in\Z$. In Fig.~\ref{fig:singexp1A}, all periodic eigenvalues $\lambda^{1,\pm}_n$ are real and there is a spectral gap 
$[\lambda^{1,-}_{-1},\lambda^{1,+}_{-1}]$; the remaining periodic eigenvalues satisfy $\lambda^{1,-}_n=\lambda^{1,+}_n$, $n\in\Z\setminus\{-1,0\}$. The periodic eigenvalues $\lambda^{2,\pm}_n$, $n\in\Z$, lie on a curve that asymptotes to the imaginary axis. 
In Fig.~\ref{fig:singexp1B}, $\lambda^{1,-}_{-1}$ and $\lambda^{1,+}_{-1}$ are not real but lie on the (global) arc $\gamma_{-1}$, which is symmetric with respect to the real axis and crosses the real line at the critical point $\dot\lambda^1_{-1}$.
In Fig.~\ref{fig:singexp1C} and Fig.~\ref{fig:singexp1D}, the spectral gaps are larger than in Fig.~\ref{fig:singexp1A} and Fig.~\ref{fig:singexp1B}, because the parameter $\alpha$ is larger  than in the previous examples ($\alpha=1/2$).

\begin{figure}[h!]
\begin{overpic}[width=.43\textwidth]{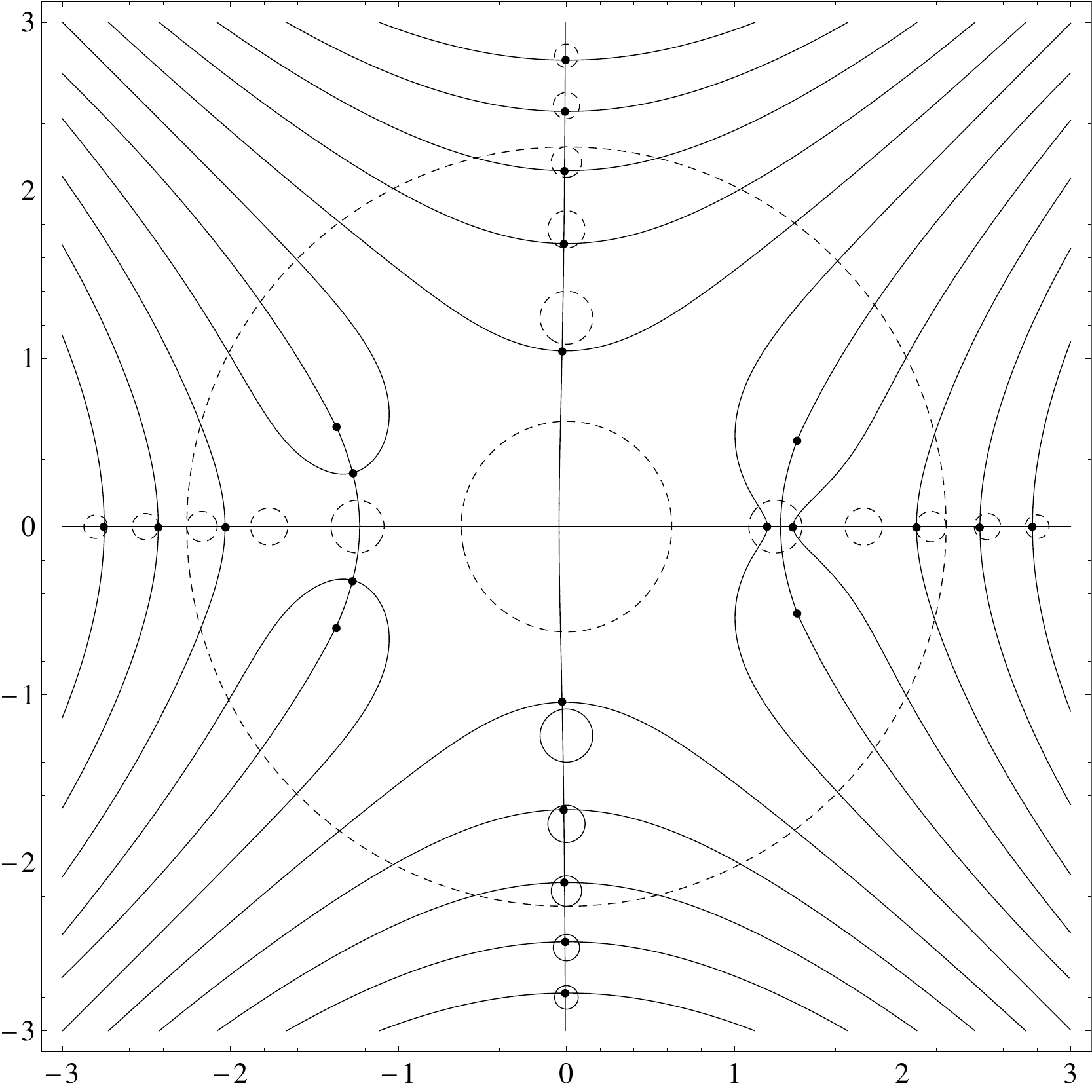}
\end{overpic}
\caption{A plot of the zero level set of $\Delta_2(\cdot,\psi)$  for the real type single exponential potential \eqref{singexp} with $\sigma=1$, $\omega=-2\pi$, $\alpha=\frac{6}{15}+\frac{11}{4}\I$, $c=\frac{1}{10}$. Periodic eigenvalues are indicated with dots, the large dashed circle is the boundary of the disc $B_3$ and the remaining dashed circles are the boundaries of the discs $D^i_n$.}
  \label{fig:singexp2}
\end{figure}

Fig.~\ref{fig:singexp2} shows the zero set of $\Delta_2(\cdot,\psi)$ in the complex $\lambda$-plane for the real type single exponential potential \eqref{singexp} with parameters $\sigma=1$, $\omega=-2\pi$, $\alpha=\frac{6}{15}+\frac{11}{4}\I$ and $c=\frac{1}{10}$. This example clearly demonstrates that Theorems \ref{thm_conn_per_ev} and \ref{cor_conn_per_ev} fail to remain true for potentials with sufficiently large $\LL$-norms. 
We further notice that some arcs $\gamma_n$ do not only ``leave'' the discs $D^i_n$ (and hence also the rectangles $R^{\eps,\delta}_n$ from Theorem \ref{thm_conn_per_ev}), but the zero set differs qualitatively from the previous examples: certain arcs    
``merge'' with other arcs and subsequently ``split'' into new components. 
This example also illustrates that the labeling of periodic eigenvalues is not preserved under continuous deformations of the potential.
We furthermore note that for this particular potential (and consequently all potentials in $\LL$ with smaller $\LL$-norm), the assertion of the Counting Lemma holds already true for $N=3$: there are $4(2\cdot 3+1)=28$ periodic eigenvalues contained in the disc $B_3$ (when counted with multiplicity: 12 double eigenvalues plus 4 simple eigenvalues) and each disc $D^i_n$, $i=1,2$, $|n|>3$, contains precisely one periodic double eigenvalue.


\section{Formulas for gradients} \label{sec_formgrad}
\vspace{0em}

\subsection{Gradient of the fundamental solution}

Let $\mathrm d F$ denote the \emph{Fr\'echet derivative} 
of a functional $F\colon Y \to \C$ on a (complex) Banach space $Y$. If it exists, $\mathrm d F \colon Y \to Y'$ is the unique map from $Y$ into its topological dual space $Y'$ such that
\bew
F(u+h)=F(u) + (\mathrm d F)(u) h + o(h) \quad  \text{as} \quad  \|h\|\to 0 
\eew
for $u\in Y$. The map $\mathrm d F h\colon Y \to \C$ (also denoted by $\D_h F$) is the \emph{directional derivative} of $F$ in direction $h\in Y$.

For any differentiable functional $F \colon \LL \to \C$ and $h\in \LL$, we have that
\bew
\mathrm d F h = \D_h F = \int^1_0 \big(F^1 h^1 + F^2 h^2 + F^3 h^3 + F^4 h^4 \big) \, \mathrm d t
\eew
for some uniquely determined function $\D F=(F^1,F^2, F^3,F^4) \colon \LL \to \LL$. We denote the components of $\D F$ by $\D_j F$, $j=1,2,3,4$, and define the \emph{gradient} $\D F$ of $F$ by
\bew
\D F = (\D_1 F,\D_2 F,\D_3 F,\D_4 F) = (F^1,F^2, F^3,F^4).
\eew
The following proposition gives formulas for the partial derivatives of the fundamental solution 
\bew
M(t, \lambda, \psi) = 
\begin{pmatrix}
m_1 
& m_2 \\ 
m_3
& m_4
\end{pmatrix}.
\eew
For fixed $t\geq 0$ and $\lambda \in \C$, we consider $M$ as a map $\LL\to M_{2\times2}(\C)$. In particular, each matrix entry $m_i$, $i=1,2,3,4$ gives rise to a functional $\LL \to \C$. 
Let us set 
\bew
\gamma \equiv \gamma(M) \coloneqq \det M^{\mathrm d} -  \det M^{\mathrm{od}} = m_1 m_4 + m_2 m_3.
\eew

\begin{proposition}
For any $t\geq 0$ and $0 \leq s \leq t$, the gradient of the fundamental solution $M$, defined on the interval $[0,t]$, is given by 
\begin{align*}
\big(\D_1 M(t)\big)(s) &= M(t)  \, \begin{pmatrix}
-\I  \gamma \psi^2 + 2\lambda m_3 m_4 
& -2\I\psi^2 m_2 m_4 +2\lambda m^2_4 \\ 
2\I\psi^2 m_1 m_3 - 2\lambda m^2_3
& \I  \gamma \psi^2  - 2\lambda m_3 m_4
\end{pmatrix} (s),
	\\
\big(\D_2 M(t)\big)(s) &= M(t)  \, \begin{pmatrix} 
 -\I  \gamma \psi^1 - 2\lambda m_1 m_2 
& -2\I\psi^1 m_2 m_4 -2\lambda m^2_2 \\ 
2\I\psi^1 m_1 m_3 + 2\lambda m^2_1
& \I \gamma \psi^1  + 2\lambda m_1 m_2
\end{pmatrix} (s),
	\\
\big(\D_3 M(t)\big) (s) &= M(t)  \, \begin{pmatrix}
\I m_3 m_4 
& \I m^2_4 \\ 
-\I m^2_3
& -\I m_3 m_4
\end{pmatrix} (s),
	 \\
\big(\D_4 M(t) \big) (s) &= M(t)  \, \begin{pmatrix}
\I m_1 m_2 
& \I m^2_2 \\ 
-\I m^2_1
& -\I m_1 m_2
\end{pmatrix} (s).
\end{align*}
Moreover, at the zero potential $\psi=0$,
\begin{align*}
\big(\D_1 E_\lambda (t)\big)(s) &= 
\begin{pmatrix}
0 & 2\lambda \, \e^{-  2 \I \lambda^2 (t-2s)} \\ 
0 & 0
\end{pmatrix},  
&\big(\D_2 E_\lambda (t)\big)(s) &= 
\begin{pmatrix}
0 & 0 \\ 
2\lambda \, \e^{2 \I \lambda^2 (t-2s)} & 0
\end{pmatrix}, 
	\\ 
\big(\D_3 E_\lambda (t)\big)(s) &= 
\begin{pmatrix}
0 & \I \, \e^{-2 \I \lambda^2 (t-2s)} \\ 
0 & 0
\end{pmatrix},  
&\big(\D_4 E_\lambda (t)\big)(s) &= 
\begin{pmatrix}
0 & 0 \\ 
- \I  \, \e^{2 \I \lambda^2 (t-2s)} & 0
\end{pmatrix}.
\end{align*}
\end{proposition}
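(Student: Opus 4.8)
The plan is to differentiate the defining Cauchy problem \eqref{fund_solution} with respect to the potential and then apply the variation-of-constants formula from Proposition \ref{Prop_inhom_eq}. Fix $t\geq0$, $\lambda\in\C$ and $j\in\{1,2,3,4\}$, and consider a perturbation $\psi+\eps h$ in which only the $j$-th component $h^j$ of $h$ is nonzero. Since $M$ is analytic in $\psi$ by Theorem \ref{thm_fund_sol}, the directional derivative $\D_h M$ exists; differentiating $\mathrm D M=(R+V)M$ and $M(0)=\id$ in $\eps$ at $\eps=0$ shows that $\D_h M$ solves the inhomogeneous linear problem
\bew
\mathrm D(\D_h M)=(R+V)\,\D_h M+(\D_h V)\,M,\qquad (\D_h M)(0)=0,
\eew
where, because $V$ depends on $\psi$ only through pointwise evaluation, $(\D_h V)(s)=h^j(s)\,(\D_{\psi^j}V)(s)$ with $\D_{\psi^j}V$ read off directly from \eqref{Vdef}.

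Applying Proposition \ref{Prop_inhom_eq} with $v_0=0$ and $g=(\D_h V)M$ gives
\bew
(\D_h M)(t)=M(t)\int^t_0 M^{-1}(s)\,(\D_h V)(s)\,M(s)\,\mathrm d s
=\int^t_0 h^j(s)\,M(t)\,M^{-1}(s)\,(\D_{\psi^j}V)(s)\,M(s)\,\mathrm d s.
\eew
Comparing with the definition of the gradient, $\D_h M(t)=\int^t_0 (\D_j M(t))(s)\,h^j(s)\,\mathrm d s$, one reads off the kernel
\bew
(\D_j M(t))(s)=M(t)\,M^{-1}(s)\,(\D_{\psi^j}V)(s)\,M(s).
\eew
It then remains to evaluate this product. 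Using the inverse formula $M^{-1}=\begin{pmatrix} m_4 & -m_2 \\ -m_3 & m_1\end{pmatrix}$ from Proposition \ref{Wronskian_identity}, together with the four elementary matrices
\begin{align*}
\D_{\psi^1}V &= \begin{pmatrix} -\I\psi^2 & 2\lambda \\ 0 & \I\psi^2\end{pmatrix}, &
\D_{\psi^2}V &= \begin{pmatrix} -\I\psi^1 & 0 \\ 2\lambda & \I\psi^1\end{pmatrix}, \\
\D_{\psi^3}V &= \begin{pmatrix} 0 & \I \\ 0 & 0\end{pmatrix}, &
\D_{\psi^4}V &= \begin{pmatrix} 0 & 0 \\ -\I & 0\end{pmatrix},
\end{align*}
one multiplies out $M^{-1}(\D_{\psi^j}V)M$. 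Collecting terms and invoking the Wronskian identity $m_1m_4-m_2m_3=1$, the combination $m_1m_4+m_2m_3=\gamma$ emerges in the diagonal entries for $j=1,2$ (the $\I\psi^j$-contributions along the diagonal combine into $\mp\I\psi^j\gamma$), which produces exactly the four asserted formulas. The $j=3,4$ cases are simpler: the single nonzero entry of $\D_{\psi^j}V$ yields products of the form $\pm\I\, m_k m_l$ without any cancellation through the Wronskian.

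For the zero-potential formulas, one specializes: at $\psi=0$ one has $M=E_\lambda$, hence $m_2=m_3=0$, $m_1(s)=\e^{-2\I\lambda^2 s}$, $m_4(s)=\e^{2\I\lambda^2 s}$ and $\gamma=1$. Substituting into the four general formulas, all diagonal entries vanish, the surviving off-diagonal entry of the kernel reduces to $2\lambda\, m_4(s)^2$ or $\I\, m_4(s)^2$ (resp.\ $-2\lambda\, m_3(s)^2=0$, etc.\ in the other column), and left multiplication by $M(t)=E_\lambda(t)$ turns, e.g., $m_1(t)\,m_4(s)^2$ into $\e^{-2\I\lambda^2(t-2s)}$; this yields the stated expressions for $\D_j E_\lambda$. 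The only genuinely delicate part of the argument is the bookkeeping in the $2\times2$ matrix multiplications and the recognition of the quantity $\gamma$; the structural steps—existence of $\D_h M$ from analyticity, reduction to an inhomogeneous linear ODE, and identification of the variation-of-constants integrand with the gradient kernel—are immediate from the results already established.
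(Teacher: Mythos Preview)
Your proof is correct and follows essentially the same route as the paper: differentiate the defining Cauchy problem \eqref{fund_solution} in the direction $h$, apply the variation-of-constants formula of Proposition \ref{Prop_inhom_eq} with $v_0=0$, and then read off the gradient kernel by explicitly computing $M^{-1}(\D_{\psi^j}V)M$ using Proposition \ref{Wronskian_identity}. The paper adds a brief remark about first verifying the formulas for smooth potentials (to justify interchanging $\partial_t$ and $\partial_h$) and then appealing to density, but your invocation of analyticity from Theorem \ref{thm_fund_sol} serves the same purpose.
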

\begin{proof}
By Theorem \ref{thm_fund_sol} the fundamental solution $M$ is analytic in $\psi$.
It suffices therefore to verify the above formulas for smooth potentials $\psi$ for which the order of differentiation with respect to $t$ and $\psi$ can be interchanged. The general result then follows by a density argument. 

Applying the directional derivative $\D_h$ to both sides of equation \eqref{fund_solution}, we obtain
\bew
\mathrm D \, \D_h M = (R+V) \, \D_h M + \D_h (R+ V) \, M. 
\eew
Since both $M(0)$ and $R$ are independent of $\psi$, Proposition \ref{Prop_inhom_eq} implies
\bew
\D_h M(t) = M(t) \int^t_0 M^{-1} (s) \, \D_h V (s) \, M(s) \, \mathrm d s.
\eew
The integrand equals 
\begin{align*}
 \begin{pmatrix}m_4 & -m_2 \\ -m_3& m_1\end{pmatrix} \, 
 \begin{pmatrix}-\I ( \psi^2 h^1 + \psi^1 h^2) & 2 \lambda h^1 + \I h^3 \\ 2 \lambda h^2 - \I h^4& \I ( \psi^2 h^1 + \psi^1 h^2)\end{pmatrix} \,
  \begin{pmatrix}m_1 & m_2 \\ m_3& m_4\end{pmatrix},
\end{align*}
which can be rewritten as
\begin{align*}
& \begin{pmatrix}
-\I\psi^2(m_1 m_4 + m_2 m_3) + 2\lambda m_3 m_4 
& -2\I\psi^2 m_2 m_4 +2\lambda m^2_4 \\ 
2\I\psi^2 m_1 m_3 - 2\lambda m^2_3
& \I\psi^2(m_1 m_4 + m_2 m_3) - 2\lambda m_3 m_4
\end{pmatrix} h^1 
	\\
& +
\begin{pmatrix} 
 -\I\psi^1 (m_1 m_4 + m_2 m_3) - 2\lambda m_1 m_2 
& -2\I\psi^1 m_2 m_4 -2\lambda m^2_2 \\ 
2\I\psi^1 m_1 m_3 + 2\lambda m^2_1
& \I\psi^1 (m_1 m_4 + m_2 m_3) + 2\lambda m_1 m_2
\end{pmatrix} h^2 
	\\
& +
\begin{pmatrix}
\I m_3 m_4 
& \I m^2_4 \\ 
-\I m^2_3
& -\I m_3 m_4
\end{pmatrix} h^3 
+
\begin{pmatrix}
\I m_1 m_2 
& \I m^2_2 \\ 
-\I m^2_1
& -\I m_1 m_2
\end{pmatrix} h^4.
\end{align*}
The expression for the gradient $\D M(t)$ follows.
In the case of the zero potential $\psi=0$, we have $m_1 = \e^{-2\lambda^2 \I t}$, $m_4 = \e^{2\lambda^2 \I t}$ and $m_2=m_3=0$, so the gradient $\D E_\lambda(t)$ is easily computed.
\end{proof}

The following notation is useful to express the gradient of $M$ more compactly. Let $M_1$ and $M_2$ denote the first and second columns of $M$,
and denote by $\psi^{1,2}$ the first two components, and by $\psi^{3,4}$ the last two components of the four-vector $\psi$:
\bew
\psi^{1,2} \coloneqq 
\begin{pmatrix}
\psi^1 \\ 
\psi^2
\end{pmatrix}, \quad
\psi^{3,4} \coloneqq 
\begin{pmatrix}
\psi^3 \\ 
\psi^4
\end{pmatrix}.
\eew
Analogously, let
\bew
\D^{1,2} \coloneqq 
\begin{pmatrix}
\D_1 \\ 
\D_2
\end{pmatrix}, \quad
\D^{3,4} \coloneqq 
\begin{pmatrix}
\D_3 \\ 
\D_4
\end{pmatrix}.
\eew
Following~\cite{GrebertKappeler14}, we introduce the \emph{star product} of two 2-vectors $a=(a_1,a_2)$ and $b=(b_1,b_2)$  by
\bew
a \star b \coloneqq
\begin{pmatrix}
a_2 b_2 \\ 
a_1 b_1
\end{pmatrix}.
\eew
Moreover, recall that $\gamma = m_1 m_4 + m_2 m_3$.
With this notation, we obtain
\begin{corollary} \label{Cor_grad_M}
For any $t\geq 0$, the gradient of the fundamental solution $M$ is given by
\begin{align*}
& \D^{1,2}
M(t) = M(t) \begin{pmatrix}
-\I \gamma \sigma_1 \psi^{1,2} + 2 \lambda \sigma_3 (M_1 \star M_2) 
& -2 \I m_2 m_4 \sigma_1 \psi^{1,2} + 2 \lambda \sigma_3 (M_2 \star M_2)  \\ 
2\I m_1 m_3 \sigma_1 \psi^{1,2} - 2 \lambda \sigma_3 (M_1 \star M_1 )
& \I \gamma \sigma_1 \psi^{1,2} - 2 \lambda \sigma_3 ( M_1 \star M_2)
\end{pmatrix}(s), 
	\\
&\I
\D^{3,4}
M(t) = M(t) \begin{pmatrix}
-M_1 \star M_2
& -M_2 \star M_2  \\ 
M_1 \star M_1 
&  M_1 \star M_2
\end{pmatrix}(s).
\end{align*}
\end{corollary}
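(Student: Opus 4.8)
The plan is to observe that Corollary~\ref{Cor_grad_M} is a purely notational repackaging of the four formulas for $\D_1 M$, $\D_2 M$, $\D_3 M$, $\D_4 M$ established in the preceding Proposition, rewritten via the star product and the columns $M_1$, $M_2$ of $M$; no new analytic input is needed, and the whole content is an algebraic bookkeeping exercise.

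First I would record the elementary identities that translate the quadratic monomials $m_a m_b$ appearing in the Proposition into star products. With $M_1 = \begin{pmatrix} m_1 \\ m_3 \end{pmatrix}$, $M_2 = \begin{pmatrix} m_2 \\ m_4 \end{pmatrix}$ and the definition $a\star b = \begin{pmatrix} a_2 b_2 \\ a_1 b_1 \end{pmatrix}$, one has
\begin{align*}
M_1\star M_1 = \begin{pmatrix} m_3^2 \\ m_1^2 \end{pmatrix},\qquad
M_2\star M_2 = \begin{pmatrix} m_4^2 \\ m_2^2 \end{pmatrix},\qquad
M_1\star M_2 = \begin{pmatrix} m_3 m_4 \\ m_1 m_2 \end{pmatrix},
\end{align*}
hence $\sigma_3(M_1\star M_1) = \begin{pmatrix} m_3^2 \\ -m_1^2 \end{pmatrix}$, $\sigma_3(M_2\star M_2) = \begin{pmatrix} m_4^2 \\ -m_2^2 \end{pmatrix}$, $\sigma_3(M_1\star M_2) = \begin{pmatrix} m_3 m_4 \\ -m_1 m_2 \end{pmatrix}$, while $\sigma_1\psi^{1,2} = \begin{pmatrix} \psi^2 \\ \psi^1 \end{pmatrix}$ encodes the interchange $\psi^1\leftrightarrow\psi^2$ that distinguishes the $\D_1 M$- and $\D_2 M$-formulas.

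With these identities in hand, the verification is entry-by-entry. Reading the right-hand side of the first display of Corollary~\ref{Cor_grad_M} as a $2\times2$ matrix whose entries are $2$-vectors (all matrix factors evaluated at time $s$, as in the Proposition), I would check that the first component of each entry reproduces the corresponding entry of the matrix multiplying $M(t)$ in the formula for $\big(\D_1 M(t)\big)(s)$, and the second component reproduces the matrix in the formula for $\big(\D_2 M(t)\big)(s)$: the term $-\I\gamma\,\sigma_1\psi^{1,2}$ yields the pair $\big(-\I\gamma\psi^2,\, -\I\gamma\psi^1\big)$ in the $(1,1)$- and $(2,2)$-entries, and the $2\lambda\,\sigma_3(\cdots)$ terms yield the remaining $\pm 2\lambda\, m_a m_b$ contributions. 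The same check applied to the second display shows that multiplying $\D^{3,4} M(t)$ by $\I$ and reading off the first and second components recovers $\I\big(\D_3 M(t)\big)(s)$ and $\I\big(\D_4 M(t)\big)(s)$, since here only the monomials $m_3 m_4$, $m_4^2$, $m_3^2$, $m_1 m_2$, $m_2^2$, $m_1^2$ occur, each arising as a component of $\pm M_i\star M_j$.

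I do not anticipate any genuine obstacle: the statement is purely formal and follows from the Proposition by direct substitution. The only point demanding a little care is the ordering convention in the star product (which swaps the two slots) together with the correct placement of $\sigma_1$ versus $\sigma_3$, so that the $\psi$-terms carry the swap $\psi^1\leftrightarrow\psi^2$ while the $m_a m_b$-terms carry the diagonal sign pattern; once this is pinned down, the four identities of the Proposition match the claimed compact formulas line by line.
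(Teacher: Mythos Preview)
Your proposal is correct and matches the paper's approach: the corollary is stated without proof in the paper, as an immediate notational reformulation of the preceding Proposition, and your entry-by-entry verification via the star-product identities is precisely the intended bookkeeping.
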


In the special case when $\psi=0$ and $\lambda$ is a periodic eigenvalue corresponding to the zero potential (i.e.~$\lambda= \lambda^{i,\pm}_n(0)$, $i=1,2$, $n\in\Z$), we find 
\bew
e^+_n = M_1 \star M_1, \quad
e^-_n = M_2 \star M_2, \quad n\in\Z,
\eew
where
\bew
e^+_n \coloneqq 
\begin{pmatrix}
0 \\
\e^{-2 \pi \I n t}
\end{pmatrix}
, \quad
e^-_n \coloneqq 
\begin{pmatrix}
\e^{2 \pi \I n t} \\
0
\end{pmatrix}
, \quad n\in\Z.
\eew
\subsection{Discriminant and anti-discriminant}

\begin{proposition}
The gradient of $\Delta$ is given by
\begin{align*}
\begin{split}
\D^{1,2} \Delta &= 
 \m_2 [2\I m_1 m_3 \sigma_1 \psi^{1,2} - 2 \lambda \sigma_3 (M_1 \star M_1 )]  - \m_3 [ 2 \I m_2 m_4 \sigma_1 \psi^{1,2} - 2 \lambda \sigma_3 (M_2 \star M_2)] \\
& \quad + (\m_4 - \m_1) [\I \gamma \sigma_1 \psi^{1,2} - 2 \lambda \sigma_3 ( M_1 \star M_2)],
\end{split}\\
\begin{split}
\I \D^{3,4} \Delta &= 
 \m_2 M_1 \star M_1
- \m_3 M_2 \star M_2
+ (\m_4-\m_1) M_1 \star M_2.
\end{split}
\end{align*}
At the zero potential, $\D \Delta(\lambda,0) =0$ for all  $\lambda\in\C$.
\end{proposition}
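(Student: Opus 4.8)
The plan is to differentiate the definition $\Delta = \m_1 + \m_4$ using the chain rule together with the already-established formulas for the gradient of the fundamental solution $M$. Concretely, since $\Delta(\lambda,\psi) = m_1(1,\lambda,\psi) + m_4(1,\lambda,\psi)$, we have $\D_j \Delta = \D_j \m_1 + \D_j \m_4$ for $j=1,2,3,4$, so it suffices to read off the $(1,1)$- and $(2,2)$-entries of each of the four matrices $\D_j M(t)|_{t=1}$ from the Proposition giving the gradient of $M$, and add them. Grouping the components $j=1,2$ and $j=3,4$ in the vector notation $\D^{1,2}$, $\D^{3,4}$ introduced before Corollary \ref{Cor_grad_M}, and using the star-product identities $M_1 \star M_1$, $M_2 \star M_2$, $M_1 \star M_2$ collected there, the sum of the diagonal entries collapses into exactly the stated compact expressions.

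The key steps, in order, are: (1) write $\D^{1,2}\Delta = \big[\D^{1,2} M\big]_{11} + \big[\D^{1,2} M\big]_{22}$, where $[\,\cdot\,]_{ii}$ denotes extracting the $i$-th diagonal entry of the matrix product $M(t)\,(\cdots)(s)$; (2) substitute the explicit formula for $\D^{1,2}M(t) = M(t)\,(\text{inner matrix})(s)$ from Corollary \ref{Cor_grad_M}, so that the diagonal entries of the product $M \cdot (\text{inner matrix})$ are computed as $\m_1 (\text{inner})_{11} + \m_2 (\text{inner})_{21}$ and $\m_3 (\text{inner})_{12} + \m_4 (\text{inner})_{22}$ evaluated with $M$ at time $t=1$; (3) add these, collect the coefficients of the vectors $\sigma_1\psi^{1,2}$, $\sigma_3(M_1\star M_1)$, $\sigma_3(M_2\star M_2)$, $\sigma_3(M_1\star M_2)$, and simplify using $\gamma = \m_1\m_4 + \m_2\m_3$; and (4) repeat the same bookkeeping for $\D^{3,4}\Delta$, which is simpler since the inner matrix there has no $\psi$- or $\lambda$-dependence, only the star products. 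For the zero-potential statement, one simply notes that at $\psi=0$ we have $\m_2 = \m_3 = 0$, $\m_1 = \e^{-2\I\lambda^2}$, $\m_4 = \e^{2\I\lambda^2}$, and moreover $M_1 \star M_1$, $M_2 \star M_2$ vanish in the relevant entries while the coefficient $\m_4 - \m_1$ multiplies $M_1 \star M_2$; a short direct substitution shows every surviving term cancels, giving $\D\Delta(\lambda,0) = 0$. Alternatively, one can invoke that $\Delta(\lambda,0) = 2\cos 2\lambda^2$ is independent of $\psi$ by construction, so its $\psi$-gradient at $\psi=0$—which is a statement about first-order variation—must still be computed, but the explicit formula makes the vanishing transparent; the cleanest route is the direct substitution just described.

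I expect no genuine obstacle here: the result is a mechanical consequence of Corollary \ref{Cor_grad_M}, and the only mildly delicate point is keeping track of which matrix entries of $M$ multiply which entries of the inner matrices when forming the trace of a product of the form $M(1)\cdot A(s)$. The trace-extraction is $\operatorname{tr}(M A) = \m_1 A_{11} + \m_2 A_{21} + \m_3 A_{12} + \m_4 A_{22}$, and a brief check that the coefficient of $\sigma_1\psi^{1,2}$ assembles to $\m_2 \cdot 2\I m_1 m_3 - \m_3 \cdot 2\I m_2 m_4 + (\m_4 - \m_1)\I\gamma$ (and similarly for the $\sigma_3(\cdots)$ terms with the factor $-2\lambda$) confirms the claimed form. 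The $\I\D^{3,4}\Delta$ formula follows by the identical trace computation applied to the second inner matrix of Corollary \ref{Cor_grad_M}, yielding $\m_2\, M_1\star M_1 - \m_3\, M_2\star M_2 + (\m_4 - \m_1)\,M_1\star M_2$ after cancellation of the diagonal $M_1\star M_2$ contributions against each other. Thus the proof is essentially a one-paragraph verification, and the $\psi=0$ case is immediate from $\m_2=\m_3=0$ and $\m_4-\m_1 = 2\I\sin 2\lambda^2$ multiplying vectors whose relevant components vanish.
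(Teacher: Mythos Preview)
Your approach is correct and essentially identical to the paper's: both derive the formula by taking the trace of the matrix expression in Corollary~\ref{Cor_grad_M}, i.e., computing $\m_1 A_{11} + \m_2 A_{21} + \m_3 A_{12} + \m_4 A_{22}$ for the inner matrix $A$ and regrouping. For the zero-potential statement, the paper's argument is slightly cleaner than your phrasing: the key observation is simply that $m_2 = m_3 = 0$ kills the $\m_2$- and $\m_3$-terms outright and forces $M_1 \star M_2 = (m_3 m_4,\, m_1 m_2)^\intercal = 0$, so the $(\m_4 - \m_1)$-term vanishes as well (and $\psi^{1,2}=0$ handles the remaining $\sigma_1\psi^{1,2}$ contributions in $\D^{1,2}\Delta$); nothing needs to ``cancel.''
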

\begin{proof} 
The formula for the gradient follows directly from Corollary \ref{Cor_grad_M}. In the case of the zero potential, $m_2=m_3=0$; hence $M_1 \star M_2=0$ and therefore $\D \Delta(\lambda,0) =0$ for all  $\lambda\in\C$. 
\end{proof}
The following formulas for the derivative of the anti-discriminant are derived in a similar way.
\begin{proposition}
The gradient of the anti-discriminant $\delta$ is given by
\begin{align*}
\partial^{1,2} \delta = &\; \m_4 [2\I m_1 m_3 \sigma_1 \psi^{1,2} - 2 \lambda \sigma_3 (M_1 \star M_1 )] 
+ (\m_2-\m_3) [\I \gamma \sigma_1 \psi^{1,2} - 2 \lambda \sigma_3 ( M_1 \star M_2)] \\
& - \m_1 [2 \I m_2 m_4 \sigma_1 \psi^{1,2} - 2 \lambda \sigma_3 (M_2 \star M_2)],
	\\
\I \partial^{3,4} \delta = &\; \m_4 M_1 \star M_1 + (\m_2-\m_3) M_1 \star M_2 - \m_1 M_2 \star M_2.
\end{align*}
In the special case when $\psi=0$ and $\lambda$ is a periodic eigenvalue corresponding to the zero potential, i.e.~$\lambda= \lambda^{i,\pm}_n(0)$, $i=1,2$, $n\in\Z$, 
\bew
\partial^{1,2}\delta = 2\lambda^{i,\pm}_n(0)(-1)^n (e^+_n + e^-_n), \quad 
\I \partial^{3,4}\delta = (-1)^n (e^+_n - e^-_n).
\eew
\end{proposition}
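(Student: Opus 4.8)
\smallskip

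The plan is to argue exactly as in the proof of the gradient formula for $\Delta$, the only structural change being that $\delta = \m_2 + \m_3$ extracts the \emph{off-diagonal} entries of $\M$ rather than the diagonal ones. By Corollary~\ref{Cor_grad_M} we may write $\D^{1,2}M(t) = M(t)\,G$ and $\I\,\D^{3,4}M(t) = M(t)\,H$, where $G$ and $H$ are the explicit $2\times2$ matrices displayed there, whose entries are built from the $m_i$, the star products $M_i\star M_j$, and $\sigma_1\psi^{1,2}$. Since the gradient is linear, $\D\delta = (\D\M)_{12} + (\D\M)_{21}$, so it suffices to read off the $(1,2)$- and $(2,1)$-entries of the products $\M G$ (for $\D^{1,2}\delta$) and $\M H$ (for $\I\,\D^{3,4}\delta$), all evaluated at $t = 1$. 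The existence of these gradients and their analyticity in $\psi$ are inherited from Corollary~\ref{Cor_grad_M}.

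Carrying this out is routine bookkeeping. One has $(\M G)_{12} = \m_1 G_{12} + \m_2 G_{22}$ and $(\M G)_{21} = \m_3 G_{11} + \m_4 G_{21}$; adding these and using $G_{11} = -G_{22} = -\big[\I\gamma\sigma_1\psi^{1,2} - 2\lambda\sigma_3(M_1\star M_2)\big]$ (read off from Corollary~\ref{Cor_grad_M}) collects the result into $\D^{1,2}\delta = \m_4\,G_{21} + (\m_2 - \m_3)\,G_{22} + \m_1\,G_{12}$. Substituting the entries $G_{12}$, $G_{21}$, $G_{22}$ then gives precisely the asserted formula for $\partial^{1,2}\delta$, and the identical manipulation with $H_{11} = -H_{22} = -M_1\star M_2$, $H_{12} = -M_2\star M_2$, $H_{21} = M_1\star M_1$ yields the formula for $\I\,\partial^{3,4}\delta$. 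The one point that needs attention — and the only place a slip could occur — is that here the two ``diagonal-type'' coefficients that survive are $\m_2$ and $\m_3$, entering with opposite signs, rather than $\m_1$ and $\m_4$ as in the $\Delta$-computation, so that formula must not be transcribed verbatim.

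Finally, for the zero-potential specialization I would set $\psi = 0$, so that $M = E_\lambda$ is diagonal and $m_2 = m_3 = 0$; consequently $\psi^{1,2} = 0$, $\m_2 - \m_3 = 0$, and $M_1\star M_2 = 0$ (its entries are products involving $m_2$ or $m_3$). The general formulas therefore collapse to $\partial^{1,2}\delta = -2\lambda\sigma_3\big(\m_4\,M_1\star M_1 - \m_1\,M_2\star M_2\big)$ and $\I\,\partial^{3,4}\delta = \m_4\,M_1\star M_1 - \m_1\,M_2\star M_2$. Evaluating at $\lambda = \lambda^{i,\pm}_n(0)$, where $2\lambda^2 = (-1)^{i-1}|n|\pi \in \pi\Z$, gives $\m_1 = \e^{-2\lambda^2\I} = \m_4 = \e^{2\lambda^2\I} = (-1)^n$; and, as already recorded just below the gradient formula for $\Delta$, at such $\lambda$ one has $M_1\star M_1 = e^+_n$ and $M_2\star M_2 = e^-_n$. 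Using $\sigma_3 e^+_n = -e^+_n$ and $\sigma_3 e^-_n = e^-_n$ then produces $\partial^{1,2}\delta = 2\lambda^{i,\pm}_n(0)(-1)^n(e^+_n + e^-_n)$ and $\I\,\partial^{3,4}\delta = (-1)^n(e^+_n - e^-_n)$, as claimed. I do not anticipate any genuine obstacle: the whole argument is a linear-algebraic specialization of Corollary~\ref{Cor_grad_M}, and the main (minor) hazard is keeping the signs and indices straight when deciding which entry of the product matrix feeds $\m_2$ versus $\m_3$.
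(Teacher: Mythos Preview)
Your proposal is correct and follows essentially the same approach as the paper, which merely states that the formulas are ``derived in a similar way'' to the gradient of $\Delta$ via Corollary~\ref{Cor_grad_M}. Your write-up in fact supplies the details the paper omits, including the careful sign tracking in $(\m_2-\m_3)G_{22}$ and the zero-potential specialization using $\sigma_3 e^\pm_n = \mp e^\pm_n$.
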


\section{Hamiltonian structure of the nonlinear Schr\"odinger system} \label{sec_biHam}
\noindent
Consider the NLS system
\begin{align}\label{NLSsystem}
\begin{cases}
  \I q_t + q_{xx} - 2 q^2 r = 0,
  	\\
  -\I r_t + r_{xx} - 2 r^2 q = 0,	
\end{cases}
\end{align}
where $q(x,t)$ and $r(x,t)$ are independent complex-valued functions. If $r = \sigma \bar{q}$, the system \eqref{NLSsystem} reduces to the NLS equation \eqref{nls}. 
We can view \eqref{NLSsystem} as an evolution equation with respect to $t$ by writing
\begin{align}\label{NLSsystemt}
\begin{pmatrix} q \\ r \end{pmatrix}_t = \I \begin{pmatrix} q_{xx} - 2 q^2 r \\ - r_{xx} +2 r^2q  \end{pmatrix}.
\end{align}
On the other hand, introducing $p(x,t)$ and $s(x,t)$ by
\be \label{def_ps}
p = q_x, \quad s = r_x,
\ee
we can also write \eqref{NLSsystem} as an evolution equation with respect to $x$:
\begin{align}\label{NLSsystemx}
\begin{pmatrix} q \\ r \\p \\ s \end{pmatrix}_x = \begin{pmatrix} p \\ s \\ -\I q_t +2 q^2 r \\ \I r_t + 2 r^2 q \end{pmatrix}.
\end{align}
The potentials $\{\psi^j\}_1^4$ of equation \eqref{Vdef} can be viewed as the initial data for \eqref{NLSsystemx} according to the identifications
$$\psi^1(t) = q(0,t), \quad \psi^2(t) = r(0,t), \quad \psi^3(t) = p(0,t), \quad \psi^4(t) = s(0,t).$$

In this section, we first review the bi-Hamiltonian formulation of \eqref{NLSsystem} when viewed as an evolution equation with respect to $t$. We also recall how this formulation gives rise to an infinite number of conservation laws. We then show that \eqref{NLSsystem} admits a Hamiltonian formulation also when viewed as an evolution equation with respect to $x$. Although this one Hamiltonian formulation is enough for the purpose of establishing local Birkhoff coordinates for the $x$-evolution of NLS, we also consider the existence of a second Hamiltonian structure for  \eqref{NLSsystemx}. We find that even though the infinitely many conservation laws of \eqref{NLSsystemt} transfer to the $x$-evolution equation \eqref{NLSsystemx}, the naive way of deriving a second Hamiltonian structure for this system fails. Indeed, the obvious guess for a second Hamiltonian structure yields a Poisson bracket which does not satisfy the Jacobi identity.

In contrast to the rest of the paper, we will not specify the functional analytic framework in terms of Sobolev spaces such as $H^1(\T,\C)$. Instead we will adopt the more algebraic point of view of \cite{O1986}, which is not restricted to the periodic setting; roughly speaking, this means that we will assume that all functions can be differentiated to any order and that partial integrations can be performed freely with vanishing boundary terms. We will use the symbol $\int$ to denote integration over the relevant $x$ or $t$ domain.

\subsection{The bi-Hamiltonian structure of \eqref{NLSsystemt}} \label{subsec_standard-biHam}
In the current framework, we define the gradient $\partial F$ of a functional $F = F[q,r]$ by 
\bew
\partial F = \begin{pmatrix} \partial_1 F \\ \partial_2 F \end{pmatrix}
= \begin{pmatrix} \frac{\partial F}{\partial q} \\ \frac{\partial F}{\partial r} \end{pmatrix}
\eew
whenever there exist functions $\partial_1 F$ and $\partial_2 F$ such that
\bew
\frac{\mathrm d}{\mathrm d\epsilon} F[q + \epsilon \varphi_1, r + \epsilon \varphi_2]\biggr|_{\epsilon =0} = \int \Big[(\partial_1 F) \varphi_1 + (\partial_2 F) \varphi_2 \Big] \, \mathrm d x
\eew
for any smooth functions $\varphi_1$ and $\varphi_2$ of compact support.
The system \eqref{NLSsystemt} admits the bi-Hamiltonian formulation \cite{M1978}
\be \label{bi-Ham_formulation}
\begin{pmatrix} q \\ r \end{pmatrix}_t = \mathcal{D} \, \partial H_1 = \mathcal{E} \, \partial H_2,
\ee
where the Hamiltonian functionals $H_1[q,r]$ and $H_2[q,r]$ are defined by
\begin{align}\label{H1H2def}
&H_1 = \I \int q_x r\, \mathrm d x, \qquad H_2 = \int \Big(-q_{xx}r+q^2 r^2\Big)\, \mathrm d x,
\end{align}
and the operators $\mathcal{D}$ and $\mathcal{E}$ are given by
\begin{align}\label{calDcalEdef}
 \mathcal{D} &=\begin{pmatrix} 
  2q D_x^{-1}q & D_x - 2q D_x^{-1} r 
 \\  D_x - 2r D_x^{-1} q  & 2r D_x^{-1} r \end{pmatrix},
 \qquad \mathcal{E} = \begin{pmatrix}  0 & -\I 
 \\ \I & 0  \end{pmatrix}.
\end{align}
The equalities in \eqref{bi-Ham_formulation} are easy to verify using that
\bew
\partial H_1 = \begin{pmatrix} -\I r_x \\ \I q_x \end{pmatrix}, \qquad
\partial H_2 = \begin{pmatrix} -r_{xx} + 2qr^2 \\ - q_{xx} + 2q^2 r \end{pmatrix}.
\eew
The operators $\mathcal{D}$ and $\mathcal{E}$ are Hamiltonian operators in the sense that the associated Poisson brackets
\begin{align}\label{PoissonbracketsDEdef}
\{F, G\}_\mathcal{D} = \int (\partial F)^\intercal \,  \mathcal{D} \, \partial G \, \mathrm d x, \qquad \{F, G\}_\mathcal{E} = \int (\partial F)^\intercal \, \mathcal{E} \, \partial G \, \mathrm d x,
\end{align}
are skew-symmetric and satisfy the Jacobi identity \cite[Definition 7.1]{O1986}. Furthermore, $\mathcal{D}$ and $\mathcal{E}$ form a Hamiltonian pair in the sense that any linear combination $a\mathcal{D} + b \mathcal{E}$, $a,b \in \R$, is also a Hamiltonian operator \cite[Definition 7.19]{O1986}.
We will review the proofs of these properties below.

The bi-Hamiltonian formulation \eqref{bi-Ham_formulation} together with the fact that $\mathcal{D}$ and $\mathcal{E}$ form a Hamiltonian pair implies that $\mathcal{D} \mathcal{E}^{-1}$ is a recursion operator for \eqref{NLSsystemt} and that a hierarchy of conserved quantities $H_n$ can be obtained (at least formally) by means of the recursive definition (see \cite[Theorem 7.27]{O1986})
\bew
\mathcal{D} \, \partial H_n = \mathcal{E} \, \partial H_{n+1}.
\eew
The first few conserved quantities $H_0, H_1, H_2, H_3$ for \eqref{NLSsystemt} are given by \eqref{H1H2def} and
\begin{align*}
&H_0 = \int qr \, \mathrm d x, \qquad
H_3 = \I \int\Big(-q_{xxx}r + \frac{3}{2}(q^2)_x r^2\Big)\, \mathrm d x.
\end{align*}
In differential form, the associated conservation laws are given by
\begin{align}  \label{differentialconservationlaws} 
\begin{aligned}
& H_0: \quad (qr)_t = \I (q_xr - qr_x)_x,
 	\\ 
& H_1: \quad \I ( q_xr)_t = (q^2r^2 + q_xr_x - q_{xx} r)_x,
 	\\ 
& H_2: \quad \big(-q_{xx} r +q^2 r^2\big)_t = \I (4qq_xr^2 + q_{xx}r_x -q_{xxx} r\big)_x,
	\\ 
& H_3: \quad \I \Big(-q_{xxx}r + \frac{3}{2}(q^2)_x r^2\Big)_t = \\ 
& \qquad \qquad \Big(2q^3r^3 - 5q_x^2 r^2 - 2qq_xrr_x + q^2 r_x^2 - 5qq_{xx}r^2  - 2q^2rr_{xx}  - q_{xxx}r_x + q_{xxxx}r\Big)_x.
\end{aligned}
\end{align}

Even for relatively simple brackets such as those defined in \eqref{PoissonbracketsDEdef}, the direct verification of the Jacobi identity is a very complicated computational task. In the next lemma, we give a proof of the well-known fact that $\mathcal{D}$ and $\mathcal{E}$ are Hamiltonian operator by appealing to the framework of \cite[Chapter 7]{O1986} which shortens the argument significantly. 

\begin{lemma} \label{lem_Ham_op_D_E_t-ev}
  $\mathcal{D}$ and $\mathcal{E}$ are Hamiltonian operators. 
\end{lemma}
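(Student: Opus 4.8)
The plan is to invoke the machinery of \cite[Chapter 7]{O1986} rather than verifying the Jacobi identity by a brute-force symbolic computation. The operator $\mathcal{E}$ is constant and skew-symmetric, hence trivially Hamiltonian: its functional trivector $\Psi_{\mathcal{E}}$ vanishes identically because all the terms in Olver's formula for $\Psi$ involve derivatives of the entries of the operator with respect to $q$ and $r$, and $\mathcal{E}$ has constant entries. So the real work is to show that $\mathcal{D}$ is Hamiltonian, i.e.\ that the associated bracket $\{\cdot,\cdot\}_{\mathcal{D}}$ is skew-symmetric and satisfies the Jacobi identity.

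First I would check skew-symmetry of $\mathcal{D}$. This amounts to verifying that $\mathcal{D}^* = -\mathcal{D}$ as an operator, using that $D_x^* = -D_x$ and $(D_x^{-1})^* = -D_x^{-1}$, that multiplication operators are self-adjoint, and that $(AB)^* = B^* A^*$. The diagonal entries $2q D_x^{-1} q$ and $2r D_x^{-1} r$ are manifestly skew (a self-adjoint factor times a skew factor times a self-adjoint factor), while the off-diagonal entries $D_x - 2q D_x^{-1} r$ and $D_x - 2r D_x^{-1} q$ are adjoints of each other up to sign, so the full $2\times 2$ operator matrix is skew-symmetric; this gives skew-symmetry of the bracket. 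Then, for the Jacobi identity, I would use Olver's criterion: $\{\cdot,\cdot\}_{\mathcal{D}}$ satisfies Jacobi if and only if the functional trivector $\mathrm{pr}\,\mathbf{v}_{\mathcal{D}\theta}(\Theta_{\mathcal{D}})$ vanishes, where $\Theta_{\mathcal{D}} = \tfrac12 \int \{\theta \wedge \mathcal{D}\theta\}$ is the bivector associated with $\mathcal{D}$ and $\theta = (\theta_1, \theta_2)$ is the formal basic uni-vector. Concretely, writing $\mathcal{D}\theta$ out and applying the prolongation of the evolutionary vector field with characteristic $\mathcal{D}\theta$ to $\Theta_{\mathcal{D}}$, one collects the terms; the key simplification, exactly as in the KdV and NLS examples of \cite[Chapter 7]{O1986}, is that all contributions organize into expressions that are antisymmetric in a way that forces them to vanish after integration by parts, the nonlocal terms $q D_x^{-1} r$ etc.\ being handled by the substitution that $D_x^{-1}$ acting on a total derivative returns the function itself.

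The main obstacle is the bookkeeping in the Jacobi computation for the nonlocal operator $\mathcal{D}$: unlike $\mathcal{E}$, the entries of $\mathcal{D}$ depend on $q,r$ and involve $D_x^{-1}$, so the prolongation picks up genuinely nonlocal terms of the form $\theta_i D_x^{-1}(\cdot) \wedge D_x^{-1}(\cdot)$, and one must verify these cancel. I expect to shorten this by noting that $\mathcal{D}$ is (up to the trivial constant piece $\mathcal{E}$) the second Hamiltonian operator of the AKNS/NLS hierarchy, whose Hamiltonian property is classical; alternatively, one can decompose $\mathcal{D} = \mathcal{D}_0 + \mathcal{D}_1$ where $\mathcal{D}_0 = \bigl(\begin{smallmatrix} 0 & D_x \\ D_x & 0\end{smallmatrix}\bigr)$ is constant-coefficient (hence Hamiltonian) and show the cross terms vanish and $\mathcal{D}_1$'s trivector vanishes on its own, so that in fact $\mathcal{D}_0$ and $\mathcal{D}_1 = \bigl(\begin{smallmatrix} 2qD_x^{-1}q & -2qD_x^{-1}r \\ -2rD_x^{-1}q & 2rD_x^{-1}r\end{smallmatrix}\bigr)$ form a Hamiltonian pair — this is the cleanest route and is the one I would write up, citing \cite[Corollary 7.21, Example 7.20]{O1986} for the relevant reductions. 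Finally, combining: $\mathcal{D}$ Hamiltonian and $\mathcal{E}$ Hamiltonian completes the proof of the lemma.
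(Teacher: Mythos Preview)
Your proposal is correct and overlaps with the paper's approach, but your preferred route differs in organization. Both you and the paper handle $\mathcal{E}$ identically (constant coefficients, Olver's Corollary~7.5) and both check skew-symmetry of $\mathcal{D}$ directly. For the Jacobi identity of $\mathcal{D}$, the paper computes the full trivector $\Psi_{\mathcal{D}} = \tfrac12\int\{\theta\wedge \pr\mathbf{v}_{\mathcal{D}\theta}(\mathcal{D})\wedge\theta\}\,\mathrm{d}x$ in one pass: it writes out $\pr\mathbf{v}_{\mathcal{D}\theta}(\mathcal{D})$, pairs up the eight resulting terms via integration by parts into four, substitutes the explicit form of $(\mathcal{D}\theta)^j$, kills the nonlocal contributions using $(D_x^{-1}A)\wedge(D_x^{-1}A)=0$, and then cancels the two surviving $D_x\theta^j$ terms by one more integration by parts. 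Your decomposition $\mathcal{D}=\mathcal{D}_0+\mathcal{D}_1$ would reproduce exactly these same cancellations, just sorted into three bins: $\Psi_{\mathcal{D}_1}=0$ is precisely the step where the nonlocal terms vanish by the wedge-square identity; the pair condition $\pr\mathbf{v}_{\mathcal{D}_0\theta}(\Theta_{\mathcal{D}_1})+\pr\mathbf{v}_{\mathcal{D}_1\theta}(\Theta_{\mathcal{D}_0})=0$ reduces (since $\mathcal{D}_0$ is constant-coefficient) to the step where the $D_x\theta^j$ terms cancel; and $\Psi_{\mathcal{D}_0}=0$ is trivial. So your route is a valid reorganization rather than a genuine shortcut---the same integrations by parts and the same wedge identities appear, and the total work is comparable. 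The paper's single-pass computation is slightly more economical in that it avoids stating and invoking the pair criterion as a separate lemma, whereas your version makes the structure (constant part plus nonlocal part) more transparent and connects cleanly to the subsequent Lemma on $\mathcal{D},\mathcal{E}$ forming a Hamiltonian pair.
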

\begin{proof}
It is easy to verify that $\mathcal{D}$ and $\mathcal{E}$ are skew-symmetric with respect to the bracket
 \begin{equation}\label{bracket}
   \left \langle \begin{pmatrix} f_1 \\ f_2 \end{pmatrix},  \begin{pmatrix} g_1 \\ g_2 \end{pmatrix} \right\rangle = \int (f_1g_1 + f_2 g_2) \, \mathrm d x.
\end{equation}
Since $\mathcal{E}$ has constant coefficients, $\mathcal{E}$ is a Hamiltonian operator by \cite[Corollary 7.5]{O1986}.
It remains to show that the bracket defined by $\mathcal{D}$ satisfies the Jacobi identity.
According to \cite[Proposition 7.7]{O1986}, it is enough to show that the functional tri-vector $\Psi_{\mathcal{D}}$ defined by
\bew
\Psi_{\mathcal{D}} = \frac{1}{2} \int \big\{\theta \wedge \pr \mathbf{v}_{\mathcal{D}\theta}(\mathcal{D}) \wedge \theta\big\} \, \mathrm d x
= \frac{1}{2} \int \sum_{\alpha, \beta = 1}^2 
\big\{\theta^\alpha \wedge (\pr \mathbf{v}_{\mathcal{D}\theta}(\mathcal{D}))_{\alpha\beta} \wedge \theta^\beta \big\} \, \mathrm d x
\eew
vanishes, where we refer to \cite{O1986} for the definitions of the wedge product $\wedge$, the functional vector $\theta = (\theta^1, \theta^2)$, the vector  field $\mathbf{v}_{\mathcal{D}\theta}$, and its prolongation $\pr \mathbf{v}_{\mathcal{D}\theta}$.
We have (see \cite[p. 442]{O1986})
\begin{align} \nonumber 
& \pr \mathbf{v}_{\mathcal{D}\theta}(q) = (\mathcal{D}\theta)^1,\qquad
\pr \mathbf{v}_{\mathcal{D}\theta}(r) = (\mathcal{D}\theta)^2,
	\\ \label{calDtheta}
& \mathcal{D}\theta = \begin{pmatrix}(\mathcal{D}\theta)^1 \\ (\mathcal{D}\theta)^2 \end{pmatrix} =  \begin{pmatrix}
  2q (D_x^{-1}(q\theta^1)) + (D_x\theta^2) - 2q (D_x^{-1}(r \theta^2)) 
 \\  
 (D_x\theta^1) - 2r (D_x^{-1}(q\theta^1))  + 2r (D_x^{-1}(r\theta^2)) 
 \end{pmatrix},
\end{align}
and 
\begin{align*}
\pr \mathbf{v}_{\mathcal{D}\theta}(\mathcal{D}) 
=2 \begin{pmatrix} 
  (\mathcal{D}\theta)^1 D_x^{-1}q + q D_x^{-1}(\mathcal{D}\theta)^1 
  &  -(\mathcal{D}\theta)^1 D_x^{-1} r -q D_x^{-1} (\mathcal{D}\theta)^2
 \\  - (\mathcal{D}\theta)^2 D_x^{-1} q - r D_x^{-1} (\mathcal{D}\theta)^1  & (\mathcal{D}\theta)^2 D_x^{-1} r + r D_x^{-1} (\mathcal{D}\theta)^2 \end{pmatrix}.
\end{align*}
Hence
\begin{align}\nonumber
\Psi_{\mathcal{D}} 
= &\; \int \bigg\{ 
\theta^1 \wedge (\mathcal{D}\theta)^1 \wedge D_x^{-1}(q \theta^1) 
+ \theta^1 \wedge q D_x^{-1}((\mathcal{D}\theta)^1  \wedge \theta^1)
	\\\nonumber
& -\theta^1 \wedge  (\mathcal{D}\theta)^1 \wedge D_x^{-1}(r \theta^2) 
 - \theta^1 \wedge q D_x^{-1} ((\mathcal{D}\theta)^2 \wedge \theta^2)
 	\\\nonumber
&  - \theta^2 \wedge (\mathcal{D}\theta)^2 \wedge D_x^{-1}(q \theta^1)
 - \theta^2 \wedge r D_x^{-1} ((\mathcal{D}\theta)^1 \wedge \theta^1)
 	\\\label{PsicalDexpression}
& + \theta^2 \wedge (\mathcal{D}\theta)^2 \wedge D_x^{-1}(r \theta^2) 
 + \theta^2 \wedge r D_x^{-1} ((\mathcal{D}\theta)^2 \wedge \theta^2)
 \bigg\}\, \mathrm d x.
\end{align}
An integration by parts shows that the first two terms on the right-hand side of \eqref{PsicalDexpression} are equal:
\begin{align*}
\int \theta^1 \wedge q D_x^{-1}((\mathcal{D}\theta)^1  \wedge \theta^1) \, \mathrm d x
& = - \int \big(D_x^{-1}(q \theta^1)\big) \wedge  (\mathcal{D}\theta)^1  \wedge \theta^1 \, \mathrm d x
	\\
& = \int \theta^1 \wedge  (\mathcal{D}\theta)^1  \wedge D_x^{-1}(q \theta^1)  \, \mathrm d x.
\end{align*}
In the same way, the third and sixth terms are equal, the fourth and fifth are equal, and the last two terms are equal. Thus we find
\begin{align}\label{PsicalD}
\Psi_{\mathcal{D}} 
= &\; \int \bigg\{ 
\theta^1 \wedge (\mathcal{D}\theta)^1 \wedge D_x^{-1}(q \theta^1 - r \theta^2) 
   - \theta^2 \wedge (\mathcal{D}\theta)^2 \wedge D_x^{-1}(q \theta^1 - r \theta^2)
 \bigg\}\, \mathrm d x.
\end{align}
Substituting in the expressions \eqref{calDtheta} for $(\mathcal{D}\theta)^1$ and $(\mathcal{D}\theta)^2$, this becomes
\begin{align*}
\Psi_{\mathcal{D}} 
= &\; \int \bigg\{ 
\theta^1 \wedge \big[2q (D_x^{-1}(q\theta^1)) + (D_x\theta^2) - 2q (D_x^{-1}(r \theta^2)) \big] \wedge D_x^{-1}(q \theta^1 - r \theta^2) 
	\\
&   - \theta^2 \wedge \big[ (D_x\theta^1) - 2r (D_x^{-1}(q\theta^1))  + 2r (D_x^{-1}(r\theta^2)) \big] \wedge D_x^{-1}(q \theta^1 - r \theta^2)
 \bigg\}\, \mathrm d x.
\end{align*}
Using that $(D_x^{-1}(q \theta^j)) \wedge (D_x^{-1}(q \theta^j)) = 0$ and $(D_x^{-1}(r \theta^j)) \wedge (D_x^{-1}(r \theta^j)) = 0$, a simplification gives
\begin{align*}
\Psi_{\mathcal{D}} 
= &\; \int \bigg\{ 
\theta^1 \wedge (D_x\theta^2) \wedge D_x^{-1}(q \theta^1 - r \theta^2) 
  - \theta^2 \wedge (D_x\theta^1) \wedge D_x^{-1}(q \theta^1 - r \theta^2) \bigg\}\, \mathrm d x.
\end{align*}
Integrating by parts in the first term on the right-hand side, we arrive at
\begin{align*}
\Psi_{\mathcal{D}} 
= &\; \int \bigg\{ 
-(D_x\theta^1) \wedge \theta^2 \wedge D_x^{-1}(q \theta^1 - r \theta^2) 
   - \theta^2 \wedge (D_x\theta^1) \wedge D_x^{-1}(q \theta^1 - r \theta^2)
 \bigg\}\, \mathrm d x = 0.
\end{align*}
This shows that $\mathcal{D}$ is Hamiltonian and completes the proof. 
\end{proof}

\begin{lemma}\label{DEpairlemma}
  $\mathcal{D}$ and $\mathcal{E}$ form a Hamiltonian pair. 
\end{lemma}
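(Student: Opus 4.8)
The goal is to show that $\mathcal{D}$ and $\mathcal{E}$ form a Hamiltonian pair, i.e.\ that $a\mathcal{D}+b\mathcal{E}$ is a Hamiltonian operator for all $a,b\in\R$. Each such operator is manifestly skew-symmetric with respect to the bracket \eqref{bracket} (being an $\R$-linear combination of the skew-symmetric operators $\mathcal{D}$ and $\mathcal{E}$), so by \cite[Proposition 7.7]{O1986} it suffices to verify that the associated functional tri-vector $\Psi_{a\mathcal{D}+b\mathcal{E}}$ vanishes identically. The plan is to exploit the bilinearity of the tri-vector construction in the Hamiltonian operator: writing $\mathcal{H}=a\mathcal{D}+b\mathcal{E}$, one has
\bew
\Psi_{\mathcal{H}} = a^2 \Psi_{\mathcal{D}} + b^2 \Psi_{\mathcal{E}} + ab\, \Psi_{\mathcal{D},\mathcal{E}},
\eew
where $\Psi_{\mathcal{D},\mathcal{E}}$ denotes the symmetric ``mixed'' tri-vector obtained by polarization. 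Concretely, $\Psi_{\mathcal{D},\mathcal{E}}$ is the sum of the two cross-terms in which $\theta$ is replaced by $\mathcal{D}\theta$ in $\mathcal{E}$ and by $\mathcal{E}\theta$ in $\mathcal{D}$:
\bew
\Psi_{\mathcal{D},\mathcal{E}} = \frac12 \int \big\{\theta \wedge \pr\mathbf{v}_{\mathcal{D}\theta}(\mathcal{E}) \wedge \theta\big\}\,\mathrm dx
 + \frac12 \int \big\{\theta \wedge \pr\mathbf{v}_{\mathcal{E}\theta}(\mathcal{D}) \wedge \theta\big\}\,\mathrm dx.
\eew

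First I would record that $\Psi_{\mathcal{D}}=0$ by Lemma \ref{lem_Ham_op_D_E_t-ev}, and that $\Psi_{\mathcal{E}}=0$ because $\mathcal{E}$ has constant coefficients, so $\pr\mathbf{v}_{\mathcal{E}\theta}(\mathcal{E})=0$ (indeed $\mathcal{E}$ is Hamiltonian by \cite[Corollary 7.5]{O1986}, which is the same fact). Since the only genuinely new object is $\Psi_{\mathcal{D},\mathcal{E}}$, the entire content reduces to showing that this mixed tri-vector vanishes. In the first of its two pieces, $\mathcal{E}$ has constant coefficients, so $\pr\mathbf{v}_{\mathcal{D}\theta}(\mathcal{E})=0$ and that term drops out immediately. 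For the second piece, I would compute $\pr\mathbf{v}_{\mathcal{E}\theta}(\mathcal{D})$ exactly as in the proof of Lemma \ref{lem_Ham_op_D_E_t-ev}, replacing $\mathcal{D}\theta$ by
\bew
\mathcal{E}\theta = \begin{pmatrix} -\I\theta^2 \\ \I\theta^1 \end{pmatrix}
\eew
throughout the formula for $\pr\mathbf{v}_{\mathcal{D}\theta}(\mathcal{D})$ on \cite[p.~442]{O1986}. This yields, after the same integration-by-parts pairing of terms that produced \eqref{PsicalD}, the expression
\bew
\int \big\{ \theta^1 \wedge (\mathcal{E}\theta)^1 \wedge D_x^{-1}(q\theta^1 - r\theta^2) - \theta^2 \wedge (\mathcal{E}\theta)^2 \wedge D_x^{-1}(q\theta^1 - r\theta^2) \big\}\,\mathrm dx.
\eew
Substituting $(\mathcal{E}\theta)^1=-\I\theta^2$ and $(\mathcal{E}\theta)^2=\I\theta^1$ and using $\theta^i\wedge\theta^i=0$, the two terms combine to $-\I\int(\theta^1\wedge\theta^2 + \theta^2\wedge\theta^1)\wedge D_x^{-1}(q\theta^1-r\theta^2)\,\mathrm dx$; since $\theta^1\wedge\theta^2 = -\theta^2\wedge\theta^1$ for these anticommuting functional one-forms, the integrand is identically zero.

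\textbf{Main obstacle.} There is no deep difficulty here; the one point requiring care is the bookkeeping for the polarization identity—making sure the cross-term $\Psi_{\mathcal{D},\mathcal{E}}$ is correctly identified as a sum of \emph{two} pieces (one with $\mathcal{D}\theta$ inserted into $\mathcal{E}$, one with $\mathcal{E}\theta$ inserted into $\mathcal{D}$), both of which must be shown to vanish separately. The first vanishes for the trivial reason that $\mathcal{E}$ is constant-coefficient; the second vanishes after the same sequence of integrations by parts used for $\mathcal{D}$ alone, because the resulting integrand is manifestly antisymmetric in the wedge of the two relevant one-forms. Thus the computation is essentially a one-line consequence of the argument already given in Lemma \ref{lem_Ham_op_D_E_t-ev}, and I would present it as such rather than redoing the integrations by parts in full.

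\begin{proof}
Since $\mathcal{D}$ and $\mathcal{E}$ are skew-symmetric with respect to the bracket \eqref{bracket}, so is every linear combination $a\mathcal{D}+b\mathcal{E}$, $a,b\in\R$. By \cite[Proposition 7.7]{O1986}, it therefore suffices to show that the functional tri-vector $\Psi_{a\mathcal{D}+b\mathcal{E}}$ vanishes for all $a,b$. Writing $\mathcal{H}=a\mathcal{D}+b\mathcal{E}$, bilinearity of the map $\mathcal{H}\mapsto \int\{\theta\wedge\pr\mathbf{v}_{\mathcal{H}\theta}(\mathcal{H})\wedge\theta\}\,\mathrm dx$ in its two occurrences of $\mathcal{H}$ gives
\bew
\Psi_{\mathcal{H}} = a^2\Psi_{\mathcal{D}} + b^2\Psi_{\mathcal{E}} + ab\,\Psi_{\mathcal{D},\mathcal{E}},
\eew
where
\bew
\Psi_{\mathcal{D},\mathcal{E}} = \frac12\int\big\{\theta\wedge\pr\mathbf{v}_{\mathcal{D}\theta}(\mathcal{E})\wedge\theta\big\}\,\mathrm dx
+ \frac12\int\big\{\theta\wedge\pr\mathbf{v}_{\mathcal{E}\theta}(\mathcal{D})\wedge\theta\big\}\,\mathrm dx.
\eew
By Lemma \ref{lem_Ham_op_D_E_t-ev}, $\Psi_{\mathcal{D}}=0$; and $\Psi_{\mathcal{E}}=0$ because $\mathcal{E}$ has constant coefficients, so $\pr\mathbf{v}_{\mathcal{E}\theta}(\mathcal{E})=0$. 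It remains to prove $\Psi_{\mathcal{D},\mathcal{E}}=0$.

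In the first integral defining $\Psi_{\mathcal{D},\mathcal{E}}$, the operator $\mathcal{E}$ has constant coefficients, hence $\pr\mathbf{v}_{\mathcal{D}\theta}(\mathcal{E})=0$ and this integral vanishes. For the second integral we repeat verbatim the computation in the proof of Lemma \ref{lem_Ham_op_D_E_t-ev} with $\mathcal{D}\theta$ replaced everywhere by
\bew
\mathcal{E}\theta = \begin{pmatrix} -\I\theta^2 \\ \I\theta^1 \end{pmatrix}.
\eew
Since that computation used only the algebraic structure of $\pr\mathbf{v}_{(\cdot)}(\mathcal{D})$ and integration by parts, the same pairing of terms that led from \eqref{PsicalDexpression} to \eqref{PsicalD} now yields
\bew
\int\big\{\theta^1\wedge(\mathcal{E}\theta)^1\wedge D_x^{-1}(q\theta^1-r\theta^2)
- \theta^2\wedge(\mathcal{E}\theta)^2\wedge D_x^{-1}(q\theta^1-r\theta^2)\big\}\,\mathrm dx.
\eew
Substituting $(\mathcal{E}\theta)^1=-\I\theta^2$ and $(\mathcal{E}\theta)^2=\I\theta^1$, and using $\theta^1\wedge\theta^1=\theta^2\wedge\theta^2=0$ together with $\theta^1\wedge\theta^2=-\theta^2\wedge\theta^1$, the integrand becomes
\bew
-\I\big(\theta^1\wedge\theta^2 + \theta^2\wedge\theta^1\big)\wedge D_x^{-1}(q\theta^1-r\theta^2) = 0.
\eew
Hence $\Psi_{\mathcal{D},\mathcal{E}}=0$, so $\Psi_{a\mathcal{D}+b\mathcal{E}}=0$ for all $a,b\in\R$, and $a\mathcal{D}+b\mathcal{E}$ is a Hamiltonian operator. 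This shows that $\mathcal{D}$ and $\mathcal{E}$ form a Hamiltonian pair.
\end{proof}
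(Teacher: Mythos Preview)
Your proof is correct and follows essentially the same approach as the paper: both reduce the question to the vanishing of the mixed term $\pr\mathbf{v}_{\mathcal{D}\theta}(\Theta_{\mathcal{E}}) + \pr\mathbf{v}_{\mathcal{E}\theta}(\Theta_{\mathcal{D}})$ (the paper cites \cite[Corollary~7.21]{O1986} for this reduction, whereas you derive it by hand via polarization of $\Psi_{a\mathcal{D}+b\mathcal{E}}$), observe that the first piece vanishes because $\mathcal{E}$ has constant coefficients, and dispatch the second piece by the same substitution $(\mathcal{E}\theta)^1=-\I\theta^2$, $(\mathcal{E}\theta)^2=\I\theta^1$ into the expression \eqref{PsicalD}.
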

\begin{proof}
By \cite[Corollary 7.21]{O1986}, it is enough to verify that 
\begin{align}\label{prDprE}
\pr \mathbf{v}_{\mathcal{D}\theta}(\Theta_{\mathcal{E}}) + \pr \mathbf{v}_{\mathcal{E}\theta}(\Theta_{\mathcal{D}}) = 0,
\end{align}
where
\bew
\Theta_{\mathcal{D}} = \frac{1}{2} \int \big\{\theta \wedge \mathcal{D} \theta\big\} \, \mathrm d x,
\qquad
\Theta_{\mathcal{E}} = \frac{1}{2} \int \big\{\theta \wedge \mathcal{E} \theta\big\} \, \mathrm d x,
\eew
are the functional bi-vectors representing the associated Poisson brackets.
Since $\mathcal{E}$ has constant coefficients, we have $\pr \mathbf{v}_{\mathcal{D}\theta}(\Theta_{\mathcal{E}}) = 0$.
Moreover, the same computations that led to the expression \eqref{PsicalD} for $\Psi_{\mathcal{D}} 
= -\pr \mathbf{v}_{\mathcal{D}\theta}(\Theta_{\mathcal{D}})$ (with $(\mathcal{D}\theta)^j$ replaced with $(\mathcal{E}\theta)^j$) imply that
\begin{align*}
\pr \mathbf{v}_{\mathcal{E}\theta}(\mathcal{D})
= &\; - \int \bigg\{ 
\theta^1 \wedge (\mathcal{E}\theta)^1 \wedge D_x^{-1}(q \theta^1 - r \theta^2) 
   - \theta^2 \wedge (\mathcal{E}\theta)^2 \wedge D_x^{-1}(q \theta^1 - r \theta^2)
 \bigg\}\, \mathrm d x.
\end{align*}
Since $(\mathcal{E}\theta)^1 =  -\I \theta^2$ and $(\mathcal{E}\theta)^2 = \I \theta^1$, this gives
\begin{align*}
\pr \mathbf{v}_{\mathcal{E}\theta}(\mathcal{D})
= &\; \I \int \bigg\{ 
\theta^1 \wedge \theta^2 \wedge D_x^{-1}(q \theta^1 - r \theta^2) 
   + \theta^2 \wedge \theta^1 \wedge D_x^{-1}(q \theta^1 - r \theta^2)
 \bigg\}\, \mathrm d x  = 0,
\end{align*}
which completes the proof of the lemma.
\end{proof}

\subsection{The NLS system as an evolution in $x$}

The system \eqref{NLSsystemx} expresses the NLS system \eqref{NLSsystem} as an evolution equation with respect to $x$. We first present a Hamiltonian structure for the system \eqref{NLSsystemx}.

\subsubsection{A Hamiltonian structure for \eqref{NLSsystemx}}
The system \eqref{NLSsystemx} can be written as
\begin{align}\label{NLSsystemxfirsthamiltonianstructure}
\begin{pmatrix}
q \\
r\\
p\\
s
\end{pmatrix}_x = \tilde{\mathcal{D}} \, \partial \tilde{H}_1,
\end{align}
where the Hamiltonian functional $\tilde{H}_1[q,r,p,s]$ is defined by
\begin{align}\label{H1tildedef}
& \tilde{H}_1 = \int (ps + \I q_t r - q^2 r^2) \, \mathrm d t
\end{align}
and the operator $\tilde{\mathcal{D}}$ is defined by
\begin{align*}
\tilde{\mathcal{D}} =
\begin{pmatrix}  0 & 0 & 0 & 1  \\
 0 & 0 &1&0  \\
 0 &-1&0& 0 \\
  -1 &0& 0 &0
 \end{pmatrix}.
\end{align*}
The next lemma shows that \eqref{NLSsystemxfirsthamiltonianstructure} is a Hamiltonian formulation of \eqref{NLSsystemx}.

\begin{lemma}
The operator $\tilde{\mathcal{D}}$ is Hamiltonian. 
\end{lemma}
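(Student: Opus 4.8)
The plan is to verify that $\tilde{\mathcal{D}}$ is a Hamiltonian operator by checking the two defining properties in the sense of \cite[Definition 7.1]{O1986}: skew-symmetry with respect to the relevant bracket, and the Jacobi identity for the associated Poisson bracket. Since $\tilde{\mathcal D}$ is a constant-coefficient operator (indeed, it is a constant skew-symmetric $4\times4$ matrix, with no differential or integral operators as entries), this should be essentially routine — the key point is that \cite[Corollary 7.5]{O1986} asserts precisely that any skew-adjoint constant-coefficient operator is automatically Hamiltonian. So the proof reduces to two short verifications.

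First I would check skew-symmetry. With the bracket $\langle f, g\rangle = \int (f_1 g_1 + f_2 g_2 + f_3 g_3 + f_4 g_4)\,\mathrm{d}t$ on $4$-vectors, one computes $\langle f, \tilde{\mathcal D} g\rangle = \int (f_1 g_4 + f_2 g_3 - f_3 g_2 - f_4 g_1)\,\mathrm{d}t$. Swapping the roles of $f$ and $g$ gives $\langle g, \tilde{\mathcal D} f\rangle = \int(g_1 f_4 + g_2 f_3 - g_3 f_2 - g_4 f_1)\,\mathrm{d}t = -\langle f, \tilde{\mathcal D} g\rangle$, so $\tilde{\mathcal D}$ is skew-symmetric. (Equivalently, the matrix $\tilde{\mathcal D}$ satisfies $\tilde{\mathcal D}^\intercal = -\tilde{\mathcal D}$, which one reads off directly.) Since the entries are constants, there are no boundary terms to worry about and no need to integrate by parts.

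Second I would invoke \cite[Corollary 7.5]{O1986}: a constant-coefficient skew-adjoint operator always satisfies the Jacobi identity, hence is Hamiltonian. Alternatively, if one prefers a self-contained argument, one can check directly that the functional tri-vector $\Psi_{\tilde{\mathcal D}} = \tfrac12 \int \{\theta \wedge \pr\mathbf{v}_{\tilde{\mathcal D}\theta}(\tilde{\mathcal D})\wedge\theta\}\,\mathrm{d}t$ vanishes, because $\pr\mathbf{v}_{\tilde{\mathcal D}\theta}(\tilde{\mathcal D}) = 0$: the prolongation of any evolutionary vector field annihilates an operator whose coefficients do not depend on $q,r,p,s$ or their derivatives. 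Either route closes the argument in a line or two.

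I do not expect any real obstacle here; the statement is genuinely a ``constant-coefficient operators are Hamiltonian'' instance, and the only thing to get right is the bookkeeping of which entries of $\tilde{\mathcal D}$ are $+1$ and which are $-1$ when writing out the skew-symmetry check. The contrast with Lemma~\ref{lem_Ham_op_D_E_t-ev}, where $\mathcal D$ has nonconstant coefficients and the vanishing of $\Psi_{\mathcal D}$ required a genuine computation, is worth noting but does not affect the present proof. I would write the proof in two sentences: one citing skew-symmetry (with the explicit pairing computation displayed in an \texttt{align*} if desired), and one citing \cite[Corollary 7.5]{O1986} for the Jacobi identity.
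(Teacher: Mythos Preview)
Your proposal is correct and follows exactly the paper's approach: the paper also notes that skew-symmetry of the bracket is clear and then invokes \cite[Corollary 7.5]{O1986} for the Jacobi identity since $\tilde{\mathcal{D}}$ has constant coefficients. Your write-up is in fact more detailed than the paper's two-sentence proof, but the strategy is identical.
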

\begin{proof}
It is clear that the bracket $\{F, G\}_{\tilde{\mathcal{D}}}$ defined by
\begin{align*}
\{F, G\}_{\tilde{\mathcal{D}}} &= \int  (\partial F)^\intercal \, \tilde{\mathcal{D}} \, \partial G \, \mathrm d t
\end{align*}
is skew-symmetric. 
The Jacobi identity is satisfied because $\tilde{\mathcal{D}}$ has constant coefficients (see \cite[Corollary 7.5]{O1986}).
\end{proof}

\subsubsection{Conservation laws}
We conclude from the conservation laws in \eqref{differentialconservationlaws} that if $(q,r,p,s)$ evolves in $x$ according to the NLS system \eqref{NLSsystemx}, then the functionals
\begin{align*}\nonumber
& \tilde{H}_0 \coloneqq \I \int  (q_xr - qr_x) \, \mathrm d t, \qquad \tilde{H}_1 \coloneqq \int (q^2r^2 + q_xr_x - q_{xx} r) \, \mathrm d t,
	\\
& \tilde{H}_2 \coloneqq \I \int  (4qq_xr^2 + q_{xx}r_x -q_{xxx} r) \, \mathrm d t, \qquad \text{etc.}
\end{align*}
are conserved under the flow, i.e., 
\bew
\frac{\mathrm d\tilde{H}_n}{\mathrm d x} = 0.
\eew
Using \eqref{def_ps} and \eqref{NLSsystemx} to eliminate the $x$-derivatives from the above expressions, we find that, on solutions of \eqref{NLSsystemx}, 
\begin{align}\label{tildeHjdef}
& \tilde{H}_0 = \I \int  (pr - qs) \, \mathrm d t, \quad  \tilde{H}_1 = \int (ps + \I q_tr - q^2 r^2) \, \mathrm d t, \quad \tilde{H}_2 = \int (q_t s - p_t r) \, \mathrm d t, \quad \text{etc.}
\end{align}
In this way, we obtain an infinite number of conserved quantities for \eqref{NLSsystemx}.
In differential form, the first few conservation laws are given by
\begin{align*}
& \tilde{H}_0: \quad  \I (pr - qs)_x = (qr)_t,
 	\\
& \tilde{H}_1: \quad  (ps + \I q_tr - q^2 r^2)_x = \I (p r)_t,
	\\ 
& \tilde{H}_2: \quad (q_t s - p_t r)_x = (\I q_t r - q^2 r^2)_t.
\end{align*}
The gradients of the first few functionals $\tilde{H}_j$ are given by
\begin{align*}
\partial \tilde{H}_0 = 
\begin{pmatrix}
- \I s\\
\I p \\
\I r \\
-\I q
\end{pmatrix}, \qquad
\partial \tilde{H}_1 = 
\begin{pmatrix}
-\I r_t - 2 r^2 q\\
\I q_t - 2 q^2 r\\
s\\
p
\end{pmatrix}, \qquad
\partial \tilde{H}_2 = 
\begin{pmatrix}
-s_t \\
-p_t\\
r_t\\
q_t
\end{pmatrix}.
\end{align*}

\subsubsection{A candidate for a second Hamiltonian structure of \eqref{NLSsystemx}}
Inspired by the bi-Hamiltonian formulation \eqref{bi-Ham_formulation} of \eqref{NLSsystemt}, it is natural to seek a second Hamiltonian formulation of \eqref{NLSsystemx} of the form
\begin{align}\label{NLSsystemxsecondhamiltonianstructure}
\begin{pmatrix}
q \\
r\\
p\\
s
\end{pmatrix}_x = \tilde{\mathcal{E}} \, \partial \tilde{H}_2,
\end{align}
where $\tilde{H}_2$ is the conserved functional defined by \eqref{tildeHjdef} and $\tilde{\mathcal{E}}$ is an appropriate Hamiltonian operator. 
It is easy to check that \eqref{NLSsystemxsecondhamiltonianstructure} is satisfied for any choice of the constant $\alpha \in \C$ provided that $\tilde{\mathcal{E}} = \tilde{\mathcal{E}}_\alpha$ is defined by
\begin{align}\label{tildecalEdef}
\tilde{\mathcal{E}}_\alpha = 
\begin{pmatrix}  0 & - D_t^{-1} & 0 & 0  \\
 - D_t^{-1} & 0 &0&0  \\
 0 & 0 & 2\alpha qD_t^{-1} q & -\I + 4 (1-\alpha) r D_t^{-1}q + 2\alpha q D_t^{-1} r \\
  0 &0& \I + 4(1-\alpha)q D_t^{-1}r + 2\alpha r D_t^{-1} q & 2\alpha r D_t^{-1} r
 \end{pmatrix}.
\end{align}
This suggests that we seek a second Hamiltonian operator for \eqref{NLSsystemx} of the form \eqref{tildecalEdef}. The bracket
\begin{align*}
\{F, G\}_{\tilde{\mathcal{E}}_\alpha} &= \int  (\partial F)^\intercal \, \tilde{\mathcal{E}}_\alpha \, \partial G \, \mathrm d t,
\end{align*}
is skew-symmetric for each $\alpha \in \C$. However, the next lemma shows that $\tilde{\mathcal{E}}_\alpha$ is not Hamiltonian for any choice of $\alpha$ because the bracket $\{\cdot, \cdot\}_{\tilde{\mathcal{E}}_\alpha}$ fails to satisfy the Jacobi identity.
\begin{lemma} \label{E_not_Ham}
The operator $\tilde{\mathcal{E}} = \tilde{\mathcal{E}}_\alpha$ defined in \eqref{tildecalEdef} is not Hamiltonian for any $\alpha \in \C$. 
\end{lemma}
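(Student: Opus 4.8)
The plan is to show that the Poisson bracket associated with $\tilde{\mathcal{E}}_\alpha$ violates the Jacobi identity, which by \cite[Proposition 7.7]{O1986} is equivalent to the non-vanishing of the functional tri-vector $\Psi_{\tilde{\mathcal{E}}_\alpha} = \tfrac{1}{2}\int\{\theta\wedge\pr\mathbf{v}_{\tilde{\mathcal{E}}_\alpha\theta}(\tilde{\mathcal{E}}_\alpha)\wedge\theta\}\,\mathrm{d}t$. First I would record the four components of $\tilde{\mathcal{E}}_\alpha\theta$, exactly as was done for $\mathcal{D}\theta$ in \eqref{calDtheta}: only the third and fourth components carry the nonconstant $q,r$-dependent pieces, while the first two are $-D_t^{-1}\theta^2$ and $-D_t^{-1}\theta^1$. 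Since the Jacobi obstruction comes entirely from the nonconstant part of the operator, I would isolate the $(3,3)$, $(3,4)$, $(4,3)$, $(4,4)$ block and compute its prolongation $\pr\mathbf{v}_{\tilde{\mathcal{E}}_\alpha\theta}$, following the same pattern as the computation of $\pr\mathbf{v}_{\mathcal{D}\theta}(\mathcal{D})$ in the proof of Lemma \ref{lem_Ham_op_D_E_t-ev}. This produces a sum of wedge terms of the schematic form $\theta^a\wedge(\tilde{\mathcal{E}}_\alpha\theta)^b\wedge D_t^{-1}(q\theta^c)$ and similar, with $a,b\in\{3,4\}$ and $c\in\{1,2\}$; here the asymmetry between the $(1,2)$-sector indices and the $(3,4)$-sector indices (which is absent in the $\mathcal{D}$ case, where all indices range over the same two values) is what prevents the cancellations that made $\Psi_{\mathcal{D}}$ vanish.

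The key computational step is then to substitute the explicit expressions for $(\tilde{\mathcal{E}}_\alpha\theta)^3$ and $(\tilde{\mathcal{E}}_\alpha\theta)^4$, integrate by parts to move all $D_t^{-1}$'s onto a common factor, and use the antisymmetry of the wedge product to collapse terms of the form $(D_t^{-1}(q\theta^j))\wedge(D_t^{-1}(q\theta^j))=0$. After simplification I expect $\Psi_{\tilde{\mathcal{E}}_\alpha}$ to reduce to a nonzero expression that is a polynomial in $\alpha$ with functional-tri-vector coefficients built from $q,r$ and $\theta$; the coefficient of the highest or lowest power of $\alpha$ should manifestly be a nonzero functional tri-vector (for instance one involving $q^2$ or $r^2$ times a genuinely nondegenerate wedge $\theta^i\wedge\theta^j\wedge D_t^{-1}(\cdots)$ that does not integrate to zero). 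Because a nonzero polynomial in $\alpha$ cannot vanish identically, this establishes $\Psi_{\tilde{\mathcal{E}}_\alpha}\neq 0$ for every $\alpha\in\C$. Alternatively, and perhaps more transparently for the write-up, one may simply exhibit specific test functions $q,r$ (e.g.\ monomials or exponentials in $t$) and specific $\theta^1,\theta^2$ and evaluate the three cyclic terms of the Jacobi identity $\{\{F,G\}_{\tilde{\mathcal{E}}_\alpha},H\}_{\tilde{\mathcal{E}}_\alpha}+\text{cyclic}$ directly on three explicit functionals, showing the sum is nonzero.

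The main obstacle I anticipate is purely bookkeeping: the operator $\tilde{\mathcal{E}}_\alpha$ has four rows, the nonlocal terms $D_t^{-1}$ proliferate under the prolongation, and one must be careful that the integration-by-parts manipulations with the wedge product carry the correct signs (recall $\theta^\alpha\wedge\theta^\beta=-\theta^\beta\wedge\theta^\alpha$ and that $D_t$ acting on a wedge of functional one-forms obeys a graded Leibniz rule). A secondary subtlety is to make sure the chosen witnessing term genuinely survives: some candidate nonvanishing terms may in fact cancel against partners coming from the $(4,3)$ entry, so I would first do the full cancellation analysis (mirroring the $\mathcal{D}$ computation) and only then read off the residual obstruction. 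Once the residual expression is in hand, concluding is immediate, since a Hamiltonian operator must have vanishing $\Psi$, so $\tilde{\mathcal{E}}_\alpha$ fails to be Hamiltonian for all $\alpha$.
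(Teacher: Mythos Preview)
Your overall strategy matches the paper's: compute the Jacobi tri-vector $\Psi_{\tilde{\mathcal{E}}_\alpha}$ via \cite[Proposition~7.7]{O1986} and exhibit a nonzero evaluation. However, there is a concrete indexing error that would derail the computation if carried out as written.

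You claim the wedge terms have the schematic form $\theta^a\wedge(\tilde{\mathcal{E}}_\alpha\theta)^b\wedge D_t^{-1}(q\theta^c)$ with $b\in\{3,4\}$ and $c\in\{1,2\}$, and you plan to substitute the explicit (complicated) expressions for $(\tilde{\mathcal{E}}_\alpha\theta)^3$ and $(\tilde{\mathcal{E}}_\alpha\theta)^4$. This is backwards. The nonconstant coefficients of $\tilde{\mathcal{E}}_\alpha$ depend only on $q$ and $r$, so the prolongation rule $\pr\mathbf{v}_{\tilde{\mathcal{E}}\theta}(q)=(\tilde{\mathcal{E}}\theta)^1$, $\pr\mathbf{v}_{\tilde{\mathcal{E}}\theta}(r)=(\tilde{\mathcal{E}}\theta)^2$ inserts the \emph{first two} components of $\tilde{\mathcal{E}}\theta$ (the simple ones, $-D_t^{-1}\theta^2$ and $-D_t^{-1}\theta^1$), not the third and fourth. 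Meanwhile the $D_t^{-1}$ factors come from applying entries like $qD_t^{-1}q$ to $\theta^3$ or $\theta^4$, so $c\in\{3,4\}$. In short, the correct schematic is $\theta^a\wedge(\tilde{\mathcal{E}}\theta)^b\wedge D_t^{-1}(q\theta^c)$ with $a,c\in\{3,4\}$ and $b\in\{1,2\}$. This matters: substituting the simple $(\tilde{\mathcal{E}}\theta)^{1,2}=-D_t^{-1}\theta^{2,1}$ is precisely what collapses $\Psi_{\tilde{\mathcal{E}}}$ to a short, tractable expression (six terms of the type $\theta^a\wedge D_t^{-1}\theta^b\wedge D_t^{-1}(q\theta^c)$ with $b\in\{1,2\}$, $a,c\in\{3,4\}$). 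Your proposed substitution of the complicated $(\tilde{\mathcal{E}}\theta)^{3,4}$ would not arise and would lead you into a wrong and much messier computation.

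A second, smaller issue: the ``polynomial in $\alpha$'' argument is too quick. The obstruction is affine in $\alpha$, and for any single choice of test data one could land at the unique $\alpha$ where that affine expression vanishes. The paper avoids this by evaluating on a one-parameter family $P^1=(nq^{n-1}q_t,0,0,0)$, $P^2=(0,0,1,0)$, $P^3=(0,0,0,q_t)$, obtaining $\int\big(2(1-\alpha)+\tfrac{2\alpha}{n+1}\big)q^{n+2}\,\mathrm{d}t$; for each fixed $\alpha$ some integer $n\ge 1$ makes the bracket nonzero. Once your indices are fixed, the rest of your plan (pairwise integration by parts, then explicit test functionals) is exactly the paper's route.
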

\begin{proof}
Fix $\alpha \in \C$.
We will show that $\{F, G\}_{\tilde{\mathcal{E}}}$ does not satisfy the Jacobi identity. By \cite[Proposition 7.7]{O1986}, it is enough to show that the tri-vector 
\bew
\Psi_{\tilde{\mathcal{E}}} = \frac{1}{2} \int \big\{\theta \wedge \pr \mathbf{v}_{\tilde{\mathcal{E}}\theta}(\tilde{\mathcal{E}}) \wedge \theta\big\} \, \mathrm d t
\eew
does not vanish identically. Since
\bew
\pr \mathbf{v}_{\tilde{\mathcal{E}}\theta}(q) = (\tilde{\mathcal{E}}\theta)^1,\qquad
\pr \mathbf{v}_{\tilde{\mathcal{E}}\theta}(r) = (\tilde{\mathcal{E}}\theta)^2,
\eew
we find
\begin{align*}
\pr \mathbf{v}_{\tilde{\mathcal{E}}\theta}(\tilde{\mathcal{E}}) 
=
\begin{pmatrix}  0 & 0 & 0 & 0  \\
 0 & 0 &0&0  \\
 0 & 0 & (\pr \mathbf{v}_{\tilde{\mathcal{E}}\theta}(\tilde{\mathcal{E}}) )_{33} & 
(\pr \mathbf{v}_{\tilde{\mathcal{E}}\theta}(\tilde{\mathcal{E}}) )_{34} \\
  0 &0& (\pr \mathbf{v}_{\tilde{\mathcal{E}}\theta}(\tilde{\mathcal{E}}) )_{43}  & (\pr \mathbf{v}_{\tilde{\mathcal{E}}\theta}(\tilde{\mathcal{E}}) )_{44} 
 \end{pmatrix},
\end{align*}
where
\begin{align*}
& (\pr \mathbf{v}_{\tilde{\mathcal{E}}\theta}(\tilde{\mathcal{E}}) )_{33} = 2\alpha (\tilde{\mathcal{E}}\theta)^1 D_t^{-1} q + 2\alpha qD_t^{-1} (\tilde{\mathcal{E}}\theta)^1,
	\\
& (\pr \mathbf{v}_{\tilde{\mathcal{E}}\theta}(\tilde{\mathcal{E}}) )_{34} = 4 (1-\alpha) (\tilde{\mathcal{E}}\theta)^2 D_t^{-1}q + 4 (1-\alpha) r D_t^{-1}(\tilde{\mathcal{E}}\theta)^1 + 2\alpha (\tilde{\mathcal{E}}\theta)^1 D_t^{-1} r  + 2\alpha q D_t^{-1} (\tilde{\mathcal{E}}\theta)^2,
	\\
& (\pr \mathbf{v}_{\tilde{\mathcal{E}}\theta}(\tilde{\mathcal{E}}) )_{43} = 4(1-\alpha)(\tilde{\mathcal{E}}\theta)^1 D_t^{-1}r +4(1-\alpha)q D_t^{-1}(\tilde{\mathcal{E}}\theta)^2+ 2\alpha (\tilde{\mathcal{E}}\theta)^2 D_t^{-1} q+ 2\alpha r D_t^{-1} (\tilde{\mathcal{E}}\theta)^1,
	\\
& (\pr \mathbf{v}_{\tilde{\mathcal{E}}\theta}(\tilde{\mathcal{E}}) )_{44} = 2\alpha (\tilde{\mathcal{E}}\theta)^2 D_t^{-1} r +  2\alpha r D_t^{-1} (\tilde{\mathcal{E}}\theta)^2.
\end{align*}
Thus
\begin{align*}
\Psi_{\tilde{\mathcal{E}}} 
= &\; \frac{1}{2}\int \bigg\{
\theta^3 \wedge (\pr \mathbf{v}_{\tilde{\mathcal{E}}\theta}(\tilde{\mathcal{E}}) )_{33} \wedge \theta^3 
+ \theta^3 \wedge (\pr \mathbf{v}_{\tilde{\mathcal{E}}\theta}(\tilde{\mathcal{E}}) )_{34} \wedge \theta^4
	\\
& +\theta^4 \wedge (\pr \mathbf{v}_{\tilde{\mathcal{E}}\theta}(\tilde{\mathcal{E}}) )_{43} \wedge \theta^3 
+\theta^4 \wedge (\pr \mathbf{v}_{\tilde{\mathcal{E}}\theta}(\tilde{\mathcal{E}}) )_{44} \wedge \theta^4 \bigg\} \, \mathrm d t
\end{align*}
is given by
\begin{align}\nonumber
\Psi_{\tilde{\mathcal{E}}} 
= &\; \int \bigg\{ 
\alpha \theta^3 \wedge (\tilde{\mathcal{E}}\theta)^1 \wedge D_t^{-1} (q \theta^3)
+ \alpha q \theta^3 \wedge D_t^{-1} ((\tilde{\mathcal{E}}\theta)^1 \wedge \theta^3)
	\\\nonumber
& + 2 (1-\alpha) \theta^3 \wedge (\tilde{\mathcal{E}}\theta)^2 \wedge D_t^{-1}(q \theta^4 )
 +2 (1-\alpha) r \theta^3 \wedge  D_t^{-1}((\tilde{\mathcal{E}}\theta)^1  \wedge \theta^4)
 	\\\nonumber
 & +\alpha \theta^3 \wedge (\tilde{\mathcal{E}}\theta)^1 \wedge D_t^{-1} (r \theta^4)
 +\alpha q \theta^3 \wedge D_t^{-1} ((\tilde{\mathcal{E}}\theta)^2 \wedge \theta^4)
	\\\nonumber
& +2(1-\alpha) \theta^4 \wedge (\tilde{\mathcal{E}}\theta)^1 \wedge D_t^{-1}(r \theta^3)
+2(1-\alpha)q \theta^4 \wedge D_t^{-1}((\tilde{\mathcal{E}}\theta)^2 \wedge \theta^3)
	\\ \nonumber
& + \alpha\theta^4 \wedge  (\tilde{\mathcal{E}}\theta)^2 \wedge D_t^{-1}(q \theta^3)
+ \alpha r \theta^4 \wedge D_t^{-1} ((\tilde{\mathcal{E}}\theta)^1 \wedge \theta^3)
	\\ \label{PsitildecalEexpression}
& + \alpha\theta^4 \wedge (\tilde{\mathcal{E}}\theta)^2 \wedge D_t^{-1} (r \theta^4)
+ \alpha r \theta^4 \wedge D_t^{-1} ((\tilde{\mathcal{E}}\theta)^2 \wedge \theta^4)
 \bigg\}\, \mathrm d t.
\end{align}
An integration by parts shows that the first two terms on the right-hand side of \eqref{PsitildecalEexpression} are equal:
\begin{align*}
\int q \theta^3 \wedge D_t^{-1}((\tilde{\mathcal{E}}\theta)^1  \wedge \theta^3) \, \mathrm d t
& = - \int D_t^{-1}(q \theta^3) \wedge  (\tilde{\mathcal{E}}\theta)^1  \wedge \theta^3 \, \mathrm d t
	\\
& = \int \theta^3 \wedge  (\tilde{\mathcal{E}}\theta)^1  \wedge D_t^{-1}(q \theta^3)  \, \mathrm d t.
\end{align*}
In the same way, the third and eighth terms are equal, the fourth and seventh are equal, the fifth and tenth are equal, the sixth and ninth are equal, and the eleventh and twelfth are equal. Thus we find
\begin{align}\nonumber
\Psi_{\tilde{\mathcal{E}}} 
= &\; 2\int \bigg\{ 
\alpha \theta^3 \wedge (\tilde{\mathcal{E}}\theta)^1 \wedge D_t^{-1} (q \theta^3)
 + 2 (1-\alpha) \theta^3 \wedge (\tilde{\mathcal{E}}\theta)^2 \wedge D_t^{-1}(q \theta^4 )
 	\\ \nonumber
 & + \alpha \theta^3 \wedge (\tilde{\mathcal{E}}\theta)^1 \wedge D_t^{-1} (r \theta^4)
 +2(1-\alpha) \theta^4 \wedge (\tilde{\mathcal{E}}\theta)^1 \wedge D_t^{-1}(r \theta^3)
	\\ \label{PsitildecalE}
& +  \alpha\theta^4 \wedge  (\tilde{\mathcal{E}}\theta)^2 \wedge D_t^{-1}(q \theta^3)
 +  \alpha\theta^4 \wedge (\tilde{\mathcal{E}}\theta)^2 \wedge D_t^{-1} (r \theta^4)
 \bigg\}\, \mathrm d t.
\end{align}
Using that
\bew
\tilde{\mathcal{E}}\theta = 
\begin{pmatrix} - (D_t^{-1}\theta^2)  \\
 - (D_t^{-1}\theta^1) \\
2\alpha qD_t^{-1}(q\theta^3) -\I\theta^4 + 4 (1-\alpha) r D_t^{-1}(q\theta^4) + 2\alpha q D_t^{-1} (r\theta^4) \\
\I\theta^3 + 4(1-\alpha)q D_t^{-1}(r\theta^3) + 2\alpha r D_t^{-1}(q\theta^3) + 2\alpha r D_t^{-1}(r\theta^4)
 \end{pmatrix},
\eew
this becomes
\begin{align}\nonumber
\Psi_{\tilde{\mathcal{E}}} 
= & -2\int \bigg\{ 
\alpha \theta^3 \wedge (D_t^{-1}\theta^2)  \wedge D_t^{-1} (q \theta^3)
 + 2 (1-\alpha) \theta^3 \wedge (D_t^{-1}\theta^1)  \wedge D_t^{-1}(q \theta^4 )
 	\\\nonumber
 & + \alpha \theta^3 \wedge (D_t^{-1}\theta^2)  \wedge D_t^{-1} (r \theta^4)
 +2(1-\alpha) \theta^4 \wedge (D_t^{-1}\theta^2) \wedge D_t^{-1}(r \theta^3)
	\\ \label{PsitildecalEexpression2}
& +  \alpha\theta^4 \wedge  (D_t^{-1}\theta^1)  \wedge D_t^{-1}(q \theta^3)
 +  \alpha\theta^4 \wedge (D_t^{-1}\theta^1)  \wedge D_t^{-1} (r \theta^4)
 \bigg\}\, \mathrm d t.
\end{align}
Consider the two terms which involve all three of the uni-vectors $\theta^1$, $\theta^3$, and $\theta^4$:
\begin{align*}
 \Xi \coloneqq & -2\int \bigg\{ 2 (1-\alpha) \theta^3 \wedge (D_t^{-1}\theta^1)  \wedge D_t^{-1}(q \theta^4 )
 +  \alpha\theta^4 \wedge  (D_t^{-1}\theta^1)  \wedge D_t^{-1}(q \theta^3)\bigg\}\, \mathrm d t
 	\\
= &\; 4 (1-\alpha) \int (D_t^{-1}\theta^1)\wedge  \theta^3    \wedge D_t^{-1}(q \theta^4 ) \, \mathrm d t
  + 2\alpha\int (D_t^{-1}\theta^1)\wedge \theta^4 \wedge  D_t^{-1}(q \theta^3)\, \mathrm d t.	
\end{align*}
Let $P^j = (P_1^j, P_2^j,P_3^j,P_4^j)$, $j = 1,2,3$, where each $P_i^j$ is a differential function (i.e., a smooth function of $t, q,r,p,s$ and $t$-derivatives of $q,r,p,s$ up to some finite, but unspecified, order). 
Then (see \cite[p. 440]{O1986})
\begin{align*}
\langle \Xi; P^1,P^2,P^3\rangle
= &\; 4 (1-\alpha) \int \begin{vmatrix}
(D_t^{-1}P_1^1) &    P_3^1  & D_t^{-1}(q P_4^1) \\
(D_t^{-1}P_1^2) &   P_3^2  & D_t^{-1}(q P_4^2) \\
(D_t^{-1}P_1^3) &   P_3^3  & D_t^{-1}(q P_4^3) \\
\end{vmatrix} \, \mathrm d t
	\\
 &+ 2 \alpha \int \begin{vmatrix}
(D_t^{-1}P_1^1) &    P_4^1  & D_t^{-1}(q P_3^1) \\
(D_t^{-1}P_1^2) &   P_4^2  & D_t^{-1}(q P_3^2) \\
(D_t^{-1}P_1^3) &   P_4^3  & D_t^{-1}(q P_3^3) \\
\end{vmatrix} \, \mathrm d t.
\end{align*}
Choosing for example
\bew
P^1 = (nq^{n-1}q_t, 0, 0, 0), \quad P^2 = (0, 0, 1, 0), \quad P^3 = (0, 0, 0, q_t),
\eew
where $n \geq 1$ is an integer, we see that
\begin{align*}
\langle \Xi; P^1,P^2,P^3\rangle
= &\; 4 (1-\alpha) \int (D_t^{-1}P_1^1) P_3^2  D_t^{-1}(q P_4^3) \, \mathrm d t
- 2 \alpha \int  (D_t^{-1}P_1^1)  P_4^3 D_t^{-1}(q P_3^2) \, \mathrm d t
	\\
= &\; 4 (1-\alpha) \int q^n \frac{q^2}{2} \, \mathrm d t
- 2 \alpha \int  q^n q_t D_t^{-1}(q) \, \mathrm d t
	\\
= &\; 4 (1-\alpha) \int \frac{q^{n+2}}{2} \, \mathrm d t
+ 2 \alpha \int  D_t^{-1}(q^n q_t) q \, \mathrm d t
	\\
= &\; \int \Big(4 (1-\alpha) \frac{q^{n+2}}{2} + 2 \alpha \frac{q^{n+2}}{n+1} \Big) \, \mathrm d t.
\end{align*}
Regardless of the value of $\alpha$, this is nonzero for some integer $n \geq 0$.
Since all the other terms in the expression \eqref{PsitildecalEexpression2} for $\Psi_{\tilde{\mathcal{E}}}$ vanish when applied to this choice of $(P^1, P^2, P^3)$, we conclude that $\Psi_{\tilde{\mathcal{E}}} \neq 0$.
\end{proof}

\begin{remark}
The inverse operator $D_t^{-1}$ in the above computations can be treated as a pseudo-differential operator in the sense of \cite[Definition 5.37]{O1986} by appealing to the identity (see \cite[Eq. (5.55)]{O1986})
$$D_t^{-1} q = \sum_{i=0}^\infty (-1)^i (D_t^iq) D_t^{-i-1}.$$
\end{remark}

\begin{remark}
The failure of $\tilde{\mathcal{E}}$ to be Hamiltonian presumably has to do with the fact that solutions of NLS only live on the submanifold where $q_x = p$ and $r_x = s$, so that one should restrict the Poisson bracket to this submanifold before considering the Jacobi identity. Since the Hamiltonian formulation \eqref{NLSsystemxfirsthamiltonianstructure} is sufficient for our objective of establishing local Birkhoff coordinates for the $x$-evolution \eqref{NLSsystemx} of NLS, and the infinite sequence of conserved quantities can be obtained from the recursion operator for \eqref{NLSsystemt}, we do not pursue this matter further. 
\end{remark}

\begin{remark}
An alternative proof of Lemma \ref{E_not_Ham} proceeds as follows: As in the proof of Lemma \ref{DEpairlemma}, it can be shown that $\tilde{\mathcal{D}}$ and $\tilde{\mathcal{E}}$ satisfy the following analog of \eqref{prDprE} for any value of $\alpha$:
$$\pr \mathbf{v}_{\tilde{\mathcal{D}}\theta}(\Theta_{\tilde{\mathcal{E}}}) + \pr \mathbf{v}_{\tilde{\mathcal{E}}\theta}(\Theta_{\tilde{\mathcal{D}}}) = 0.$$
Thus, if $\tilde{\mathcal{E}}$ were Hamiltonian, then $\tilde{\mathcal{D}}$ and $\tilde{\mathcal{E}}$ would form a Hamiltonian pair and then $\tilde{\mathcal{R}} = \tilde{\mathcal{E}}\tilde{\mathcal{D}}^{-1}$ would be a recursion operator for \eqref{NLSsystemx}. 
However, a direct computation shows that $\tilde{\mathcal{R}}$ does not satisfy the defining relation \cite[Eq. (5.43)]{O1986} of a recursion operator for any value of $\alpha \in \C$.
\end{remark}

\medskip
\noindent
\textbf{Acknowledgment.} The authors are grateful to Thomas Kappeler for valuable discussions. Furthermore, the authors want to thank the anonymous referees for several helpful suggestions which led to a considerable improvement of the manuscript.
Support is acknowledged from the \emph{G\"oran Gustafsson Foundation}, the \emph{European Research Council, Grant Agreement No.~682537} and the \emph{Swedish Research Council, Grant No.~2015-05430}.

\vspace{0em}


\begin{thebibliography}{10}
\bibitem{AKNS1974}
M.~J.~Ablowitz, D.~J.~Kaup, A.~C.~Newell, and H.~Segur, 
\newblock{The inverse scattering transform-Fourier analysis for nonlinear problems}, 
\newblock{\em Stud.~Appl.~Math.} \textbf{53}, 249--315 (1974). 

\bibitem{A2013}
G.~P.~Agrawal, 
\newblock{\em Nonlinear Fiber Optics}, 
\newblock{Academic Press}, 2013.

\bibitem{BF2008}
J.~L.~Bona and A.~S.~Fokas, 
\newblock{Initial-boundary-value problems for linear and integrable nonlinear dispersive partial differential equations}, 
\newblock{\em Nonlinearity} \textbf{21}, T195--T203 (2008). 

\bibitem{BIK2007}
A.~Boutet de Monvel, A.~Its, and V.~Kotlyarov, 
\newblock{Long-time asymptotics for the focusing NLS equation with time-periodic boundary condition}, 
\newblock{\em C.~R.~Math.~Acad.~Sci.~Paris} \textbf{345}, 615--620 (2007).

\bibitem{BIK2009}
A.~Boutet de Monvel, A.~Its, and V.~Kotlyarov, 
\newblock{Long-time asymptotics for the focusing NLS equation with time-periodic boundary condition on the half-line}, 
\newblock{\em Comm.~Math.~Phys.} \textbf{290}, 479--522 (2009).

\bibitem{BKSZ2010}
A.~Boutet de Monvel, V.~Kotlyarov, D.~Shepelsky, and C.~Zheng, 
\newblock{Initial boundary value problems for integrable systems: towards the long time asymptotics}, 
\newblock{\em Nonlinearity} \textbf{23}, 2483--2499 (2010).

\bibitem{CL1955}
E.~A.~Coddington and N.~Levinson, 
\newblock{\em Theory of differential equations}, 
\newblock{McGraw-Hill}, New York, 1955.

\bibitem{DjaMit06} 
P.~Djakov and B.~S.~Mityagin,
\newblock{Instability zones of periodic 1-dimensional Schr\"odinger and Dirac operators}, 
\newblock{\em Russ.~Math.~Surv.} \textbf{61}(4), 663--766 (2006).

\bibitem{DunSchw}
N.~J.~Dunford and  J.~T.~Schwartz
\newblock{\em Linear Operators, Part I: General Theory}, 
Wiley Classics Library, 1988. 

\bibitem{F1997}
A.~S.~Fokas, 
\newblock{A unified transform method for solving linear and certain nonlinear PDEs}, 
\newblock{\em Proc.~Roy.~Soc.~Lond.} A \textbf{453}, 1411--1443 (1997).

\bibitem{GGKM1967}
C.~S.~Gardner, J.~M.~Greene, M.~D.~Kruskal, and R.~M.~Miura, 
\newblock{Method for solving the Korteweg-de Vries equation}, 
\newblock{\em Phys.~Rev.~Lett.} \textbf{19}, 1095--1097 (1967).

\bibitem{GGKM1974}
C.~S.~Gardner, J.~M.~Greene, M.~D.~Kruskal, and R.~M.~Miura, 
\newblock{Korteweg-de Vries equation and generalizations. VI. methods for exact solution}, 
\newblock{\em Comm.~Pure Appl.~Math.} \textbf{27}, 97--133 (1974).

\bibitem{GrebertKappeler14}
B.~Gr\'ebert and T.~Kappeler,
\newblock{\em The Defocusing NLS Equation and Its Normal Form}, 
\newblock{EMS Series of Lecture Notes in Mathematics}, Z\"urich, 2014.

\bibitem{HE1989b}
I.~Hauser and F.~J.~Ernst, 
\newblock{Initial value problem for colliding gravitational plane waves II}, 
\newblock{\em J. Math. Phys.} \textbf{30}, 2322--2336 (1989). 

\bibitem{KM2001}
T.~Kappeler and M.~Makarov, 
\newblock{On Birkhoff coordinates for KdV}, 
\newblock{\em Ann.~Henri Poincar\'e} \textbf{2} (2001), 807--856.

\bibitem{KM1999}
T.~Kappeler and B.~Mityagin, 
\newblock{Gap estimates of the spectrum of Hill's equation and action variables for KdV}, 
\newblock{\em Trans. Amer. Math. Soc.} \textbf{351}, 619--646 (1999). 

\bibitem{KapMit01} 
T.~Kappeler, and B.~S.~Mityagin,
\newblock{Estimates for periodic and Dirichlet eigenvalues of the Schr\"odinger operator}, 
\newblock{\em SIAM J.~Math.~Anal.} \textbf{33}(1), 113--152 (2001).

\bibitem{KappelerPoeschel03}
T.~Kappeler and J.~P\"oschel,
\newblock{\em KdV \& KAM}, 
\newblock{Ergeb.~der Math.~und ihrer Grenzgeb.}, Springer, Berlin-Heidelberg-New
York, 2003.

\bibitem{Kappeler_etal_06} 
T.~Kappeler, F.~Serier and P.~Topalov,
\newblock{On the characterization of the smoothness of skew-adjoint potentials in periodic Dirac operators}, 
\newblock{\em J.~Funct.~Anal.} \textbf{256}(7), 2069--2112 (2006).

\bibitem{Kappeler_etal_09}
T.~Kappeler, P.~Lohrmann, P.~Topalov and N.~T.~Zung,
\newblock{Birkhoff Coordinates for the Focusing NLS Equation}, 
\newblock{\em Commun. Math. Phys.} \textbf{285}, 1087--1107 (2009).

\bibitem{Kappeler_etal_14}
T.~Kappeler, P.~Lohrmann and P.~Topalov, 
\newblock{\em On the spectrum of nonself-adjoint Zakharov-Shabat operators on R}, in 
\newblock{\em Spectral theory and differential equations}, 165--177, 
edited by E.~Khruslov, L.~Pastur and D.~Shepelsky, 
Amer. Math. Soc. Transl. Ser. 2, 233, 
Adv. Math. Sci., 66, Amer. Math. Soc., Providence, RI, 2014.

\bibitem{L1968}
P.~Lax, 
\newblock{Integrals of nonlinear equations of evolution and solitary waves}, 
\newblock{\em Comm.~Pure Appl.~Math.} {\bf 21}, 467--490 (1968).

 \bibitem{Ldefocusing}
J.~Lenells, 
\newblock{Admissible boundary values for the defocusing nonlinear Schr\"odinger equation with asymptotically $t$-periodic data}, 
\newblock{\em J.~Diff.~Eq.} \textbf{259}, 5617--5639 (2015).

\bibitem{LNonlinearFourier}
J.~Lenells, 
\newblock{Nonlinear Fourier transforms and the mKdV equation in the quarter plane}, 
\newblock{\em Stud.~Appl.~Math.} \textbf{136}, 3--63 (2016).

\bibitem{LeFo15}
J.~Lenells and A.~S.~Fokas,
\newblock {The nonlinear Schr\"odinger equation with t-periodic data: I.~Exact results},
\newblock {\em Proc.~R.~Soc.~A} \textbf{471}: 20140925 (2015).
 
\bibitem{LFunifiedII}
J.~Lenells and A.~S.~Fokas, 
\newblock{The unified method: II. NLS on the half-line with t-periodic boundary conditions}, 
\newblock{\em J. Phys. A} \textbf{45}, 195202 (2012). 

\bibitem{M1978}
F. Magri, A simple model of the integrable Hamiltonian equation, {\it J. Math. Phys.} {\bf 19}, 1156--1162 (1978).

\bibitem{Mujica86}
J.~Mujica,
\newblock{\em Complex Analysis in Banach Spaces}, 
North-Holland, Amsterdam-New York, 1986. 

\bibitem{O1986}
P.~J.~Olver, 
\newblock{\em Applications of Lie Groups to Differential Equations}, 
Springer-Verlag, Berlin-Heidelberg-New York-Tokyo, 1986. 

\bibitem{Poe11} 
J.~P\"oschel,
\newblock{Hill's potential in weighted Sobolev spaces and their spectral gaps}, 
\newblock{\em Math.~Ann.} \textbf{349}(7), 433--458 (2011).

\bibitem{Whittlesey65}
E.~F.~Whittlesey,
\newblock{Analytic Functions in Banach Spaces},
\newblock{\em Proceedings of the American Mathematical Society}
\textbf{16}(5), 1077--1083 (1965). 

\bibitem{ZS1971} 
V.~E.~Zakharov and L.~D.~Faddeev, 
\newblock{The Korteweg-de Vries equation is a fully integrable Hamiltonian system}, 
\newblock{\em Funkcional.~Anal.~i Prilo\v{z}en} \textbf{5}, 18--27 (1971).

\bibitem{ZM1974} 
V.~E.~Zakharov and S.~V.~Manakov, 
\newblock{The complete integrability of the nonlinear Schr\"odinger equation}, 
\newblock{\em Teoret. Mat. Fiz.} \textbf{19}, 332--343 (1974).

\bibitem{ZS1972} V.~E.~Zakharov and A.~B.~Shabat, 
\newblock{Exact theory of two-dimensional self-focusing and one-dimensional self-modulation in nonlinear media}, 
\newblock{\em Soviet Physics-JETP} \textbf{34}, 62--69 (1972). 
\end{thebibliography}
\end{document}